\theoremstyle{definition}
\newtheorem{theorem}{Theorem}[section]
\newtheorem{lemma}[theorem]{Lemma}
\newtheorem{remark}[theorem]{Remark}
\newtheorem{proposition}[theorem]{Proposition}
\newtheorem{corollary}[theorem]{Corollary}
\newtheorem{observation}[theorem]{Observation}
\newtheorem{definition}[theorem]{Definition}
\definecolor{aquamarine}{rgb}{0.5, 1.0, 0.83}
\begin{document}
	

	%
	\title{Coloring of zero-divisor graphs of posets and applications to graphs associated with algebraic structures
}\maketitle
\markboth{ Nilesh Khandekar and Vinayak Joshi}{ Coloring of Zero-divisor graphs and Applications }\begin{center}\begin{large} Nilesh Khandekar and Vinayak
		Joshi\end{large}\\\begin{small}\vskip.1in\emph{Department of Mathematics,
			Savitribai Phule Pune University,\\ Pune - 411007, Maharashtra,
			India}\\E-mail: \texttt{khandekarnilesh11@gmail.com, vvjoshi@unipune.ac.in, vinayakjoshi111@yahoo.com}\end{small}\end{center}\vskip.2in
\begin{abstract} In this paper, we characterize chordal and perfect zero-divisor graphs of finite posets. Also, it is proved that the zero-divisor graphs of finite posets and the complement of zero-divisor graphs of finite $0$-distributive posets satisfy the Total Coloring Conjecture. These results are applied to the zero-divisor graphs of finite reduced rings, the comaximal ideal graph of rings, the annihilating ideal graphs, the intersection graphs of ideals of rings, and the intersection graphs of subgroups of cyclic groups. In fact, it is proved that these graphs associated with a commutative ring $R$ with identity can be effectively studied via the zero-divisor graph of a specially constructed poset from $R$. 
\end{abstract}\vskip.2in
\noindent\begin{Small}\textbf{Mathematics Subject Classification (2020)}:
	05C15, 05C17, 05C25, 06E05, 06E40, 06D15, 06A07, 13A70    \\\textbf{Keywords}: Zero-divisor graphs, comaximal ideal graph, annihilating ideal graphs,  intersection graphs, co-annihilating ideal graph,  Total Coloring Conjecture,  pseudocomplemented poset, reduced ring. \end{Small}\vskip.2in
\vskip.25in

\baselineskip 14truept 
\section{Introduction}\label{intro}
The study of graphs associated with algebraic and ordered structures is an active area of research. Some of the interesting classes of such graphs are the zero-divisor graphs of rings, comaximal ideal graphs of rings, annihilating ideal graphs of rings, and intersection graphs of ideals of rings. Besides being interesting in their own right, these classes of graphs, on the one hand, have served as a testing ground for some of the conjectures in graph theory, while on the other hand demonstrate the rich interplay that comes with associating graphs to algebraic and ordered structures.

One of the important aspects of graph theory is the notion of the coloring of graphs and the computation of a graph's chromatic number. It is well known that the coloring of graphs is an NP-Complete problem; see \cite{Kar72}. However, for perfect graphs, coloring can be done in polynomial time. A \textit{perfect graph} is a finite simple graph in which the chromatic number of every induced subgraph is equal to its clique number, that is, the order of the subgraph's largest clique. Efforts have been made to determine the classes of perfect graphs. For instance, it is known that the class of chordal graphs is perfect; see Dirac \cite{dirac}. The notion of perfectness, weakly perfectness and chordalness of graphs associated with algebraic structures has been an active area of research; see \cite{aghapouramin},  \cite{azadi}, \cite{bagheri}, \cite{Be},  \cite{das}, \cite{mirghadim}, \cite{ashkan}, \cite{smith},  etc.

On the lines of the zero-divisor graphs of rings, we study the zero-divisor graphs of ordered sets to construct examples of perfect graphs and also, at the same time, demonstrate the rich interplay that naturally comes with studying the questions of coloring. However, we would like to state that not much attention has been given to the interplay of the zero-divisor graphs of ordered sets and the graphs associated with algebraic structures. In this paper, we aim to mainly explore these aspects in the last section and highlight the interplay.

Besides the vertex and edge colorings of graphs, total coloring has been another important coloring. The \textit{total coloring} of a graph $G$ is an assignment of colors to the vertices and the edges of $G$ such that every pair of adjacent vertices, every pair of incident edges, and every vertex and incident edge pair receive different colors. The total chromatic number $\chi" (G)$ of a graph $G$ is the minimum number of colors needed in a total coloring of $G$; see \cite{vgv1}.
Vizing \cite{vgv1, vgv2}  and  Behzad \cite{mbehzad} studied the total coloring of graphs. They both formulated the following conjecture, now known as the Total Coloring Conjecture.

\vskip 5truept 
\noindent \textbf{Total Coloring Conjecture:} {\it Let $G$ be a finite simple undirected graph. Then $\chi''(G) =\Delta(G)+1$ or $\chi''(G)=\Delta(G)+2$.}
\vskip 5truept

One of our main establishes the Total Coloring conjecture for graphs associated with finite posets and the complement of zero-divisor graphs of finite 0-distributive posets. As a consequence of these results, we are able to prove that the comaximal graphs and the complement of the comaximal graphs of commutative rings satisfy the Total Coloring Conjecture.

We now briefly discuss the contents of each Section. Section 2 deals with the preliminary results from ordered sets needed for the paper's later development. Sections 3 and 4 deal with the chordal zero-divisor graphs and perfect zero-divisor graphs of ordered sets. In section 5, we prove one of our main results that establish the Total Coloring conjecture for zero-divisor graphs finite posets. The work in this section is a continuation of the earlier work in \cite{nkvj}. Section 6 focuses on applications of these ideas to the study of the interplay between zero-divisor graphs ordered sets and various graphs associated with algebraic structures such as the divisor graphs of rings, the comaximal ideal graphs of rings, the annihilating ideal graphs of rings, the intersection graphs of ideals of rings.

In fact, we prove the following main results and their corollaries. One can refer to the subsequent sections for the terminologies and notations mentioned in the following results.

\begin{theorem} \label{zdgchordal}
	Let $P$ be a finite poset such that $[P]$ is a Boolean lattice. Then
	
	\textbf{(A)} $G(P)$ is chordal if and only if one of the following hold:
	
	\begin{enumerate}
		\item $P$ has exactly one atom;
		
		\item $P$ has exactly two atoms with $|P_i|=1$ for some $i\in \{1,2\}$;

		\item $P$ has exactly three atoms with $|P_i|=1$ for all $i\in \{1,2,3\}$.
	\end{enumerate}
	
	\textbf{(B)} $G^c(P)$ is chordal if and only if   number of atoms of $P$ are at most $3$.

	\textbf{(C)}	$G(P)$
	is perfect if and only if  
	$P$ has at most 4 atoms.
\end{theorem}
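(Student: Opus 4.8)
The plan is to reduce all three parts to the combinatorics of subsets of an $n$-element set, where $n$ is the number of atoms of $[P]$. Writing the atoms as $1,\dots,n$ and identifying each element of the Boolean lattice $[P]$ with a subset of $\{1,\dots,n\}$, the classes of $\sim$ become the nonempty proper subsets (the top is not a zero-divisor), two classes being adjacent in the reduced graph exactly when the corresponding subsets are disjoint. Thus $G(P)$ is the blow-up of this ``disjointness graph'' in which each class $S$ is replaced by $|P_S|$ mutually non-adjacent copies, the atom classes $P_1,\dots,P_n$ being the copies of the singletons $\{1\},\dots,\{n\}$; dually, $G^c(P)$ is the blow-up of the ``intersection graph'' in which each class is replaced by a clique. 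I would first record this correspondence using the preliminary material, since every subsequent argument runs on these two concrete graphs.

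For \textbf{(A)} I would split on $n$. If $n=1$ there are no zero-divisors and $G(P)$ is edgeless, hence chordal, giving (1). If $n=2$ the disjointness graph is a single edge, so $G(P)=K_{|P_1|,|P_2|}$, which is chordal iff it has no induced $C_4$, i.e.\ iff $\min(|P_1|,|P_2|)=1$; this is (2). If $n=3$ the singletons form a triangle and each pair $\{j,k\}$ is adjacent only to the opposite singleton $\{i\}$; when all $|P_i|=1$ the singletons are simplicial and the pair-vertices have degree $1$, so no induced $C_4$ can occur, whereas if some $|P_i|\ge 2$ then two copies $x,x'$ of $\{i\}$ together with $\{j\}$ and any vertex of the class $\{j,k\}$ (nonempty since $[P]\cong B_3$) induce a $C_4$; this is (3). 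Finally, for $n\ge 4$ the four classes $\{1\},\{2\},\{1,3\},\{2,4\}$ already induce a $C_4$ (consecutive ones disjoint, opposite ones meeting), so $G(P)$ is never chordal. Together these exhaust the equivalence.

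For \textbf{(B)} I would pass to the complement, where $G^c(P)$ is the clique-blow-up of the intersection graph on nonempty proper subsets. Since substituting a clique for a vertex neither creates nor destroys an induced cycle of length $\ge 4$, $G^c(P)$ is chordal iff the intersection graph is chordal, so the class sizes are irrelevant, matching the fact that (B) constrains only the number of atoms. For $n\le 3$ the intersection graph admits a perfect elimination ordering: the singletons are simplicial, since their neighbours are pairwise intersecting, and deleting them leaves the clique of all pairs. For $n\ge 4$ the four pairs $\{1,2\},\{2,3\},\{3,4\},\{1,4\}$ induce a $C_4$ (consecutive ones meet, opposite ones are disjoint), so the intersection graph is not chordal. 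Hence $G^c(P)$ is chordal iff $n\le 3$.

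For \textbf{(C)} I would use that independent-set substitution preserves perfection, so $G(P)$ is perfect iff the disjointness graph is, and then invoke the Strong Perfect Graph Theorem. The crux, and the step I expect to be the main obstacle, is to show the disjointness graph has neither an odd hole nor an odd antihole when $n\le 4$, which I would do by a covering count. In an induced odd hole $A_1\cdots A_{2k+1}$ every element lies in an independent set of indices, hence in at most $k$ of the $A_i$; as each of the $(2k+1)(k-1)$ non-edges must be witnessed by a common element, one gets $(2k+1)(k-1)\le n\binom{k}{2}$, forcing $n\ge 4+2/k>4$. For an induced odd antihole, which has length $2k+1\ge 7$, choosing $x_i\in A_i\cap A_{i+1}$ one checks that each $x_i$ lies in exactly $A_i$ and $A_{i+1}$, so the $x_i$ are distinct and $n\ge 2k+1\ge 7$. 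Thus for $n\le 4$ the disjointness graph, and hence $G(P)$, is perfect; conversely, for $n\ge 5$ the subsets $\{i,i+2\}$ with $i\in\mathbb{Z}_5$ induce a $C_5$, so $G(P)$ is not perfect.
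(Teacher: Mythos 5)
Your proposal is correct, and for parts \textbf{(A)} and \textbf{(B)} it is essentially the paper's own argument: the same explicit induced $4$-cycles (classes $\{1\},\{2\},\{1,3\},\{2,4\}$ for $G(P)$ with $n\ge 4$; pairs $\{1,2\},\{2,3\},\{3,4\},\{1,4\}$ for $G^c(P)$), the same case analysis for $n\le 3$, and the same reduction to the class graph (your ``clique substitution preserves chordality'' is exactly the paper's Theorem \ref{gchord} combined with Remark \ref{gred}). Where you genuinely diverge is the positive direction of \textbf{(C)}. The paper handles $n\le 3$ via chordality of $G^c(P)$ plus Dirac's theorem and the Perfect Graph Theorem, and then disposes of $n=4$ by reducing to $G([P])$ (a $14$-vertex graph, Figure \ref{b24}) and asserting, by inspection, that neither it nor its complement contains an induced odd cycle of length $\ge 5$. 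You instead give a uniform double-counting argument in the disjointness graph: in an odd hole $A_1\cdots A_{2k+1}$ each ground element lies in an independent set of cycle indices, hence in at most $k$ of the $A_i$, and counting the $(2k+1)(k-1)$ non-edges that must be witnessed by common elements forces $n\ge 4+2/k>4$; for odd antiholes of length $\ge 7$ the elements $x_i\in A_i\cap A_{i+1}$ are pairwise distinct, forcing $n\ge 7$. This buys you something real: it replaces a figure-based verification with a checkable proof, it treats all $n\le 4$ at once without splitting off $n\le 3$, and it quantifies exactly why $5$ atoms is the threshold (matching the induced $C_5$ on the subsets $\{i,i+2\}$, which is the paper's cycle $P_{14}\!-\!P_{25}\!-\!P_{13}\!-\!P_{24}\!-\!P_{35}$ up to relabelling). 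Your appeal to the substitution (replication) lemma for perfection plays the role of the paper's Corollary \ref{corsqperfect}, so no gap there. One small inaccuracy: in part \textbf{(A)}, case $n=3$ with all $|P_i|=1$, the singleton vertices are \emph{not} simplicial in $G(P)$ (the atom $q_1$ is adjacent to $q_2$ and to every vertex of $P_{23}$, which are non-adjacent); however, your other observation --- that every vertex outside the triangle $q_1q_2q_3$ has degree $1$ --- already implies that the triangle is the only cycle, so chordality stands and the slip is harmless.
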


\begin{theorem}\label{zdgtcc}
	Let $P$ be a finite poset. Then $G(P)$ satisfies the Total Coloring Conjecture. Moreover, if $P$ is a finite 0-distributive poset, then $G^c(P)$ satisfies Total Coloring Conjecture.
\end{theorem}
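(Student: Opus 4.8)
The content of the claim is entirely the upper bound: since $\chi''(G)\ge\Delta(G)+1$ holds for every finite simple graph, it suffices to produce a proper total coloring of $G(P)$ (respectively $G^c(P)$) using at most $\Delta+2$ colors. The plan is to exploit the description of $G(P)$ as a blow-up of its reduced graph. Writing $x\sim y$ when $x$ and $y$ have the same orthogonality set (equivalently, the same neighbourhood in $G(P)$), one checks directly from the definition that each $\sim$-class is an independent set of $G(P)$, and that whether a vertex of one class is adjacent to a vertex of another depends only on the two classes. Hence $G(P)$ is obtained from the quotient $G(P)/\!\sim$ by replacing each vertex by an independent set and joining two classes by a complete bipartite graph exactly when they are adjacent in the quotient. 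The first step is to record this structure carefully, together with the consequence that the degree of every vertex of a class $C$ equals the total size of the classes adjacent to $C$, so that $\Delta(G(P))$ is the maximum of these totals over all classes.

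Second, I would build the total coloring in two coordinated layers. Between two adjacent classes $C$ and $D$ the edges form a complete bipartite graph $K_{|C|,|D|}$, which is $\max(|C|,|D|)$-edge-colorable by K\"onig's theorem; I would color each such bundle by a Latin-rectangle pattern, arranged so that the set of colors appearing at a fixed vertex inside a bundle is a controllable (for instance consecutive) block. Simultaneously I assign the vertex colors class by class. The residual case is when every class is a singleton, i.e.\ $G(P)$ coincides with its reduced graph $G(P)/\!\sim$; here I would use the structure of reduced zero-divisor graphs of posets established in the preliminary sections and in \cite{nkvj} — in particular the control on the clique number and the decomposition coming from the atoms of $[P]$ — to assemble an explicit total coloring, using the known total colorings of complete multipartite graphs and of small-maximum-degree graphs as building blocks.

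The main obstacle, and the technical heart of the argument, is coordinating the bundle edge-colorings with the vertex colors so that the global palette never exceeds $\Delta+2$. A naive lift of the bundle colorings overshoots this bound, so I would offset the palettes of the bundles incident to a common class against one another and against that class's vertex color, running a case analysis on the sizes $|C|$ of the classes and on the parity of $\Delta$, in the spirit of the classical arguments for complete multipartite graphs. The guiding principle is that blowing a class up to size at least two raises $\Delta$ and thereby relaxes the constraints, so that the genuinely tight situations involve only the small (and hence few) classes, which can be dispatched by hand.

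Finally, for $G^c(P)$ with $P$ a finite $0$-distributive poset, the same scheme applies after dualizing. Taking the same relation $\sim$, each class is now a clique of $G^c(P)$ while between-class adjacency remains all-or-nothing, so $G^c(P)$ is a blow-up of its reduced graph by cliques. Here $0$-distributivity is what I would use to pin down the reduced graph $G^c(P)/\!\sim$: via the pseudocomplementation available in $0$-distributive posets, it forces this quotient to be of the tractable complete-multipartite type for which the Total Coloring Conjecture is already known. Replacing each \emph{complete bipartite bundle} by a \emph{complete join of cliques} and reusing the layered construction then yields a total coloring of $G^c(P)$ with at most $\Delta(G^c(P))+2$ colors, completing the argument.
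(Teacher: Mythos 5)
Your proposal does not close either half of the statement, and in both cases the gap is structural rather than cosmetic. For the first assertion, the paper's proof is essentially two lines: for any atom $q$ of $P$, the set $q^{u}\cap V(G(P))$ is independent, and its complement in $V(G(P))$ is precisely the neighbourhood of $q$, so $|q^{u}\cap V(G(P))|=N-\deg(q)\geq N-\Delta(G(P))-1$; Yap's criterion (Theorem \ref{zdg}: a graph with an independent set of at least $N-\Delta-1$ vertices satisfies the Conjecture) then finishes the proof. Your blow-up scheme never reaches a proof: the step you yourself identify as ``the technical heart'' --- offsetting the bundle palettes so the total never exceeds $\Delta+2$ --- is left entirely unspecified, and the residual case in which every $\sim$-class is a singleton is not covered by anything you cite. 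For a general finite poset, $G([P])$ is a disjointness graph on nonempty subsets of $\{1,\dots,n\}$ (Corollary \ref{adj}); this is not a complete multipartite or small-degree graph, and the Total Coloring Conjecture is not known for this class, so the base case of your induction-by-blow-up is exactly as hard as the theorem itself.

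For the second assertion the proposal contains an outright false claim: $0$-distributivity does \emph{not} force $G^{c}(P)/\!\sim$ to be complete multipartite. Already for three atoms, $G^{c}([P])$ is the graph of Figure \ref{fig1}(b); its complement $G([P])$ is a triangle with a pendant vertex attached to each corner, which is not a disjoint union of cliques, so $G^{c}([P])$ is not complete multipartite, and for $n\geq 4$ the quotient can be considerably wilder since the classes $P_{i_1\dots i_k}$ with $2\leq k\leq n-2$ may or may not be empty. Moreover, ``blow-up by cliques preserves the Total Coloring Conjecture'' is not a known reduction, so even a tractable quotient would not yield the result by your route. The paper's actual argument is a case analysis on the number $n$ of atoms: for $n=2$ the graph is a disjoint union of two complete graphs; for $n\geq 4$, and for $n=3$ whenever some class $[q_i]$ has fewer than $\tfrac{1}{4}|V(G^{c}(P))|$ elements, one shows $\Delta(G^{c}(P))\geq\tfrac{3}{4}|V(G^{c}(P))|$ and applies Theorem \ref{czdg}; the single remaining configuration ($n=3$ with all three classes large) is handled by an explicit hand-built total coloring assembled from a total coloring of a complete graph on $[q_1]^{*}\cup[q_2]^{*}\cup[q_3]^{*}\cup[q_3]$ and edge colorings of two complete bipartite pieces. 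You would need to supply arguments of comparable substance to repair the proposal.
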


\begin{corollary} \label{cgchordperfect}
	Let $R= R_1 \times R_2 \times \cdots \times R_n$ be a ring with identity such that each $R_i$ is a local ring with   finitely many ideals. Let $\mathbb{CG}^*(R)$ be the comaximal graph, $\mathbb{CAG}^*(R)$ be the co-annihilating ideal graph and $\mathbb{AG}^{*c}(R)$ be the complement of the annihilating ideal graph of $R$.  Then
	
	\textbf{(A)}  $\mathbb{CG}^*(R)=\mathbb{CAG}^*(R)=\mathbb{AG}^{*c}(R)$.
	
	\textbf{(B)}  $\mathbb{CG}^*(R)=\mathbb{CAG}^*(R)=\mathbb{AG}^{*c}(R)$
	is chordal if and only if  one of the following hold:
	
	\begin{enumerate}
		\item $n=1$;
		
		\item  $n=2$  and $R_i$ is field for some $i\in \{1,2\}$;

		\item $n=3$ and $R_i$ is field for all $i\in \{1,2,3\}$.
	\end{enumerate}
	
	\textbf{(C)} $\mathbb{CG}^{*c}(R)=\mathbb{CAG}^{*c}(R)=\mathbb{AG}^{*}(R)$ is chordal if and only if  $n \leq 3$.	
	
	\textbf{(D)}	$\mathbb{CG}^*(R)=\mathbb{CAG}^*(R)=\mathbb{AG}^{*c}(R)$ is perfect if and only if $n \leq 4$.
	
	\textbf{(E)} $\mathbb{CG}^*(R)$ and $\mathbb{CG}^{*c}(R)$ satisfies the Total Coloring Conjecture. Moreover, the edge chromatic number  $\chi'(\mathbb{CG}^*(R))=\Delta(\mathbb{CG}^*(R))$. 
	
	\textbf{(F)} Moreover, if each $R_i$ is a local  Artinian principal ideal ring, then the statements $(B), (C), (D)$ and $(E)$ are also true, if we replace $\mathbb{CG}^*(R)$ by the complement of the  intersection graph $\mathbb{IG}^c(R)$ of ideals  of $R$.
	
\end{corollary}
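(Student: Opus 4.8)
The plan is to realize every graph attached to $R$ as the zero-divisor graph, or its complement, of a single poset built from the ideal lattice of $R$, and then read off (B)--(E) from Theorems \ref{zdgchordal} and \ref{zdgtcc}. Since each $R_i$ is local with maximal ideal $\mathfrak{m}_i$, every ideal of $R$ factors as $I=I_1\times\cdots\times I_n$ with $I_i$ an ideal of $R_i$, and a proper $I_i$ is forced into $\mathfrak{m}_i$; hence $I_i+J_i=R_i$ iff $I_i=R_i$ or $J_i=R_i$. Writing $S(I)=\{\,i:I_i=R_i\,\}$, this shows $I+J=R$ iff $S(I)\cup S(J)=\{1,\dots,n\}$, so comaximality depends only on the support pattern $S(I)$. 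I would take $P$ to be the ideals of $R$ ordered by reverse inclusion, so that the least element $0$ of $P$ is $R$ itself and $L(I,J)=\{0\}$ holds precisely when $I+J=R$. With this choice $G(P)=\mathbb{CG}^*(R)$, the vertices being exactly the proper ideals $I$ with $S(I)\neq\emptyset$ (equivalently $I\not\subseteq J(R)$), since these are the only ones admitting a comaximal partner.

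For (A) I would unwind the definitions of the co-annihilating ideal graph and of the complement of the annihilating ideal graph in the product ring and check, coordinate by coordinate using locality, that adjacency in each again reduces to $S(I)\cup S(J)=\{1,\dots,n\}$ on the same vertex set; all three graphs then coincide with $G(P)$. Next I would verify that the skeleton $[P]$ is the Boolean lattice $B_n$: the map $I\mapsto S(I)$ induces the quotient, the $n$ atoms of $[P]$ being the classes of the maximal ideals $\mathfrak{M}_i=R_1\times\cdots\times\mathfrak{m}_i\times\cdots\times R_n$, and the block $P_i$ over the $i$-th atom consisting of the ideals with $I_j=R_j$ for $j\neq i$ and $I_i\subsetneq R_i$; thus $|P_i|$ equals the number of proper ideals of $R_i$, which is $1$ exactly when $R_i$ is a field. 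With $[P]\cong B_n$ in hand, (B), (C) and (D) are immediate translations of Theorem \ref{zdgchordal}(A), (B), (C), with ``$n$'' replacing ``number of atoms'' and ``$R_i$ is a field'' replacing ``$|P_i|=1$''.

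For (E), the Total Coloring Conjecture for $\mathbb{CG}^*(R)=G(P)$ is the first half of Theorem \ref{zdgtcc}. For $\mathbb{CG}^{*c}(R)=G^c(P)$ I must check that $P$ is $0$-distributive, whereupon the second half of Theorem \ref{zdgtcc} applies; in the reverse-inclusion order $0$-distributivity amounts to the implication that $I+J=R$ and $I+K=R$ force $I+(J\cap K)=R$, which follows from $(I+J)(I+K)\subseteq I+(J\cap K)$ since the left side equals $R$. The edge-coloring claim $\chi'(\mathbb{CG}^*(R))=\Delta(\mathbb{CG}^*(R))$ I would obtain from the explicit description of $G(P)$ as the blow-up of the graph on nonempty proper support patterns, each pattern class being an independent set of size $\prod_{i\notin S}(\#\text{proper ideals of }R_i)$, by exhibiting a proper $\Delta$-edge-coloring directly. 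Finally, for (F), when each $R_i$ is a local Artinian principal ideal ring its ideals form a chain, so $I_i\cap J_i=0$ iff $I_i=0$ or $J_i=0$; ordering the ideals of $R$ by ordinary inclusion yields a poset $P'$ with $G(P')=\mathbb{IG}^c(R)$, $[P']\cong B_n$, and block sizes recording the number of nonzero ideals of each $R_i$, so the arguments of (B)--(E) apply verbatim with $P$ replaced by $P'$.

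The main obstacle I anticipate is twofold. First, proving the honest equalities in (A): one must pin down the exact vertex sets of the co-annihilating and annihilating-complement graphs and show they agree with $\{\,I:S(I)\neq\emptyset\,\}$, which is delicate because the annihilator conditions are not a priori phrased through support patterns and require the full force of locality (and, for the annihilating graph, the finiteness of the ideal set of each $R_i$). Second, the Class-$1$ assertion $\chi'=\Delta$ is not formal: unlike the chordal, perfect and total-coloring statements it is not a direct consequence of Theorems \ref{zdgchordal}--\ref{zdgtcc}, and will need either an appeal to a structural edge-coloring theorem for complete-multipartite-type blow-ups or an explicit coloring exploiting the $B_n$-symmetry of the support classes.
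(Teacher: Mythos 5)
Your proposal follows essentially the same route as the paper: the comaximal graph is realized as the zero-divisor graph of the dual ideal lattice $L^\partial=Id(R)^\partial=\prod Id(R_i)^\partial$ (Theorem \ref{same}), each factor has $Z(Id(R_i)^\partial)=\{0\}$ by locality so Lemma \ref{product} gives $[L^\partial]$ Boolean with $|P_i|=|Id(R_i)|-1$ (which is $1$ iff $R_i$ is a field), part (A) reduces to $\mathrm{Ann}(I)\cap\mathrm{Ann}(J)=\mathrm{Ann}(I+J)=(0)$ iff $I+J=R$ in an Artinian ring (Observation \ref{obs3} and Lemma \ref{artiniansame1}, rather than your coordinate-wise check), and (B)--(F) then fall out of Theorems \ref{zdgchordal}, \ref{zdgtcc} and Lemmas \ref{igczdg}, \ref{chains} exactly as you describe. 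Two small corrections: the class-one claim $\chi'(\mathbb{CG}^*(R))=\Delta(\mathbb{CG}^*(R))$, which you rightly flag as the one non-formal step, needs no new edge-coloring argument because the paper quotes Theorem \ref{edge chromatic number} from \cite{nkvj} for zero-divisor graphs of exactly such products; and your identification $G(P)=\mathbb{CG}^*(R)$ should read $G(P)=\mathbb{CG}(R)$ with $\mathbb{CG}^*(R)=\mathbb{CG}(R)+I_m$ and $\mathbb{CG}^{*c}(R)=\mathbb{CG}^c(R)\vee K_m$, after which Remarks \ref{obs1}, \ref{obs2} and \ref{tccrmk} show the extra vertices do not affect chordality, perfectness, or the Total Coloring Conjecture.
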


\section{Preliminaries}
\par We begin with the following necessary definitions and terminologies given in  Devhare et al. \cite{djl}. Also, we prove  the key results (cf. Lemma \ref{property}) that are required to prove chordality, perfectness and Total Coloring Conjecture.  
\vskip 5truept 
\begin{definition}[Devhare et al. \cite{djl}]
	\par Let $P$ be a poset. Given any $ A\subseteq P$, the \textit{upper cone} of $A$ is given by 
	$A^u=\{b\in P$ $|$ $b\geq a$ for every $a\in A\}$ and the \textit{lower cone} of $A$ is given by $A^\ell=\{b\in P$ $|$
	$b\leq a$ for every $a\in A\}$. If $a\in P$, then the sets $\{a\}^u$ and
	$\{a\}^\ell$ will be denoted by $a^u$ and $a^\ell$, respectively. By $A^{u\ell}$, we mean $\{A^u\}^\ell$. Dually, we have the notion of $A^{\ell u}$.

	A poset $P$ with  0 is called \textit{$0$-distributive} if  $\{a,b\}^\ell=\{0\}=\{a,c\}^\ell$ implies $\{a,\{b,c\}^u\}^\ell=0$; see \cite{jw1}. Note that if $\{b,c\}^u=\emptyset$, then $\{b,c\}^{u\ell}=P$. 
	A lattice $L$ with $0$ is said to be {\it $0$-distributive }if $a\wedge b=0$ and $a\wedge c=0$ implies $a\wedge(b\vee c)=0$. Hence it is clear that if a lattice $L$ is 0-distributive, then $L$, as a poset, is a 0-distributive poset. Dually, a  lattice $L$ with $1$ is said to be \textit{$1$-distributive} if $a\vee b=1$ and $a\vee c=1$ implies $a\vee(b\wedge c)=1$. A lattice $L$ is \textit{modular} if  for all $a,b,c \in L$, $a \leq b$ implies  \mbox{$(a\vee c)\wedge b=a\vee(c\wedge b)$; see \cite[page 132]{stern}.}

	\par Suppose that $P$ is a poset with  $0$. If $\emptyset\neq A \subseteq P$, then
	the {\it annihilator} of $A$ is given by \linebreak
	$A^{\perp} = \{ b \in P ~~\vert ~~\{a, b\}^\ell = \{0\}~~ {\rm for~
		all }~~ a \in A \}$, and if $A = \{a\}$, then we write $a^\perp=A^\perp$. An element
	$a\in P$  is an {\it atom} if $a >0$ and $\{b\in P$ $|$ $0<b<a\}=\emptyset$, and
	$P$ is called \emph{atomic} if for every $b\in P\setminus\{0\}$, there exists an
	atom $a\in P$ such that $a\leq b$. A poset $P$ with 0 is said to {\it section semi-complemented} (in brief SSC poset), if for $a \not \leq b$, there exists a nonzero element $c \in P$ such that $ c\leq a$ and $\{b,c\}^\ell =\{0\}$. An atomic, SSC poset is called an {\it atomistic} poset. Equivalently, $P$ is atomistic, if for $a\not\leq b$, then there is an atom $p\in P$ such that $p \leq a$ and $p \not \leq b$; see Joshi \cite{vjssc}. 
	\par  A poset $P$ is called \emph{bounded} if $P$ has both the least element $0$
	and the greatest element $1$. An element $a'$ of a bounded poset $P$ is a
	\emph{complement} of $a\in P$ if $\{a,a'\}^\ell=\{0\}$ and
	$\{a,a'\}^u=\{1\}$. A \textit{pseudocomplement} of $a\in P$ is an element
	$b\in P$ such that $\{a,b\}^\ell=\{0\}$, and $x\leq b$ for every $x\in P$ 
	with $\{a,x\}^\ell=\{0\}$; that is, $b$ is the pseudocomplement of $a$ if and
	only if $a^\perp=b^\ell$; see Venkatanarasimhan \cite{venkat} (see also Hala\v{s} \cite{halas}). It is straightforward to check that any element
	$a\in P$ has at most one pseudocomplement, and it will be denoted by $a^*$ (if
	it exists). A bounded poset $P$ is called \emph{complemented} (respectively,
	\textit{pseudocomplemented}) if every element $a$ of $P$ has a complement $a'$
	(the pseudocomplement $a^*$).

	\par A  poset $P$ is called \emph{distributive} if,  
	$\{\{a\}\cup\{b,c\}^u\}^\ell=\{\{a,b\}^\ell\cup\{a,c\}^\ell\}^{u\ell}$
	holds for all
	$a,b,c\in P$; see \cite{LR}.  This definition generalizes the usual notion of
	a distributive lattice (i.e., a  lattice is distributive in the usual
	sense if and only if it is a distributive poset). Moreover, a bounded poset $P$
	is called \emph{Boolean}, if $P$ is distributive and complemented; see \cite{H}. Clearly, every
	Boolean algebra is a Boolean poset, but the converse is not true.

	\par It is well-known that in a Boolean poset, complementation is nothing but the pseudocomplementation (cf. \cite[Lemma 2.4]{JK}). In particular, if $P$ is Boolean,
	then $P$ is pseudocomplemented, and every element $x\in P$ has the unique    complement $x'$. Further, every Boolean poset is atomistic (cf. \cite{jw}).
	\vskip 2truept 
	
	The concept of zero-divisor graph of a poset is introduced in \cite{HJ}, which was modified later in \cite{LW}.
	\vskip 2truept 
	
	\par Let $P$ be a poset with $0$. Define a \emph{zero-divisor} of $P$ to be any element of the set \linebreak $Z(P)=\{a\in P$ $|$ there exists $b\in P\setminus\{0\}$ such that $\{a,b\}^\ell=\{0\}\}$. As in \cite{LW}, the \emph{zero-divisor graph} of $P$ is the graph $G(P)$ whose vertices are the elements of $Z(P)\setminus\{0\}$ such that two vertices $a$ and $b$ are adjacent if and only if $\{a,b\}^\ell=\{0\}$. If $Z(P)\neq\{0\}$, then clearly $G(P)$ has at least two vertices, and $G(P)$ is connected with diameter at most three (\cite[Proposition 2.1]{LW}). We abuse the notation  $G^*(P)$ for the \emph{zero-divisor graph} of $P$ with the vertex set  $P\setminus\{0,1\}$, if $P$ has the greatest element 1, if $P$ do not have 1, then the vertex set is $P\setminus\{0\}$. Further, two vertices $a$ and $b$ are adjacent in $G^*(P)$ if and only if $\{a,b\}^\ell=\{0\}$. So from the notation $G(P)$ or $G^*(P)$, the underlined vertex set of the zero-divisor graph is clear and the adjacency relation remains same in both the graphs.
	\par We set    $\mathcal{D}=P\setminus Z(P)$. The elements  $d\in\mathcal{D}$ are the {\it dense elements} of $P$.
	
	\vskip 2truept 
	\par  For a poset $P$ with $0$, an equivalence relation
	$\sim$ is given on $P$ by $a \sim b$ if and only if $a^\perp = b^\perp$. The set
	of equivalence classes of $P$ will be denoted by $[P^\sim]=\bigl\{[a]~ \vert ~~a \in
	P~~\bigr\}$, where $[a] = \{x\in P ~\vert~ x \sim a\}$. Clearly, $[0]=\{0\}$, and if
	$d\in\mathcal{D}$ then $[d]=\mathcal{D}$; see Joshi et al. \cite{jwp} and  Devhare, Joshi and LaGrange \cite{djl}.
	
	\vskip 2truept 
	\par Note that $[P^\sim]$ is a poset under a partial order given by $[a] \leq [b]$
	if and only if $b^\perp\subseteq a^\perp$. From the observation that
	$b^\perp\subseteq a^\perp$ whenever $a,b\in P$ with $a\leq b$, it follows that
	the canonical mapping $P\rightarrow [P^\sim]$ defined by $a\mapsto[a]$ is an
	order-preserving surjection. Furthermore, if $a$ is an atom of the poset $P$
	then, for every $b\in P\setminus\{0\}$, either $a\leq b$, or $\{a,b\}^\ell=\{0\}$. It follows
	that if $a$ is an atom of $P$, then $[a]$ is an atom of $[P^\sim]$; however, the converse is
	not true.
	\par Let $P$ be a pseudocomplemented poset. If $a,b\in P$, then $a^*\leq b^*$ if
	and only if $a^*\in b^\perp$, if and only if $a^\perp=(a^*)^\ell\subseteq
	b^\perp$. That is, $a^*\leq b^*$ if and only if $[b]\leq[a]$ and, in particular,
	$a^*=b^*$ if and only if $[a]=[b]$. 

	\par Note that pseudocomplemented poset need not be Boolean.

	\vskip 5truept

	\vskip 5truept

	\par The {\it direct product} of posets $P^1,\dots,P^n$ is the poset
	$\textbf{P}=\prod\limits_{i=1}^nP^i$ with $\leq$ defined such that $a\leq b$ in $\textbf{P}$ if and only if
	$a_{i}\leq b_{i}$ (in $P^i$) for every $i\in\{1,\dots,n\}$. For any
	$\emptyset\neq A\subseteq \prod\limits_{i=1}^nP^i$, note that
	$A^u=\{b\in\prod\limits_{i=1}^nP^i$ $|$ $b_{i}\geq a_{i}$ for every $a\in A$
	and $i\in\{1,\dots,n\}\}$. Similarly, $A^\ell=\{b\in\prod\limits_{i=1}^nP^i$
	$|$ $b_{i}\leq a_{i}$ for every $a\in A$ and $i\in\{1,\dots,n\}\}$. Clearly,  $\textbf{P}$ is a  $0$-distributive poset, if $Z(P^i)=\{0\}$ for every $i$. We set $A_{{q}_{_1}} = ({q}_{_1}^u)\setminus \mathcal{D}$ and define $A_{{q}_{_j}} = \Big({{q}_{_j}}^{u}\Big)\setminus\Big(\mathcal{D}\cup (\bigcup \limits_{i=1}^{j-1}{{q}_{_{i}}}^{u})\Big)$ for every $j\in\{2,\dots,n\}$.

	\vskip 10truept

	\par  Throughout, $P$ denotes a poset with $0$ and $q_i$, $i \in\{1,2, \cdots, n\}$ are all atoms of $P$, where $n \geq 2$. All graphs are finite simple graphs. The poset $ \mathbf{ P}$ is  $\prod\limits_{i=1}^{n}P^i$,  where $P^i$'s are  finite bounded posets such that $Z(P^i)=\{0\}$ and $2\leq|P^i|$, $\forall i$. 
	\vskip 10truept 
	
	Afkhami et al. \cite{afkhami} partitioned the set $P\setminus \{0\}$ as follows.
	
	\par	Let $1\leq i_1< i_2<\dots<i_k\leq n$. The notation $P_{i_1i_2\dots i_k}$ stands for the set: $$P_{i_1i_2\dots i_k}=\Bigg\{x\in P~\mathbin{\Big|}~x\in\biggl( \bigcap\limits_{s=1}^k\{q_{_{i_s}}\}^u\biggr) \mathbin{\Big\backslash}\biggl(\bigcup\limits_{j\neq i_1,i_2,\dots,i_k}\{q_{_j}\}^u\biggr)\Bigg\}. \hfill{     \hspace{.2in}  -------(\circledcirc)}$$

	In \cite{afkhami}, the following observations are proved.
	\begin{enumerate}
		
		\item If the index sets $\{i_1,\dots,i_k\}$ and $\{j_1,\dots,j_{k'}\}$ of $P_{i_1i_2\dots i_k}$ and $P_{j_1j_2\dots j_{k'}}$, respectively, are distinct, that is, $\{i_1,\dots,i_k\}\neq \{j_1,\dots,j_{k'}\}$, then $(P_{i_1i_2\dots i_k})\cap (P_{j_1j_2\dots j_k'})=\emptyset$.
		\item $\displaystyle P\backslash \{0\}= \bigcupdot\limits_{\substack{k=1,\\ 1\leq i_1< i_2<\dots<i_k\leq n}}^{n} P_{i_1i_2\dots i_k}$.
	\end{enumerate}

	\par Define a relation $\approx$ on $P\setminus\{0\}$ as follows:
	$x\approx y$ if and only if $x,y\in P_{i_1i_2\dots i_k}$ for some partition $P_{i_1i_2\dots i_k}$ of $P\setminus \{0\}$.
	It is easy to observe that $\approx$ is an equivalence relation.
	The following result proves that the equivalence relations $\sim$ and $\approx$ are the same.
	
\end{definition}
\vskip 5truept

\begin{lemma}\label{eqsame} The equivalence relations
	$\sim$ and $\approx$ are same on $P\setminus \{0\}$.
\end{lemma}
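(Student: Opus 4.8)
The plan is to route both relations through a single combinatorial invariant, namely the set of atoms lying below a given element, and then to match $\sim$ and $\approx$ through this invariant. For $x\in P\setminus\{0\}$ write $A(x)=\{j\mid q_{_j}\leq x\}$ for the index set of atoms below $x$. Unwinding the definition of the blocks $P_{i_1i_2\dots i_k}$, an element $x$ lies in $P_{i_1i_2\dots i_k}$ exactly when $A(x)=\{i_1,\dots,i_k\}$, since $x\in\bigcap_{s}\{q_{_{i_s}}\}^u$ says each $q_{_{i_s}}\leq x$ while $x\notin\bigcup_{j\neq i_1,\dots,i_k}\{q_{_j}\}^u$ says no other atom lies below $x$. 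Hence $x\approx y$ holds if and only if $A(x)=A(y)$, and the whole task reduces to proving that $a^\perp=b^\perp$ is equivalent to $A(a)=A(b)$.

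First I would record the atom-orthogonality fact already noted in the preliminaries: if $q$ is an atom and $a\in P\setminus\{0\}$, then either $q\leq a$ or $\{q,a\}^\ell=\{0\}$, because the only lower bounds of $q$ are $0$ and $q$. Consequently $q_{_j}\in a^\perp$ if and only if $q_{_j}\not\leq a$, so the atoms contained in $a^\perp$ are precisely those indexed by the complement of $A(a)$. This settles one implication at once: if $a^\perp=b^\perp$, then $a^\perp$ and $b^\perp$ contain the same atoms, so the complements of $A(a)$ and $A(b)$ coincide, whence $A(a)=A(b)$ and $a\approx b$. Thus $\sim$ refines $\approx$.

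For the converse I would argue by contradiction, and here I expect essentially all of the (slight) work to lie. Assume $A(a)=A(b)$ but $a^\perp\neq b^\perp$; without loss of generality choose $c\in a^\perp\setminus b^\perp$, so $\{a,c\}^\ell=\{0\}$ while $\{b,c\}^\ell\neq\{0\}$. Then $b$ and $c$ admit a nonzero common lower bound $d$, and since $P$ is finite every nonzero element lies above some atom, so pick an atom $q\leq d$. Then $q\leq b$ and $q\leq c$, and the hypothesis $A(a)=A(b)$ forces $q\leq a$ as well. Hence $q$ is a nonzero element of $\{a,c\}^\ell$, contradicting $c\in a^\perp$. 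Therefore $a^\perp\subseteq b^\perp$, and by symmetry $a^\perp=b^\perp$, i.e. $a\sim b$.

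The main obstacle, such as it is, is this converse direction: one must replace an arbitrary nonzero meet witnessing $\{b,c\}^\ell\neq\{0\}$ by a single witnessing atom, which is exactly where finiteness (the atomicity of a finite poset with $0$) is used, and then transport that atom across the equality $A(a)=A(b)$. Everything else is a direct translation between the definition of $a^\perp$ and the defining condition of the blocks $P_{i_1i_2\dots i_k}$.
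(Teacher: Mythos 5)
Your proof is correct and follows essentially the same route as the paper's: both arguments identify membership in a block $P_{i_1\dots i_k}$ with the set of atoms below an element and identify the atoms in $a^\perp$ with those not below $a$, using the dichotomy that an atom is either below $a$ or orthogonal to it. The only difference is cosmetic — you spell out (via atomicity of a finite poset) the step the paper dismisses as "easy to check" when it writes down the formula for $t^\perp$.
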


\begin{proof}
	Let $t\in P_{i_1i_2\dots i_k}=\Bigg\{x\in P~\mathbin{\Big|}~x\in\biggl( \bigcap\limits_{s=1}^k\{q_{_{i_s}}\}^u\biggr) \mathbin{\Big\backslash}\biggl(\bigcup\limits_{j\neq i_1,i_2,\dots,i_k}\{q_{_j}\}^u\biggr)\Bigg\}$. Then it is easy check that, \linebreak $t^\perp =\biggl(\bigcup\limits_{j\neq i_1,i_2,\dots,i_k}\bigl\{q_{_j}\bigr\}^u\biggr)\mathbin{\Big\backslash} \biggl(\bigcup\limits_{s=1}^k\bigl\{q_{_{i_s}}\bigr\}^u\biggr)$. From this, it is clear that, if $x,y\in P_{i_1i_2\dots i_k}$, then $x^\perp=y^\perp$. Thus $x\approx y$ implies that $x\sim y$. 	Conversely, assume that  $x\sim y$. Then $x^\perp=y^\perp$. We have to prove that $x\approx y$, that is, $x,y\in P_{i_1i_2\dots i_k}$ for some partion $P_{i_1i_2\dots i_k}$ of $P\setminus \{0\}$. Suppose on contrary, there exist the index sets $\{i_1,\dots,i_k\}$ and $\{j_1,\dots,j_{k'}\}$ of $P_{i_1i_2\dots i_k}$ and $P_{j_1j_2\dots j_{k'}}$, respectively, that are distinct such that $x\in P_{i_1i_2\dots i_k}$ and $y\in P_{j_1j_2\dots j_{k'}}$. Clearly, $x\neq y$. Since $\{i_1,\dots,i_k\}$ and $\{j_1,\dots,j_{k'}\}$ are distinct, then $i_p\notin \{j_1,\dots,j_{k'}\}$ for some $p\in \{1,\dots,k\}$ or $j_q\notin  \{i_1,\dots,i_k\}$ for some $q\in \{1,\dots,k'\}$. Without loss of generality assume that $i_p\notin \{j_1,\dots,j_{k'}\}$ for some $p\in \{1,\dots,k\}$. Therefore, the atom $q_{_{i_p}}\notin x^\perp$ but the atom $q_{_{i_p}}\in y^\perp$. Hence $x^\perp\neq y^\perp$, a contradiction. Therefore $x\approx y$. Thus $x\sim y$ implies that $x\approx y$. 
\end{proof}

\begin{remark} From the above discussion, we conclude that the set $P_{i_1i_2\dots i_k}$ for some $\{i_1,i_2\dots,i_k\}\subseteq \{1,2,\dots,n\}$, is nothing but the equivalence class, say $[a]$, where $a\in P_{i_1i_2\dots i_k}$,  under the equivalence relation $\sim$, and vice-versa.
	The set
	of equivalence classes under $\approx$ of $P\setminus \{0\}$ will be denoted by $$[P^\approx]'=\{~P_{i_1i_2\dots i_k}~ \vert~~ \{i_1,i_2\dots,i_k\}\subseteq \{1,2,\dots,n\}, \text{and } P_{i_1i_2\dots i_k}\neq\emptyset~~\}.$$  Now, we  set $[P^\approx]= [P^\approx]'\cup P_0$. Define a relation $\leq$ on $[P^\approx]$ as follows. $P_{i_1i_2\dots i_k}\leq P_{j_1j_2\dots j_m}$  if and only if $b^\perp\subseteq a^\perp$, for some $a\in P_{i_1i_2\dots i_k}$ and for some $ b\in P_{j_1j_2\dots j_m}$, where $P_{i_1i_2\dots i_k}, P_{j_1j_2\dots j_m}\in [P^\approx]' $  and $P_0\leq P_{i_1i_2\dots i_k}$ for all $\{i_1,i_2\dots,i_k\}\subseteq \{1,2,\dots,n\}$. It is not very difficult to prove that $([P^\approx], \leq)$ is a poset. The least element of  $([P^\approx], \leq)$ is $P_0$ and if $P$ has the greatest element 1, then the greatest element of the poset $([P^\approx],\; \leq)$ is $P_{12\dots n}$ (see Proposition \ref{porder} for details). In view of Lemma \ref{eqsame},    the   posets $([P^\approx], \leq)$ and $([P^\sim],\leq)$ are same. \textbf{Henceforth, without any distinction, we write $[P^\approx]=[P^\sim]$ by simply $[P]$.}

	\vskip 5truept 
	
	Let $P$ is a poset with 0. Since $[P]$ is a poset with the least element $[0]=P_0$, and except $[d]$ (where $d\in \mathcal{D}$), every element of $[P]$ is a zero-divisor. Note that $\mathcal{D}$ may be empty.  Hence the zero-divisor graphs $G([P])$ and $G^*([P])$ of the poset $[P]$ are same, that  is, $G([P])=G^*([P])$. Hence afterwards, we write $G([P])$ for the zero-divisor graph of $[P]$. Clearly, $a$ and $b$ are adjacent in $G(P)$ if and only if $[a]$ and $[b]$ are adjacent in $G([P])$. More about the inter relationship between the properties of $G(P)$ and $G([P])$ are mentioned in Lemma \ref{property}.
\end{remark}	
For this purpose, we need the following definition.

\vskip 5truept 

\begin{definition}[West \cite{west}]
	A set $I$  of vertices of a graph $G$ is said to be \textit{independent} if no two vertices $u$ and $v$ in $I$ are adjacent in $G$. If  $|I|=n$, then we denote $I_n$, the independent set on $n$ vertices. The maximum cardinality of an independent set of vertices of $G$ is called the \textit{vertex-independence number} and is denoted by $\alpha(G)$.
\end{definition}

\begin{proposition}\label{porder}
	Let $P$ be a  poset. Then $P_{i_1i_2\dots i_k}\leq P_{j_1j_2\dots j_m}$ in $[P]$ if and only if  $\{i_1,i_2,\dots, i_k\}\subseteq \{j_1,j_2,\dots, j_m\}$.
\end{proposition}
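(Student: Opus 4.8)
The plan is to reduce the order relation on $[P]$ to an inclusion of annihilators and then to evaluate those annihilators on the atoms $q_1,\dots,q_n$. First I would record that the relation $\leq$ on $[P]$ is well posed: by Lemma \ref{eqsame} all elements of a class $P_{i_1i_2\dots i_k}$ share the same annihilator, so the defining condition ``$b^\perp\subseteq a^\perp$ for some $a\in P_{i_1i_2\dots i_k}$ and some $b\in P_{j_1j_2\dots j_m}$'' is equivalent to the same inclusion for arbitrary representatives. Writing $I=\{i_1,\dots,i_k\}$ and $J=\{j_1,\dots,j_m\}$, the computation in the proof of Lemma \ref{eqsame} gives, for $a\in P_{i_1i_2\dots i_k}$,
$$a^\perp=\biggl(\bigcup_{j\notin I}\{q_{_j}\}^u\biggr)\setminus\biggl(\bigcup_{s\in I}\{q_{_s}\}^u\biggr),$$
and similarly for $b^\perp$ with $J$ in place of $I$. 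Thus the statement reduces to showing that this ``positive part minus negative part'' description of $b^\perp$ is contained in that of $a^\perp$ exactly when $I\subseteq J$.

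For the forward implication I would argue purely set-theoretically. Assuming $I\subseteq J$, the complements satisfy $\{1,\dots,n\}\setminus J\subseteq\{1,\dots,n\}\setminus I$, so $\bigcup_{j\notin J}\{q_{_j}\}^u\subseteq\bigcup_{j\notin I}\{q_{_j}\}^u$, while simultaneously $\bigcup_{s\in I}\{q_{_s}\}^u\subseteq\bigcup_{s\in J}\{q_{_s}\}^u$. Feeding these two containments into the set differences yields $b^\perp\subseteq a^\perp$: any $y\in b^\perp$ lies in the larger positive part of $a^\perp$, and since it avoids the larger negative part $\bigcup_{s\in J}\{q_{_s}\}^u$ it also avoids the smaller one $\bigcup_{s\in I}\{q_{_s}\}^u$. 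Hence $P_{i_1i_2\dots i_k}\leq P_{j_1j_2\dots j_m}$.

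For the converse I would use the atoms as test vertices via the contrapositive. Suppose $I\not\subseteq J$ and fix $i_p\in I\setminus J$. Since $q_{_{i_p}}$ is an atom, the only atom below it is itself, so $q_{_{i_p}}\in\{q_{_l}\}^u$ if and only if $l=i_p$. Consequently $q_{_{i_p}}$ lies in $\bigcup_{j\notin J}\{q_{_j}\}^u$ (as $i_p\notin J$) but not in $\bigcup_{s\in J}\{q_{_s}\}^u$, giving $q_{_{i_p}}\in b^\perp$; the same remark shows $q_{_{i_p}}\in\bigcup_{s\in I}\{q_{_s}\}^u$ (as $i_p\in I$), so $q_{_{i_p}}\notin a^\perp$. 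Thus $b^\perp\not\subseteq a^\perp$, i.e. $P_{i_1i_2\dots i_k}\not\leq P_{j_1j_2\dots j_m}$, which is the contrapositive.

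The only genuinely delicate point, and the step I would treat most carefully, is the claim that an atom $q_{_{i_p}}$ belongs to $\{q_{_l}\}^u$ precisely when $l=i_p$; this is exactly where the hypothesis that the $q_i$ are atoms is used, and it is what converts membership in the annihilators into the combinatorics of the index sets. I would also check the boundary cases $I=\{1,\dots,n\}$ (so that $a^\perp=\emptyset$) and $I=\emptyset$ (the class $P_0$, whose annihilator is all of $P\setminus\{0\}$), confirming that the inclusion degenerates correctly so that $P_0$ is the least element and $P_{12\dots n}$ the greatest, as asserted in the surrounding remark.
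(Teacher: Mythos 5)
Your proof is correct and follows essentially the same route as the paper's: both directions rest on the explicit annihilator formula $a^\perp=\bigl(\bigcup_{j\notin I}\{q_{_j}\}^u\bigr)\setminus\bigl(\bigcup_{s\in I}\{q_{_s}\}^u\bigr)$ from Lemma \ref{eqsame}, with the implication $I\subseteq J\Rightarrow b^\perp\subseteq a^\perp$ checked set-theoretically and the reverse implication obtained by testing the annihilators on atoms. Your contrapositive phrasing and the explicit well-definedness remark are only cosmetic differences from the paper's argument.
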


\begin{proof}
	Let  $P_{i_1i_2\dots i_k}\leq P_{j_1j_2\dots j_m}$ in $[P]$. This gives that $b^\perp\subseteq a^\perp$, for some $a\in P_{i_1i_2\dots i_k}$, and for some $ b\in P_{j_1j_2\dots j_m}$. It is easy to verify that  $q_{_{l_1}},q_{_{l_2}},\dots,q_{_{l_s}}$ are the atoms in $b^\perp$, where $\{l_1,\dots,l_s\}=\{1,\dots,n\}\setminus \{j_1,\dots,j_m\}$. Note that for every $t$, $l_t\notin\{j_1,\dots,j_m\}$, otherwise $q_{_{l_t}}\in b^\ell \cap b^\perp=\{0\},$ a contradiction. Since $q_{_{l_1}},q_{_{l_2}},\dots,q_{_{_{l_s}}}\in b^\perp\subseteq a^\perp$. From this, we have $l_1,\dots,l_s\notin \{i_1,\dots,i_k\}$. Hence, we get $\{i_1,\dots,i_k\}\subseteq \{j_1,\dots,j_m\}$.
	
	Conversely, assume that $\{i_1,\dots,i_k\}\subseteq \{j_1,\dots,j_m\}$. We want to prove that $P_{i_1i_2\dots i_k}\leq P_{j_1j_2\dots j_m}$ in $[P]$. For this, let $a\in P_{i_1i_2\dots i_k}$ and $b\in P_{j_1j_2\dots j_m}$. Then $a^\perp =\biggl(\bigcup\limits_{l\neq i_1,i_2,\dots,i_k}\bigl\{q_{_l}\bigr\}^u\biggr)\mathbin{\Big\backslash} \biggl(\bigcup\limits_{s=1}^k\bigl\{q_{_{i_s}}\bigr\}^u\biggr)$ and $b^\perp =\biggl(\bigcup\limits_{l\neq j_1,j_2,\dots,j_k}\bigl\{q_{_l}\bigr\}^u\biggr)\mathbin{\Big\backslash} \biggl(\bigcup\limits_{s=1}^m\bigl\{q_{_{j_s}}\bigr\}^u\biggr)$. Since $\{i_1,\dots,i_k\}\subseteq \{j_1,\dots,j_m\}$. Then we get, $b^\perp\subseteq a^\perp$. Hence, $P_{i_1i_2\dots i_k}\leq P_{j_1j_2\dots j_m}$ in $[P]$.\end{proof}

\vskip 5truept 

\begin{corollary}\label{adj}
	Let $G([P])$ be the zero-divisor graph of $[P]$. Then	$P_{i_1i_2\dots i_k}$ and $P_{j_1j_2\dots j_m}$ are adjacent in $G([P])$ if and only if $\{i_1,i_2,\dots, i_k\}\cap \{j_1,j_2,\dots, j_m\}=\emptyset$.
\end{corollary}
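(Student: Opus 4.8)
The plan is to translate adjacency in $G([P])$ back to the defining order relation on representatives and then read off everything from the atom supports. By the remark preceding Proposition \ref{porder}, two classes $P_{i_1i_2\dots i_k}$ and $P_{j_1j_2\dots j_m}$ are adjacent in $G([P])$ precisely when $\{a,b\}^\ell=\{0\}$ for (equivalently, for any) representatives $a\in P_{i_1i_2\dots i_k}$ and $b\in P_{j_1j_2\dots j_m}$; this is well defined because the relation $\sim$ guarantees $a^\perp$ depends only on the class, so that $\{a,b\}^\ell=\{0\}$, i.e. $b\in a^\perp$, is insensitive to the choice of $a$, and symmetrically of $b$. The structural fact I would isolate first is that, directly from the definition $(\circledcirc)$, the set of atoms lying below an element of $P_{i_1i_2\dots i_k}$ is exactly $\{q_{i_1},\dots,q_{i_k}\}$: membership in $\bigcap_{s}\{q_{i_s}\}^u$ forces $q_{i_s}\le a$ for each $s$, while the removal of $\bigcup_{j\neq i_1,\dots,i_k}\{q_j\}^u$ forbids any other atom from lying below $a$.

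With this in hand I would prove one direction by contraposition. If $\{i_1,\dots,i_k\}\cap\{j_1,\dots,j_m\}\neq\emptyset$, pick a common index $t$; then $q_t\le a$ and $q_t\le b$, so $q_t\in\{a,b\}^\ell$ with $q_t\neq 0$, whence $\{a,b\}^\ell\neq\{0\}$ and the two classes are not adjacent.

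For the converse, suppose $\{i_1,\dots,i_k\}\cap\{j_1,\dots,j_m\}=\emptyset$ and assume toward a contradiction that some $c\neq 0$ lies in $\{a,b\}^\ell$. Since $P$ is finite with least element $0$, the element $c$ lies above some atom $q_t$; then $q_t\le c\le a$ and $q_t\le c\le b$, so $t$ belongs to both $\{i_1,\dots,i_k\}$ and $\{j_1,\dots,j_m\}$, contradicting disjointness. Hence $\{a,b\}^\ell=\{0\}$ and the classes are adjacent, completing both implications.

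I do not expect a genuine obstacle here: the only nonformal input is that every nonzero element of a finite poset sits above an atom, which is immediate by descending to a minimal nonzero element, and the independence from the chosen representatives is already subsumed by the definition of $\sim$. The one point meriting a line of care is simply matching the two membership conditions to the combinatorics of the index sets. As an alternative route that avoids atoms altogether, one could argue from the explicit formula $a^\perp=\bigl(\bigcup_{l\neq i_1,\dots,i_k}\{q_l\}^u\bigr)\setminus\bigl(\bigcup_{s=1}^k\{q_{i_s}\}^u\bigr)$ recorded in the proof of Lemma \ref{eqsame}, checking that $b\in a^\perp$ unwinds to exactly the disjointness condition.
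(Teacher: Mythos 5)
Your proof is correct. The paper argues entirely inside the quotient poset $[P]$: for the forward direction a common index $l$ gives $P_l$ as a nonzero common lower bound of the two classes, and for the converse any nonzero class in the lower cone is pulled back to a common index via Proposition \ref{porder}. You instead descend to representatives $a,b$ in $P$ itself (legitimate by Lemma \ref{property}(3)) and detect the common lower bound through atoms of $P$ rather than atoms of $[P]$. The two arguments are parallel, but yours needs one extra ingredient the paper's does not: that every nonzero element of the finite poset lies above an atom, so that a nonzero $c\in\{a,b\}^\ell$ forces a common atom and hence a common index. That step is fine here (and is implicitly required anyway for $(\circledcirc)$ to partition $P\setminus\{0\}$), whereas the paper's route gets it for free because Proposition \ref{porder} already encodes the order of $[P]$ combinatorially by index sets. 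Your observation that the atoms below an element of $P_{i_1i_2\dots i_k}$ are exactly $q_{i_1},\dots,q_{i_k}$ is the correct reading of $(\circledcirc)$, and your closing remark about arguing via the explicit formula for $a^\perp$ from Lemma \ref{eqsame} is also a valid shortcut.
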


\begin{proof} Let $P_{i_1i_2\dots i_k}$ and $P_{j_1j_2\dots j_m}$ are adjacent in $G([P])$. Suppose on contrary that $\{i_1,i_2,\dots, i_k\}\cap \{j_1,j_2,\dots, j_m\}\neq \emptyset$, i.e., there exists $l\in \{i_1,i_2,\dots, i_k\}\cap \{j_1,j_2,\dots, j_m\}$. By Proposition \ref{porder},  $P_l\leq P_{i_1i_2\dots i_k}$ and  $P_l\leq P_{j_1j_2\dots j_m}$. This gives that $\{P_0,P_l\}\subseteq\{P_{i_1i_2\dots i_k}, ~ P_{j_1j_2\dots j_m}\}^\ell$, a contradiction, as $P_{i_1i_2\dots i_k}$ and $P_{j_1j_2\dots j_m}$ are adjacent in $G([P])$.  Thus, $\{i_1,i_2,\dots, i_k\}\cap \{j_1,j_2,\dots, j_m\}=\emptyset$.
	
	Conversely, we assume that  $\{i_1,i_2,\dots, i_k\}\cap \{j_1,j_2,\dots, j_m\}=\emptyset$. We want to prove that  $P_{i_1i_2\dots i_k}$ and $P_{j_1j_2\dots j_m}$ are adjacent in $G([P])$, i.e., $\{P_{i_1i_2\dots i_k}, ~ P_{j_1j_2\dots j_m}\}^\ell=\{P_0\}$. Suppose not, then there exists $P_l\not=P_0$ such that $P_l\in\{P_{i_1i_2\dots i_k},~ P_{j_1j_2\dots j_m}\}^\ell$.  By Proposition \ref{porder}, $l\in\{i_1,i_2,\dots, i_k\}\cap \{j_1,j_2,\dots, j_m\}$, a contradiction. Thus, $P_{i_1i_2\dots i_k}$ and $P_{j_1j_2\dots j_m}$ are adjacent in $G([P])$.	\end{proof}

\vskip 5truept 


\begin{lemma}
	Let $q$ be an atom of a  $0$-distributive poset $P$. If $\{q,x_i\}^\ell=\{0\}$ for every $i, 2\leq i\leq n$, then $\{q,\{x_1,\dots,x_n\}^u\}^\ell=\{0\}$. Moreover, there exists $d\in\{x_1,\dots,x_n\}^u$ such that $q\nleq d$.
\end{lemma}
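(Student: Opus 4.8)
The plan is to prove the first assertion by induction on $n$ and then read off the ``Moreover'' clause as an immediate reformulation of it. The engine of the whole argument is the atom dichotomy recorded in the preliminaries: since $q$ is an atom, $q^\ell=\{0,q\}$, and hence for \emph{any} subset $S\subseteq P$ one has $\{q,S\}^\ell = q^\ell\cap S^\ell=\{0\}$ precisely when $q$ fails to be a lower bound of $S$, i.e. precisely when $q\nleq s$ for some $s\in S$. In particular $\{q,x_i\}^\ell=\{0\}$ is the same as $q\nleq x_i$. First I would state and verify this equivalence, since it is exactly what converts the annihilator hypotheses and the conclusion into statements about the single relation $\nleq$, and it shows at once that the first assertion $\{q,\{x_1,\dots,x_n\}^u\}^\ell=\{0\}$ is \emph{equivalent} to the ``Moreover'' clause (apply the equivalence with $S=\{x_1,\dots,x_n\}^u$). (The conclusion in fact forces $q\nleq x_1$ as well, since $q\le x_1$ would put every common upper bound of the $x_i$ above $q$; so I read the hypothesis as $\{q,x_i\}^\ell=\{0\}$ for every $i\in\{1,\dots,n\}$.)

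With the equivalence in hand I would prove, by induction on the number of elements, that there exists $t\in\{x_1,\dots,x_n\}^u$ with $q\nleq t$. The base case of a single element is immediate, since $x_1\in\{x_1\}^u$ and $q\nleq x_1$. For the inductive step I assume the statement for $x_1,\dots,x_{n-1}$; translating via the atom dichotomy, the induction hypothesis yields a \emph{single} element $t'\in\{x_1,\dots,x_{n-1}\}^u$ with $q\nleq t'$. Now $t'$ and $x_n$ both satisfy $\{q,t'\}^\ell=\{0\}=\{q,x_n\}^\ell$, which is exactly the shape required to invoke $0$-distributivity on the \emph{pair} $\{t',x_n\}$; this gives $\{q,\{t',x_n\}^u\}^\ell=\{0\}$, and the dichotomy produces $t\in\{t',x_n\}^u$ with $q\nleq t$. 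Since $t\ge t'\ge x_1,\dots,x_{n-1}$ and $t\ge x_n$, we have $t\in\{x_1,\dots,x_n\}^u$, closing the induction and thereby establishing both the first assertion and the ``Moreover'' clause simultaneously.

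The step I expect to be the main obstacle is precisely this inductive step, because of the mismatch between the scope of the $0$-distributivity axiom and what the induction produces: the axiom governs only the upper cone of a \emph{two-element} set $\{b,c\}$, whereas the induction hands me the upper cone of the $(n-1)$-element set $\{x_1,\dots,x_{n-1}\}$, which is generally not of the form $b^u$ for a single $b$. The idea that dissolves this obstacle is to collapse that cone to one representative $t'$ using the atom hypothesis, so that the axiom applies to the pair $\{t',x_n\}$. A secondary point to watch is the possible emptiness of the upper cones involved; this is harmless here, because emptiness of $\{t',x_n\}^u$ would make $q$ a vacuous lower bound and so contradict the equality $\{q,\{t',x_n\}^u\}^\ell=\{0\}$ delivered by $0$-distributivity. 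Thus applying the axiom simultaneously guarantees that a common upper bound exists and supplies the witness $t$, and no separate nonemptiness hypothesis is needed.
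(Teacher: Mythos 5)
Your proposal is correct and follows essentially the same route as the paper: both arguments use the atom property of $q$ (so that $q^\ell=\{0,q\}$ and $\{q,S\}^\ell=\{0\}$ iff $q\nleq s$ for some $s\in S$) to collapse each upper cone to a single witness $d$ with $\{q,d\}^\ell=\{0\}$, and then iterate the two-element $0$-distributivity axiom; your linear induction merely organizes the paper's ``repeat this procedure'' step cleanly. You are also right that the hypothesis must be read as holding for all $i\in\{1,\dots,n\}$ (the paper's own proof assumes $\{q,x_1\}^\ell=\{0\}$ as well), and your observation that the displayed conclusion and the ``Moreover'' clause are equivalent via the atom dichotomy is consistent with how the paper closes its argument.
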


\begin{proof}
	For $n=2$, the result follows from the definition of 0-distributivity.
	\par  Now, we prove for the result for $n=3$. For this, let $\{q,x_1\}^\ell=\{q,x_2\}^\ell=\{q,x_3\}^\ell=\{0\}$. By 0-distributivity, this gives that $\{q,\{x_1,x_2\}^u\}^\ell=\{0\}$. Hence $q\notin \{x_1,x_2\}^{u\ell}$. Therefore, there exist $d_1\in \{x_1,x_2\}^u$ such that $q\nleq d_1$. Thus, $\{q,d_1\}=\{0\}$. Similarly, there exist $d_2\in \{x_1,x_3\}^u$ such that $q\nleq d_2$ and $\{q,d_2\}=\{0\}$. Since $P$ is $0$-distributive, we have $\{q,\{d_1,d_2\}^u\}^\ell=\{0\}$. Therefore, there exist $d_3\in \{d_1,d_2\}^u$ such that $q\nleq d_3$. Hence $\{q,d_3\}^\ell=\{0\}$. Clearly, $x_1,x_2\leq d_1\leq d_3$ and $x_1,x_3\leq d_2\leq d_3$. This together implies that $x_1,x_2,x_3\leq d_3$. Thus, $d_3\in\{x_1,x_2,x_3\}^u$ and $q\nleq d_3$. Therefore we have $q\notin \{x_1,x_2,x_3\}^{u\ell}$ and $\{q,d_3\}^\ell=\{0\}$. This gives that $\{0\}=q^\ell\cap\{x_1,x_2,x_3\}^{u\ell}$. In this case $d=d_3$. Hence moreover part also proved. 
	
	\par Repeating this procedure, we can prove for any finite $n$. 	\end{proof}

\vskip 5truept

\begin{corollary}\label{pseudoatom}
	Let $ q_{_1} , q_{_2} ,\dots, q_{_n} $ be the all  atoms of a $0$-distributive poset $P$.\\ Then $\{ q_{_i}, \{ q_{_1}, \dots, q_{_{i-1}}, \; q_{_{i+1}},\dots, q_{_n}\}^u\}^\ell=\{0\}$ for all $i$. Moreover, $P_{1\dots(i-1)(i+1)\dots n}\neq\emptyset$ for all $i$.
\end{corollary}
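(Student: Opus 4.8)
The plan is to deduce this corollary as an essentially immediate consequence of the preceding Lemma, taking the distinguished atom to be $q_{_i}$ and letting the remaining atoms play the role of $x_1,\dots,x_{n-1}$. The one fact that unlocks the application is that any two distinct atoms are adjacent in the zero-divisor sense. Indeed, recall from the preliminaries that if $a$ is an atom then, for every $b\in P\setminus\{0\}$, either $a\le b$ or $\{a,b\}^\ell=\{0\}$. Applying this with $a=q_{_i}$ and $b=q_{_j}$ for $j\ne i$, the relation $q_{_i}\le q_{_j}$ is impossible (it would force $q_{_i}=q_{_j}$ since $q_{_j}$ is an atom and $q_{_i}>0$), so we must have $\{q_{_i},q_{_j}\}^\ell=\{0\}$ for every $j\ne i$.

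First I would fix $i$ and set $\{x_1,\dots,x_{n-1}\}=\{q_{_1},\dots,q_{_{i-1}},q_{_{i+1}},\dots,q_{_n}\}$. By the remark above, $\{q_{_i},x_t\}^\ell=\{0\}$ for every $t$, so the hypotheses of the Lemma hold with $q=q_{_i}$. Its first conclusion then yields at once $\{q_{_i},\{q_{_1},\dots,q_{_{i-1}},q_{_{i+1}},\dots,q_{_n}\}^u\}^\ell=\{0\}$, which is precisely the asserted equality.

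For the \emph{moreover} part, the Lemma also supplies an element $d\in\{q_{_1},\dots,q_{_{i-1}},q_{_{i+1}},\dots,q_{_n}\}^u$ with $q_{_i}\nleq d$, and it remains only to match this $d$ against the definition $(\circledcirc)$ of $P_{1\dots(i-1)(i+1)\dots n}$. Here the index set is $\{1,\dots,i-1,i+1,\dots,n\}$, whose complement in $\{1,\dots,n\}$ is the single index $i$; thus membership in $P_{1\dots(i-1)(i+1)\dots n}$ means lying in $\bigcap_{j\ne i}\{q_{_j}\}^u$ while avoiding $\{q_{_i}\}^u$. Since $d$ lies in the upper cone of every atom except $q_{_i}$ we have $d\in\bigcap_{j\ne i}\{q_{_j}\}^u$, and since $q_{_i}\nleq d$, i.e.\ $d\notin\{q_{_i}\}^u$, the element $d$ is excluded from the only forbidden upper cone. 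Hence $d\in P_{1\dots(i-1)(i+1)\dots n}$, so this class is nonempty.

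There is essentially no obstacle here beyond bookkeeping: the mathematical content is carried entirely by the preceding Lemma, and the remaining work reduces to checking that the element $d$ produced there satisfies exactly the membership constraints imposed by $(\circledcirc)$. The single point that must be stated with care is why distinct atoms are necessarily adjacent, since it is this which licenses invoking the Lemma in the first place.
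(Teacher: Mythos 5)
Your proposal is correct and is exactly the argument the paper intends: the corollary is stated without proof as an immediate consequence of the preceding lemma, applied with $q=q_{_i}$ and the remaining atoms as the $x_t$'s, using the standard fact that distinct atoms $q_{_i},q_{_j}$ satisfy $\{q_{_i},q_{_j}\}^\ell=\{0\}$, and reading off membership of the resulting $d$ in $P_{1\dots(i-1)(i+1)\dots n}$ from $(\circledcirc)$. Your write-up merely makes explicit the bookkeeping the paper leaves to the reader, so there is nothing to add.
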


\vskip 5truept

\begin{remark} If $P$ is a $0$-distributive poset with $n$ number of atoms, then $P_{i_1i_2\dots i_k}$ may not be a nonempty set for $2\leq k\leq n-2$, where $\{i_1,i_2,\dots, i_k\}\subseteq \{1,\dots,n\}$.	Let $P$ be a $0$-distributive poset with  $4$ atoms (as shown in Figure \ref{exa1}). Then $P_{ij}=\emptyset$ for $i,j \in \{1,2,3,4\}$.

	\begin{figure}[h]
		\begin{center}

			\begin{tikzpicture}[scale =0.7]
				\centering
				
				\draw [fill=black] (-1.5,0) circle (.1);
				\draw [fill=black] (-.5,0) circle (.1);
				\draw [fill=black] (1.5,0) circle (.1);
				\draw [fill=black] (.5,0) circle (.1);
				\draw [fill=black] (-1.5,1.5) circle (.1);
				\draw [fill=black] (-.5,1.5) circle (.1);
				\draw [fill=black] (1.5,1.5) circle (.1);
				\draw [fill=black] (.5,1.5) circle (.1);
				\draw [fill=black] (0,-1.5) circle (.1);
				\draw [fill=black] (0,3) circle (.1);
				
				\draw (0,-1.5)--(-1.5,0)--(-1.5,1.5)--(0,3)--(-0.5,1.5)--(-0.5,1.5)--(-0.5,0)--(0,-1.5)--(0.5,0)--(0.5,1.5)--(0,3)--(1.5,1.5)--(1.5,0)--(0,-1.5);
				\draw (-1.5,0)--(-0.5,1.5)--(1.5,0)--(1.5,1.5)--(.5,0)--(0.5,1.5);
				\draw (-1.5,0)--(0.5,1.5);
				\draw (-.5,0)--(1.5,1.5);
				\draw (0.5,1.5)--(1.5,0);
				\draw (-0.5,0)--(-1.5,1.5)--(0.5,0);
				
				\draw node [above] at (-1.5,1.5) {$q_{4}^*$};
				\draw node [above] at (-0.6,1.5) {$q_{3}^*$};
				\draw node [above] at (0.6,1.5) {$q_{2}^*$};
				\draw node [above] at (1.5,1.5) {$q_{1}^*$};
				\draw node [below] at (-0.6,0) {$q_{2}$};
				\draw node [below] at (-1.5,-0.05) {$q_{1}$};
				\draw node [below] at (0.65,0) {$q_{3}$};
				\draw node [below] at (1.5,-0.05) {$q_{4}$};
				\draw node [below] at (0,-1.7) {$0$};
				\draw node [above] at (0,3) {$1$};

				
			\end{tikzpicture}
		\end{center}
		\caption{0-distributive poset $P$}\label{exa1}
	\end{figure}
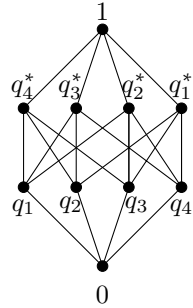
\end{remark}

\begin{proposition}\label{atompseudo}
	If $P$ is a  $0$-distributive  poset, then $P_1,\dots,P_n$ are the atoms of $[P]$ and $ P_{1\dots(i-1)(i+1)\dots n}$ is the pseudocomplement of $P_i$ in $[P]$ for all $i, 1\leq i\leq n$.
\end{proposition}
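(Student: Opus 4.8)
The plan is to read off both claims from the combinatorial description of the order on $[P]$ furnished by Proposition \ref{porder} together with Corollary \ref{adj}, reducing everything to statements about index sets. First I would settle the atoms. Each $P_i$ is nonempty because the atom $q_i$ lies in it (its index set is exactly $\{i\}$, since $q_j\leq q_i$ with $q_j$ an atom forces $q_j=q_i$). By Proposition \ref{porder}, the only elements of $[P]$ below $P_i$ are those $P_{j_1\dots j_m}$ with $\{j_1,\dots,j_m\}\subseteq\{i\}$, namely $P_0$ and $P_i$ itself; hence each $P_i$ covers $P_0$ and is an atom. Conversely, any class $P_{i_1\dots i_k}$ with $k\geq 2$ satisfies $\{i_1\}\subsetneq\{i_1,\dots,i_k\}$, so $P_{i_1}$ sits strictly below it and it cannot be an atom. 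Thus the atoms of $[P]$ are precisely $P_1,\dots,P_n$; note that this half requires no hypothesis beyond Proposition \ref{porder}.

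For the pseudocomplement, the key step is the characterization: for any $P_{j_1\dots j_m}\in[P]$ one has $\{P_i,P_{j_1\dots j_m}\}^\ell=\{P_0\}$ if and only if $i\notin\{j_1,\dots,j_m\}$. The forward direction is immediate, since $i\in\{j_1,\dots,j_m\}$ forces $P_i\leq P_{j_1\dots j_m}$ by Proposition \ref{porder}, placing the atom $P_i$ in the common lower cone. For the converse, any common lower bound $P_{l_1\dots l_s}$ satisfies both $\{l_1,\dots,l_s\}\subseteq\{i\}$ and $\{l_1,\dots,l_s\}\subseteq\{j_1,\dots,j_m\}$; if it were not $P_0$ it would equal $P_i$, forcing $i\in\{j_1,\dots,j_m\}$. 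This is essentially Corollary \ref{adj} rephrased for the meet being the bottom element (the pseudocomplement condition permitting equality causes no difficulty).

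With this in hand I would verify the two defining properties of the pseudocomplement. Writing $J=\{1,\dots,n\}\setminus\{i\}$, Corollary \ref{pseudoatom} guarantees that $P_{1\dots(i-1)(i+1)\dots n}$ (the class with index set $J$) is a genuine, nonempty element of $[P]$. Since $i\notin J$, the characterization gives $\{P_i,P_{1\dots(i-1)(i+1)\dots n}\}^\ell=\{P_0\}$. Moreover, if $x=P_{j_1\dots j_m}$ is any element with $\{P_i,x\}^\ell=\{P_0\}$, then $i\notin\{j_1,\dots,j_m\}$, so $\{j_1,\dots,j_m\}\subseteq J$ and hence $x\leq P_{1\dots(i-1)(i+1)\dots n}$ by Proposition \ref{porder}. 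Thus $P_{1\dots(i-1)(i+1)\dots n}$ is the largest element whose meet with $P_i$ is $P_0$, which is exactly the definition of the pseudocomplement of $P_i$ in $[P]$.

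The routine verifications — nonemptiness of each $P_i$ and the reduction of common lower bounds to index-set inclusions — are all formal consequences of Proposition \ref{porder}. The only substantive point, and the step I would flag as the main obstacle, is ensuring that the candidate $P_{1\dots(i-1)(i+1)\dots n}$ actually exists in $[P]$: for a general poset the index set $\{1,\dots,n\}\setminus\{i\}$ might support no equivalence class, in which case the stated pseudocomplement would not be well defined. This non-emptiness is precisely what $0$-distributivity supplies through Corollary \ref{pseudoatom}, and it is the one place where the hypothesis is genuinely used.
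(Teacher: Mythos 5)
Your proof is correct and follows essentially the same route as the paper's: Proposition \ref{porder} identifies the atoms, Corollary \ref{adj} supplies the characterization of when the lower cone of $\{P_i,P_{j_1\dots j_m}\}$ collapses to $\{P_0\}$ via disjointness of index sets, and the maximality of $P_{1\dots(i-1)(i+1)\dots n}$ follows from the index-set inclusion of Proposition \ref{porder}. Your explicit appeal to Corollary \ref{pseudoatom} for the non-emptiness of $P_{1\dots(i-1)(i+1)\dots n}$ --- the one place where $0$-distributivity is genuinely used --- is left implicit in the paper's own proof, and flagging it is a small improvement in rigor rather than a deviation in approach.
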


\begin{proof}
	By Proposition \ref{porder}, $P_1,\dots,P_n$ are the atoms of $[P]$. We prove that $ P_{1\dots(i-1)(i+1)\dots n}$ is the pseudocomplement of $P_i$ in $[P]$, for all $i$. Clearly, $\Bigl\{P_i, \; P_{1\dots(i-1)(i+1)\dots n}\Bigr\}^\ell =\{P_0\}$. Assume that $P_{j_1j_2\dots j_m}\in [P]$ such that $\Bigl\{P_i,P_{j_1j_2\dots j_m}\Bigr\}^\ell=\{P_0\}$. By Corollary \ref{adj}, $i\notin \{j_1,j_2,\dots,j_m\}$. Since $\{i\}\cup\{1,\dots,i-1,i+1,\dots,n\}=\{1,\dots,n\}$ and $\{j_1,j_2,\dots,j_m\}\subseteq \{1,\dots,n\}$. Then $\{j_1,j_2,\dots,j_m\}\subseteq \{1,\dots,i-1,i+1,\dots,n\}$. By Proposition \ref{porder}, we have $P_{j_1j_2\dots j_m}\leq  P_{1\dots(i-1)(i+1)\dots n}$. This proves that $ P_{1\dots(i-1)(i+1)\dots n}$ is pesudocomplement of $P_i$ in $[P]$, for all $i$.\end{proof}

\vskip 5pt 

The following statements 1-4 are essentially proved in \cite{djl} (see Lemma 4.5, Lemma 4.2), statements 5-7 are essentially proved in  \cite{nkvj} (see Lemma 2.1 (6), (8), respectively). We write these statements in terms of $P_{i_1i_2\dots i_k}$. These properties will be used frequently in the sequel.

\begin{lemma}\label{property} 
	The following statements are true.
	\begin{enumerate}
		\item If $q_{_1},q_{_2},\dots,q_{_n}$ are distinct atoms of ${ P}$, then $[ {q}_{_1} ],\dots,[ {q}_{_n} ]$ are distinct atoms of $[{ P}]$. Note that $[q_{_i}]=P_{i}$, for every $i\in\{1,\dots,n\}$ .
		\vskip 5truept 
		\item If $ {a}\leq {b}$ in ${ P}$, then $[{a} ]\leq [{b} ]$ in $[{ P}]$. Moreover, $P_{i_1i_2\dots i_k}\leq P_{j_1j_2\dots j_m}$ in $[P]$ if and only if  $\{i_1,i_2,\dots, i_k\}\subseteq \{j_1,j_2,\dots, j_m\}$.
		
		\vskip 5truept 
		\vskip 5truept

		\item $\{{a,b} \}^\ell=\{{0} \}$ in ${ P}$ if and only if $\{[{a} ],[{b} ]\}^\ell=\{[{0} ]\}$ in $[{ P}]$. Note that the lower cones are taken in the respective posets. $P_{i_1i_2\dots i_k}$ and $P_{j_1j_2\dots j_m}$ are adjacent in $G([P])$ if and only if $\{i_1,i_2,\dots, i_k\}\cap \{j_1,j_2,\dots, j_m\}=\emptyset$. Further, $ {a} \in V(G({ P}))$ if and only if $[{a} ] \in V(G([{ P}]))$.

		\vskip 5truept  
		
		\item Let $[{a} ]\in V(G([{ P}]))$. Then for any ${x,y}\in [{a} ]$, $\{{x,y} \}^\ell\neq \{{0} \}$ in ${ P}$. Hence vertices of $[{a} ]$ forms an independent set in $G({ P})$. Further, if  $\{[{a} ],[{b} ]\}^\ell=\{[{0} ]\}$ in $[{ P}]$, then for any ${x}\in [{a} ]$ and for any  $ {y}\in [{b} ]$,    $\{{x,y} \}^\ell=\{{0} \}$ in ${ P}$. In particular, $[{a} ]$ and $ [{b}]$ are adjacent in $G({ [P]})$ with $|[{a} ]|=m$, $|[{b} ]|=n$, then the vertices of $[{a} ]$ and $[{b} ]$ forms an induced complete bipartite subgraph $K_{m,n}$ of $G({ P})$. Moreover, for any $x, y \in [a]$, deg$_{G(P)}(x)= $ deg$_{G(P)}(y)$.

		\vskip 5truept 
		
		\item If ${q}_{_1},\dots, {q}_{_n}$ are the atoms of ${ P}$, then $A_{{ q}_{_i}}$ is set of an independent vertices of $G({P})$ for every $i\in\{1,2,\dots,n\}$, and $V(G({P}))=\bigcupdot   A_{{q}_{_{i}} }$. Also, $q_{_i}^u\setminus \mathcal{D}$ is set of an independent vertices of $G(P)$ for every $i\in\{1,\dots,n\}$.
		
		\vskip 5truept

		\item The induced subgraph of $G([P])$ on the set $\{P_1,P_2,\dots,P_n\}$ is a complete graph on $n$ vertices and the induced subgraph of $G(P)$ on the set $\bigcup\limits_{i=1}^{n}P_i$ is a complete $n$-partite graph. Therefore the induced subgraph of $G([P])$ on the set $\{P_1,\dots,P_n,P_{23\dots n},\dots, P_{12\dots (n-1)}\}$ is a complete graph on $n$ vertex $K_n$ with exactly one pendent vertex attached to each vertex of $K_n$ (See Figure \ref{fig1} in the case $n=3$), provided $P_{23\dots n},\dots, P_{12\dots (n-1)}$ are nonempty sets.

		\item Let $P$ be a $0$-distributive poset. Then $P_i$ be the atom of $[P]$ and	$P_{1\dots(i-1)(i+1)\dots n}$ is the pseudocomplement of $P_i$ in $[P]$ for all $i$. 	
		Moreover in $G([P])$, the vertex $P_{1\dots(i-1)(i+1)\dots n}$ is adjacent to only $P_i$,   that is, $P_{1\dots(i-1)(i+1)\dots n}$ is the only pendent vertex to $P_i$ in $G([P])$, for all $i$.
		Therefore the set $\{P_{23\dots n}, P_{13\dots n}, P_{12\dots(n-1)}\}$ of $G([P])$ is an independent set  of vertex, that is,   the induced subgraph of $G^c([P])$ on the set $\{P_{23\dots n}, P_{13\dots n}, P_{12\dots(n-1)}\}$ is a complete graph on $n$ vertex. 
	\end{enumerate}
\end{lemma}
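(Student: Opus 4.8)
The plan is to treat the seven assertions not as fresh theorems but as translations of the known results of \cite{djl} and \cite{nkvj} into the combinatorial language of the index sets $P_{i_1i_2\dots i_k}$. The bridge between the order-theoretic data (the annihilators $a^\perp$ and the equivalence $\sim$) and the purely set-theoretic data (the index sets) has already been built: Lemma \ref{eqsame} identifies $\sim$ with $\approx$, Proposition \ref{porder} converts the partial order on $[P]$ into containment of index sets, and Corollary \ref{adj} converts adjacency in $G([P])$ into disjointness of index sets. Accordingly, for each item I would first restate the claim entirely in terms of index sets and then read it off from these three facts, invoking the cited lemmas of \cite{djl} and \cite{nkvj} only for the parts that are genuinely order-theoretic.

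For (1)--(4) I would argue as follows. Since $q_i$ is an atom, $q_i\not\geq q_j$ for $j\neq i$, so $q_i\in P_i$ and hence $[q_i]=P_i$; distinctness and the atom property of the $P_i$ then follow because, by Proposition \ref{porder}, $P_i\leq P_j$ forces $\{i\}\subseteq\{j\}$, i.e. $i=j$. The implication $a\leq b\Rightarrow[a]\leq[b]$ is the order-preserving surjection $a\mapsto[a]$ recorded in the preliminaries, and its index-set reformulation is exactly Proposition \ref{porder}; the equivalence $\{a,b\}^\ell=\{0\}\Leftrightarrow\{[a],[b]\}^\ell=\{[0]\}$ together with the vertex correspondence $a\in V(G(P))\Leftrightarrow[a]\in V(G([P]))$ is the content of \cite{djl}, and the disjoint-index adjacency criterion is Corollary \ref{adj}. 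For (4) I would work directly with annihilators: if $x,y\in[a]$ then $x^\perp=y^\perp$, and a nonzero $x$ can never lie in $x^\perp$ (else $x^\ell=\{0\}$ forces $x=0$), giving independence; when $\{[a],[b]\}^\ell=\{[0]\}$, writing $[a]=P_{i_1\dots i_k}$, $[b]=P_{j_1\dots j_m}$ with disjoint index sets and using the explicit formula for $t^\perp$ from Lemma \ref{eqsame} shows $y\in x^\perp$ for every representative pair, so the induced subgraph is the complete bipartite $K_{m,n}$. Regularity within a class is then immediate, since the neighborhood of $x$ is $x^\perp\setminus\{0\}$, which depends only on $x^\perp$ and hence is constant on $[a]$.

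For (5)--(7) the arguments are combinatorial. Independence of $q_i^u\setminus\mathcal{D}$ (hence of $A_{q_i}\subseteq q_i^u\setminus\mathcal{D}$) holds because any two elements above $q_i$ share $q_i$ in their lower cone, so their meet is not $\{0\}$; the equality $V(G(P))=\bigcupdot_i A_{q_i}$ uses atomicity (every nonzero zero-divisor lies above some atom), after which the defining set differences assign each vertex to a unique block. The $n$-partite and pendant structures in (6) follow by combining Corollary \ref{adj} (so that $\{P_1,\dots,P_n\}$ spans $K_n$, and each ``full-but-one'' class $P_{1\dots(i-1)(i+1)\dots n}$, with index set $\{1,\dots,n\}\setminus\{i\}$, is adjacent only to $P_i$) with part (4) to blow the quotient picture up to the complete multipartite picture in $G(P)$. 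Item (7) layers Proposition \ref{atompseudo} on top: the pseudocomplement of $P_i$ is precisely $P_{1\dots(i-1)(i+1)\dots n}$, so these classes are pendant to distinct vertices of $K_n$, and for $n\geq 3$ any two of them have overlapping index sets (the common complement of two indices is nonempty), hence are non-adjacent in $G([P])$ and therefore form a clique in $G^c([P])$.

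The main obstacle I anticipate is the lifting step inside (4): proving that adjacency of two classes in the quotient forces \emph{every} pair of representatives to have meet $\{0\}$ in $P$, uniformly, so that the induced subgraph is genuinely $K_{m,n}$ rather than merely bipartite. Getting this cleanly requires the explicit annihilator formula of Lemma \ref{eqsame} and care that it is the disjointness of index sets, and not just the abstract relation $\{[a],[b]\}^\ell=\{[0]\}$, that is being exploited; once this is secured, the remaining items reduce to bookkeeping with Proposition \ref{porder}, Corollary \ref{adj}, and Proposition \ref{atompseudo}.
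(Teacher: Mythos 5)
Your proposal is correct and follows essentially the same route as the paper, which itself offers no separate proof but points to \cite{djl} and \cite{nkvj} for items (1)--(4) and (5)--(7) respectively and relies on the translation machinery of Lemma \ref{eqsame}, Proposition \ref{porder}, Corollary \ref{adj}, and Proposition \ref{atompseudo} to restate everything in terms of the sets $P_{i_1i_2\dots i_k}$. The one step you flag as delicate --- lifting adjacency of classes to adjacency of all representative pairs in item (4) --- is in fact already supplied uniformly by item (3) (equivalently by the explicit annihilator formula in Lemma \ref{eqsame}), so no extra care is needed there.
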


\noindent\textbf{Notation:}
Let $q_{_1},q_{_2},\dots,q_{_n}$ be all  atoms of a  $0$-distributive  poset $P$. Then $[q_1]=P_1,\dots,[q_n]=P_n$ are the atoms of $[P]$ and $[q_i]^*= P_{1\dots(i-1)(i+1)\dots n}$ is the pseudocomplement of $[q_i]=P_i$ in $[P]$ for all $i$. Moreover, one can check that if $[P]$ has the greatest element, then $[q_i]^*= P_{1\dots(i-1)(i+1)\dots n}$ is the complement of $[q_i]=P_i$ in $[P]$ for all $i$.

\vskip 5truept 

The following result is useful to prove that a finite lattice $L$ is pseudocomplemented. One needs to look at whether all the atoms have pseudocomplements.

\begin{lemma}[{Chameni-Nembua and Monjardet \cite[Lemma 3]{cm}}]\label{mon} 
	A finite lattice $L$ is pseudocomplemented if and only if each atom of $L$ has the pseudocomplement.
\end{lemma}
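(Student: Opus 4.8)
The plan is to dispose of the forward implication immediately and to concentrate all the work on the converse. If $L$ is pseudocomplemented then by definition \emph{every} element has a pseudocomplement, so in particular each atom does; this needs no finiteness. For the converse I would assume that each atom of $L$ possesses a pseudocomplement and prove that an arbitrary $a \in L$ does as well.

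The key idea I would use is the reduction formula $a^* = \bigwedge\{\,p^* : p \text{ an atom of } L,\ p \le a\,\}$, which expresses the (a priori unknown) pseudocomplement of $a$ as a meet of the (known, by hypothesis) pseudocomplements of the atoms below $a$. First I would record that a finite lattice is complete, so the meet above exists, and atomic, i.e. every nonzero element lies above some atom. For $a = 0$ the pseudocomplement is just the top element $1$, so I may assume $a \neq 0$; then atomicity guarantees that the collection of atoms $p_1,\dots,p_k$ below $a$ is nonempty, and I set $c := \bigwedge_{i=1}^{k} p_i^*$.

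Next I would verify the two defining properties of a pseudocomplement for $c$. For $a \wedge c = 0$: if this meet were nonzero, atomicity would produce an atom $q \le a \wedge c$; since $q \le a$, this $q$ must be one of the $p_j$, and then $q \le c \le p_j^*$ would give $p_j \le p_j^*$, i.e. $p_j \wedge p_j^* = p_j \neq 0$, contradicting $p_j \wedge p_j^* = 0$. For maximality: whenever $a \wedge x = 0$ we have $p_i \wedge x \le a \wedge x = 0$ for each $i$, so $x \le p_i^*$ by the defining property of $p_i^*$, whence $x \le c$. These two facts say exactly that $c$ is the pseudocomplement of $a$, which completes the converse.

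The step I expect to be the genuine content is the verification $a \wedge c = 0$: it is where finiteness enters, through atomicity, and where it is essential that $p_1,\dots,p_k$ are \emph{all} the atoms below $a$ rather than merely some of them. If one used only a proper subset, the atom $q$ extracted from $a \wedge c$ need not coincide with any $p_j$ and the contradiction would collapse. The maximality half, by contrast, is a routine consequence of $p_i \le a$ together with the definition of $p_i^*$, so I would present it briefly.
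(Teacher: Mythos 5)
Your argument is correct. Note that the paper does not prove this lemma at all: it is quoted verbatim from Chameni-Nembua and Monjardet with a citation, so there is no internal proof to compare against. Your proof is the standard one for this result: the forward direction is immediate from the definition, and for the converse the formula $a^{*}=\bigwedge\{p^{*}: p \text{ an atom},\ p\le a\}$ does exactly the right job. Both verifications are sound --- finiteness is used correctly (completeness of the meet, atomicity to extract an atom below $a\wedge c$), and you are right to flag that the atom $q\le a\wedge c$ must be recognized as one of the $p_j$, which requires taking \emph{all} atoms below $a$. The only cosmetic remark is that the case $a\neq 0$ with no atoms below $a$ cannot occur in a finite lattice, which you already cover via atomicity, and the case $a=0$ is handled by $0^{*}=1$; so the argument is complete as written.
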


\begin{lemma}[{Joshi and Mundlik \cite[Lemma 2.5]{jm}}]\label{ssc} The poset $[P]$ of all equivalence classes of a poset $P$ with 0 is an SSC poset.
\end{lemma}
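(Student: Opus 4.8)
The plan is to verify the defining condition of an SSC poset directly for $[P]$, translating everything back into annihilators in $P$. Recall that the order on $[P]$ is $[a]\leq[b]$ iff $b^\perp\subseteq a^\perp$, and that by statement (3) of Lemma~\ref{property} one has $\{[a],[b]\}^\ell=\{[0]\}$ in $[P]$ precisely when $\{a,b\}^\ell=\{0\}$ in $P$. So to show $[P]$ is SSC I must start from $[a]\not\leq[b]$ and produce a class $[c]\neq[0]$ with $[c]\leq[a]$ and $\{[b],[c]\}^\ell=\{[0]\}$.

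First I would unpack the hypothesis: $[a]\not\leq[b]$ means $b^\perp\not\subseteq a^\perp$, so there is an element $t\in b^\perp\setminus a^\perp$. The membership $t\in b^\perp$ says $\{b,t\}^\ell=\{0\}$, while $t\notin a^\perp$ says $\{a,t\}^\ell\neq\{0\}$; hence I may choose a nonzero $s\in\{a,t\}^\ell$, so that $s\leq a$ and $s\leq t$ with $s\neq 0$.

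Next I claim $[c]:=[s]$ is the required witness. Since $s\neq 0$ and $[0]=\{0\}$, we have $[s]\neq[0]$. Since $s\leq a$ in $P$, every $x\in a^\perp$ satisfies $\{s,x\}^\ell\subseteq\{a,x\}^\ell=\{0\}$, giving $a^\perp\subseteq s^\perp$, i.e.\ $[s]\leq[a]$. Finally, because $s\leq t$ and $\{b,t\}^\ell=\{0\}$, we get $\{b,s\}^\ell\subseteq\{b,t\}^\ell=\{0\}$, so $\{b,s\}^\ell=\{0\}$; by statement (3) of Lemma~\ref{property} this yields $\{[b],[s]\}^\ell=\{[0]\}$. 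Thus $[P]$ satisfies the SSC condition.

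The argument is short, and the only care needed is in the two monotonicity steps — that $s\leq a$ forces $[s]\leq[a]$, and that $s\leq t$ with $\{b,t\}^\ell=\{0\}$ forces $\{b,s\}^\ell=\{0\}$ — both immediate from the lower-cone definition of $^\perp$. I do not expect a genuine obstacle here; the main thing to get right is the correct direction of the order on $[P]$ (reverse inclusion of annihilators) and the faithful translation of orthogonality between $P$ and $[P]$ supplied by Lemma~\ref{property}(3). Notably, no finiteness, $0$-distributivity, or atomicity of $P$ is used, so the conclusion holds for an arbitrary poset with $0$, as stated.
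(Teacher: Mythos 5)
Your argument is correct: the choice of a nonzero $s\in\{a,t\}^\ell$ for some $t\in b^\perp\setminus a^\perp$, together with the two monotonicity observations ($s\leq a$ gives $a^\perp\subseteq s^\perp$, hence $[s]\leq[a]$, and $s\leq t\in b^\perp$ gives $\{b,s\}^\ell=\{0\}$, hence $\{[b],[s]\}^\ell=\{[0]\}$ by Lemma~\ref{property}(3)), is exactly the standard direct verification of the SSC condition, and no extra hypotheses on $P$ are needed. The paper itself does not reproduce a proof — it cites the result from Joshi and Mundlik — so there is nothing internal to compare against, but your proof is sound and is essentially the argument given in that reference.
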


\begin{theorem}[Janowitz \cite{jan}]\label{boolean}
	Every pseudocomplemented, SSC lattice is Boolean.
\end{theorem}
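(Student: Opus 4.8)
The plan is to show directly that $L$ is complemented and distributive, the two ingredients of being Boolean. Write $x^{*}$ for the pseudocomplement of $x$, so that $x^{\perp}=(x^{*})^{\ell}$, i.e. $x^{*}$ is the largest element with $x\wedge x^{*}=0$. Two facts come for free from pseudocomplementation alone. First, $*$ is order-reversing, $x\le x^{**}$ for all $x$, and $x^{*}=x^{***}$. Second, $L$ is automatically $0$-distributive: if $x\wedge y=0=x\wedge z$ then $y,z\le x^{*}$, hence $y\vee z\le x^{*}$ and $x\wedge(y\vee z)\le x\wedge x^{*}=0$. Combining $0$-distributivity with the definition of the pseudocomplement yields the De Morgan identity $(x\vee y)^{*}=x^{*}\wedge y^{*}$, since $w\le(x\vee y)^{*}$ iff $w\wedge(x\vee y)=0$ iff $w\wedge x=0=w\wedge y$ iff $w\le x^{*}\wedge y^{*}$.

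The section semi-complementedness hypothesis enters at exactly two points. I would first use it to show that every element is closed, i.e. $x=x^{**}$ for all $x\in L$. Indeed $x\le x^{**}$ always; if the inequality were strict then $x^{**}\not\le x$, so SSC furnishes a nonzero $c$ with $c\le x^{**}$ and $c\wedge x=0$. But $c\wedge x=0$ forces $c\le x^{*}$, whence $c\le x^{*}\wedge x^{**}=0$, a contradiction. An entirely parallel argument gives complementation: if $x\vee x^{*}\neq 1$, then $1\not\le x\vee x^{*}$ and SSC produces a nonzero $c$ with $c\wedge(x\vee x^{*})=0$; then $c\wedge x=0$, so $c\le x^{*}\le x\vee x^{*}$, forcing $c=c\wedge(x\vee x^{*})=0$, again a contradiction. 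Hence $x\wedge x^{*}=0$ and $x\vee x^{*}=1$, so $x^{*}$ is a genuine complement of $x$ and $L$ is complemented.

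It remains to obtain distributivity, and here I would pass to the skeleton $S=\{x^{*}\mid x\in L\}=\{x\mid x=x^{**}\}$. By the classical theorem of Glivenko and Frink, the skeleton of any pseudocomplemented (meet-)semilattice is a Boolean algebra under the meet inherited from $L$, the complement $*$, and the join $a\sqcup b:=(a^{*}\wedge b^{*})^{*}$. By the closedness established above, $S=L$. Moreover, using $0$-distributivity together with closedness, the skeleton join agrees with the lattice join: $a\sqcup b=(a^{*}\wedge b^{*})^{*}=\bigl((a\vee b)^{*}\bigr)^{*}=(a\vee b)^{**}=a\vee b$. Thus $(L,\wedge,\vee,{}^{*},0,1)$ is a Boolean algebra with its original operations, so $L$ is distributive and complemented, i.e. Boolean.

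The main obstacle is distributivity, not complementation. Complementation falls out immediately from the SSC trick, and indeed the pseudocomplement is then the unique complement, but unique complementation alone does not force distributivity for general lattices (Dilworth's examples). The real content of the SSC hypothesis is that it collapses $L$ onto its skeleton by making every element closed; once that is done, the always-Boolean structure of the skeleton supplies the distributive law. If one wished to avoid invoking Glivenko--Frink, the alternative is to verify $x\wedge(y\vee z)\le(x\wedge y)\vee(x\wedge z)$ by hand, and that computation---rather than any of the order-theoretic steps above---is where the genuine work would lie.
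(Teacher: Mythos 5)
Your argument is correct. The paper does not prove this statement at all --- it is imported verbatim from Janowitz's 1968 paper as a black box --- so there is no in-paper proof to compare against; what you have written is a legitimate self-contained justification. The key step is exactly the right one: SSC forces every element to be closed ($x=x^{**}$), since a nonzero $c\le x^{**}$ with $c\wedge x=0$ would satisfy $c\le x^{*}\wedge x^{**}=0$. Your De Morgan identity $(x\vee y)^{*}=x^{*}\wedge y^{*}$ via $0$-distributivity is also sound, and once closedness is in hand you do not actually need a second application of SSC for complementation: $(x\vee x^{*})^{*}=x^{*}\wedge x^{**}=x^{*}\wedge x=0$ gives $x\vee x^{*}=(x\vee x^{*})^{**}=0^{*}=1$ directly (your SSC version is fine too, and correctly uses that a pseudocomplemented lattice is bounded). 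The only place where the proof is not fully self-contained is distributivity, which you delegate to the Glivenko--Frink theorem that the skeleton of a pseudocomplemented meet-semilattice is a Boolean algebra under $\wedge$, $a\sqcup b=(a^{*}\wedge b^{*})^{*}$ and $*$; since closedness gives $S=L$ and De Morgan identifies $\sqcup$ with $\vee$, the conclusion follows. That citation is legitimate and classical, but as you yourself note it is where the genuine work lives, so a referee insisting on a from-scratch proof would ask you to verify $x\wedge(y\vee z)\le(x\wedge y)\vee(x\wedge z)$ on the skeleton; everything else in your write-up checks out.
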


\begin{theorem}\label{nboolean}
	Let $L$ be a $0$-distributive bounded lattice with finitely many atoms. Then $[L]$ is a Boolean lattice.
\end{theorem}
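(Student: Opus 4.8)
The plan is to realize $[L]$ as a finite pseudocomplemented SSC lattice and then quote Janowitz's theorem (Theorem \ref{boolean}). Two of the three required properties are almost immediate from the machinery already set up: $[L]$ is finite because every equivalence class is one of the sets $P_{i_1\dots i_k}$ indexed by a subset of the finite atom set $\{1,\dots,n\}$, together with $P_0$, so $|[L]|\leq 2^n$; and $[L]$ is SSC by Lemma \ref{ssc}. The genuine gap to be closed is that Theorem \ref{boolean} and Lemma \ref{mon} are statements about \emph{lattices}, whereas so far $[L]$ is only known to be a poset. Thus the heart of the argument is to upgrade $[L]$ from a poset to a lattice.

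For the lattice step I would prove the annihilator identity $(a\vee b)^{\perp}=a^{\perp}\cap b^{\perp}$ for all $a,b\in L$. The inclusion $\subseteq$ is formal: $a,b\leq a\vee b$ force $a^{\perp},b^{\perp}\subseteq(a\vee b)^{\perp}$ to hold in reverse, giving $(a\vee b)^{\perp}\subseteq a^{\perp}\cap b^{\perp}$. The reverse inclusion is exactly where $0$-distributivity is used: if $x\wedge a=0$ and $x\wedge b=0$, then $0$-distributivity of the lattice $L$ yields $x\wedge(a\vee b)=0$, so $a^{\perp}\cap b^{\perp}\subseteq(a\vee b)^{\perp}$. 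Reading this through the order on $[L]$ (Proposition \ref{porder}, where $[u]\leq[v]$ means $v^{\perp}\subseteq u^{\perp}$), the identity says precisely that $[a\vee b]$ is the least upper bound of $[a]$ and $[b]$: it is an upper bound since $(a\vee b)^{\perp}\subseteq a^{\perp},b^{\perp}$, and any common upper bound $[c]$ satisfies $c^{\perp}\subseteq a^{\perp}\cap b^{\perp}=(a\vee b)^{\perp}$, i.e.\ $[a\vee b]\leq[c]$. Hence $[L]$ is a join-semilattice with $[a]\vee[b]=[a\vee b]$. A finite join-semilattice possessing a least element (here $P_0$) is automatically a lattice, the meet of $[a]$ and $[b]$ being the join of the finite, nonempty set of their common lower bounds; together with the top $[1]$ coming from the boundedness of $L$, this makes $[L]$ a finite bounded lattice.

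With $[L]$ now a finite lattice I would finish by checking pseudocomplementedness and invoking Janowitz. By Lemma \ref{mon} it suffices that every atom of $[L]$ has a pseudocomplement. Since $L$, viewed as a poset, is $0$-distributive, Proposition \ref{atompseudo} identifies the atoms of $[L]$ as $P_1,\dots,P_n$ and supplies the pseudocomplement $P_{1\dots(i-1)(i+1)\dots n}$ of each $P_i$, which is nonempty by Corollary \ref{pseudoatom} and hence a genuine element of $[L]$. Therefore $[L]$ is a pseudocomplemented SSC lattice, and Theorem \ref{boolean} gives that $[L]$ is Boolean.

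I expect the lattice step to be the main obstacle, and within it the only nontrivial point is the reverse inclusion of the annihilator identity, which is the one place $0$-distributivity is indispensable; one must also confirm that the induced join is well defined on classes (it is, since $(a\vee b)^{\perp}$ depends only on $a^{\perp}$ and $b^{\perp}$) and genuinely returns the least upper bound. Once $[L]$ is seen to be a finite lattice, the remaining steps are direct citations of Lemma \ref{ssc}, Lemma \ref{mon}, Proposition \ref{atompseudo}, and Theorem \ref{boolean}.
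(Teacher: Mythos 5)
Your proposal is correct, and its overall skeleton coincides with the paper's: realize $[L]$ as a finite, SSC, pseudocomplemented lattice (via Lemma \ref{ssc}, Corollary \ref{pseudoatom}, Proposition \ref{atompseudo} and Lemma \ref{mon}) and conclude by Janowitz's Theorem \ref{boolean}. The one place where you genuinely diverge is exactly the step you flag as the heart of the matter: showing that the poset $[L]$ is a \emph{lattice}. The paper disposes of this by asserting that $[L]$ is an atomic, hence atomistic, lattice with at most $2^n$ elements --- an argument that presupposes the lattice structure rather than establishing it (and the paper's own later example of a Boolean \emph{poset} with five atoms that is not a lattice shows that finiteness plus atomisticity alone do not suffice). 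You instead prove the annihilator identity $(a\vee b)^{\perp}=a^{\perp}\cap b^{\perp}$, whose nontrivial inclusion is precisely an application of $0$-distributivity of $L$, read it through the order on $[L]$ to get $[a]\vee[b]=[a\vee b]$, and then pass from a finite join-semilattice with least element to a lattice. This buys an explicit, self-contained verification of the lattice step and makes visible where the $0$-distributivity hypothesis enters beyond its use in Corollary \ref{pseudoatom}; the paper's route is shorter but leans on the unproved assertion. Your remaining steps (well-definedness of the induced join, nonemptiness of the classes $P_{1\dots(i-1)(i+1)\dots n}$, and the reduction of pseudocomplementedness to atoms) are all correctly justified by the cited results.
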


\begin{proof}
	Let $L$ be a $0$-distributive bounded lattice with finitely many atoms $q_{_1},q_{_2},\dots,q_{_n}$. Then by Lemma \ref{ssc}, $[L]$ is SSC with $P_1,P_2,\dots,P_n$ as atoms of $[L]$. Clearly, $[L]$ is a atomic lattice. This together with $[L]$ is  SSC, we observe that $[L]$ is an atomistic lattice with $n$ atoms. Since every element of an atomistic lattice is a join of atoms below it, so $[L]$ may have at most $2^n$ elements. Thus $[L]$ is a finite atomistic lattice. 
	By Corollary \ref{pseudoatom}, the sets $P_{23\dots n},P_{13\dots n}, \dots, P_{12\dots(n-1)}$ are nonempty. Hence these sets are the elements of $[L]$.  By Lemma \ref{property}(7),  $P_{1\dots (i-1)(i+1)\dots n}$ is the pseudocomplement of the atom $P_i$ for all $i$.  By Lemma \ref{mon},  $[L]$ is pseudocomplemented. Hence, by Theorem  \ref{boolean},  $[L]$ is a Boolean lattice.	\end{proof}
\vskip 5truept 

\begin{remark}
	Note that the above result fails if we remove the condition of finiteness of atoms. For this, consider the set $L=\{X \subseteq \mathbb{N} ~|~ |X| < \infty\} \cup \{\mathbb{N}\}$. Then $L$ is a bounded 0-distributive lattice under set inclusion as a  partial order. Since $L$ is a 0-distributive, atomistic lattice,    $[L] \cong L$ (cf. \cite[Theorem 2.4]{jm}) is a lattice. However, $[L]$ is not Boolean. 
\end{remark}

The following remark gives the relation between $G(P)$ and $G^*(P)$. For this purpose, we need the following definitions.

\begin{definition}[\cite{west}]
	The \textit{join} of two graphs $G$ and $H$ is a graph formed from disjoint copies of $G$ and $H$ by connecting each vertex of $G$ to each vertex of $H$. We denote the join of graphs 	$G$ and $H$ by $G\vee H$.
	The \textit{disjoint union} of graphs is an operation that combines two or more graphs to form a larger graph. It is analogous to the disjoint union of sets and is constructed by making the vertex set of the result be the disjoint union of the vertex sets of the given graphs and by making the edge set of the result be the disjoint union of the edge sets of the given graphs. Any disjoint union of two or more nonempty graphs is necessarily disconnected. The disjoint union is also called the graph sum. If $G$ and $H$ are two graphs, then $G+H$  denotes their disjoint union.	
\end{definition}
\begin{remark} \label{zdgczdg}
	Let $P$ be a poset. Then $G^*(P)=G(P)+I_m$ and $G^{*c}(P)=G^c(P)\vee K_m$, where $m=|\mathcal{D}\setminus \{1\}|$, if $P$ has the largest element 1; otherwise $m=|\mathcal{D}|$. If $P$ has $n$ atoms, then $|\mathcal{D}|=|P_{12\dots n}|$. 
\end{remark}

\vskip 5truept

\section{Chordal zero-divisor graphs }
A \textit{chord} of a cycle $C$ of graph $G$ is an edge that is not in $C$ but has both its end vertices in $C$. A graph $G$ is \textit{chordal} if every cycle of length at least $4$ has a chord, \textit{i.e.},  $G$ is chordal if and only if it does not contains induced cycle of length at least $4$. We assume that a null graph (without edges and vertices) is chordal.

Let $G$ be a finite graph. The set $\{u\in V(G)~~|~~u-v\in E(G)\}$ be the neighborhood of a vertex $v$ in graph $G$, denoted by $N_G(v)$. If there is no ambiguity about the graph under consideration, we write $N(v)$.   

Define a relation  on $G$ such that $u\simeq v$ if and only if either $u=v$, or  $u-v\in E(G)$ and $N(u)\setminus \{v\}= N(v)\setminus \{u\}$. Clearly, $\simeq$ is an equivalence relation on $V(G)$. The equivalence class of $v$ is the set $\{u\in V(G)~~|~~u\simeq v\}$, denoted by $[v]^\simeq$. Denote the set $\{[v]^\simeq~~|~~ v\in V(G)\}$ by $G_{red}$. Define $[u]-[v]$ is an edge in $E(G_{red})$ if and only if $u-v\in E(G)$, where $[u]\neq [v]$.

	D. F. Anderson and John LaGrange \cite{adlag} studied the few equivalance relation on graphs, rings, and in particular, on   zero-divisor graphs of rings. One of them is on a finite simple graph $G$ which is as follows:  Define a relation  on $G$ such that $u~\Theta~ v$ if and only if  $N(u)\setminus \{v\}= N(v)\setminus \{u\}$. Note that the relations $\simeq$ and $\Theta$ on a simple finite graph are different.
	
\begin{remark}\label{gred}
	It is easy to observe that, if $G^c(P)$ be  the complement of the zero-divisor graph $G(P)$, then $(G^c(P))_{red}=G^c([P])$. 
	
\end{remark}	
\begin{theorem} \label{gchord}
	Let $G$ be a finite graph. Then $G$ is chordal if and only if $G_{red}$ is chordal.
\end{theorem}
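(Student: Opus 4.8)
The plan is to prove both implications by exploiting the fact that $G_{red}$ is, up to isomorphism, an \emph{induced} subgraph of $G$, together with a careful analysis of how an induced cycle of length at least $4$ behaves under the quotient by $\simeq$. Recall that chordality is equivalent to the absence of induced cycles of length at least $4$, and that this property is hereditary on induced subgraphs, so both directions reduce to transferring induced long cycles between $G$ and $G_{red}$.

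First I would settle the easy direction ($G$ chordal $\Rightarrow G_{red}$ chordal). Choose one representative from each $\simeq$-class and let $S \subseteq V(G)$ be the resulting set. The map $v \mapsto [v]^\simeq$ is a bijection from $S$ onto $V(G_{red})$, and by the definition of $E(G_{red})$ it sends edges to edges and non-edges to non-edges; hence $G_{red}$ is isomorphic to the induced subgraph $G[S]$. Since every induced cycle of $G[S]$ is already an induced cycle of $G$, chordality of $G$ passes to $G[S] \cong G_{red}$.

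For the converse I would argue by contraposition: assume $G$ is not chordal and produce an induced cycle of length $\geq 4$ in $G_{red}$. Let $C : v_1 - v_2 - \cdots - v_k - v_1$ with $k \geq 4$ be an induced cycle of $G$. The crux is to show that $v_1, \dots, v_k$ lie in $k$ \emph{distinct} $\simeq$-classes. Two non-consecutive vertices of $C$ are non-adjacent, hence cannot be $\simeq$-equivalent because $\simeq$ identifies only adjacent vertices. For two consecutive vertices $v_i, v_{i+1}$, if they were equivalent then $N(v_i) \setminus \{v_{i+1}\} = N(v_{i+1}) \setminus \{v_i\}$; since $k \geq 4$ the vertex $v_{i-1}$ lies in $N(v_i) \setminus \{v_{i+1}\}$ and would therefore be adjacent to $v_{i+1}$, producing a chord of $C$---a contradiction. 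Thus the classes $[v_1]^\simeq, \dots, [v_k]^\simeq$ are pairwise distinct.

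Finally I would check that $[v_1]^\simeq - \cdots - [v_k]^\simeq - [v_1]^\simeq$ is an induced cycle of $G_{red}$. Consecutive classes are adjacent because the consecutive $v_i$ are. For non-consecutive indices $i, j$, I would verify that $[v_i]^\simeq$ and $[v_j]^\simeq$ are non-adjacent; this reduces to the well-definedness of cross-class adjacency, which follows from the twin condition: if $u \simeq v_i$ with $u \neq v_i$ and $v_j \notin [v_i]^\simeq$, then $v_j \in N(u)$ would force $v_j \in N(v_i)$, so $v_i - v_j \notin E(G)$ yields $u - v_j \notin E(G)$. Hence no chord appears and $G_{red}$ contains an induced $k$-cycle, contradicting its chordality. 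I expect the main obstacle to be precisely this bookkeeping in the converse: one must simultaneously use that $\simeq$ identifies only adjacent true twins (to rule out both the non-consecutive and the consecutive coincidences) and the representative-independence of cross-class adjacency (to keep the image cycle chordless). Everything else is routine once $G_{red}$ is recognised as an induced subgraph of $G$.
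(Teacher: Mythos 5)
Your proposal is correct and follows essentially the same route as the paper: the substance of both proofs is the observation that the vertices of an induced cycle of length at least $4$ must lie in pairwise distinct $\simeq$-classes (ruling out coincidences separately for consecutive and non-consecutive vertices), after which induced long cycles transfer between $G$ and $G_{red}$. Your packaging of the easy direction via $G_{red}\cong G[S]$ for a set $S$ of class representatives, and your avoidance of the paper's appeal to a \emph{minimal} bad cycle in the converse, are minor streamlinings rather than a different argument.
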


\begin{proof} Let $G$ be a chordal graph. Suppose on contrary that $G_{red}$ is not chordal. Hence  $G_{red}$ contains an induced  cycle of length at least $4$. Let the set vertices $[a_1]-[a_2]-\dots-[a_n]-[a_1]$, where $n\geq 4$, form an induced cycle of length $n$ in $G_{red}$. By the definition of $G_{red}$, we have the set of vertices $a_1-a_2-\dots-a_n-a_1$ forms an induced cycle of length $n \geq 4$ in $G$, a contradiction. Thus $G_{red}$ is chordal.

	\par Conversely,  suppose that $G_{red}$ is a chordal graph.  Suppose on contrary that $G$ is not chordal. Hence, $G$  contains an induced  cycle of length at least $4$.  Let $a_1-a_2-\dots-a_n-a_1$ be the smallest  induced  cycle of length $n\geq 4$ in $G$. This gives that $\{a_{(i-1)mod ~n},a_{(i+1)mod~n}\}\subseteq N(a_i)$. We first prove that $[a_i]\neq[a_j]$ for $i\neq j$. Clearly, $a_i\not= a_j$. If $[a_i]=[a_j]$ for some $i\neq j$, then by the definition of equivalence relation $a_i-a_j$ and $N(a_i)\setminus \{a_j\} =N(a_j)\setminus \{a_i\}$. If $a_j \notin\{a_{i-1}, a_{i+1}\}$, then this implies that $\{a_{(i-1)mod ~n},a_{(i+1)mod~n}\}\subseteq N(a_j)$, that is, $a_j$ is adjacent to $a_i, a_{(i-1)mod ~n}$ and $a_{(i+1)mod~n}$, a contradiction to the minimality of the length of the cycle. Thus, in this case,  $[a_i]\neq[a_j]$ for $i\neq j$.
	
	Now, assume that $a_j \in\{a_{i-1}, a_{i+1}\}$. Without loss of generality,  $a_j=a_{i+1}$. Since $\{a_{i-1},a_{j}\}\subseteq N(a_i)$ and $N(a_i)\setminus \{a_j\} =N(a_j)\setminus \{a_i\}$, we have $a_{i-1}\in N(a_j)$, that is, $a_j$ is adjacent to $a_i$, and $a_{i-1}$, a contradiction to the minimality of the length of the cycle. Thus, in this case,  $[a_i]\neq[a_j]$ for $i\neq j$. 
	
	Therefore, in both the cases,  $[a_i]\neq[a_j]$ for $i\neq j$.	 By the definition of $G_{red}$, we have an induced cycle  $[a_1]-[a_2]-\dots-[a_n]-[a_1]$ of length $n\geq 4$ in $G_{red}$, a contradiction. Thus, $G$ is a chordal graph.		 	\end{proof}

In view of Remark \ref{gred} and Theorem \ref{gchord}, we have the following corollary.

\begin{corollary}
	Let $P$ be a finite poset. Then $G^c(P)$ is a chordal graph if and only if $G^c([P])$ is a chordal graph.
\end{corollary}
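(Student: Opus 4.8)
The plan is to observe that this corollary is an immediate specialization of the general graph-theoretic reduction already established, applied to the particular graph $G^c(P)$. Concretely, I would take $G=G^c(P)$ in Theorem \ref{gchord}. Since $P$ is a finite poset, $Z(P)\setminus\{0\}$ is a finite vertex set, so $G(P)$ and hence its complement $G^c(P)$ are finite simple graphs; thus Theorem \ref{gchord} applies and yields that $G^c(P)$ is chordal if and only if $\bigl(G^c(P)\bigr)_{red}$ is chordal.

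The second and final ingredient is the identification of the reduced graph. By Remark \ref{gred}, the reduction of the complement of the zero-divisor graph is exactly the complement of the zero-divisor graph of the poset of equivalence classes, that is, $\bigl(G^c(P)\bigr)_{red}=G^c([P])$. Substituting this equality into the biconditional from the previous paragraph gives directly that $G^c(P)$ is chordal if and only if $G^c([P])$ is chordal, which is the assertion.

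I do not expect a genuine obstacle here, since the content is entirely carried by Theorem \ref{gchord} (the equivalence of chordality under the $\simeq$-reduction of an arbitrary finite graph) together with the structural computation in Remark \ref{gred}. The only points that require a moment's care are verifying the hypothesis of Theorem \ref{gchord}, namely finiteness of $G^c(P)$, which follows from finiteness of $P$, and checking that the reduction operation $(\,\cdot\,)_{red}$ is being applied to the correct graph, namely $G^c(P)$ rather than $G(P)$, so that Remark \ref{gred} delivers $G^c([P])$ on the nose. With these two observations in place the proof is a one-line combination of the two cited results.
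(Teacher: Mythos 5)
Your proposal is correct and matches the paper exactly: the paper derives this corollary precisely by combining Theorem \ref{gchord} applied to $G=G^c(P)$ with the identification $\bigl(G^c(P)\bigr)_{red}=G^c([P])$ from Remark \ref{gred}. No further comment is needed.
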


\begin{remark}\label{obs1} It is easy to observe that $G+ I_m$ is a chordal graph if and only if $G$ is chordal if and only if $G\vee K_m$ is chordal. In particular, $G(P)$  is chordal if and only if $G^*(P)$ is chordal. Also,  $G^c(P)$  is chordal if and only if $G^{*c}(P)$ is a chordal graph.   
	
\end{remark} 

It should be noted that if a poset $P$ has exactly one atom, then $G^*(P)$ is an empty graph (without edges) of size $|P\setminus \{0,1\}|$, if $P$ has the greatest element, otherwise 
of size $|P\setminus \{0\}|$. However, if $P$ has exactly one atom,  the zero-divisor graph $G(P)$ is a null graph (without vertices and edges) that we can assume to be a chordal graph.
Thus, with this preparation, we are ready to prove statements $\textbf{(A)}$ and $\textbf{(B)}$ of our first main result.

\begin{proof}[\underline{\underline{Proof of Theorem \ref{zdgchordal}}}] \textbf{(A)} 
	Since $P$ be a finite poset such that  $[P]$ is Boolean, we have $P_{i_1i_2\dots i_k}\neq \emptyset$ for any nonempty set $\{i_1,i_2,\dots, i_k\}\subsetneqq \{1,2,\dots,n\}$.
	
	Suppose that $G(P)$ is chordal. Then we prove that the number $n$ of atoms of $P$ is $\leq 3$. We assume that $n\geq 4$. Then using the statements (3) and (5) of Lemma \ref{property}, we show that there exists an induced cycle of length $4$, where $q_1\in P_1,\   q_2\in P_2, \  x_{14}\in P_{14}, \  x_{23}\in P_{23}$ shown in Figure \ref{chordal1}(A). Thus $n\leq 3$. 
	
	We discuss the following three cases.
	
	If $n=1$, then $G(P)$ is a null graph, and hence $G(P)$ is chordal, as per the assumption.
	
	Now, assume that $n=2$. Then $G(P)$ is a complete bipartite graph $K_{m_1,m_2}$, where $|P_i|=m_i$ for $i\in \{1,2\}$. We show that one of $m_1$ and $m_2$ is $1$. If not, then $m_i\geq 2$ for all $i\in \{1,2\}$. Let $x_{11},x_{12}\in P_1$ and let $x_{21},x_{22}\in P_2$. Figure \ref{chordal1}(B) shows an induced cycle $x_{11} - x_{21} - x_{12} - x_{22} - x_{11}$ of length $4$, a contradiction. This proves that  one of $m_1$ and $m_2$ is $1$.
	
	Let $n=3$. We have to show that $|P_i|=1$ for all $i\in \{1,2,3\}$. If not, then  $|P_i|\geq 2$ for some $i\in \{1,2,3\}$. Without lose of generality, we assume that $|P_1|\geq 2$. Let $x_{11}, x_{12}\in P_1$, $x_{23}\in P_{23}$, and $q_2\in P_2$. Figure \ref{chordal1}(C) shows an induced cycle  $x_{11} - x_{23} - x_{12} - q_2 - x_{11}$ of length $4$, a contradiction. This proves that $|P_i|=1$ for all $i\in \{1,2,3\}$.

	\begin{figure}[h]
		\begin{center}
			
			\begin{tikzpicture}[scale =.8]

				\draw [fill=black] (-1,0) circle (.1);
				\draw [fill=black] (1,0) circle (.1);
				\draw [fill=black] (-1,2) circle (.1);
				\draw [fill=black] (1,2) circle (.1);
				
				\draw (-1,0) --(1,0)--(1,2)--(-1,2)--(-1,0);
				
				\draw node [below] at (-1,0) {$q_1$};
				\draw node [below] at (1,0) {$q_2$};
				\draw node [above] at (1,2) {$x_{14}$};
				\draw node [above] at (-1,2) {$x_{23
					}$};
				
				\draw node [below] at (0,-1) {$(A)$};

				\begin{scope}[shift={(4,0)}]

					\draw [fill=black] (-1,0) circle (.1);
					\draw [fill=black] (1,0) circle (.1);
					\draw [fill=black] (-1,2) circle (.1);
					\draw [fill=black] (1,2) circle (.1);
					
					\draw (-1,0) --(1,0)--(1,2)--(-1,2)--(-1,0);
					
					\draw node [below] at (-1,0) {$x_{11}$};
					\draw node [below] at (1,0) {$x_{21}$};
					\draw node [above] at (1,2) {$x_{12}$};
					\draw node [above] at (-1,2) {$x_{22
						}$};
					
					\draw node [below] at (0,-1) {$(B)$};

				\end{scope}

				\begin{scope}[shift={(8,0)}]

					\draw [fill=black] (-1,0) circle (.1);
					\draw [fill=black] (1,0) circle (.1);
					\draw [fill=black] (-1,2) circle (.1);
					\draw [fill=black] (1,2) circle (.1);
					
					\draw (-1,0) --(1,0)--(1,2)--(-1,2)--(-1,0);
					
					\draw node [below] at (-1,0) {$x_{11}$};
					\draw node [below] at (1,0) {$x_{23}$};
					\draw node [above] at (1,2) {$x_{12}$};
					\draw node [above] at (-1,2) {$q_2$};
					
					\draw node [below] at (0,-1) {$(C)$};

				\end{scope}

				\begin{scope}[shift={(13,0)}]
					
					\draw [fill=black] (-1,0) circle (.1); \draw [fill=black] (1,0) circle (.1);\draw [fill=black] (0,2) circle (.1); \draw [fill=black] (2,.5) circle (.1);  \draw [fill=black] (-2,.5) circle (.1);  \draw [fill=black] (-0.5,3) circle (.1);
					\draw [fill=black] (2,-.5) circle (.1);  \draw [fill=black] (-2,-.5) circle (.1);  \draw [fill=black] (0.5,3) circle (.1);
					
					\draw [fill=black] (0.2,3) circle (.025);
					\draw [fill=black] (-0.2,3) circle (.025);
					\draw [fill=black] (0,3) circle (.025);
					
					\draw [fill=black] (-2,0.2) circle (.025);
					\draw [fill=black] (-2,-0.2) circle (.025);
					\draw [fill=black] (-2,0) circle (.025);

					\draw [fill=black] (2,0.2) circle (.025);
					\draw [fill=black] (2,-0.2) circle (.025);
					\draw [fill=black] (2,0) circle (.025);

					\draw (-1,0) --(1,0)--(0,2)--(-1,0)--(-2,0.5);
					\draw	(-1,0)--(-2,-0.5);
					\draw (2,-0.5)--	(1,0)--(2,0.5);	\draw (0.5,3)--(0,2)--(-0.5,3);

					\draw (-2,0) ellipse (.2 and 1);
					\draw (2,0) ellipse (.2 and 1);
					\draw (0,3) ellipse (1 and 0.2);

					\draw node [below] at (-1,-.1) {$q_1$}; 
					\draw node [below] at (1,-.1) {$q_2$};
					
					\draw node [right] at (0,2) {$q_3$};

					\draw node [below] at (0,-1) { $(D)$};
					
					\draw node [left] at (-2.2,0) {$P_{23}$};
					
					\draw node [right] at (2.2,0) {$P_{13}$};
					
					\draw node [above] at (0,3.2) {$P_{12}$}; 
					
				\end{scope}

			\end{tikzpicture}
		\end{center}	
		\caption{}\label{chordal1}
	\end{figure}

	Conversely, suppose that one of the condition $(1), (2), (3)$ is satisfied. One can see that Condition $(1)$ implies that $G(P)$ is a null graph, and thus $G(P)$ is chordal.  If Condition $(2)$ holds, then $G(P)$ is a complete bipartite graph $K_{m_1, m_2}$, where $m_1=1$ or $m_2=1$. This implies that $G(P)$ is chordal. If Condition $(3)$ satisfied, then the vertex set of $G([P])$ is the set $\{P_1, P_2, P_3, P_{12}, P_{13}, P_{23}\}$ and $|P_i|=1$ for all $i\in \{1,2,3\}$. Then the  zero-divisor graph $G(P)$ is shown in  Figure \ref{chordal1}(D). Further, the vertices of $P_{ij}$ forms an independent set for $i, j \in \{1,2,3\}$ and $i <j$.  Clearly, in this case also $G(P)$ is chordal.

	\textbf{(B)} 	Suppose $G^c(P)$ is a chordal graph. We have to prove that $P$ has at most 3 atoms. Suppose on contrary the number $n$ of atoms is $\geq 4$. Choose $x_{12}\in P_{12}$, $x_{14}\in P_{14}$, $x_{34}\in P_{34}$, $x_{23}\in P_{23}$. Then we can have an induced cycle $x_{12} - x_{14} - x_{34} - x_{23} - x_{12}$  of length $4$ as shown in Figure \ref{chordal2}(A), a contradiction to the fact that $G^c(P)$ is chordal. Thus $n\leq 3$. 
	
	Conversely, suppose that the number $n$ of atoms in $P$ is at most  $3$. We must prove that $G^c(P)$ is a chordal graph. 
	
	If $n=1$, then $G^c(P)$ is a null graph. Hence $G^c(P)$ is a chordal graph.		
	
	Now, assume that  $n=2$. Then $G^c(P)$ is a union of two complete  graphs $K_{m_1}+K_{m_2}$, where $|P_i|=m_i$ for $i\in \{1,2\}$. This implies that $G^c(P)$ is chordal graph. 	
	
	Lastly, assume that $n=3$. By Remark \ref{gred} and Theorem \ref{gchord}, $G^c(P)$ is chordal if and only if $G^{c}([P])$ is chordal. From Figure \ref{chordal2}(B), it is easy to observe that $G^{c}([P])$ is chordal. Therefore $G^c(P)$ is chordal.

	\begin{figure}[h]
		\begin{center}
			
			\begin{tikzpicture}[scale =.71]

				\draw [fill=black] (-1,0) circle (.1);
				\draw [fill=black] (1,0) circle (.1);
				\draw [fill=black] (-1,2) circle (.1);
				\draw [fill=black] (1,2) circle (.1);
				
				\draw (-1,0) --(1,0)--(1,2)--(-1,2)--(-1,0);
				
				\draw node [below] at (-1,0) {$x_{12}$};
				\draw node [below] at (1,0) {$x_{14}$};
				\draw node [above] at (1,2) {$x_{34}$};
				\draw node [above] at (-1,2) {$x_{23
					}$};
				
				\draw node [below] at (0,-1.7) {$(A)$};

				\begin{scope}[shift={(5,0)}]
					
					\draw [fill=black] (-1,0) circle (.1); \draw [fill=black] (1,0) circle (.1);\draw [fill=black] (0,2) circle (.1); \draw [fill=black] (1.3,1.5) circle (.1);  \draw [fill=black] (-1.3,1.5) circle (.1);  \draw [fill=black] (0,-1) circle (.1);
					
					\draw (-1,0) --(1,0)--(0,-1)--(-1,0)-- (-1.3,1.5)--(0,2)--(-1,0)--(1,0)--(1.3,1.5)--(0,2)--(1,0);
					
					\draw node [left] at (-1,0) {$P_{13}$}; 
					\draw node [right] at (1,0) {$P_{12}$};
					\draw node [above] at (-1.3,1.5) {$P_{3}$};
					\draw node [above] at (1.3,1.5) {$P_{2}$};
					\draw node [above] at (0,2) {$P_{23}$};
					\draw node [below] at (0,-1.1) {$P_{1}$};
					
					\draw node [below] at (0,-1.7) {(B) ~~The graph $G^{c}([P])$};
					
				\end{scope}
				
			\end{tikzpicture}
		\end{center}
		\caption{}\label{chordal2}
	\end{figure}
\end{proof}

In the following remark, we provide the class of posets $P$ such that $[P]$ is a Boolean lattice.		
\begin{remark}\label{bolrem}
	We observe that a finite 0-distributive lattice $L$ is pseudocomplemented. Hence in view of Theorem \ref{nboolean}, $[L]$ is Boolean. Another class of posets $P$ for which $[P]$ is a Boolean lattice is  $\textbf{P}=\prod\limits_{i=1}^nP^i$, where $P^i$  be a finite bounded poset  with $Z(P^i)=\{0\}$ for every $i$. This follows from Lemma \ref{product}. 
\end{remark}	

\begin{lemma} [{Khandekar and Joshi \cite[Lemma 2.1(3)]{nkvj}}] \label{product}
	Let $\textbf{P}=\prod\limits_{i=1}^nP^i$, where $P^i$  be a finite bounded poset  with $Z(P^i)=\{0\}$ for every $i$. Then $[\textbf{P}]$ is a Boolean lattice and $|P_{i_1\dots i_k}|= \prod\limits_{i=i_1}^{i_k}\big(|P^i|-1\big)$, where $\{i_1, \dots, i_k\}\subseteq \{1,\dots, n\}$.
\end{lemma}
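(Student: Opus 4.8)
The plan is to reduce the entire statement to the combinatorics of supports in the product. The first and most important step is to exploit the hypothesis $Z(P^i)=\{0\}$ in each factor. I would show that a finite bounded poset $P^i$ with $Z(P^i)=\{0\}$ and $|P^i|\geq 2$ has a \emph{unique} atom $e_i$, and that $e_i\leq c$ for every nonzero $c\in P^i$: two distinct atoms would have $\{0\}$ as their only common lower bound and hence be nonzero zero-divisors, contradicting $Z(P^i)=\{0\}$, while finiteness guarantees at least one atom below every nonzero element. The payoff is the clean rule that $\{a,b\}^\ell=\{0\}$ in $P^i$ if and only if $a=0$ or $b=0$ (if both are nonzero then $e_i$ is a common lower bound).

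Next I would transport this factorwise rule to $\mathbf{P}=\prod_{i=1}^n P^i$. For $x\in\mathbf{P}$ set its support $S(x)=\{i : x_i\neq 0\}$. Since lower cones in a direct product are computed coordinatewise, the rule above yields $\{x,y\}^\ell=\{0\}$ in $\mathbf{P}$ if and only if $S(x)\cap S(y)=\emptyset$. Consequently $x^\perp$ depends only on $S(x)$, and in fact $x\sim y$ if and only if $S(x)=S(y)$, since the atoms $q_i=(0,\dots,e_i,\dots,0)$ detect the support ($q_i\in x^\perp$ iff $i\notin S(x)$). Matching this against the defining condition $(\circledcirc)$, the class $P_{i_1\dots i_k}$ is precisely the set of elements whose support is $\{i_1,\dots,i_k\}$; realizing an arbitrary support by the element carrying $e_i$ in those coordinates and $0$ elsewhere shows every such class is nonempty.

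The cardinality formula is then immediate: an element of support $\{i_1,\dots,i_k\}$ is obtained by choosing, independently, a nonzero entry in each coordinate $i\in\{i_1,\dots,i_k\}$ (there are $|P^i|-1$ of these) and $0$ in the remaining coordinates, whence $|P_{i_1\dots i_k}|=\prod_{i\in\{i_1,\dots,i_k\}}\big(|P^i|-1\big)$. For the Boolean-lattice claim I would exhibit the map $P_{i_1\dots i_k}\mapsto\{i_1,\dots,i_k\}$ from $[\mathbf{P}]$ to the power set $2^{\{1,\dots,n\}}$; it is a bijection by the previous paragraph, and by Proposition \ref{porder} it is order-preserving and order-reflecting, that is, $P_{i_1\dots i_k}\leq P_{j_1\dots j_m}$ iff $\{i_1,\dots,i_k\}\subseteq\{j_1,\dots,j_m\}$. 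Thus $[\mathbf{P}]$ is order-isomorphic to $(2^{\{1,\dots,n\}},\subseteq)$, the Boolean lattice on $n$ atoms; in particular $[\mathbf{P}]$ is a (Boolean) lattice, its lattice structure being transported across the isomorphism. Note that one cannot simply invoke Theorem \ref{nboolean} here, since $\mathbf{P}$ need not be a lattice (the $P^i$ are only bounded posets), so the explicit isomorphism is the right tool.

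The step I expect to be the main obstacle is the first one: correctly deducing the unique-atom, common-lower-bound structure of each factor from the single hypothesis $Z(P^i)=\{0\}$, since everything downstream rests on the equivalence $\{a,b\}^\ell=\{0\}\iff a=0 \text{ or } b=0$. A secondary point requiring care is the surjectivity of the support map, i.e. verifying that every subset $\{i_1,\dots,i_k\}$ actually occurs as a support, which is what makes $[\mathbf{P}]$ the \emph{full} power set rather than merely a sublattice of it.
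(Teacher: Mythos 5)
Your argument is correct. The paper gives no proof of this lemma -- it is quoted from the authors' earlier work \cite{nkvj} -- but your reduction is exactly the natural one: the hypothesis $Z(P^i)=\{0\}$ together with finiteness forces each factor to have a unique atom $e_i$ lying below every nonzero element, so that $\{x,y\}^\ell=\{0\}$ in $\mathbf{P}$ iff the supports of $x$ and $y$ are disjoint, and the classes $P_{i_1\dots i_k}$ become precisely the support fibers, giving both the counting formula and the order-isomorphism of $[\mathbf{P}]$ with $\bigl(2^{\{1,\dots,n\}},\subseteq\bigr)$. Your remark that Theorem \ref{nboolean} cannot be invoked here because $\mathbf{P}$ need not be a lattice is a genuine and worthwhile point of care; the explicit isomorphism with the power set is indeed the right mechanism for transporting the lattice structure.
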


In view of Theorem \ref{zdgchordal}, Theorem \ref{nboolean},  and Lemma \ref{product}, we have the following corollaries.

\begin{corollary} \label{zdgchordlattice}
	Let $L$ be a finite $0$-distributive  lattice. Then
	
	\textbf{(A)} $G(L)$ is chordal if and only if one of the following hold:
	
	\begin{enumerate}
		\item $L$ has exactly one atom;
		
		\item $L$ has exactly two atoms with $|L_i|=1$ for some $i\in \{1,2\}$ (see $(\circledcirc)$);

		\item $L$ has exactly three atoms with $|L_i|=1$ for all $i\in \{1,2,3\}$ (see $(\circledcirc)$).
	\end{enumerate}
	
	\textbf{(B)} $G^{c}(L)$ is chordal if and only if number of atoms of $L$ are at most $3$.	.	
	
\end{corollary}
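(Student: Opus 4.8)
The plan is to deduce this corollary directly from Theorem \ref{zdgchordal} by verifying that a finite $0$-distributive lattice $L$ satisfies the hypothesis of that theorem, namely that $[L]$ is a Boolean lattice. First I would invoke Theorem \ref{nboolean}: since $L$ is a finite $0$-distributive bounded lattice (finiteness guarantees boundedness and finitely many atoms), that theorem immediately gives that $[L]$ is a Boolean lattice. With this hypothesis established, $L$ (viewed as a poset) falls squarely under the scope of Theorem \ref{zdgchordal}, so parts \textbf{(A)} and \textbf{(B)} of the corollary are obtained by specializing parts \textbf{(A)} and \textbf{(B)} of Theorem \ref{zdgchordal} to $P = L$.

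The only subtlety is notational rather than mathematical: Theorem \ref{zdgchordal} is phrased in terms of the equivalence classes $P_i$ and $P_{i_1\dots i_k}$ and the condition $|P_i| = 1$, whereas the corollary is stated with $L_i$. I would note that when $P = L$ the sets $L_{i_1\dots i_k}$ are exactly the equivalence classes defined by $(\circledcirc)$, and the parenthetical references to $(\circledcirc)$ in conditions $(2)$ and $(3)$ make this identification explicit. Thus the three cases of \textbf{(A)} — one atom; two atoms with one class a singleton; three atoms with all three classes singletons — are a verbatim restatement of the corresponding conditions in Theorem \ref{zdgchordal}\textbf{(A)}, and part \textbf{(B)} is the verbatim statement that $G^c(L)$ is chordal iff $L$ has at most three atoms.

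I do not anticipate any genuine obstacle here: the entire content of the corollary is the reduction to the already-proved Theorem \ref{zdgchordal}, and the bridge is supplied by Theorem \ref{nboolean}. The only point requiring a sentence of care is confirming that finiteness of $L$ supplies all the standing hypotheses of Theorem \ref{nboolean} (bounded, $0$-distributive, finitely many atoms), after which the result is immediate.
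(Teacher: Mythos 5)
Your proposal is correct and follows exactly the route the paper intends: the paper derives this corollary by noting (Theorem \ref{nboolean}, cf. Remark \ref{bolrem}) that a finite $0$-distributive lattice $L$ is bounded with finitely many atoms, hence $[L]$ is Boolean, and then specializing Theorem \ref{zdgchordal} to $P=L$. No gaps; the identification of $L_{i_1\dots i_k}$ with the classes of $(\circledcirc)$ is the only bookkeeping point, and you handle it.
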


\begin{corollary} \label{zdgchordproduct}
	Let $\textbf{P}=\prod\limits_{i=1}^nP^i$, where $P^i$  be a finite bounded poset with $Z(P^i)=\{0\}$ for every $i$. Then
	
	\textbf{(A)} $G(\mathbf{P})$ is chordal if and only if one of the following holds:
	
	\begin{enumerate}
		\item $n=1$;
		
		\item  $n=2$ with $|P^i|=2$ for some $i\in \{1,2\}$, i.e., $P^i=C_2$ for some $i\in \{1,2\}$;

		\item  $n=3$ with $|P^i|=2$ for all $i\in \{1,2,3\}$, i.e., $\textbf{P}=C_2\times C_2\times C_2$.
	\end{enumerate}
	
	\textbf{(B)} $G^c(\mathbf{P})$ is chordal if and only if number of atoms of $\mathbf{P}$ are at most $3$.	.	
	
\end{corollary}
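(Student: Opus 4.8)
The plan is to deduce this corollary directly from Theorem \ref{zdgchordal} by way of Lemma \ref{product}. By Lemma \ref{product}, $[\textbf{P}]$ is a Boolean lattice, so the hypothesis of Theorem \ref{zdgchordal} is met and both parts \textbf{(A)} and \textbf{(B)} apply with $P=\textbf{P}$. It will therefore only remain to translate the cardinality conditions of Theorem \ref{zdgchordal}, which are phrased in terms of the number of atoms of $\textbf{P}$ and the sizes $|P_i|$, into the stated conditions on the factors $P^i$.

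First I would verify that the number of atoms of $\textbf{P}$ equals the number of factors $n$. Since $Z(P^i)=\{0\}$, the poset $P^i$ cannot possess two distinct atoms: if $a\neq b$ were atoms of $P^i$, then $\{a,b\}^\ell=\{0\}$, forcing $a\in Z(P^i)$, a contradiction. As $P^i$ is finite with $|P^i|\geq 2$, it has exactly one atom, say $p_i$. Consequently the atoms of $\textbf{P}$ are precisely the elements $q_i=(0,\dots,0,p_i,0,\dots,0)$ with $p_i$ in the $i$-th coordinate, $i\in\{1,\dots,n\}$; hence $\textbf{P}$ has exactly $n$ atoms and $P_i=[q_i]$ for each $i$.

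Next I would set up the cardinality dictionary. Applying Lemma \ref{product} with a single index gives $|P_i|=|P^i|-1$, so $|P_i|=1$ if and only if $|P^i|=2$; and since $P^i$ is bounded with at least two elements, $|P^i|=2$ means exactly that $P^i$ is the two-element chain $C_2$. With this dictionary the three conditions of Theorem \ref{zdgchordal}\textbf{(A)} translate verbatim: ``$\textbf{P}$ has exactly one atom'' becomes $n=1$; ``$\textbf{P}$ has exactly two atoms with $|P_i|=1$ for some $i$'' becomes $n=2$ with $P^i=C_2$ for some $i\in\{1,2\}$; and ``$\textbf{P}$ has exactly three atoms with $|P_i|=1$ for all $i$'' becomes $n=3$ with $P^i=C_2$ for all $i$, i.e. $\textbf{P}=C_2\times C_2\times C_2$. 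This establishes \textbf{(A)}. Part \textbf{(B)} is then immediate: since $[\textbf{P}]$ is Boolean and $\textbf{P}$ has $n$ atoms, Theorem \ref{zdgchordal}\textbf{(B)} yields that $G^c(\textbf{P})$ is chordal if and only if the number of atoms of $\textbf{P}$ is at most $3$, which is the stated condition.

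Because everything reduces to a direct application of results already in hand, I do not anticipate a genuine obstacle. The only points that require care are confirming that the number of atoms of $\textbf{P}$ is exactly $n$, which follows from the unique-atom argument forced by $Z(P^i)=\{0\}$, and the elementary identification $|P^i|=2\iff P^i=C_2$; both are routine, and the substance of the result is carried entirely by Theorem \ref{zdgchordal} and Lemma \ref{product}.
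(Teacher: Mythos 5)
Your proposal is correct and follows essentially the same route as the paper, which derives this corollary directly from Theorem \ref{zdgchordal} together with Lemma \ref{product} (the paper gives no further details). Your added verifications --- that $Z(P^i)=\{0\}$ forces each $P^i$ to have a unique atom so that $\textbf{P}$ has exactly $n$ atoms, and that $|P_i|=|P^i|-1$ so $|P_i|=1$ iff $P^i=C_2$ --- are exactly the routine translations the paper leaves implicit.
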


\vskip 25truept

\section{ Perfect Zero-divisor Graphs }

In this section, we prove the characterizations of perfect zero-divisor graphs of ordered sets. A key result to prove the perfectness of zero-divisor graphs of ordered sets is the Strong Perfect Graph Theorem due to Chudnosky et al. \cite{strongperfect}.

\begin{theorem} [Strong Perfect Graph Theorem \cite{strongperfect}]\label{strongperfect}
	A graph $G$  is perfect if and only if neither $G$ nor $G^c$ contains an induced odd cycle of length at least $5$.
\end{theorem}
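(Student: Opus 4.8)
The statement is the Strong Perfect Graph Theorem, so a self-contained proof is out of reach; what I would present is the high-level strategy of Chudnovsky, Robertson, Seymour and Thomas, isolating the one genuinely hard ingredient. Call a graph \emph{Berge} if neither $G$ nor $G^c$ contains an induced odd cycle of length at least $5$ (an odd hole or an odd antihole). The theorem asserts that Berge graphs coincide with perfect graphs, and the two implications are wildly asymmetric in difficulty.

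First I would dispose of the easy direction, namely that a perfect graph is Berge. Since every induced subgraph of a perfect graph is perfect, it suffices to check that odd holes and odd antiholes are themselves imperfect. An odd hole $C_{2k+1}$ with $k\geq 2$ has clique number $2$ but chromatic number $3$, hence is imperfect; its complement, an odd antihole, is then imperfect by the Weak Perfect Graph Theorem, which guarantees that $G$ is perfect if and only if $G^c$ is. Thus a perfect graph can contain neither configuration as an induced subgraph.

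The hard direction, that every Berge graph is perfect, is proved by contradiction via a minimum counterexample. Suppose $G$ is a Berge graph that is not perfect and has the fewest vertices among all such graphs; then $G$ is minimally imperfect, so by the classical theory of minimally imperfect graphs (partitionability, the absence of clique cutsets and of star cutsets, and so on) it inherits a rigid internal structure. The plan is to combine this rigidity with a structural decomposition theorem for Berge graphs: every Berge graph either belongs to one of a few \emph{basic} classes --- bipartite graphs, line graphs of bipartite graphs, the complements of these two, and double split graphs --- or admits one of a short list of structural decompositions, namely a $2$-join, a $2$-join in the complement, or a balanced skew partition. Granting this dichotomy one finishes as follows. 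Each basic class is perfect (bipartite and line graphs of bipartite graphs by K\"onig-type arguments, their complements by the Weak Perfect Graph Theorem, double split graphs directly), so $G$ cannot be basic. And one shows that none of the three decompositions can occur in a minimally imperfect Berge graph: each decomposition would either let us assemble a proper $k$-coloring from colorings of strictly smaller Berge graphs, or contradict one of the structural constraints on minimally imperfect graphs. Either way $G$ is neither basic nor decomposable, contradicting the dichotomy.

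The entire weight of the argument sits in the decomposition theorem, and that is where I expect the real obstacle to be. Establishing it requires a long and delicate case analysis organized around unavoidable substructures --- most prominently the prism and the behavior of long even and odd holes together with their attachments --- showing that whenever a Berge graph escapes all the basic classes, one of the three decompositions is forced. This is the portion that occupies the bulk of the proof, and I would not expect to shorten it; by contrast, the coloring and decomposition bookkeeping in the final step is comparatively routine once the dichotomy is in hand.
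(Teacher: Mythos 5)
The paper does not prove this statement; it is quoted verbatim from Chudnovsky, Robertson, Seymour and Thomas \cite{strongperfect} and used as a black box, so there is no in-paper argument to compare yours against. Your sketch is a faithful and accurate summary of the cited proof: the easy direction via the imperfection of odd holes and antiholes, and the hard direction via a minimum counterexample, the theory of minimally imperfect graphs, and the decomposition theorem for Berge graphs. Two minor remarks: the original Annals paper also allowed a homogeneous-pair (M-join) decomposition, later shown redundant, so your three-decomposition list is the streamlined version; and your appeal to the Weak Perfect Graph Theorem for odd antiholes is legitimate in the literature (Lov\'asz's theorem is independent and earlier), though one can avoid it entirely by computing $\omega(\overline{C_{2k+1}})=k$ and $\chi(\overline{C_{2k+1}})=k+1$ directly --- worth doing here since the present paper derives Lov\'asz's theorem as a corollary of this one.
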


In view of Theorem \ref{strongperfect}, we have the following corollary. The statement $(1)$ is nothing but the Perfect Graph Theorem due to Lov\'asz \cite{perfectlovasz}.

\begin{corollary} \label{corsperfect}
	Let $G$ be a graph. Then the following statements hold:
	\begin{enumerate}
		\item $G$ is a perfect graph if and only if $G^c$ is a perfect graph.
		
		\item If $G$ is a complete bipartite graph, then $G$ is a perfect graph.
	\end{enumerate}
\end{corollary}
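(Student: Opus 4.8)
The plan is to prove Corollary \ref{corsperfect} directly from the Strong Perfect Graph Theorem (Theorem \ref{strongperfect}), which has just been stated. Both statements are essentially immediate consequences, so the main task is to unwind the definitions carefully rather than to overcome any genuine obstacle.

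For statement $(1)$, I would argue by the symmetry built into Theorem \ref{strongperfect}. The theorem characterizes perfectness by a condition that is manifestly self-dual: $G$ is perfect if and only if neither $G$ nor $G^c$ contains an induced odd cycle of length at least $5$. I would simply observe that this condition is literally unchanged when $G$ is replaced by $G^c$, since $(G^c)^c = G$, so the pair $\{G, G^c\}$ is the same pair whether one starts from $G$ or from $G^c$. Hence $G$ satisfies the hypothesis of the theorem if and only if $G^c$ does, and therefore $G$ is perfect if and only if $G^c$ is perfect. (This is of course the original Perfect Graph Theorem of Lov\'asz, as the excerpt notes, but deriving it from the strong form is the cleanest route given what is available.)

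For statement $(2)$, I would let $G = K_{m,n}$ be complete bipartite and verify the hypothesis of Theorem \ref{strongperfect}, namely that neither $G$ nor $G^c$ contains an induced odd cycle of length at least $5$. First, $G$ itself is bipartite, so every cycle in $G$ has even length; in particular $G$ contains no odd cycle at all, let alone an induced one of length $\geq 5$. It remains to examine $G^c$. Writing the two parts as $A$ and $B$ with $|A|=m$, $|B|=n$, the complement $G^c$ is the disjoint union $K_m + K_n$ (the two parts become cliques and all cross edges vanish). Any induced cycle must lie inside a single connected component, hence inside a clique $K_m$ or $K_n$; but an induced cycle inside a complete graph has length at most $3$. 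Thus $G^c$ contains no induced cycle of length $\geq 5$ either. By Theorem \ref{strongperfect}, $G$ is perfect, which completes the proof.

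I do not anticipate a serious obstacle here. The only point requiring slight care is the description of the complement of a complete bipartite graph as $K_m + K_n$ and the observation that induced cycles cannot cross components of a disconnected graph; both are routine. If one preferred to avoid the complement computation, statement $(2)$ could alternatively be derived from the classical fact that every bipartite graph is perfect (its chromatic number and clique number agree on every induced subgraph), but invoking Theorem \ref{strongperfect} keeps the argument self-contained within the material already presented.
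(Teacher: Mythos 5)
Your proposal is correct and follows exactly the route the paper intends: the paper gives no explicit proof, simply stating the corollary ``in view of'' the Strong Perfect Graph Theorem, and your unwinding of both parts (the self-duality of the forbidden-subgraph condition for $(1)$, and the absence of odd cycles in $K_{m,n}$ together with $K_{m,n}^c = K_m + K_n$ for $(2)$) is precisely the intended argument.
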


The following result gives the relation between chordal graphs and perfect graphs.

\begin{theorem} [Dirac \cite{dirac}] \label{choedalperfect}
	Every chordal graph is perfect.
\end{theorem}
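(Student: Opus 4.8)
The plan is to prove the classical equality $\chi(H) = \omega(H)$ on every induced subgraph, where $\chi$ is the chromatic number and $\omega(H)$ the clique number of $H$. Since an induced subgraph of a chordal graph is again chordal (an induced cycle of length at least $4$ inside $H$ would also be induced in the ambient graph), it suffices to establish the single equality $\chi(G) = \omega(G)$ for an arbitrary finite chordal graph $G$; perfectness then follows by applying this to each induced subgraph. The inequality $\chi(G) \geq \omega(G)$ is immediate, since the vertices of a largest clique must receive pairwise distinct colors, so the entire content lies in producing a proper coloring with only $\omega(G)$ colors.

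The engine of the argument is the existence of a \emph{simplicial vertex}, i.e. a vertex $v$ whose neighborhood $N(v)$ induces a clique. First I would establish Dirac's lemma: every chordal graph $G$ with at least two vertices that is not complete contains two non-adjacent simplicial vertices (and if $G$ is complete, every vertex is trivially simplicial). This I would prove by induction on $|V(G)|$. Choosing non-adjacent vertices $a,b$ and a minimal $a$--$b$ vertex separator $S$, the crucial subclaim is that $S$ induces a clique: if $u,v \in S$ were non-adjacent, minimality of $S$ forces each of $u,v$ to have a neighbor in both of the components $C_a, C_b$ of $G-S$ containing $a$ and $b$, and shortest $u$--$v$ paths through $C_a$ and through $C_b$ then splice into a chordless cycle of length at least $4$, contradicting chordality. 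With $S$ a clique, applying the induction hypothesis to the smaller chordal graphs $G[C_a \cup S]$ and $G[C_b \cup S]$ yields a simplicial vertex lying inside $C_a$ and one lying inside $C_b$ (a simplicial vertex of such a subgraph that lies off the clique $S$ stays simplicial in $G$); these two vertices are separated by $S$ and hence non-adjacent.

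Granting the lemma, I would finish the equality $\chi(G)=\omega(G)$ by a second induction on $|V(G)|$. Pick a simplicial vertex $v$; the graph $G-v$ is chordal and smaller, so by induction it admits a proper coloring using $\omega(G-v) \leq \omega(G)$ colors. Because $N(v)$ is a clique, $N(v) \cup \{v\}$ is a clique of $G$, whence $|N(v)| \leq \omega(G)-1$; thus the neighbors of $v$ occupy at most $\omega(G)-1$ colors. If some color among the $\omega(G)$ available ones is missing on $N(v)$, assign it to $v$; otherwise every color on $N(v)$ is distinct and $\omega(G-v) \leq |N(v)| \leq \omega(G)-1$, so a fresh $\omega(G)$-th color may be used. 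In either case $G$ is properly colored with at most $\omega(G)$ colors, giving $\chi(G) \leq \omega(G)$ and hence equality.

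The main obstacle is the simplicial-vertex lemma, and within it the clique property of a minimal separator together with the splicing of two internally disjoint paths into a genuinely chordless cycle; the one point requiring care is to choose the paths of minimal length so that the resulting cycle admits no shortcut, which is precisely where chordality is invoked. Once the lemma is secured, both inductions are routine, and the perfectness of every chordal graph follows.
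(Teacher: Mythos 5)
Your proof is correct. Note, however, that the paper does not prove this statement at all: it is quoted as a known classical result with a citation to Dirac, and is used as a black box (e.g.\ to deduce perfectness of $G^{c}(P)$ from its chordality when the number of atoms is at most $3$). What you have written is essentially Dirac's original argument: the reduction of perfectness to the single equality $\chi(G)=\omega(G)$ via heredity of chordality, the lemma that a minimal separator in a chordal graph induces a clique (with the splicing of two shortest paths through the two components into a chordless cycle of length at least $4$), the resulting existence of a simplicial vertex, and the greedy coloring obtained by deleting it. All the delicate points are handled: the two spliced paths have no chords between their interiors because those interiors lie in distinct components of $G-S$; the simplicial vertex produced in $G[C_a\cup S]$ lies off the clique $S$ and hence remains simplicial in $G$; and $|N(v)|\leq\omega(G)-1$ guarantees a free color for $v$. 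So your argument is a complete and standard proof of the cited theorem rather than an alternative to anything in the paper.
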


\begin{theorem} \label{redperfect}
	Let $G$ be a finite graph. Then $G$ is perfect if and only if $G_{red}$ is perfect.
\end{theorem}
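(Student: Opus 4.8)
The plan is to run an argument completely parallel to the proof of Theorem~\ref{gchord}, but with induced odd cycles of length at least $5$ (odd holes and odd antiholes) playing the role that induced cycles of length at least $4$ played there, and with the Strong Perfect Graph Theorem (Theorem~\ref{strongperfect}) in place of the definition of chordality. The first thing I would record is the structural meaning of the relation $\simeq$: if $u\simeq v$ with $u\neq v$, then $u-v\in E(G)$ and $N(u)\setminus\{v\}=N(v)\setminus\{u\}$, which forces equality of the closed neighbourhoods $N[u]=N[v]$. Thus each equivalence class is a set of mutually adjacent ``true twins''. In particular, picking one representative from each class yields a set $T$ of vertices for which the induced subgraph $G[T]$ is isomorphic to $G_{red}$ via $t\mapsto[t]$ (this map is adjacency preserving in both directions, and one checks that adjacency in $G_{red}$ does not depend on the chosen representatives).

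For the forward implication I would argue directly from the definition of perfectness: since $G_{red}\cong G[T]$ is an induced subgraph of $G$, and every induced subgraph of a perfect graph is perfect, $G_{red}$ is perfect whenever $G$ is.

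The converse is the substantive direction, and I would prove its contrapositive. Assume $G$ is not perfect. By Theorem~\ref{strongperfect}, $G$ or $G^c$ contains an induced odd cycle $C\colon a_1-a_2-\cdots-a_{2k+1}-a_1$ with $2k+1\geq5$. The key claim is that $a_1,\dots,a_{2k+1}$ lie in pairwise distinct $\simeq$-classes. Indeed, if $a_i\simeq a_j$ with $i\neq j$, then $N[a_i]=N[a_j]$ in $G$; intersecting with the vertex set of $C$ and using that $C$ is induced gives $N_G[a_i]\cap C=C\setminus\{a_{i-1},a_{i+1}\}$ when $C\subseteq G^c$ (an antihole), and $N_G[a_i]\cap C=\{a_{i-1},a_i,a_{i+1}\}$ when $C\subseteq G$ (a hole). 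In either case, equating the two sets for $i\neq j$ forces a coincidence of cyclic indices that fails because $2k+1\geq5$ (for the antihole it reduces to $4\equiv0\pmod{2k+1}$, which is impossible). Hence the classes are distinct, and since adjacency in $G_{red}$ is exactly $G$-adjacency, the induced subgraph of $G_{red}$ on $[a_1],\dots,[a_{2k+1}]$ is isomorphic to the induced subgraph of $G$ on $a_1,\dots,a_{2k+1}$. This reproduces an induced $C_{2k+1}$ in $G_{red}$ (hole case) or in $(G_{red})^c$ (antihole case), so by Theorem~\ref{strongperfect} again, $G_{red}$ is not perfect.

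The main obstacle is the converse direction, and inside it the verification of the ``distinct classes'' claim for the antihole case: one must check that the closed neighbourhood of a cycle vertex restricted to the antihole determines that vertex, which is exactly the small modular computation $4\not\equiv0\pmod{2k+1}$. Everything else is the bookkeeping that identifies $G$ as the graph obtained from $G_{red}$ by cloning each vertex into a clique of true twins, so that odd holes and odd antiholes pass freely between the two graphs.
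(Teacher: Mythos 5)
Your proof is correct, and its substantive half --- the converse, via the claim that the vertices of an odd hole or odd antihole of length at least $5$ lie in pairwise distinct $\simeq$-classes, followed by an appeal to Theorem~\ref{strongperfect} --- is essentially the argument the paper gives, including the same case split between $j\in\{i-1,i+1\}$ and $j\notin\{i-1,i+1\}$ (your modular computation $4\not\equiv 0\pmod{2k+1}$ just makes explicit the contradiction the paper asserts in the antihole case). Where you genuinely diverge is the forward direction: the paper again invokes the Strong Perfect Graph Theorem and lifts an induced odd cycle from $G_{red}$ back to $G$, but only treats the case of a cycle in $G_{red}$ itself ``without loss of generality,'' leaving the antihole case in $G_{red}^{\,c}$ implicit; your observation that the classes are cliques of true twins ($N[u]=N[v]$), so that a transversal $T$ gives $G_{red}\cong G[T]$ as an \emph{induced subgraph} of $G$, makes that direction immediate from the hereditary definition of perfectness, needs no SPGT, and silently repairs the paper's WLOG. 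The trade-off is that your route requires checking well-definedness of adjacency in $G_{red}$ across representatives, which the paper never isolates but which your true-twin identity supplies for free.
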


\begin{proof} Let $G$ be a perfect graph. By  Strong Perfect Graph Theorem, neither $G$ nor $G^c$ contains an induced odd cycle of length at least $5$. Suppose, on the contrary, that $G_{red}$ is not perfect. By  Strong Perfect Graph Theorem, either $G_{red}$ or $G^c_{red}$ contains an induced odd cycle of length at least $5$. Without loss of generality, we assume that $G_{red}$ contains an induced odd cycle of length at least $5$. 
	
	Let $[a_1]-[a_2]-\dots-[a_n]-[a_1]$  be an induced cycle of odd length $n$, where $n\geq 5$  in $G_{red}$. By the definition of the equivalence relation $\simeq$ and $G_{red}$, we get $a_1-a_2-\dots-a_n-a_1$ an induced odd cycle of length $n\geq 5$ in $G$, a contradiction. Thus $G_{red}$ is perfect.

	\par Conversely,  assume that $G_{red}$ is a perfect graph. By Strong Perfect Graph Theorem, neither $G_{red}$ nor $G^c_{red}$ contains an induced odd cycle of length at least $5$. Suppose on contrary that $G$ is not perfect. Then $G$ or $G^c$ contains an induced odd cycle of length at least $5$. Assume that $G$  contains an induced odd cycle $a_1-a_2-\dots-a_n-a_1$ of length $n$, where $n\geq 5$.  In view of proof of Theorem \ref{cgchord}, we have $[a_i]\neq[a_j]$ for $i\neq j$. By the definition of the equivalence relation $\simeq$ and $G_{red}$, we have an induced odd cycle $[a_1]-[a_2]-\dots-[a_n]-[a_1]$  of length $n \geq 5$ in $G_{red}$, a contradiction.
	

	Assume that $G^c$  contains an induced odd cycle of  length at least $5$. Let $a_1-a_2-\dots-a_n-a_1$forms an induced odd cycle of length $n \geq 5$ in $G^c$. This gives that $\{a_{(i-1)},a_{(i+1)}\}\nsubseteq N_G(a_i)$. 	We first prove that $[a_i]\neq[a_j]$ for $i\neq j$ and $i,  j \in \{1,2, \cdots, n\}$. Clearly, $a_i\not= a_j$. If $[a_i]=[a_j]$ for some $i\neq j$, and $i,\ j  \in \{1,2, \cdots, n\}$, then by the definition of the equivalence relation $\simeq$,  $a_i-a_j$ in $G$ and $N_G(a_i)\setminus \{a_j\} =N_G(a_j)\setminus \{a_i\}$. If $a_j \notin\{a_{i-1}, a_{i+1}\}$, then this implies that $\{a_{(i-1)},a_{(i+1)}\}\nsubseteq N_G(a_j)$, that is, $a_j$ is adjacent to $ a_{(i-1)}$ and $a_{(i+1)}$ in $G^c$. This contradicts the fact that $a_1-a_2-\dots-a_n-a_1$ forms an induced  odd cycle in $G^c$. Thus, in this case,  $[a_i]\neq[a_j]$ for $i\neq j$.
	
	Now, assume that $a_j \in\{a_{i-1}, a_{i+1}\}$. Without loss of generality,  $a_j=a_{i+1}$. This gives that $a_i$ and $a_j$ are not adjacent in $G$.  Thus, in this case,  $[a_i]\neq[a_j]$ for $i\neq j$. Therefore $[a_i]\neq[a_j]$ for $i\neq j$. 	 By the definitions of the equivalence relation $\simeq$ and $G_{red}$, we have an induced odd cycle of length $n\geq 5$ $[a_1]-[a_2]-\dots,[a_n]-[a_1]$ in $G^c_{red}$, a contradiction. 
	
	Thus in both cases, we get a contradiction. Therefore neither $G$ nor $G^c$ contains an induced odd cycle of length at least $5$. Thus, $G$ is a perfect graph.	 	\end{proof}

Bagheri et al. \cite{bagheri} considered the following relation on a graph $G$: $u\approxeq v$ if and only if $N_G(u)= N_G(v)$. Clearly, $\approxeq$ is an equivalence relation on $V(G)$. The equivalence class of $v$ is the set $\{u\in V(G)~~|~~u\approx v\}$, denoted by $[v^\approxeq]$. Denote the set $\{[v^\approxeq]~~|~~ v\in V(G)\}$ by $[V(G)]$. Define $[u^\approxeq]-[v^\approxeq]$ is an edge in $E([G])$ if and only if $u-v\in E(G)$, where $[u^\approxeq]\neq [v^\approxeq]$. Let $[G]$ be a simple graph whose vertex set is $[V(G)]$, and edge set is $E([G])$. 

\begin{remark}\label{rem4.5}
	It is easy to observe that, if $G(P)$ be  the  zero-divisor graph, then $[G(P)]=G([P])$.	
\end{remark}

In view of Corollary \ref{corsperfect} and Theorem \ref{redperfect}, we have the following result.

\begin{corollary} [{Bagheri et al. \cite[Corollary 3.2]{bagheri}}] \label{corsqperfect}
	$G$ is perfect if and only if $[G]$ is perfect.
\end{corollary}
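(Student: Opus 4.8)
The plan is to derive Corollary \ref{corsqperfect} directly from the two results it is placed beneath, namely Corollary \ref{corsperfect}(1) and Theorem \ref{redperfect}, by identifying the construction $[G]$ (using the relation $\approxeq$) with the construction $G_{red}$ (using the relation $\simeq$). The key observation is that for a \emph{perfect} graph, or more precisely for the class of graphs to which the Strong Perfect Graph Theorem applies, the two reductions produce isomorphic graphs, so that Theorem \ref{redperfect} transfers verbatim to $[G]$. Thus the proof should be short: it is essentially a translation lemma plus an invocation of already-established machinery.

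First I would clarify the relationship between the two equivalence relations. Recall $u \simeq v$ means $u=v$, or $u-v \in E(G)$ and $N(u)\setminus\{v\} = N(v)\setminus\{u\}$, whereas $u \approxeq v$ means $N_G(u) = N_G(v)$. The crucial point is that $\approxeq$ identifies vertices with \emph{identical} (open) neighborhoods — these are necessarily non-adjacent twins — while $\simeq$ identifies adjacent twins. Contracting either kind of twin class cannot create or destroy an induced odd hole or antihole, which is exactly what Theorem \ref{redperfect} proves for $\simeq$. The cleanest route is to argue that the same proof template used in Theorem \ref{redperfect} applies to $\approxeq$: an induced odd cycle of length $\geq 5$ in $[G]$ lifts to one in $G$ by choosing one representative per class, and conversely any induced odd cycle in $G$ must use distinct $\approxeq$-classes (two vertices with the same neighborhood cannot both lie on an induced cycle of length $\geq 5$), so it pushes down to an induced odd cycle in $[G]$; the same holds for $[G]^c = [G^c]$.

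Concretely, I would proceed as follows. By Corollary \ref{corsperfect}(1), $G$ is perfect if and only if $G^c$ is perfect, and likewise $[G]$ is perfect if and only if $[G]^c$ is. By the Strong Perfect Graph Theorem \ref{strongperfect}, perfection of $G$ is equivalent to neither $G$ nor $G^c$ having an induced odd hole of length $\geq 5$, and the same criterion applies to $[G]$. I would then show the two criteria coincide by mimicking the lifting argument of Theorem \ref{redperfect}: a length-$n$ induced odd cycle $[a_1]^\approxeq - \cdots - [a_n]^\approxeq - [a_1]^\approxeq$ in $[G]$ gives the induced cycle $a_1 - \cdots - a_n - a_1$ in $G$ by the definition of $E([G])$, and an induced odd cycle in $G$ has pairwise distinct $\approxeq$-classes because non-adjacent twins (or, in the complement, adjacent twins) cannot both sit on an induced cycle of length $\geq 5$, yielding the corresponding cycle in $[G]$. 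The analogous argument in the complement handles antiholes.

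The main obstacle, and the step that requires genuine care rather than routine bookkeeping, is verifying that distinct vertices on an induced odd cycle of length $\geq 5$ always belong to distinct $\approxeq$-classes — that is, ruling out $[a_i]^\approxeq = [a_j]^\approxeq$ for $i \neq j$. This is precisely where the proof of Theorem \ref{redperfect} expended most of its effort (the case analysis on whether $a_j \in \{a_{i-1}, a_{i+1}\}$), and the argument for $\approxeq$ differs in the adjacency bookkeeping because $N_G(a_i) = N_G(a_j)$ forces $a_i$ and $a_j$ to be non-adjacent with identical neighborhoods, which would immediately give $a_i$ a chord or a repeated neighbor on the cycle, contradicting minimality or inducedness. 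Once this separation is established in both $G$ and $G^c$, the equivalence of the Strong Perfect Graph criteria for $G$ and $[G]$ is immediate, and the corollary follows.
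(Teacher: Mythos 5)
Your operative argument --- lifting and descending induced odd holes and antiholes between $G$ and $[G]$ via the Strong Perfect Graph Theorem, with the key step that distinct vertices of an induced cycle of length $\geq 5$ in $G$ (or in $G^c$) lie in distinct $\approxeq$-classes because same-open-neighborhood twins are non-adjacent and would force a chord --- is correct and self-contained. However, two of the identifications you use to set it up are false. First, $[G]$ and $G_{red}$ are \emph{not} isomorphic in general, even for perfect graphs: for $G=K_2$ the two vertices are $\simeq$-equivalent (adjacent twins) but not $\approxeq$-equivalent, so $G_{red}=K_1$ while $[G]=K_2$; so Theorem \ref{redperfect} does not ``transfer verbatim'' by an isomorphism of the two reductions. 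Second, $[G]^c\neq[G^c]$ in general: $u\approxeq v$ in $G^c$ unwinds (for $u\neq v$) to equality of \emph{closed} neighborhoods $N_G[u]=N_G[v]$, not of open ones; again $G=K_2$ gives $[G]^c=\overline{K_2}$ but $[G^c]=K_1$. The identity that actually holds, and which is what the paper is implicitly invoking when it says ``in view of Corollary \ref{corsperfect} and Theorem \ref{redperfect}'', is that $u\approxeq v$ in $G$ if and only if $u\simeq v$ in $G^c$, i.e., $(G^c)_{red}=[G]^c$; with it the corollary follows in three steps: $G$ perfect $\iff$ $G^c$ perfect $\iff$ $(G^c)_{red}$ perfect $\iff$ $[G]=\bigl((G^c)_{red}\bigr)^c$ perfect, using the Perfect Graph Theorem at both ends and Theorem \ref{redperfect} in the middle, with no new case analysis. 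Your direct rerun of the hole/antihole argument for $\approxeq$ is a legitimate alternative route, but it duplicates work the paper has already done once for $\simeq$; either adopt the complement identification above or delete the two false identities from your write-up, since neither is load-bearing for the argument you actually carry out.
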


\begin{remark}\label{obs2} $G+ I_m$ is a perfect graph if and only if $G$ is a perfect graph if and only if $G\vee K_m$ is a perfect graph. In particular, $G(P)$  is a perfect graph if and only if $G^*(P)$ is a perfect graph.  
	
\end{remark}

With this preparation, we are ready to prove statement $\textbf{(C)}$ of our main result.

\begin{proof}[\underline{\underline{Proof of Theorem \ref{zdgchordal}}}] \textbf{(C)} 	Let $P$ be a finite poset with $n$ atoms such that   $[P]$ is a Boolean  lattice. This implies that $P_{i_1i_2\dots i_k}\neq \emptyset$ for any nonempty set $\{i_1,i_2,\dots, i_k\}\subseteq \{1,2,\dots,n\}$.
	
	Suppose $G(P)$ is a perfect graph. We have to prove that the number $n$ of atoms of $P$ is  $\leq 4$. Suppose on contrary that $n\geq 5$. We show that $G([P])$ contains induced $5$-cycle. For this, we choose the set of vertices $X=\{P_{13}, P_{14}, P_{24}, P_{25}, P_{35}\}$. By Lemma \ref{property}(3), we have the elements of set $X$ induces $5$-cycle $P_{14}- P_{25}- P_{13}- P_{24}- P_{35}- P_{14}$ in $G([P])$. This gives that $G([P])$ is not a perfect graph. By Remark \ref{rem4.5} and  Corollary \ref{corsqperfect}, $G(P)$ is not perfect graph. Thus, $n\leq 4$.

	Conversely, suppose that $n\leq 4$. We have to show that $G(P)$ is perfect. First, we observe that the graph $G(P)$ is perfect for $n\leq 3$. By Theorem \ref{zdgchordal} \textbf{(B)}, $G^{c}(P)$ is a chordal graph , if $n\leq 3$. Thus, by Theorem \ref{choedalperfect}, $G^{c}(P)$ is a perfect graph, if $n\leq 3$. By Corollary \ref{corsperfect}(1), $G(P)$ is a perfect graph, if $n\leq 3$.
	
	Now, we prove that $G(P)$ is a perfect graph, if $n=4$. By Remark \ref{rem4.5} and  Corollary \ref{corsqperfect}, it is enough to prove that $G([P])$ is perfect. Since $n=4$, then $[P]$ is a Boolean lattice with $2^4$ elements, as shown in Figure \ref{b24}(a). The Figure \ref{b24}(b) shows that the zero-divisor graph $G([P])$ of $[P]$. It is not very difficult to verify that, $G([P])$ does not contains an induced odd cycle of length $\geq 5$ and its complement. By Theorem \ref{strongperfect}, $G([P])$ is a perfect graph, if $n=4$. This completes the proof.
	
	\begin{figure}[h]
		\begin{center}
			\begin{tikzpicture}[scale = 1]
				\node [right] at (0,-2.5) {(a) $ \mathbf{ [P]}=\mathbf{2}^4$  };
				
				\draw [fill=black] (1,-1) circle (.05); \node [below] at (1,-1) {$0$};
				\draw [fill=black] (1.5,0) circle (.05); \node [below] at (1.6,0) {$q_3$};
				\draw [fill=black] (2.5,0) circle (.05); \node [below] at (2.6,0) {$q_4$};
				\draw [fill=black] (0.5,0) circle (.05);\node [below] at (0.4,0) {$q_2$}; 
				\draw [fill=black] (-0.5,0) circle (.05); \node [below] at (-.6,0) {$q_1$};
				
				\draw [fill=black] (-1,1.25) circle (.05); \node [left] at (-1,1.25) {$q_{12}$};
				\draw [fill=black] (-0.2,1.25) circle (.05); \node [left] at (-.2,1.25) {$q_{13}$};
				\draw [fill=black] (0.6,1.25) circle (.05); \node [below] at (0.65,1.2) {$q_{14}$};
				\draw [fill=black] (1.4,1.25) circle (.05); \node [right] at (1.4,1.25) {$q_{23}$};
				\draw [fill=black] (2.2,1.25) circle (.05); \node [right] at (2.2,1.25) {$q_{24}$};
				\draw [fill=black] (3,1.25) circle (.05); \node [right] at (3,1.25) {$q_{34}$};
				
				\draw [fill=black] (1.5,2.5) circle (.05); \node [above] at (1.6,2.5) {$q_2^*$};
				\draw [fill=black] (2.5,2.5) circle (.05); \node [above] at (2.5,2.5) {$q_1^*$};
				\draw [fill=black] (0.5,2.5) circle (.05); \node [above] at (.4,2.5) {$q_3^*$};
				\draw [fill=black] (-0.5,2.5) circle (.05); \node [above] at (-.5,2.5) {$q_4^*$};
				\draw [fill=black] (1,3.5) circle (.05);\node [above] at (1,3.5) {$1$}; 
				
				\draw (1,3.5) -- (-0.5,2.5);\draw (1,3.5) -- (0.5,2.5);\draw (1,3.5) -- (1.5,2.5);\draw (1,3.5) -- (2.5,2.5);
				
				\draw (1,-1) -- (-0.5,0);\draw (1,-1) -- (0.5,0);\draw (1,-1) -- (1.5,0);\draw (1,-1) -- (2.5,0);
				
				\draw (-1,1.25) -- (-0.5,0);\draw (-0.2,1.25) -- (-0.5,0);\draw (0.6,1.25) -- (-0.5,0);\draw (-1,1.25) -- (0.5,0);\draw (1.4,1.25) -- (0.5,0);\draw (2.2,1.25) -- (0.5,0);\draw (-0.2,1.25) -- (1.5,0);
				\draw (1.4,1.25) -- (1.5,0);\draw (3,1.25) -- (1.5,0);\draw (3,1.25) -- (2.5,0);\draw (2.2,1.25) -- (2.5,0);\draw (0.6,1.25) -- (2.5,0);
				\draw (-1,1.25) -- (-0.5,2.5);\draw (-0.2,1.25) -- (-0.5,2.5);\draw (1.4,1.25) -- (-0.5,2.5);\draw (-1,1.25) -- (0.5,2.5);\draw (0.6,1.25) -- (0.5,2.5);\draw (2.2,1.25) -- (0.5,2.5);\draw (-.2,1.25) -- (1.5,2.5);\draw (0.6,1.25) -- (1.5,2.5);\draw (3,1.25) -- (1.5,2.5); \draw (1.4,1.25) -- (2.5,2.5);\draw (2.2,1.25) -- (2.5,2.5);\draw (3,1.25) -- (2.5,2.5);

				\begin{scope}[shift={(7,0)}]
					\node [right] at (0.5,-2.5) {(b)  $G(\mathbf{ [P]})$  };
					\draw (0,0) -- (2,0); \draw (0,0) -- (0,2);\draw (0,0) -- (-1,1); \draw (0,0) -- (2,2); \draw (0,0) -- (-0.5,-0.5);\draw (0,0) -- (1,-1);\draw (0,0) -- (3.5,-1.5); 
					\draw (2,0) -- (2.5,-0.5); \draw (2,0) -- (2,2); \draw (2,0) -- (0,2);
					\draw (2,0) -- (3,1); \draw (2,0) -- (3.5,3.5);\draw (2,0) -- (1,-1);
					\draw (2,2) -- (0,2); \draw (2,2) -- (2.5,2.5);\draw (2,2) -- (3,1); \draw (2,2) -- (3.5,-1.5); \draw (2,2) -- (1,3);
					\draw (0,2) -- (-1,1); \draw (0,2) -- (-0.5,2.5);\draw (0,2) -- (1,3); \draw (0,2) -- (3.5,3.5);
					\draw (3.5,-1.5) -- (3.5,3.5); \draw (-1,1) -- (3,1);\draw (1,-1) -- (1,3);

					\draw [fill=black] (0,0) circle (.1); 
					
					\draw [fill=black] (2,0) circle (.1); 
					\draw [fill=black] (2,2) circle (.1); 
					
					\draw [fill=black] (0,2) circle (.1);

					\draw [fill=black] (-0.5,-0.5) circle (.1); \node [below] at (-.5,-.5) {$q_2^*$};
					\draw [fill=black] (-0.5,2.5) circle (.1);\node [above] at (-.5,2.5) {$q_1^*$};
					\draw [fill=black] (2.5,-0.5) circle (.1); \node [below] at (2.5,-.5) {$q_3^*$};
					\draw [fill=black] (2.5,2.5) circle (.1);\node [above] at (2.5,2.5) {$q_4^*$};
					\draw [fill=black] (-1,1) circle (.1);\node [above] at (-1,1) {$q_{34}$};
					\draw [fill=black] (1,3) circle (.1);\node [above] at (1,3) {$q_{23}$};
					\draw [fill=black] (3.5,-1.5) circle (.1);\node [below] at (3.5,-1.5) {$q_{13}$};
					
					\draw [fill=black] (1,-1) circle (.1);\node [below] at (1,-1) {$q_{14}$};
					\draw [fill=black] (3,1) circle (.1);\node [below] at (3.15,1) {$q_{12}$};
					\draw [fill=black] (3.5,3.5) circle (.1);\node [above] at (3.5,3.5) {$q_{24}$};
					
					\node [left] at (-0.05,0) {$q_{2}$};\node [right] at (2.05,0) {$q_{3}$};\node [left] at (-0.05,2) {$q_{1}$};\node [right] at (2.05,2) {$q_{4}$};
				\end{scope}
			\end{tikzpicture}
		\end{center}
		\caption{}\label{b24}
	\end{figure}
	
\end{proof}

\vskip 5truept 

\begin{theorem}[{Joshi \cite[Corollary 2.11]{joshi}}]\label{weaklyperfectatomic}
	Let $G(P)$ be the zero-divisor graph of an atomic poset $P$. Then $\chi(G(P))=\omega(G(P))=n$, where $n$ is the finite number of atoms of $P$.
\end{theorem}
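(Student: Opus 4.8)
The plan is to prove the two equalities $\chi(G(P)) = \omega(G(P))$ and $\omega(G(P)) = n$ separately, where $n$ is the number of atoms of the atomic poset $P$. Recall that for any graph we always have the chain $\omega(G) \leq \chi(G)$, so it suffices to exhibit a clique of size $n$ (giving $\omega \geq n$), a proper coloring using $n$ colors (giving $\chi \leq n$), and an argument that no clique can exceed size $n$ (giving $\omega \leq n$). Combining these yields $n \leq \omega(G(P)) \leq \chi(G(P)) \leq n$, forcing equality throughout.

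First I would establish the lower bound $\omega(G(P)) \geq n$. Let $q_{_1}, q_{_2}, \dots, q_{_n}$ be the atoms of $P$. Since $P$ is atomic, distinct atoms are pairwise incomparable, and for atoms one has $\{q_{_i}, q_{_j}\}^\ell = \{0\}$ whenever $i \neq j$ (if a nonzero element lay below both, it would contradict minimality of the atoms). Hence the atoms form a clique of size $n$ in $G(P)$, so $\omega(G(P)) \geq n$. This is essentially a special case of Lemma \ref{property}(6), where the induced subgraph on $\{P_1, \dots, P_n\}$ is complete.

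Next I would prove the upper bound on the chromatic number, $\chi(G(P)) \leq n$, by constructing an explicit proper $n$-coloring. The natural idea is to use the partition from statement (5) of Lemma \ref{property}: the sets $A_{q_{_i}}$ for $i \in \{1, \dots, n\}$ are each independent in $G(P)$ and satisfy $V(G(P)) = \bigcupdot A_{q_{_i}}$. Assigning color $i$ to every vertex in $A_{q_{_i}}$ then gives a proper vertex coloring with exactly $n$ colors, because each color class is an independent set. Thus $\chi(G(P)) \leq n$. The fact that $\{A_{q_{_i}}\}$ is a partition into independent sets is exactly what makes atomicity the crucial hypothesis: every nonzero element lies above some atom, which is what allows it to be placed in some $A_{q_{_i}}$.

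Finally, combining $n \leq \omega(G(P)) \leq \chi(G(P)) \leq n$ collapses the inequalities and gives $\chi(G(P)) = \omega(G(P)) = n$. I expect the main obstacle to be verifying carefully that the sets $A_{q_{_i}}$ genuinely cover all of $V(G(P))$ and are each independent — in particular, confirming that the definition $A_{q_{_1}} = (q_{_1}^u) \setminus \mathcal{D}$ and $A_{q_{_j}} = (q_{_j}^u) \setminus (\mathcal{D} \cup \bigcup_{i<j} q_{_i}^u)$ produces a genuine partition of the vertex set when $P$ is atomic. Once statement (5) of Lemma \ref{property} is invoked, this independence and covering is immediate, so the real content of the proof is reduced to the clique/coloring sandwich; the atomicity hypothesis is precisely what guarantees that the greedy partition by atoms exhausts all vertices.
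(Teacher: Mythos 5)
Your argument is correct. The paper does not actually prove this statement---it is quoted from Joshi \cite{joshi}---but your sandwich $n\leq\omega(G(P))\leq\chi(G(P))\leq n$, obtained from the clique formed by the $n$ pairwise-disjoint atoms together with the partition of $V(G(P))$ into the $n$ independent sets $A_{q_{_i}}$ (Lemma \ref{property}(5), where atomicity is exactly what makes the $A_{q_{_i}}$ exhaust the vertex set), is the standard proof of the cited result. The only point worth making explicit is that each atom is genuinely a vertex of $G(P)$, which needs $n\geq 2$ (the paper's standing assumption); for $n=1$ the graph is null and the statement must be read vacuously.
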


Thus, Theorem \ref{zdgchordal}(C) extends the following result. 

\begin{corollary}[{Patil et al. \cite[Theorem 2.6]{awj}}]\label{t8}
	Let $L$ be a lattice with the least element $0$ such that $G(L)$ is finite. If $G(L)$ is not perfect,  then $\omega(G(L))\geq 5$. Moreover, if $L$ is $0$-distributive, then $G(L)$ contains an induced $5$-cycle.
\end{corollary}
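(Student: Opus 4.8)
The plan is to recognize that Corollary \ref{t8} is a direct consequence of Theorem \ref{zdgchordal}(C) together with the characterization of perfect graphs via induced odd cycles, once we reduce from the lattice setting to the equivalence-class poset $[L]$. First I would invoke Theorem \ref{weaklyperfectatomic} to identify $\omega(G(L))$ with the number $n$ of atoms of $L$; this holds because a $0$-distributive (or more generally atomic) lattice with $0$ is atomic, so the clique number equals the number of atoms. Thus the hypothesis ``$G(L)$ is not perfect'' must be translated into a statement about $n$.

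Next I would apply Theorem \ref{zdgchordal}(C). Since $L$ is a finite lattice with $0$ (and $G(L)$ finite), by Theorem \ref{nboolean} the poset $[L]$ of equivalence classes is a Boolean lattice, so Theorem \ref{zdgchordal}(C) applies: $G(L)$ is perfect if and only if $L$ has at most $4$ atoms. Taking the contrapositive, if $G(L)$ is \emph{not} perfect, then $L$ has at least $5$ atoms, i.e. $n \geq 5$. Combining this with $\omega(G(L)) = n$ from the previous step yields $\omega(G(L)) \geq 5$, which is the first assertion.

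For the ``moreover'' part, I would assume $L$ is $0$-distributive and exhibit an explicit induced $5$-cycle. By Corollary \ref{pseudoatom} (or Lemma \ref{property}(3) together with the nonemptiness guaranteed for $0$-distributive posets), the sets $P_{13}, P_{14}, P_{24}, P_{25}, P_{35}$ are nonempty whenever $n \geq 5$, since each index pair is a proper subset of $\{1,\dots,n\}$. By Lemma \ref{property}(3), two classes $P_{i_1\dots i_k}$ and $P_{j_1\dots j_m}$ are adjacent in $G([L])$ exactly when their index sets are disjoint. Checking the index pairs shows that the vertices induce the $5$-cycle $P_{14} - P_{25} - P_{13} - P_{24} - P_{35} - P_{14}$ in $G([L])$ — this is precisely the cycle used in the proof of Theorem \ref{zdgchordal}(C). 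Pulling back a representative element from each class via Lemma \ref{property}(3) and (4) produces the induced $5$-cycle in $G(L)$ itself.

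The main obstacle, though a mild one, is the bookkeeping in the ``moreover'' part: I must confirm that the chosen five index-pairs are pairwise non-adjacent except along the claimed cycle, so that the cycle is genuinely \emph{induced} and not merely a closed walk with chords. This amounts to verifying that the only disjoint index-pairs among $\{13,14,24,25,35\}$ are exactly the five consecutive pairs of the cycle, which is a finite and routine check using Corollary \ref{adj}. Everything else follows formally from results already established in the excerpt.
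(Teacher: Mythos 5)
First, note that the paper does not actually prove Corollary \ref{t8}: it is quoted from Patil et al.\ and offered only as a result that Theorem \ref{zdgchordal}(C) \emph{extends}, so what is really at issue is whether your derivation from the paper's machinery is sound. It is not. For the first assertion you apply Theorem \ref{zdgchordal}(C), whose hypothesis is that $[L]$ is a Boolean lattice, and you justify that hypothesis by Theorem \ref{nboolean} --- but Theorem \ref{nboolean} requires $L$ to be a $0$-distributive bounded lattice with finitely many atoms, whereas the first sentence of Corollary \ref{t8} assumes none of this ($0$-distributivity enters only in the ``moreover'' clause, and only $G(L)$, not $L$, is assumed finite). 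Without $0$-distributivity $[L]$ need not be Boolean, so Theorem \ref{zdgchordal}(C) does not apply; likewise Theorem \ref{weaklyperfectatomic} is stated for atomic posets, which $L$ is not assumed to be. What you actually need is only the implication ``at most $4$ atoms implies perfect'', and that can be rescued: finiteness of $G(L)$ forces every nonzero zero-divisor to lie above one of finitely many atoms, so $G([L])$ is an induced subgraph of the zero-divisor graph of the Boolean lattice $\mathbf{2}^n$, induced subgraphs of perfect graphs are perfect, and one concludes via Remark \ref{rem4.5} and Corollary \ref{corsqperfect}. Your proposal does not make this argument, and since the ``moreover'' part needs $n\geq 5$, the gap propagates there as well.

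Second, in the ``moreover'' part your nonemptiness claim is wrongly justified. Corollary \ref{pseudoatom} guarantees only that the sets $P_{1\dots(i-1)(i+1)\dots n}$ (index sets of size $n-1$) are nonempty, and the paper explicitly warns, in the remark following it (Figure \ref{exa1}), that for a $0$-distributive \emph{poset} the sets $P_{i_1\dots i_k}$ with $2\le k\le n-2$ can all be empty; so ``each index pair is a proper subset of $\{1,\dots,n\}$'' does not by itself give $P_{13},P_{14},P_{24},P_{25},P_{35}\neq\emptyset$. The claim is true here only because $L$ is a \emph{lattice}: $q_i\vee q_j$ exists, and $0$-distributivity gives $q_k\wedge(q_i\vee q_j)=0$ for every $k\notin\{i,j\}$, whence $q_i\vee q_j\in P_{ij}$. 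With that supplied, your pentagram $P_{14}-P_{25}-P_{13}-P_{24}-P_{35}-P_{14}$ and the pull-back via Lemma \ref{property}(3),(4) do produce an induced $5$-cycle in $G(L)$. As written, however, both halves of the proposal rest on citations whose hypotheses are not met.
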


In view of  Proof of Theorem \ref{zdgchordal},  Theorem \ref{gchord},  Theorem \ref{redperfect}, Remark \ref{rem4.5} and  Corollary \ref{corsqperfect}, we have the following result.

\begin{theorem}\label{t3}
	Let $P$ be a finite poset such that   $[P]$ is a Boolean   lattice.
	Then the following statements are equivalent.
	\begin{enumerate} \item $G(P)$
		is perfect. 
		\item $G([P])$
		is perfect. 
		\item The number of atoms of $P$ is $\leq 4$.
		\item $G(P)$ does not contain an induced 5-cycle.
		\item $G([P])$ does not contain an induced 5-cycle.
		\item $[P]$ does not contain a meet sub-semilattice isomorphic to
		the meet sub-semilattice containing the least element 0 of $L$ as depicted in Figure \ref{sc}.
	\end{enumerate}
\end{theorem}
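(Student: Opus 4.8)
The plan is to prove the equivalence of the six statements by establishing a cycle of implications, leaning heavily on the machinery already assembled in the excerpt. First I would observe that the equivalences $(1)\Leftrightarrow(2)$, $(1)\Leftrightarrow(4)$ and $(2)\Leftrightarrow(5)$ are essentially free. Indeed, by Remark \ref{rem4.5} we have $[G(P)]=G([P])$, so Corollary \ref{corsqperfect} gives $(1)\Leftrightarrow(2)$ at once. The equivalences involving induced $5$-cycles then follow from the transfer results: by Remark \ref{rem4.5}, $G(P)$ contains an induced $5$-cycle if and only if $G([P])$ does (a $5$-cycle has no two vertices with identical neighborhoods, so it descends to $[G(P)]$ and conversely lifts by Lemma \ref{property}(3),(4)), which yields $(4)\Leftrightarrow(5)$, and the same reasoning ties these to perfectness.

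Next I would handle the core number-theoretic equivalence $(2)\Leftrightarrow(3)$, which is exactly the content of the Proof of Theorem \ref{zdgchordal}\textbf{(C)} already carried out in the excerpt. There it is shown that if $P$ has at least $5$ atoms then the vertices $P_{13},P_{14},P_{24},P_{25},P_{35}$ induce a $5$-cycle in $G([P])$ (using Corollary \ref{adj}), so $G([P])$ is not perfect; conversely, for $n\le 4$ the graph $G([P])$ is verified to be perfect via the Strong Perfect Graph Theorem (Theorem \ref{strongperfect}) together with the explicit analysis of Figure \ref{b24} and, for $n\le 3$, via chordality of $G^c(P)$ (Theorem \ref{zdgchordal}\textbf{(B)}) and Corollary \ref{corsperfect}(1). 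Thus $(3)\Rightarrow(2)$ and $\neg(3)\Rightarrow\neg(2)$, giving $(2)\Leftrightarrow(3)$. Since the induced $5$-cycle exhibited under $n\ge 5$ is precisely the obstruction, the chain $(3)\Leftrightarrow(5)$ also falls out, and combining with the above we already have $(1)$--$(5)$ all equivalent.

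The genuinely new work is statement $(6)$, the order-theoretic reformulation. The idea is that an induced $5$-cycle in $G([P])$ corresponds, under Corollary \ref{adj}, to five equivalence classes $P_{i_1\dots},\dots$ whose index sets are pairwise disjoint-or-not in the cyclic pattern of $C_5$; translated back into $[P]$, this is exactly a meet-subsemilattice of the shape depicted in Figure \ref{sc}, with the least element $0$ as the common meet of the adjacent pairs. So I would prove $(5)\Leftrightarrow(6)$ directly: given an induced $5$-cycle $P_{a_1}-\cdots-P_{a_5}-P_{a_1}$, the conditions $\{P_{a_i},P_{a_{i+1}}\}^\ell=\{P_0\}$ (adjacency) and $\{P_{a_i},P_{a_{i+2}}\}^\ell\ne\{P_0\}$ (non-adjacency) say precisely that the subposet generated by these five elements together with $0$ is isomorphic to the configuration in Figure \ref{sc}; conversely such a subsemilattice yields, via Lemma \ref{property}(3), an induced $5$-cycle. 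Because $[P]$ is Boolean, the required meets exist and the disjointness/overlap of atom-index sets encodes the adjacency relation faithfully.

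The main obstacle will be $(5)\Leftrightarrow(6)$, specifically nailing down that the abstract "meet sub-semilattice isomorphic to Figure \ref{sc}" captures exactly the adjacency and non-adjacency constraints of a $C_5$, and no extra relations. The delicate point is that a $5$-cycle forbids chords, so I must verify that the five chosen elements are pairwise incomparable and that each non-adjacent pair has a nonzero common lower bound while each adjacent pair meets at $0$; in a Boolean lattice this is a clean computation with the complementation and the atom structure (Proposition \ref{atompseudo}, Lemma \ref{property}), but one must check the correspondence is not merely an embedding in one direction but a genuine isomorphism of the marked subsemilattice, so that the figure is both necessary and sufficient. Once this is settled, assembling the equivalences $(1)\Leftrightarrow(2)\Leftrightarrow(3)\Leftrightarrow(4)\Leftrightarrow(5)\Leftrightarrow(6)$ is routine.
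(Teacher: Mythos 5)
Your proposal is correct and follows essentially the same route as the paper, which itself gives no separate proof but simply assembles Theorem \ref{t3} from Remark \ref{rem4.5}, Corollary \ref{corsqperfect}, the proof of Theorem \ref{zdgchordal}(C) (the explicit $5$-cycle on $P_{13},P_{14},P_{24},P_{25},P_{35}$ for $n\geq 5$ and the chordality/SPGT analysis for $n\leq 4$), and the transfer results for induced $5$-cycles. Your sketch of $(5)\Leftrightarrow(6)$ is actually more explicit than anything in the paper, and the verification you flag (adjacent pairs of the cycle meet at $0$, non-adjacent pairs have nonzero pairwise meets which are pairwise disjoint and strictly below the cycle vertices, so the eleven elements form a meet-closed copy of Figure \ref{sc}) goes through exactly as you describe since $[P]$ is a Boolean lattice.
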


\begin{figure}[h]
	\begin{center}
		\begin{tikzpicture}[scale=.6]
			\draw
			(-1.5,0)--(-.5,2)--(0.5,0)--(1.5,2)--(2.5,0)--(3.5,2)--(4.5,0)--(5.5,2)--(6.5,0)--(7.5,2)--cycle;
			\draw (-1.5,0)--(2.5,-2);\draw (.5,0)--(2.5,-2);\draw
			(2.5,0)--(2.5,-2);\draw (4.5,0)--(2.5,-2);\draw (6.5,0)--(2.5,-2);
			\node [left] at(-1.5,0) { $P_{1}$};\node [above] at(-.5,2)
			{$P_{12}$};\node [left] at(0.6,0) { $P_{2}$};\node [above] at(1.5,2)
			{$P_{23}$};\node [right] at(2.4,0) { $P_{3}$};\node [above]
			at(3.5,2) {$P_{34}$};\node [right] at(4.4,0) { $P_{4}$};\node
			[above] at(5.5,2) {$P_{45}$};\node [right] at(6.4,0) { $P_{5}$};\node [above] at(7.5,2) {$P_{15}$};\node [left] at (2.5,-2.1)
			{$P_{0}$}; \draw[fill=white](-1.5,0) circle(.06);
			\draw[fill=white](-.5,2) circle(.06);\draw[fill=white](.5,0)
			circle(.06);\draw[fill=white](1.5,2)
			circle(.06);\draw[fill=white](2.5,0)
			circle(.06);\draw[fill=white](3.5,2)
			circle(.06);\draw[fill=white](4.5,0)
			circle(.06);\draw[fill=white](5.5,2)
			circle(.06);\draw[fill=white](6.5,0)
			circle(.06);\draw[fill=white](7.5,2)
			circle(.06);\draw[fill=white](2.5,-2) circle(.06);

		\end{tikzpicture}
	\end{center}
	\caption{Meet sub-semilattice}\label{sc}
\end{figure}
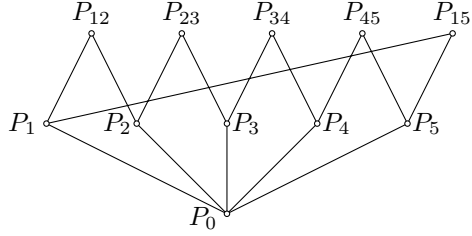

Using Theorem \ref{zdgchordal}, Theorem \ref{nboolean}, Lemma \ref{product} and Theorem \ref{weaklyperfectatomic},   we have the following results.

\begin{corollary}\label{perfect0lattice}
	Let $L$ be a finite $0$-distributive lattice with  $n$ atoms. Then $G(L)$ is perfect if and only if $\omega(G(L))=n\leq 4$. 
\end{corollary}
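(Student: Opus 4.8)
The plan is to prove Corollary \ref{perfect0lattice} by reducing it to Theorem \ref{zdgchordal}(C) and Theorem \ref{weaklyperfectatomic}. The statement asserts that for a finite $0$-distributive lattice $L$ with $n$ atoms, $G(L)$ is perfect if and only if $\omega(G(L)) = n \leq 4$. The strategy hinges on two facts already established: first, by Theorem \ref{nboolean}, a finite $0$-distributive lattice $L$ has $[L]$ Boolean, which is precisely the hypothesis needed to invoke Theorem \ref{zdgchordal}(C); second, by Theorem \ref{weaklyperfectatomic}, since a finite $0$-distributive lattice is atomic, we have $\chi(G(L)) = \omega(G(L)) = n$, identifying the clique number with the number of atoms.

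First I would observe that every finite $0$-distributive lattice is atomic (each nonzero element dominates an atom), so Theorem \ref{weaklyperfectatomic} applies and yields $\omega(G(L)) = n$, where $n$ is the number of atoms. This immediately shows that the conditions ``$\omega(G(L)) = n \leq 4$'' and ``the number of atoms of $L$ is $\leq 4$'' are equivalent, since $\omega(G(L)) = n$ always holds. Thus the corollary is really the assertion that $G(L)$ is perfect if and only if $L$ has at most $4$ atoms.

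Next I would apply Theorem \ref{nboolean} to conclude that $[L]$ is a Boolean lattice; this verifies the hypothesis of Theorem \ref{zdgchordal}(C) with $P = L$. Theorem \ref{zdgchordal}(C) then gives directly that $G(L)$ is perfect if and only if $L$ has at most $4$ atoms, i.e., $n \leq 4$. Combining this with the identity $\omega(G(L)) = n$ from the previous step completes both directions of the equivalence: $G(L)$ perfect $\iff n \leq 4 \iff \omega(G(L)) = n \leq 4$.

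The proof is essentially a bookkeeping assembly of earlier results, so there is no serious obstacle; the only point requiring minor care is confirming that a finite $0$-distributive lattice genuinely satisfies the hypotheses of each cited theorem—namely that it is atomic (so Theorem \ref{weaklyperfectatomic} applies) and that Theorem \ref{nboolean} is stated for bounded lattices, which a finite lattice automatically is. Once these routine verifications are in place, the equivalence follows immediately by chaining the cited results, and I would present the argument in two or three short sentences.
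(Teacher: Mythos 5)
Your proposal is correct and follows exactly the route the paper intends: the paper derives this corollary by citing Theorem \ref{nboolean} (to get $[L]$ Boolean), Theorem \ref{zdgchordal}(C) (perfectness iff at most $4$ atoms), and Theorem \ref{weaklyperfectatomic} (to identify $\omega(G(L))=n$). Your added verifications that a finite lattice is automatically bounded and atomic are exactly the routine checks needed to make the chain of citations legitimate.
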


\begin{corollary} \label{zdgperfectproduct}
	Let $\textbf{P}=\prod\limits_{i=1}^nP^i$, where $P^i$  be a finite bounded poset such that $Z(P^i)=\{0\}$ for every $i$. Then $G(\textbf{P})$ is perfect graph if and only if $\omega(G(\textbf{P}))=n\leq 4$.

\end{corollary}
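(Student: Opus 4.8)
The plan is to reduce this statement about the direct product $\textbf{P}=\prod_{i=1}^nP^i$ to the already-proven characterization in Theorem \ref{zdgchordal}(C) together with the clique-number computation in Theorem \ref{weaklyperfectatomic}. The essential observation is that the number of atoms of $\textbf{P}$ is exactly $n$: since each $P^i$ is a finite bounded poset with $Z(P^i)=\{0\}$, the only zero-divisor-type splitting in the product comes across the $n$ coordinates, so the atoms of $\textbf{P}$ are precisely the tuples having an atom of $P^i$ in one coordinate and $0$ elsewhere, and there are $n$ such coordinate-blocks giving rise to the atom classes $P_1,\dots,P_n$ of $[\textbf{P}]$.

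First I would invoke Lemma \ref{product}, which guarantees that $[\textbf{P}]$ is a Boolean lattice (and records $|P_{i_1\dots i_k}|=\prod_{i=i_1}^{i_k}(|P^i|-1)$). This is exactly the hypothesis ``$[P]$ is a Boolean lattice'' required to apply Theorem \ref{zdgchordal}(C) to $P=\textbf{P}$. Consequently $G(\textbf{P})$ is perfect if and only if the number of atoms of $\textbf{P}$ is at most $4$; and by the preceding observation this number of atoms equals $n$, so $G(\textbf{P})$ is perfect if and only if $n\leq 4$.

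Next I would identify the clique number with $n$. Since $\textbf{P}$ is atomic with exactly $n$ atoms, Theorem \ref{weaklyperfectatomic} gives $\chi(G(\textbf{P}))=\omega(G(\textbf{P}))=n$. Substituting $\omega(G(\textbf{P}))=n$ into the condition $n\leq 4$ yields the stated equivalence: $G(\textbf{P})$ is perfect if and only if $\omega(G(\textbf{P}))=n\leq 4$. This simultaneously packages the combinatorial threshold and the clique interpretation into a single clean criterion.

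The only genuinely delicate point, which I would verify carefully rather than gloss over, is that $\textbf{P}$ is atomic with precisely $n$ atoms (so that Theorem \ref{weaklyperfectatomic} applies and the atom count feeding Theorem \ref{zdgchordal}(C) is indeed $n$). This follows because each factor $P^i$ has no nonzero zero-divisors and is bounded, so an atom of $\textbf{P}$ must be supported in a single coordinate; atomicity of $\textbf{P}$ is then inherited coordinatewise. Everything else is a direct citation, so I expect this atom-counting verification to be the main obstacle, and it is a routine one.
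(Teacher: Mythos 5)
Your proposal is correct and matches the paper's route exactly: the paper derives this corollary by the same chain Lemma \ref{product} (so $[\textbf{P}]$ is Boolean) $\Rightarrow$ Theorem \ref{zdgchordal}(C) $\Rightarrow$ Theorem \ref{weaklyperfectatomic}, with no further argument given. The only point worth making explicit in your atom count is that each factor $P^i$ has a \emph{unique} atom (two distinct atoms of $P^i$ would be adjacent nonzero zero-divisors, contradicting $Z(P^i)=\{0\}$), which is what guarantees that $\textbf{P}$ has exactly $n$ atoms rather than merely $n$ coordinate-blocks of atoms.
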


\begin{remark}
	The condition on poset $P$ with $[P]$ is Boolean lattice  in our first main Theorem \ref{zdgchordal} is necessary. Let $P$ be a uniquely complemented distributive poset with  $5$ atoms and $5$ dual atoms (as shown in Figure \ref{condition}(A)) such that $[P]$ is a Boolean poset but not a lattice. Also, $P_{ij}\not= \emptyset$ for all $i, j \in \{1,2,3,4,5\}$. The zero-divisor graph $G(P)$ of $P$ and its complement $G^c(P)$ are shown in Figure \ref{condition}(B), Figure \ref{condition}(C), respectively. Then $G(P)$ and $G^c(P)$ does not contains an induced  cycle of length $\geq 4$. Thus, $G(P)$ and $G^c(P)$ are the chordal  graphs. Hence, $G(P)$ is a perfect graph though number of atoms $\nleq 4$, contrary to Theorem \ref{zdgchordal}. In fact, we can construct a Boolean poset with  $n$ atoms and $n$ dual atoms ($n$- arbitrarily large) similar to as shown in Figure \ref{condition}(A) such that $G(P)$ and $G^c(P)$ are chordal graphs, hence perfect graphs.

	\begin{figure}[h]
		\begin{center}

			\begin{tikzpicture}[scale =0.85]
				\centering	
				
				\draw [fill=black] (0,0) circle (.1);
				\draw [fill=black] (-2,1.5) circle (.1);
				\draw [fill=black] (-1,1.5) circle (.1);
				\draw [fill=black] (0,1.5) circle (.1);
				\draw [fill=black] (1,1.5) circle (.1);
				\draw [fill=black] (2,1.5) circle (.1);
				\draw [fill=black] (-2,3) circle (.1);
				\draw [fill=black] (-1,3) circle (.1);
				\draw [fill=black] (0,3) circle (.1);
				\draw [fill=black] (1,3) circle (.1);
				\draw [fill=black] (2,3) circle (.1);
				\draw [fill=black] (-2,1.5) circle (.1);
				\draw [fill=black] (-1,1.5) circle (.1);
				\draw [fill=black] (0,1.5) circle (.1);
				\draw [fill=black] (1,1.5) circle (.1);
				\draw [fill=black] (0,4.5) circle (.1);	
				
				\draw (0,0)--(-2,1.5)--(-2,3)--(-1,1.5)--(0,0)--(0,1.5)--(-2,3)--(1,1.5)--(0,0)--(2,1.5)--(-1,3)--(-2,1.5)--(0,3)--(-1,1.5); \draw(1,3)--(0,4.5)--(2,3)--(2,1.5)--(0,3)--(0,4.5)--(-1,3)--(0,1.5)--(1,3)--(2,1.5);	
				
				\draw (-2,3)--(0,4.5)--(2,3)--(1,1.5);
				\draw (-1,1.5)--(-1,3);
				\draw (-2,1.5)--(1,3)--(1,1.5)--(0,3);
				\draw (-1,1.5)--(2,3)--(0,1.5);

				\draw node [above] at (0,4.6) {$1$};
				\draw node [above] at (0,3.1) {$q^*_3$};
				\draw node [above] at (-2,3.1) {$q^*_5$};
				\draw node [above] at (-1,3.1) {$q^*_4$};
				\draw node [above] at (1,3.1) {$q^*_2$};
				\draw node [above] at (2,3.1) {$q^*_1$};
				\draw node [below] at (0,1.4) {$q_3$};
				\draw node [below] at (-2,1.4) {$q_1$};
				\draw node [below] at (-1,1.4) {$q_2$};
				\draw node [below] at (1,1.4) {$q_4$};
				\draw node [below] at (2,1.4) {$q_5$};
				\draw node [below] at (0,-0.1) {$0$};
				
				\draw node [above] at (0,-1.5) {A. $P$};

				\begin{scope}[shift={(4,1.)}]
					
					\draw [fill=black] (0,0) circle (.1);
					\draw [fill=black] (0,2) circle (.1);
					\draw [fill=black] (2.5,0) circle (.1);
					\draw [fill=black] (2.5,2) circle (.1);
					\draw [fill=black] (1.25,3.5) circle (.1);
					\draw [fill=black] (-0.75,3) circle (.1);
					\draw [fill=black] (3.25,3.3) circle (.1);
					\draw [fill=black] (-1.5,1) circle (.1);
					\draw [fill=black] (4,1) circle (.1);
					\draw [fill=black] (1.25,-1.5) circle (.1);
					
					\draw (0,0)--(0,2)--(2.5,2)--(2.5,0)--(0,0)--(1.25,3.5)--(2.5,2)--(0,0)--(1.25,-1.5);
					
					\draw (2.5,0)--(0,2)--(1.25,3.5)--(2.5,0)--(4,1);
					
					\draw (0,2)--(-1.5,1);
					
					\draw (1.25,3.5)--(-0.75,3);
					
					\draw (3.25,3.3)--(2.5,2);
					
					\draw node [below] at (0,0) {$q_1$};
					\draw node [below] at (2.5,0) {$q_2$};
					\draw node [above] at (2.5,2) {$q_3$};
					\draw node [above] at (1.25,3.5) {$q_4$};
					\draw node [above] at (0,2) {$q_5$};
					
					\draw node [above] at (-1.5,1) {$q^*_5$};
					\draw node [above] at (1.25,-1.5) {$q^*_1$};
					\draw node [above] at (4,1) {$q^*_2$};
					\draw node [above] at (3.25,3.3) {$q^*_3$};
					\draw node [above] at (-0.75,3) {$q^*_4$};
					
					\draw node [above] at (1.25,-2.5) {B. $G(P)$};
					
				\end{scope}

				\begin{scope}[shift={(10,1)}]
					
					\draw [fill=black] (0,0) circle (.1);
					\draw [fill=black] (0,2) circle (.1);
					\draw [fill=black] (2.5,0) circle (.1);
					\draw [fill=black] (2.5,2) circle (.1);
					\draw [fill=black] (1.25,3.5) circle (.1);
					\draw [fill=black] (-0.75,3) circle (.1);
					\draw [fill=black] (3.25,3) circle (.1);
					\draw [fill=black] (-1.5,1) circle (.1);
					\draw [fill=black] (4,1) circle (.1);
					\draw [fill=black] (1.25,-1.5) circle (.1);
					
					\draw (0,0)--(0,2)--(2.5,2)--(2.5,0)--(0,0)--(1.25,3.5)--(2.5,2)--(0,0)--(1.25,-1.5)--(2.5,0)--(0,2)--(1.25,3.5)--(3.25,3)--(2.5,2)--(4,1)--(2.5,0)--(1.25,-1.5)--(0,0)--(-1.5,1)--(2.5,0)--(3.25,3)--(0,2)--(-0.75,3)--(2.5,2);
					
					\draw (-1.5,1)--(0,2)--(1.25,-1.5)--(2.5,2)--(4,1)--(0,0)--(-0.75,3)--(1.25,3.5)--(2.5,0);
					
					\draw (-1.5,1)--(1.25,3.5)--(4,1);
					
					\draw node [below] at (0,0) {$q^*_1$};
					\draw node [below] at (2.5,0) {$q^*_2$};
					\draw node [above] at (2.5,2) {$q^*_3$};
					\draw node [above] at (1.25,3.5) {$q^*_4$};
					\draw node [above] at (0,2) {$q^*_5$};

					\draw node [above] at (-1.5,1) {$q_3$};
					\draw node [above] at (1.25,-1.5) {$q_4$};
					\draw node [above] at (4,1) {$q_5$};
					\draw node [above] at (3.25,3.1) {$q_1$};
					\draw node [above] at (-0.75,3) {$q_2$};
					
					\draw node [above] at (1.25,-2.5) {C. $G^c(P)$};
					
				\end{scope}
				
			\end{tikzpicture}
		\end{center}	
		\caption{}\label{condition}
	\end{figure}
\end{remark}

\newpage

\section{Coloring of zero-divisor graphs}

In this section, we prove that the zero-divisor graphs of finite posets satisfy the Total Coloring Conjecture. Further, we prove that the complement of the zero-divisor graph of finite 0-distributive posets satisfies the Total Coloring Conjecture. Recently, Srinivasa Murthy \cite[Revision 3]{murthy} claims the proof of Total Coloring Conjecture for the finite simple graphs. However, this claim is not verified.

\vskip 5truept 
We quote the following definition and results needed in the sequel.

\vskip 5truept 

The \textit{vertex chromatic number} $\chi(G)$ of a graph $G$ is the minimum number of colors required to color the vertices of $G$ such that no two adjacent vertices receive the same color, whereas the \textit{edge chromatic number} $\chi'(G)$ of  $G$ is the minimum number of colors required to color the edges of $G$ such that incident edges receive different colors. A graph $G$ is {\it class one}, if $\chi'(G)=\Delta(G)$  and is {\it class two},	if $\chi'(G)=\Delta(G)+1$.

Note that every graph $G$ requires at least $\Delta(G)+1$ colors for the total coloring. A graph is said to be  {\it type I}, if $\chi''(G) =\Delta(G)+1$ and is said to be {\it type II}, if $\chi''(G) =\Delta(G)+2$.


\begin{theorem}[Behzad et al. {\cite[Theorem 1]{behzad}}]\label{complete} The following statements hold for the complete graph $K_n$.
	\begin{enumerate}
		\item $\chi(K_{n})=n$.
		\item $\chi{'}(K_{n})= \begin{cases}
			n & \text{ for $n$ odd  $n\geq 3;$} \\ 
			n-1 & \text{for $n$ even.} 
		\end{cases}$
		
		\item $\chi{''}(K_{n})= \begin{cases}
			n  & \text{for $n$ odd;}\\ n+1 & \text{ for $n$ even.} \end{cases}$
	\end{enumerate}
\end{theorem}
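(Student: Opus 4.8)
The plan is to treat the three quantities separately, in each case establishing matching lower and upper bounds. Statement (1) is immediate: in $K_n$ every two vertices are adjacent, so a proper vertex coloring must assign $n$ distinct colors, while $n$ colors visibly suffice; hence $\chi(K_n)=n$.

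For statement (2), note first that $\Delta(K_n)=n-1$, so $\chi'(K_n)\geq n-1$. When $n$ is odd, a matching has at most $(n-1)/2$ edges, whereas $K_n$ has $\binom{n}{2}=n(n-1)/2$ edges; dividing forces at least $n$ color classes, so $\chi'(K_n)\geq n$. For the upper bounds I would use explicit rotational constructions. For $n$ odd, label the vertices by $\mathbb{Z}_n$ and, for each $c\in\mathbb{Z}_n$, let the color class $M_c$ consist of all edges $\{i,j\}$ with $i+j\equiv 2c\pmod n$; since $2$ is invertible modulo an odd number, each $M_c$ is a near-perfect matching missing exactly the vertex $c$, and these $n$ matchings partition the edge set, giving $\chi'(K_n)=n$. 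For $n$ even, label the vertices by $\mathbb{Z}_{n-1}\cup\{\infty\}$ and set $M_c=\{\{\infty,c\}\}\cup\{\{i,j\}:i+j\equiv 2c\pmod{n-1}\}$ for $c\in\mathbb{Z}_{n-1}$; each $M_c$ is now a perfect matching, and the $n-1$ of them form a $1$-factorization, so $\chi'(K_n)=n-1$.

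For statement (3) the lower bound $\chi''(K_n)\geq\Delta(K_n)+1=n$ holds for every graph. For $n$ even I would sharpen this by a counting argument. Since all vertices of $K_n$ are mutually adjacent, each color class in a total coloring contains at most one vertex, and if it contains a vertex it can otherwise contain only a matching on the remaining $n-1$ (odd) vertices, hence at most $\lfloor(n-1)/2\rfloor=n/2-1$ edges; either way a color class has at most $n/2$ elements. As there are $n+\binom{n}{2}=n(n+1)/2$ elements to color, using only $n$ colors could cover at most $n\cdot(n/2)=n^2/2<n(n+1)/2$ elements, a contradiction; thus $\chi''(K_n)\geq n+1$ for even $n$. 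For the upper bounds, when $n$ is odd I would extend the rotational edge coloring above to a total coloring by giving vertex $c$ the color $c$: the edges at vertex $i$ use exactly the colors $\{(i+j)/2:j\neq i\}=\mathbb{Z}_n\setminus\{i\}$, so color $i$ is free at vertex $i$ and no conflict arises, yielding $\chi''(K_n)=n$. When $n$ is even, $n+1$ is odd, so $K_{n+1}$ admits a total coloring with $n+1$ colors by the odd case; deleting one vertex together with its incident edges leaves a total coloring of $K_n$ using at most $n+1$ colors, matching the lower bound and giving $\chi''(K_n)=n+1$.

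The main obstacle is verifying that the rotational edge coloring for odd $n$ genuinely extends to a proper total coloring, that is, checking simultaneously that each vertex color is incident to no edge of the same color and that incident edges receive distinct colors. This reduces to the arithmetic fact that, for odd $n$, the map $j\mapsto(i+j)/2$ is a bijection from $\mathbb{Z}_n\setminus\{i\}$ onto $\mathbb{Z}_n\setminus\{i\}$, which is precisely where the invertibility of $2$ modulo an odd number is essential.
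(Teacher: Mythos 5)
Your proof is correct and complete: the counting lower bounds, the rotational (round-robin) matchings $M_c$ for the edge colorings, the extension to a total coloring of $K_n$ for odd $n$ by assigning vertex $c$ the color $c$, and the deletion argument from $K_{n+1}$ for even $n$ all check out, including the key bijectivity of $j\mapsto (i+j)\cdot 2^{-1}$ on $\mathbb{Z}_n\setminus\{i\}$. Note that the paper does not prove this statement at all --- it is quoted from Behzad, Chartrand and Cooper as a known result --- and your argument is essentially the classical proof from that source, so there is nothing in the paper to compare it against.
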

\vskip 5truept 
\begin{theorem}[Behzad et al. {\cite[Theorem 2]{behzad}}]\label{bipartite} The following statements hold for the complete bipartite graph $K_{m,n}$.
	\begin{enumerate}
		
		\item $\chi(K_{m,n})=2$;
		\item $\chi{'}(K_{m,n})=$ max$\{m,n\}$;
		\item $\chi{''}(K_{m,n})=$ max$\{m,n\}+1+\delta_{mn}$, where $\delta_{mn}= \begin{cases} 0 & \text{if $m\neq n;$}\\ 1 & \text{if $m= n.$} \end{cases}$ \end{enumerate}
\end{theorem}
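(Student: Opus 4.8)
The plan is to prove the three statements separately, relying only on elementary constructions and a single counting argument. Throughout I assume without loss of generality that $m \le n$, so that $\Delta(K_{m,n}) = n = \max\{m,n\}$, and I write $A = \{a_0,\dots,a_{m-1}\}$ and $B = \{b_0,\dots,b_{n-1}\}$ for the two parts. For (1), the bipartition itself furnishes a proper $2$-coloring, since every edge crosses between $A$ and $B$; as any edge forces at least two colors, $\chi(K_{m,n})=2$. For (2), the bound $\chi'(K_{m,n}) \ge \Delta = n$ is immediate, and I would match it by the explicit cyclic edge coloring $c(a_i b_j) = (i+j)\bmod n$ with colors $\{0,1,\dots,n-1\}$: at $a_i$ the $n$ incident edges receive all residues, while at $b_j$ they receive the $m$ residues $\{j,j+1,\dots,j+m-1\}\bmod n$, which are distinct because $m \le n$. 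Hence $\chi'(K_{m,n}) = n$. (One may instead invoke K\"onig's edge-coloring theorem for bipartite graphs.)

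For (3) I would first record the universal lower bound $\chi''(G) \ge \Delta(G)+1$, giving $\chi''(K_{m,n}) \ge n+1$, and then obtain the upper bounds by extending the edge coloring above. When $m < n$, I keep $c(a_i b_j)=(i+j)\bmod n$, assign the fresh color $n$ to every vertex of $A$ (the $a_i$ are mutually non-adjacent and each already meets $0,\dots,n-1$ on its edges), and color each $b_j$ with any residue in $\{0,\dots,n-1\}\setminus\{j,\dots,j+m-1\}$, a set of size $n-m \ge 1$; this avoids both the incident edge colors and the color $n$ of its neighbors, so we obtain a total coloring with $n+1$ colors, proving $\chi''(K_{m,n}) = n+1$ (type I). When $m = n$, the same edge coloring together with color $n$ on all of $A$ and color $n+1$ on all of $B$ is a valid total coloring, so $\chi''(K_{n,n}) \le n+2$.

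The crux of the proof, and the step I expect to be the main obstacle, is the matching lower bound $\chi''(K_{n,n}) \ge n+2$, i.e.\ that the balanced graph is genuinely type II. I would argue by contradiction: suppose a total coloring of $K_{n,n}$ uses exactly $n+1$ colors. Every vertex has degree $n$, so the vertex together with its $n$ incident edges exhausts all $n+1$ colors exactly once; in particular a vertex's own color is the unique color absent from its incident edges. Fixing a color $c$ and letting $V_c$ be the set of vertices colored $c$, each vertex of $A$ is therefore either colored $c$ or incident to exactly one $c$-edge, and since each $c$-edge has exactly one endpoint in $A$ we get $(\#\, c\text{-edges}) = n - |V_c \cap A|$; counting from $B$ gives $(\#\, c\text{-edges}) = n - |V_c \cap B|$, whence $|V_c \cap A| = |V_c \cap B|$. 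But $V_c$ is independent, so two same-colored vertices must lie in the same part; if $|V_c| \ge 2$ this forces one intersection to be $0$ and hence both to be $0$, a contradiction, while $|V_c|=1$ makes the intersections $1$ and $0$, again impossible. Thus $V_c=\emptyset$ for every $c$, contradicting the coloring of the $2n$ vertices. Hence $n+1$ colors cannot suffice and $\chi''(K_{n,n})=n+2$. Combining the three cases yields $\chi''(K_{m,n}) = \max\{m,n\}+1+\delta_{mn}$, as claimed. The only genuinely delicate point is this last argument, where the balanced structure $m=n$ is exactly what makes \emph{every} vertex see all colors and drives the contradiction; the unbalanced case offers the slack $n-m\ge 1$ that the construction exploits.
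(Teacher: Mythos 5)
The paper does not prove this statement at all: it is quoted verbatim from Behzad, Chartrand and Cooper \cite{behzad}, so there is no internal proof to compare yours against. Your argument is correct and self-contained. Parts (1) and (2) are the standard bipartition coloring and the cyclic edge coloring $c(a_ib_j)=(i+j)\bmod n$ (equivalently K\"onig's edge-coloring theorem), and both check out, including the verification that the $m$ colors seen at each $b_j$ are distinct because $m\le n$. For part (3), the unbalanced construction correctly exploits the slack $n-m\ge 1$ to find a legal color for each $b_j$ after painting all of $A$ with the fresh color $n$, and you rightly identify the balanced lower bound as the only delicate step. That step is also sound: in a hypothetical $(n+1)$-total-coloring of $K_{n,n}$ each vertex together with its $n$ incident edges realizes every color exactly once, so the $c$-colored edges form a matching missing exactly the vertices of $V_c$, giving $|V_c\cap A|=|V_c\cap B|$ for every color class $V_c$; since an independent set of $K_{n,n}$ lies entirely in one part, every $V_c$ must be empty, which is absurd because the $2n$ vertices must be colored. (The degenerate case $m=n=1$ is consistent with the formula, and $m,n\ge 1$ is implicit throughout.) Your proof is a legitimate replacement for the citation.
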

\vskip 5truept

\begin{theorem}[Yap {\cite[Theorem 2.6, p. 8]{tcyap}}]\label{zdg}
	Let $G$ be a graph of order $N$ and $\Delta(G)$ be the maximum degree of graph $G$. If $G$ contains an independent set $X$ of vertices, where $|X|\geq N-\Delta(G)-1$, then $\chi''(G)\leq\Delta(G)+2$, that is, $G$ satisfies the Total Coloring Conjecture.
\end{theorem}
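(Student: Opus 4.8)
The plan is to turn a total colouring of $G$ into an ordinary edge–colouring problem, where Vizing's theorem is available, and to handle the large independent set by hand. Write $\Delta=\Delta(G)$ and set $Y=V(G)\setminus X$, so that $|Y|=N-|X|\le \Delta+1$. The decisive structural consequence of $X$ being independent is that $Y$ is a vertex cover: every edge of $G$ meets $Y$, and there are no edges inside $X$. Thus all edge–constraints are anchored at the \emph{small} set $Y$, while the large set $X$ interacts with the rest only through the $X$--$Y$ edges; it is this imbalance, quantified by $|X|\ge N-\Delta-1$, that must be exploited.

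First I would build the reduction. Form $G^{*}$ by adding one new vertex $w$ joined to every vertex of $Y$. Each $y\in Y$ gains $1$ in degree, each $x\in X$ is unchanged, and $\deg_{G^{*}}(w)=|Y|\le\Delta+1$, so $\Delta(G^{*})\le\Delta+1$; Vizing's theorem then gives a proper edge–colouring $c$ of $G^{*}$ with the palette $\{1,\dots,\Delta+2\}$. From $c$ I read off a partial total colouring of $G$: each edge $e\in E(G)\subseteq E(G^{*})$ keeps the colour $c(e)$, and each $y\in Y$ receives the vertex colour $c(wy)$. Since $c$ is proper and the edges $wy$ ($y\in Y$) are pairwise incident at $w$, every total–colouring constraint involving an edge or a vertex of $Y$ holds automatically: edges meeting at a common $y$ get distinct colours, $c(wy)\ne c(yy')$ for incident $yy'$, and the $c(wy)$ are pairwise distinct so adjacent vertices of $Y$ differ. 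Only the vertices of $X$ remain uncoloured. (A symmetric route is to colour vertices first, assigning the $\le\Delta+1$ vertices of $Y$ distinct colours, e.g.\ via the clique total colouring of Theorem \ref{complete} on $K_{|Y|}$, and one reserved colour to all of $X$; this is a proper vertex colouring because $X$ is independent and meets $Y$ only across edges.)

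The main obstacle is exactly the colouring of $X$, and here the hypothesis pays off only after some care. For $x\in X$ with $d=\deg_G(x)\le\Delta$, the colours forbidden at $x$ are the $d$ incident edge–colours $\{c(xy):y\sim x\}$ together with the $d$ neighbour colours $\{c(wy):y\sim x\}$; a priori their union can have size $2d$, which may exhaust the $\Delta+2$ colours, so a free colour is \emph{not} guaranteed by a naive count (and indeed an arbitrary Vizing colouring can fail, as one sees already for $G=K_{\Delta+1}$ coloured by a $1$--factorization). Independence is what rescues the argument: no two vertices of $X$ constrain one another, so it suffices to guarantee, \emph{separately for each} $x$, that the list $L(x)=\{1,\dots,\Delta+2\}\setminus\big(\{c(xy):y\sim x\}\cup\{c(wy):y\sim x\}\big)$ is nonempty. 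The step I expect to be hardest is arranging the edge colours around the high–degree vertices of $X$ so that incident edge–colours and neighbour colours overlap enough to keep $|L(x)|\ge 1$, using the slack $(\Delta+2)-\deg_G(x)\ge 2$. Concretely I would either choose $c$ so that the edges $xy$ reuse the neighbour colours $c(wy')$ (a cyclic ``rotation'' forcing $\{c(xy)\}\cup\{c(wy)\}$ to have size $\le\Delta+1$), or, on the vertex–first route, complete the $X$--$Y$ edges by a list edge–colouring of the bipartite graph between $X$ and $Y$: there each edge $xy$ forbids only the at most $\deg_G(y)$ objects already present at $y$, giving lists large relative to the degrees on the $Y$–side, so that Galvin's theorem on bipartite edge choosability supplies the colouring. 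Either way the combinatorial heart is showing such an alignment always exists; this is precisely the content that makes the weaker bound $\Delta+2$ (rather than $\Delta+1$) attainable, yielding $\chi''(G)\le\Delta+2$ and hence the Total Coloring Conjecture for $G$.
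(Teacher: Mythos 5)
You should know at the outset that the paper contains no proof of this statement: it is quoted, with citation, from Yap's monograph on total colourings (Theorem 2.6 there) and used as a black box in the proof of the paper's Theorem \ref{zdgtcc}. So your attempt has to stand on its own, and it does not: it is a correct reduction followed by an unproved claim at exactly the point where the theorem is hard. Everything through the construction of $G^{*}=G+w$, the application of Vizing's theorem, and the transfer of a partial total colouring to $G$ (all edges coloured, all of $Y$ coloured consistently) is fine. But the remaining task, finding for each $x\in X$ a colour outside $\{c(xy):y\sim x\}\cup\{c(wy):y\sim x\}$, is the entire content of the theorem, and you explicitly leave it open (``the combinatorial heart is showing such an alignment always exists''). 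A proof that ends by naming its missing lemma is not a proof.

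Neither of your two proposed completions closes the gap. The ``cyclic rotation'' variant is not an argument: you never construct an edge colouring of $G^{*}$ with the required overlap property, and its existence is precisely what must be proved; your own counting remark shows that an arbitrary Vizing colouring can fail. The Galvin variant is a misapplication of the cited theorem. In the vertex-first route, the list available to an edge $xy$ has guaranteed size only $(\Delta+2)-2-\deg_{G[Y]}(y)=\Delta-\deg_{G[Y]}(y)$; this is indeed at least $\deg_B(y)$, where $B$ denotes the $X$--$Y$ bipartite graph, but it can be far below $\deg_B(x)=\deg_G(x)$, whereas Galvin's theorem requires every list to have size at least $\chi'(B)=\Delta(B)$ (and even the Borodin--Kostochka--Woodall refinement requires $|L(xy)|\geq\max\{\deg_B(x),\deg_B(y)\}$). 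The binding constraint is properness on the $X$-side: for each $x$ one needs Hall's condition $\bigl|\bigcup_{y\in S}L(xy)\bigr|\geq|S|$ for every $S\subseteq N(x)$, and nothing in your setup guarantees it --- ``lists at least as large as the $Y$-side degrees'' is perfectly consistent with several edges at a common $x$ carrying the same tiny list (already two edges $xy_1,xy_2$ whose lists are equal singletons kill the extension; this is exactly the regime $\deg_{G[Y]}(y_i)=\Delta-1$ with an unlucky precolouring). This shows the failure is not a removable technicality: the precolouring of $Y$ and $G[Y]$ (equivalently, the edge colouring of $G^{*}$) cannot be fixed first and arbitrarily; it must be chosen globally, as a function of the bipartite structure between $X$ and $Y$, and supplying that choice --- or an altogether different recolouring/counting argument, as in Yap's book --- is the missing idea.
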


\begin{theorem}[Yap {\cite[Theorem 5.15, p. 52]{tcyap}}]\label{czdg}
	If $G$ is a graph having $\Delta(G)\geq\frac{3}{4}|G|$, then \linebreak $\chi''(G)\leq\Delta(G)+2$, that is, $G$ satisfies the Total Coloring Conjecture.
\end{theorem}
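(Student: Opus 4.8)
The plan is to prove Yap's criterion (Theorem \ref{zdg}) directly by constructing a total coloring that uses at most $\Delta(G)+2$ colors, exploiting the large independent set $X$. Write $\Delta = \Delta(G)$ and $N = |V(G)|$, and suppose $X \subseteq V(G)$ is independent with $|X| \geq N - \Delta - 1$. The key structural observation I would use is that the complement vertex set $Y = V(G) \setminus X$ is small: $|Y| = N - |X| \leq \Delta + 1$. Since no two vertices of $X$ are adjacent, every edge of $G$ has at least one endpoint in $Y$, so the edges can be organized according to how they meet the small set $Y$. The strategy is to color the vertices of $Y$ and all the edges first (this is essentially an edge-coloring problem on a graph whose edges are incident to $Y$), and then color the independent vertices of $X$ last, where the abundance of colors relative to the small degree constraints makes the final step a routine greedy argument.

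First I would color the edges of $G$. By Vizing's theorem the edges of $G$ can be properly edge-colored with $\Delta + 1$ colors; fix such a coloring using the palette $\{1, 2, \dots, \Delta+1\}$. This handles the edge--edge incidence constraint with $\Delta+1$ colors. Next I would color the vertices in $Y$. Introduce one extra color, say color $0$, giving a total palette of $\{0, 1, \dots, \Delta+1\}$ of size $\Delta + 2$. For each vertex $v \in Y$, the forbidden colors are those on edges incident to $v$ (at most $\Delta$ of them) together with the colors already assigned to neighbors of $v$ lying in $Y$. The hard part will be bounding these forbidden sets so that a free color always remains: a naive count of $\deg(v)$ edge-colors plus $\deg(v)$ neighbor-colors can exceed $\Delta+2$, so I would color the vertices of $Y$ greedily in a carefully chosen order and, crucially, assign color $0$ preferentially to $Y$-vertices, reserving the structured palette $\{1,\dots,\Delta+1\}$ for resolving conflicts. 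Since $|Y| \leq \Delta+1$ is comparable to the number of available colors, each $Y$-vertex sees at most $\Delta$ edge-colors and at most $|Y|-1 \leq \Delta$ neighbor-colors, but these two families overlap enough (an edge to a colored neighbor contributes to both) that $\Delta+2$ colors suffice.

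Finally I would color the vertices of $X$. This is the step the large independent set is designed to make easy. Each vertex $u \in X$ is nonadjacent to every other vertex of $X$, so the only vertex-color conflicts come from neighbors of $u$, all of which lie in $Y$. The colors forbidden at $u$ are: the colors on the edges incident to $u$ (at most $\deg(u) \leq \Delta$ of these) and the colors on the neighbors of $u$ in $Y$ (again at most $\deg(u)$). But each neighbor $w \in N(u) \cap Y$ is joined to $u$ by an edge, and $w$'s own color was chosen to differ from that edge's color; nonetheless the two contributions may be distinct. The decisive point is that distinct vertices of $X$ impose no constraints on one another, so I can color them one at a time, and with $\Delta+2$ colors available against at most $2\deg(u) \leq 2\Delta$ constraints I cannot conclude freeness by counting alone — instead I would invoke the edge-coloring structure to show the palette at $u$ is not exhausted, using that the $\deg(u)$ edge-colors at $u$ are all distinct and leave room. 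Assembling the three colorings yields a total coloring with palette size $\Delta + 2$, giving $\chi''(G) \leq \Delta + 2$, which is exactly the Total Coloring Conjecture bound. I expect the main obstacle to be the precise counting in the middle step, coloring $Y$, where the palette size $\Delta+2$ is only marginally larger than the worst-case forbidden set, so the ordering of vertices and the deliberate use of the extra color $0$ must be arranged so that the greedy bound is never violated.
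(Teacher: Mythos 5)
Your proposal proves the wrong statement. The theorem in question (Theorem \ref{czdg}, Yap's Theorem 5.15) assumes only that $\Delta(G)\geq\frac{3}{4}|G|$; your argument instead begins by positing an independent set $X$ with $|X|\geq N-\Delta(G)-1$, which is the hypothesis of the \emph{other} quoted result, Theorem \ref{zdg} (Yap's Theorem 2.6). These hypotheses are incomparable. Concretely, let $G$ be the disjoint union of the cliques $K_{3b+4}$ and $K_{b}$ with $b\geq 3$: then $N=4b+4$ and $\Delta(G)=3b+3=\frac{3}{4}N$, so $G$ satisfies the hypothesis of Theorem \ref{czdg}, yet its largest independent set has exactly $2$ vertices, far below $N-\Delta(G)-1=b$. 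Thus your construction never gets started on precisely the graphs this theorem is about, and there is no easy repair, since graphs with $\Delta(G)\geq\frac{3}{4}|G|$ can have independence number $2$, so no decomposition into a small set $Y$ plus a large independent set $X$ exists. (For context: the paper itself gives no proof of Theorem \ref{czdg}; it is imported from Yap's monograph, where it is a deep result, due to Hilton and Hind, whose proof requires substantial edge-colouring machinery rather than a greedy scheme on top of Vizing's theorem.)

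Even if read as an attempt at Theorem \ref{zdg}, the sketch has genuine gaps at exactly the two places you flag as hard. For the vertices of $Y$ you concede that the naive forbidden count (up to $\Delta$ edge colours plus up to $|Y|-1\leq\Delta$ vertex colours) can exceed the palette size $\Delta+2$, and you appeal to an unproven claim that the two families ``overlap enough''; but in a valid partial total colouring the colour of the edge $vw$ and the colour of an already-coloured neighbour $w\in Y$ are forced to be \emph{different}, so the overlap you need is not automatic, and no ordering of $Y$ is exhibited for which the greedy bound closes. The same problem recurs for $X$: each $u\in X$ faces up to $2\deg(u)\leq 2\Delta$ constraints against $\Delta+2$ colours, and the promised invocation of ``the edge-colouring structure'' is never made precise. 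So the proposal both misses the statement's actual hypothesis and leaves its own key counting steps unestablished.
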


\begin{theorem}[Khandekar and Joshi {\cite[Theorem 1.2, p.1]{nkvj}}]\label{tcc} Let  $ \mathbf{ P}=\prod\limits_{i=1}^{n}P^i$, $(n\geq 2)$, where $P^i$'s are  finite bounded posets such that $Z(P^i)=\{0\}$, $\forall i$ and $2\leq|P^1|\leq|P^2|\leq\dots\leq|P^n|$. Then $G(\mathbf{ P})$ satisfies the Total Coloring Conjecture. In particular, \\
	$\chi{''}(G(\mathbf{ P}))=\begin{cases}  \Delta(G(\mathbf{ P}))+2 & \text{if $n=2$ and $|P^1|=|P^2|;$}\\ \Delta(G(\mathbf{ P}))+1 & \text{ otherwise.}\\ \end{cases}$
\end{theorem}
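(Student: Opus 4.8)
The plan is to analyze $G(\mathbf{P})$ through its block structure relative to the partition $\{P_{i_1\dots i_k}\}$, deducing the Total Coloring Conjecture from Yap's independent-set criterion (Theorem \ref{zdg}) and the sharp value by a case split on $n$. Throughout I write $P_S=P_{i_1\dots i_k}$ for $S=\{i_1,\dots,i_k\}$, and set $P=\prod_i|P^i|$ and $D=\prod_i(|P^i|-1)$. Since $Z(P^i)=\{0\}$, each $P^i$ has a unique atom, so $\mathbf{P}$ has exactly $n$ atoms and, by Lemma \ref{product}, $[\mathbf{P}]$ is the Boolean lattice $\mathbf{2}^n$ with $|P_S|=\prod_{i\in S}(|P^i|-1)$ for every $\emptyset\neq S\subsetneq\{1,\dots,n\}$. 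By Corollary \ref{adj} a vertex of $P_S$ is adjacent to exactly the vertices of $\bigcup_{\emptyset\neq T\subseteq S^c}P_T$, so its degree is $\prod_{i\notin S}|P^i|-1$. Maximising over $S$ (take $S=\{1\}$, discarding the smallest factor) gives $\Delta(G(\mathbf{P}))=\prod_{i=2}^{n}|P^i|-1=P/|P^1|-1$, while the order is $N=P-1-D$.

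For the conjecture itself I would apply Theorem \ref{zdg}. Let $X=\bigcup_{n\in S}P_S$ be the set of all vertices whose index set contains $n$; any two such index sets meet in $n$, so $X$ is independent by Corollary \ref{adj}, and a short sum gives $|X|=P-P/|P^n|-D$. The hypothesis $|X|\ge N-\Delta-1$ then reduces to $P/|P^1|\ge P/|P^n|-1$, which holds since $|P^1|\le|P^n|$. Hence $\chi''(G(\mathbf{P}))\le\Delta+2$, and with the universal bound $\chi''\ge\Delta+1$ this establishes the Total Coloring Conjecture for every $n\ge2$. Note that Theorem \ref{czdg} is not available here: already for $C_2^3$ (the ``net'' graph) one has $\Delta=3<\tfrac34\cdot6$, so the independent-set criterion is essential.

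For the exact value I would split on $n$. When $n=2$, the two classes $P_1,P_2$ are adjacent and induce a complete bipartite graph $K_{m_1,m_2}$ with $m_i=|P^i|-1$ and $\Delta=m_2$; Theorem \ref{bipartite}(3) gives $\chi''=\Delta+1+\delta_{m_1m_2}$, which is $\Delta+2$ precisely when $|P^1|=|P^2|$ and $\Delta+1$ otherwise. This disposes of both clauses of the formula for $n=2$.

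The remaining and genuinely hard case is $n\ge3$, where the claim is that $G(\mathbf{P})$ is type~I; the independent-set argument above only yields $\Delta+2$, so one must \emph{construct} a $(\Delta+1)$-total coloring. My approach would be to assemble it from the block structure: the $n$ independent classes $A_{q_i}$ of Lemma \ref{property}(5) furnish a base vertex colouring (with $n\le\Delta+1$), while each adjacency between classes $P_S,P_T$ is a complete bipartite block $K_{|P_S|,|P_T|}$ carrying Latin-square edge colourings. The task is to glue these block colourings along the class meeting at each vertex so that incident edges stay distinct and the one spare colour absorbs the at-most-one vertex/edge clash per vertex. I expect the main obstacle to be exactly this global coordination, equivalently a conformability (parity) argument: one must show a $(\Delta+1)$-vertex colouring with the correct number of odd-sized classes exists and lifts to a total colouring. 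The delicate point is that such a colouring \emph{fails} precisely for the balanced bipartite graph $K_{m,m}$ (which is why $n=2$ with $|P^1|=|P^2|$ is type~II), so the heart of the $n\ge3$ argument is verifying that no analogous obstruction survives; I would attempt this by induction on $n$, peeling off the factor $P^n$ and using the Latin-square structure of the new bipartite blocks to repair parity.
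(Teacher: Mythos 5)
Your argument correctly establishes the first assertion (that $G(\mathbf{P})$ satisfies the Total Coloring Conjecture): the degree and order computations are right, the set $X=\bigcup_{n\in S}P_S$ is indeed independent by Corollary \ref{adj}, and the inequality $|X|\ge N-\Delta-1$ does reduce to $P/|P^1|\ge P/|P^n|-1$, so Theorem \ref{zdg} applies. This is essentially the same route the paper takes when it generalizes this statement to arbitrary finite posets in Theorem \ref{zdgtcc}, where the independent set is $q_1^u\cap V(G(P))$ (the same set as your $X$, described order-theoretically) and the counting is even shorter because $\deg(q_1)$ equals $|V(G(P))\setminus q_1^u|$ exactly. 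Your $n=2$ case is also complete: $G(\mathbf{P})=K_{m_1,m_2}$ and Theorem \ref{bipartite}(3) gives both clauses of the formula there.

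The genuine gap is the ``in particular'' clause for $n\ge 3$: the theorem asserts $\chi''(G(\mathbf{P}))=\Delta(G(\mathbf{P}))+1$, i.e.\ that the graph is type I, and Yap's criterion only yields the upper bound $\Delta+2$. Everything you write about this case --- gluing Latin-square colourings of the bipartite blocks $K_{|P_S|,|P_T|}$, a conformability/parity argument, induction on $n$ peeling off $P^n$ --- is a plan, not a proof; no $(\Delta+1)$-total colouring is actually exhibited, and the coordination problem you yourself flag (making the block colourings consistent at every vertex while reserving one spare colour) is precisely the hard content of the result in \cite{nkvj}, which the present paper imports without reproving. As it stands your proposal proves the Total Coloring Conjecture for all $n\ge 2$ and the exact value only for $n=2$; the type I determination for $n\ge 3$ remains unproved.
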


\begin{theorem}[Khandekar and Joshi {\cite[Theorem 3.5, p.6]{nkvj}}]\label{edge chromatic number} Let  $ \mathbf{ P}=\prod\limits_{i=1}^{n}P^i$, $(n\geq 2)$, where $P^i$'s are  finite bounded posets such that $Z(P^i)=\{0\}$, $\forall i$ and $2\leq|P^1|\leq|P^2|\leq\dots\leq|P^n|$. Let $G(\mathbf{ P})$ be the zero-divisor graph of a poset $\mathbf{ P}$.  Then 	$\chi{'}(G(\mathbf{ P}))=\Delta(G(\mathbf{ P}))$.
\end{theorem}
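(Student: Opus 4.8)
The plan is to extract a purely combinatorial model of $G(\mathbf P)$ from the product structure, read off $\Delta$, and then show that $G(\mathbf P)$ is class one; Vizing's theorem already gives $\chi'(G(\mathbf P))\in\{\Delta(G(\mathbf P)),\Delta(G(\mathbf P))+1\}$, so ``class one'' is exactly the statement to be proved. First I would use $Z(P^i)=\{0\}$ to observe that $\{a,b\}^\ell=\{0\}$ in $\mathbf P$ holds if and only if, in each coordinate, one of $a_i,b_i$ is $0$; writing $\mathrm{supp}(a)=\{\,i:a_i\neq 0\,\}$, this says that two vertices are adjacent precisely when their supports are disjoint, the vertices being the elements with $\emptyset\neq\mathrm{supp}(a)\subsetneq\{1,\dots,n\}$. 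By Lemma \ref{product} the class $P_S=\{a:\mathrm{supp}(a)=S\}$ has $|P_S|=\prod_{i\in S}(|P^i|-1)$, and by Lemma \ref{property}(4) all vertices of one class share a single degree, namely $\sum_{\emptyset\neq T\subseteq\{1,\dots,n\}\setminus S}|P_T|=\prod_{i\notin S}|P^i|-1$. This is maximal when $\prod_{i\in S}|P^i|$ is minimal, i.e.\ for the singleton $S=\{1\}$, giving $\Delta(G(\mathbf P))=\prod_{i=2}^n|P^i|-1$.

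The convenient handle is the \emph{core}, the subgraph induced by the maximum-degree vertices; from the degree formula these are exactly the classes $P_{\{i\}}$ with $|P^i|=|P^1|$. If the minimum fibre size is attained only once, i.e.\ $|P^1|<|P^2|$, the core is the single class $P_1$, which is an independent set and hence a forest; then the classical theorem of Fournier (if the maximum-degree vertices of a graph induce a forest, the graph is class one) immediately yields $\chi'(G(\mathbf P))=\Delta(G(\mathbf P))$.

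It remains to treat ties $|P^1|=\cdots=|P^k|$ with $k\ge2$, where the core is the complete $k$-partite graph on $P_1,\dots,P_k$ and therefore contains cycles, so Fournier no longer applies. For $n=2$ the whole graph is the complete bipartite graph $K_{|P^1|-1,\,|P^2|-1}$, and Theorem \ref{bipartite} already gives $\chi'=\max\{|P^1|-1,|P^2|-1\}=\Delta$, so assume $n\ge3$. Here I would build a proper $\Delta$-edge-colouring directly, by induction on $n$: colour the core first (its maximum degree is small compared with $\Delta$ when $n\ge3$, so using at most $\Delta$ colours is routine), and then extend across the remaining classes $P_S$ with $|S|\ge2$, whose vertices all have degree strictly less than $\Delta$ and hence retain spare colours, colouring the individual complete-bipartite blocks $K_{|P_S|,|P_T|}$ by Theorem \ref{bipartite}. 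The main obstacle is precisely this extension: at a class where many blocks (one for each $T$ disjoint from $S$) meet, one must reconcile the chosen block-colourings so that no colour repeats at a vertex while the total palette never exceeds $\Delta$. I expect to control this by a slack count showing that at every class the number of already-forbidden colours stays below $\Delta$, processing the classes in order of increasing $|S|$ so that the pseudocomplement classes $P_{\{1,\dots,n\}\setminus\{i\}}$ -- which by Lemma \ref{property}(7) are pendant onto the atoms in the reduced graph $G([\mathbf P])$ -- are coloured last and create no conflict.
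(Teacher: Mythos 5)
First, a point of comparison: this statement is not proved in the present paper at all — it is imported verbatim from \cite[Theorem 3.5]{nkvj} — so there is no in-paper proof to measure you against, and I am judging your proposal on its own terms. The first two thirds of your argument are correct and cleanly done: the identification of adjacency with disjointness of supports (valid because $Z(P^i)=\{0\}$ forces $\{a_i,b_i\}^\ell\neq\{0\}$ whenever both coordinates are nonzero), the degree formula $\deg(a)=\prod_{i\notin S}|P^i|-1$ for $\mathrm{supp}(a)=S$, the resulting value $\Delta(G(\mathbf P))=\prod_{i=2}^n|P^i|-1$, the observation that the maximum-degree vertices are exactly the classes $P_{\{i\}}$ with $|P^i|=|P^1|$, the Fournier argument when $|P^1|<|P^2|$ (the core is then the independent set $P_1$, hence a forest, so the graph is class one), and the $n=2$ case via Theorem \ref{bipartite}.

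The genuine gap is the case $n\geq 3$ with $|P^1|=|P^2|$, which you only sketch, and the sketch as stated will not close. A ``slack count'' showing that fewer than $\Delta$ colours are forbidden at each class is not sufficient to extend an edge colouring: an uncoloured edge $uv$ needs a colour missing simultaneously at $u$ and at $v$, so the relevant obstruction is the \emph{union} of the colours already present at the two endpoints, which can be as large as $\deg(u)+\deg(v)$ and in particular can exceed $\Delta$ for an edge joining two classes $P_{\{i\}}$ and $P_{\{j\}}$ of (near-)maximum degree. Hence processing the classes in order of increasing $|S|$ and invoking per-vertex slack leaves precisely the hard edges of the core uncolourable by greedy means. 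To finish you would need either a genuine recolouring device (Vizing fans or Kempe-chain exchanges adapted to this blown-up Kneser-type structure) or an explicit global decomposition of the edge set into $\Delta$ matchings exploiting the product structure — for instance round-robin/Latin-square schedules on the complete multipartite core meshed compatibly with the bipartite blocks $K_{|P_S|,|P_T|}$. As written, your argument establishes the theorem only in the subcases $|P^1|<|P^2|$ and $n=2$.
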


\begin{remark} [Khandekar and Joshi {\cite[Remark 3.9, p.13]{nkvj}}]\label{type2} Let  $ \mathbf{ P}=\prod\limits_{i=1}^{n}P^i$, $(n\geq 2)$, where $P^i$'s are  finite bounded posets such that $Z(P^i)=\{0\}$, $\forall i$ and $2\leq|P^1|\leq|P^2|\leq\dots\leq|P^n|$. 
	The zero-divisor graph of $\mathbf{P}$ is of type II if and only if  $\mathbf{P}$ is a direct product of exactly two posets $P^1$, $P^2$ with $Z(P^i)=\{0\}$ for $i=1,2$ and  $|P^1|=|P^2|$. 
\end{remark}

With this preparation, we are ready to prove our second main result of the paper which follows from  Theorem \ref{zdgtcc} and Theorem \ref{czdgtcc}.

\begin{theorem}\label{zdgtcc}
	Let $P$ be a finite poset. Then $G(P)$ satisfies the Total Coloring Conjecture.
\end{theorem}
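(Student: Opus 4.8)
The plan is to reduce the statement for an arbitrary finite poset $P$ to the already-established case of direct products of bounded posets with trivial zero-divisor sets. The key structural fact I would exploit is the relationship between $G(P)$ and the quotient graph $G([P])$ together with the partition of $P\setminus\{0\}$ into the classes $P_{i_1\dots i_k}$. Recall from Lemma \ref{property}(4) that every equivalence class $[a]=P_{i_1\dots i_k}$ is an independent set in $G(P)$, that adjacency in $G(P)$ is governed entirely by the disjointness of index sets (Corollary \ref{adj}), and that all vertices lying in a common class have equal degree. Thus $G(P)$ is, up to ``blowing up'' each vertex of $G([P])$ into an independent set, completely determined by $G([P])$, and $G([P])$ is exactly the zero-divisor graph of a finite atomistic SSC poset (Lemma \ref{ssc}) with $n$ atoms $P_1,\dots,P_n$.

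First I would observe that if $P$ has exactly one atom, then $G(P)$ is a null graph and the conjecture holds trivially, so we may assume $n\geq 2$. Next, the main reduction: I would invoke Theorem \ref{zdg} (Yap), which says that if $G$ contains an independent set $X$ with $|X|\geq N-\Delta(G)-1$, where $N=|V(G)|$, then $G$ satisfies the Total Coloring Conjecture. The idea is to pick a vertex $v$ of maximum degree, lying in some class $P_{i_1\dots i_k}$, and to identify a large independent set. By Lemma \ref{property}(5), each set $A_{q_i}=(q_i^u)\setminus\mathcal{D}$ is independent and these sets partition $V(G(P))$; equivalently $q_i^u\setminus\mathcal{D}$ is independent for each atom. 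Choosing the independent set associated with an atom that is comparable to $v$, or more carefully the union of classes whose index sets all share a fixed coordinate, gives an independent set $X$ consisting of all vertices \emph{not} adjacent to a chosen maximum-degree vertex together with that vertex's own class. The quantity $N-\Delta(G)-1$ is precisely $|V(G)|$ minus the closed neighborhood size of a maximum-degree vertex (minus one more), so the non-neighbors of $v$ together with the independent class $[v]$ should furnish exactly such an $X$; here the equal-degree property from Lemma \ref{property}(4) is what lets one control $\Delta(G(P))$ class by class.

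The step I expect to be the main obstacle is verifying the counting inequality $|X|\geq N-\Delta(G(P))-1$ cleanly for a general atomistic quotient, since the block sizes $|P_{i_1\dots i_k}|$ are arbitrary and $\Delta(G(P))$ is attained at a class determined by the combinatorics of which index sets are disjoint from the most vertices. I would handle this by choosing the maximum-degree vertex $v$ to lie in the class $P_1$ (an atom class, which is adjacent to every class whose index set misses $1$ and hence tends to have large degree), and then taking $X$ to be the set of all vertices whose index set \emph{contains} $1$; this $X$ is independent because any two such classes share the index $1$ and so are non-adjacent by Corollary \ref{adj}, and it contains exactly the non-neighbors of $v$ plus the class of $v$ itself, which is what Yap's bound requires. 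Where the atom-class degree is not the maximum, one compares against it using the monotonicity $P_{i_1\dots i_k}\leq P_{j_1\dots j_m}$ iff $\{i_1,\dots,i_k\}\subseteq\{j_1,\dots,j_m\}$ (Proposition \ref{porder}) to show a single-atom class dominates. Once the inequality is secured, Theorem \ref{zdg} immediately yields $\chi''(G(P))\leq\Delta(G(P))+2$, and since $\chi''(G(P))\geq\Delta(G(P))+1$ always holds, the Total Coloring Conjecture follows; the sharper distinction between type I and type II would then be read off from Remark \ref{type2} in the product case and from Theorem \ref{bipartite} when $n=2$.
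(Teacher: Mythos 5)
Your proposal is correct and follows essentially the same route as the paper: apply Yap's criterion (Theorem \ref{zdg}) to the independent set $X=q_1^u\cap V(G(P))$ of all vertices above a fixed atom, whose complement in $V(G(P))$ is exactly $N(q_1)$, so that $|X|=N-\deg(q_1)\ge N-\Delta(G(P))-1$. The only divergence is that you treat locating a maximum-degree vertex inside an atom class as the main obstacle; this is unnecessary, since the inequality above already holds for an arbitrary atom because $\deg(q_1)\le\Delta(G(P))$ — no comparison of degrees across classes is needed.
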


\begin{proof}
	Let $q_1, q_2, \dots, q_n$ be  all atoms of  $P$. Let $G(P)$ be the zero-divisor graph of $P$ of order $N$, i.e., $|V(G(P))|=N$ and $\Delta(G(P))$ be the maximum degree of  $G(P)$. Then $q_i^u\cap V(G(P))$ is an independent set of  $G(P)$, for every $i\in\{1,\dots,n\}$. Hence $\alpha(G(P))\geq |q_i^u\cap V(G(P))|$. It is clear to see that, if $x\in V(G(P))$ and $x\notin q_i^u$, then $x$, $q_i$ are adjacent in $G(P)$. Therefore, the degree of $q_i$ is equal to $|V(G(P))\setminus q_i^u|$, for every $i\in\{1,\dots,n\}$. Hence, $\Delta(G)\geq$ deg$(q_i)$, for all $i$. Note that if $x\in V(G(P))$, then either $x\in q_i^u\cap V(G(P))$ or $x\in V(G(P))\setminus q_i^u$. Thus, $V(G(P))$ is the disjoint union of $q_i^u\cap V(G(P))$ and $V(G(P))\setminus q_i^u$, for all $i$. In particular, $V(G(P))$ is the disjoint union of $q_1^u\cap V(G(P))$ and $V(G(P))\setminus q_1^u$. Let $|q_1^u\cap V(G(P))|=m$ and $|V(G(P))\setminus q_1^u|=k$. We have $m+k=N$. From this, we get $m\geq N-k-1$. This implies that $m\geq N-\Delta(G(P))-1$ (as $\Delta(G)\geq$ deg$(q_1)$). By Theorem \ref{zdg}, $G(P)$ satisfies the Total Coloring Conjecture.	\end{proof}

Now, we prove that if $P$ is a finite 0-distributive poset, then $G^c(P)$ satisfies Total Coloring Conjecture. Since the proof of this result is a little lengthy, we first give the skeleton of it using  Figure \ref{example}. Consider the $0$-distributive poset $P$ and it's zero-divisor graph shown in Figure  \ref{example}(a) and (b). This is symbolically shown in Figure \ref{example}(c), where $I_4$ is the independent set on $4$ vertices and the edges between $I_4$ is nothing but the induced graph $I_4\vee I_4$. One can compare this graph with Figure \ref{fig1}(a). With the same idea, the complement of zero-divisor graph of $P$ is shown in Figure \ref{example}(d) (see also Figure \ref{fig1}(b)). 

Now, we discuss the skeleton of the proof of Theorem \ref{czdgtcc}. If a poset has exactly two atoms, then $G^c(P)$ is a union of two complete graphs and hence satisfies the Conjecture. Further, if $n\geq 4$, then we use Theorem \ref{czdg} to prove that $G^c(P)$ satisfies the Total Coloring Conjecture. The case when $n=3$ is a little complicated. 

First, we do the total coloring to the vertex induced subgraph $G'$ of $G^c(P)$ on the vertices of $[q_1]^*,[q_2]^*,[q_3]^*,[q_3]$, where $|[q_i]|=l_i$ and   $|[q_i]^*| =m_i$. We denote $[q_i]=\{q_{i1},q_{i2},\dots,q_{il_i}\}$ and $[q_i]^*=\{q_{i1}^*,q_{i2}^*,\dots,q_{im_i}^*\}$. Without loss of generality,   assume that $l_3\geq l_2\geq l_1$.
Then we do the total coloring to the vertex induced subgraph by $[q_2]$ and followed by $[q_1]$. Afterwards, we do the edge coloring of the complete bipartite graph $K_{|[q_3]*|} \vee K_{|[q_1]|}$. We denote the set $V_1=\{q_{21},q_{22},\dots,q_{2l_2}\}$ and $V_2=\{q_{11}^*,\dots,q_{1m_1}^*,~ q_{31}^*,\dots,q_{3m_3}^*\}$.  Let us complete the edge coloring of the complete bipartite graph $K_{s,t}$ on  sets $V_1$ and $V_2$, where $s=l_2, ~ t=m_1+m_3$. 
Similarly, we do the edge coloring of the complete bipartite graph $K'_{r',s'}$ on  sets $V_1'$ and $V_2'$, where $r'=l_1,~ s'=m_2$,~ $V_1'=\{q_{11},q_{12},\dots,q_{1l_1}\}$ and $V_2'=\{q_{21}^*,\dots,q_{2m_2}^*\}$.

In conclusion, we prove that this is a required total coloring of $G^c(P)$.

\begin{figure}[h]
	\begin{center}
		\begin{tikzpicture}[scale =1]
			
			\draw[fill=pink] (-1.5,1.8) ellipse(0.2 and 1.2);
			\draw[fill=pink] (0,1.8) ellipse(0.2 and 1.2);
			\draw[fill=pink] (1.5,1.8) ellipse(0.2 and 1.2);
			
			\draw[fill=aquamarine] (-1.5,4.8) ellipse(0.2 and 1.2);
			\draw[fill=aquamarine] (0,4.8) ellipse(0.2 and 1.2);
			\draw[fill=aquamarine] (1.5,4.8) ellipse(0.2 and 1.2);
			
			\node [left] at (-1.6,1.8) {$[q_1]$}; \node [left] at (-0.1,1.8) {$[q_2]$}; \node [right] at (1.6,1.8) {$[q_3]$};
			\node [left] at (-1.6,4.8) {$[q_3]^*$}; \node [left] at (-0.1,4.8) {$[q_2]^*$}; \node [right] at (1.6,4.8) {$[q_1]^*$};

			\draw [fill=black] (0,-0.5) circle (0.08);\draw [fill=black] (0,7) circle (0.08);
			
			\draw [fill=black] (0,1) circle (0.08);\draw [fill=black] (0,1.5) circle (0.08);\draw [fill=black] (0,2.5) circle (0.08);\draw [fill=black] (0,2) circle (0.08);
			
			\draw [fill=black] (-1.5,5.5) circle (0.08);\draw [fill=black] (-1.5,4) circle (0.08);\draw [fill=black] (-1.5,4.5) circle (0.08);\draw [fill=black] (-1.5,5) circle (0.08);
			
			\draw [fill=black] (-1.5,1) circle (0.08);\draw [fill=black] (-1.5,1.5) circle (0.08);\draw [fill=black] (-1.5,2.5) circle (0.08);\draw [fill=black] (-1.5,2) circle (0.08);
			
			\draw [fill=black] (1.5,5.5) circle (0.08);\draw [fill=black] (1.5,4) circle (0.08);\draw [fill=black] (1.5,4.5) circle (0.08);\draw [fill=black] (1.5,5) circle (0.08);
			
			\draw [fill=black] (1.5,1) circle (0.08);\draw [fill=black] (1.5,1.5) circle (0.08);\draw [fill=black] (1.5,2.5) circle (0.08);\draw [fill=black] (1.5,2) circle (0.08);
			
			\draw [fill=black] (1.5,5.5) circle (0.08);\draw [fill=black] (1.5,4) circle (0.08);\draw [fill=black] (1.5,4.5) circle (0.08);\draw [fill=black] (1.5,5) circle (0.08);
			
			\draw [fill=black] (0,5.5) circle (0.08);\draw [fill=black] (0,4) circle (0.08);\draw [fill=black] (0,4.5) circle (0.08);\draw [fill=black] (0,5) circle (0.08);
			
			\draw (0,-0.5) -- (-1.5,1);\draw (0,-0.5) -- (0,1);\draw (0,-0.5) -- (1.5,1);
			
			\draw (-1.5,5.5) -- (-1.5,1);\draw (0,2.5) -- (0,1);\draw (1.5,5.5) -- (1.5,1);
			
			\draw (-1.5,2.5) -- (0,4);\draw (0,2.5) -- (0,1);\draw (1.5,2.5) -- (1.5,1);\draw (1.5,2.5) -- (0,4);\draw (0,2.5) -- (1.5,4);
			\draw (0,2.5) -- (-1.5,4); \draw (0,5.5) -- (0,4);\draw (0,7) -- (0,4); \draw (0,7) -- (-1.5,5.5);\draw (0,7) -- (1.5,5.5);
			
			\node [above] at (0,7) {$1$};
			\node [below] at (0,-0.6) {$0$};
			\node [below] at (-0.2,-1) {$(a)~ P$};

			\begin{scope}[shift={(7,2.5)}]
				
				\node [left] at (2.5,0) {$[q_1]$};
				\node [left] at (2.7,-2) {$[q_1]^*$}; \node [left] at (2,3.8) {$[q_2]$}; \node [left] at (-0.2,3.8) {$[q_3]$}; \node [left] at (4.8,3.8) {$[q_2]^*$}; \node [left] at (-3.3,3.8) {$[q_3]^*$};

				\draw[fill=pink] (0.7,0) ellipse(1 and 0.5);
				\draw[fill=aquamarine] (0.7,-2) ellipse(1 and 0.5);
				\begin{scope}[rotate=50]{\tiny
						\draw[fill=pink] (1.3,2.5) ellipse(1 and 0.5); \draw[fill=aquamarine] (1.,4.45) ellipse(1 and 0.5);}\end{scope}
				\begin{scope}[rotate=125]{\tiny
						\draw[fill=pink] (1.,-3.2) ellipse(1 and 0.5); \draw[fill=aquamarine] (1.,-4.8) ellipse(1 and 0.5);}\end{scope}
				
				\draw [fill=black] (0,0) circle (.05);\draw [fill=black] (0.5,0) circle (.05);\draw [fill=black] (1,0) circle (.05);\draw [fill=black] (1.5,0) circle (.05);
				
				\draw [fill=black] (0,-2) circle (.05);\draw [fill=black] (0.5,-2) circle (.05);\draw [fill=black] (1,-2) circle (.05);\draw [fill=black] (1.5,-2) circle (.05);
				
				\draw [fill=black] (-1.5,2) circle (.05); \draw [fill=black] (-1.2,2.4) circle (.05);\draw [fill=black] (-0.9,2.8) circle (.05);\draw [fill=black] (-0.6,3.2) circle (.05);
				
				\draw [fill=black] (-3.3,3) circle (.05); \draw [fill=black] (-3.0,3.4) circle (.05);\draw [fill=black] (-2.7,3.8) circle (.05);\draw [fill=black] (-2.4,4.2) circle (.05);
				
				\draw [fill=black] (2.5,2) circle (.05); \draw [fill=black] (2.2,2.4) circle (.05);\draw [fill=black] (1.9,2.8) circle (.05);\draw [fill=black] (1.6,3.2) circle (.05);
				\draw [fill=black] (3.8,3) circle (.05); \draw [fill=black] (3.5,3.4) circle (.05);\draw [fill=black] (3.2,3.8) circle (.05);\draw [fill=black] (2.9,4.2) circle (.05);
				
				\draw (0,0) -- (0,-2);\draw (0.5,0) -- (0,-2);\draw (1,0) -- (0,-2);\draw (1.5,0) -- (0,-2); \draw (0,0) -- (0.5,-2);\draw (0.5,0) -- (0.5,-2);\draw (1,0) -- (0.5,-2);\draw (1.5,0) -- (0.5,-2); \draw (0,0) -- (1,-2);\draw (0.5,0) -- (1,-2);\draw (1,0) -- (1,-2);\draw (1.5,0) -- (1,-2);
				\draw (0,0) -- (1.5,-2);\draw (0.5,0) -- (1.5,-2);\draw (1,0) -- (1.5,-2);\draw (1.5,0) -- (1.5,-2);

				\draw (2.5,2) -- (3.8,3);\draw (0.5,0) -- (2.5,2);\draw (1,0) -- (2.5,2);\draw (1.5,0) -- (2.5,2);
				
				\draw (0,0) -- (2.2,2.4);\draw (0.5,0) -- (2.2,2.4);\draw (1,0) -- (2.2,2.4);\draw (1.5,0) -- (2.2,2.4);
				
				\draw (0,0) -- (1.9,2.8);\draw (0.5,0) -- (1.9,2.8);\draw (1,0) -- (1.9,2.8);\draw (1.5,0) -- (1.9,2.8);
				
				\draw (0,0) -- (1.6,3.2);\draw (0.5,0) -- (1.6,3.2);\draw (1,0) -- (1.6,3.2);\draw (1.5,0) -- (1.6,3.2);

				\draw (0,0) -- (2.5,2);\draw (3.5,3.4) -- (2.5,2);\draw (3.2,3.8) -- (2.5,2);\draw (2.9,4.2) -- (2.5,2);
				
				\draw (3.8,3) -- (2.2,2.4);\draw (3.5,3.4) -- (2.2,2.4);\draw (3.2,3.8) -- (2.2,2.4);\draw (2.9,4.2) -- (2.2,2.4);
				
				\draw (3.8,3) -- (1.9,2.8);\draw (3.5,3.4) -- (1.9,2.8);\draw (3.2,3.8) -- (1.9,2.8);\draw (2.9,4.2) -- (1.9,2.8);
				
				\draw (3.8,3) -- (1.6,3.2);\draw (3.5,3.4) -- (1.6,3.2);\draw (3.2,3.8) -- (1.6,3.2);\draw (2.9,4.2) -- (1.6,3.2);

				\draw (-1.5,2) -- (2.5,2);\draw (-1.2,2.4) -- (2.5,2);\draw (-0.9,2.8) -- (2.5,2);\draw (-0.6,3.2) -- (2.5,2);
				
				\draw (-1.5,2) -- (2.2,2.4);\draw (-1.2,2.4) -- (2.2,2.4);\draw (-0.9,2.8)  -- (2.2,2.4);\draw (-0.6,3.2) -- (2.2,2.4);
				
				\draw (-1.5,2) -- (1.9,2.8);\draw (-1.2,2.4) -- (1.9,2.8);\draw (-0.9,2.8)  -- (1.9,2.8);\draw (-0.6,3.2) -- (1.9,2.8);
				
				\draw (-1.5,2) -- (1.6,3.2);\draw (-1.2,2.4) -- (1.6,3.2);\draw (-0.9,2.8)  -- (1.6,3.2);\draw (-0.6,3.2) -- (1.6,3.2);

				\draw (-1.5,2) -- (0,0);\draw (-1.2,2.4) -- (0,0);\draw (-0.9,2.8) -- (0,0);\draw (-0.6,3.2) -- (0,0);
				
				\draw (-1.5,2) -- (0.5,0);\draw (-1.2,2.4) -- (0.5,0);\draw (-0.9,2.8)  -- (0.5,0);\draw (-0.6,3.2) -- (0.5,0);
				
				\draw (-1.5,2) -- (1,0);\draw (-1.2,2.4) -- (1,0);\draw (-0.9,2.8)  -- (1,0);\draw (-0.6,3.2) -- (1,0);
				
				\draw (-1.5,2) -- (1.5,0);\draw (-1.2,2.4) -- (1.5,0);\draw (-0.9,2.8)  -- (1.5,0);\draw (-0.6,3.2) -- (1.5,0);
				
				\draw (-1.5,2) -- (-3.3,3);\draw (-1.2,2.4) -- (-3.3,3);\draw (-0.9,2.8) -- (-3.3,3);\draw (-0.6,3.2) -- (-3.3,3);
				
				\draw (-1.5,2) -- (-3,3.4);\draw (-1.2,2.4) -- (-3,3.4);\draw (-0.9,2.8)  -- (-3,3.4);\draw (-0.6,3.2) -- (-3,3.4);
				
				\draw (-1.5,2) -- (-2.7,3.8);\draw (-1.2,2.4) -- (-2.7,3.8);\draw (-0.9,2.8)  -- (-2.7,3.8);\draw (-0.6,3.2) -- (-2.7,3.8);
				
				\draw (-1.5,2) -- (-2.4,4.2);\draw (-1.2,2.4) -- (-2.4,4.2);\draw (-0.9,2.8)  -- (-2.4,4.2);\draw (-0.6,3.2) -- (-2.4,4.2);

				\node [below] at (0.5,-3.2) {$(b)~ G(P)$};
				
			\end{scope}


				\begin{scope}[shift={(0,-6)}]
					
					\draw [fill=pink] (0,0) circle (0.5);\draw [fill=pink] (-1.5,2) circle (0.5);\draw [fill=pink] (1.5,2) circle (0.5);\draw [fill=aquamarine] (0,-2) circle (0.5);\draw [fill=aquamarine] (3,3) circle (0.5);\draw [fill=aquamarine] (-3,3) circle (0.5);
					
					\draw (-0.25,-0.4) -- (-0.25,-1.6);\draw (0.25,-0.4) -- (0.25,-1.6);\draw (0.5,0.1) -- (1.6,1.5);\draw (0.1,0.5) -- (1.1,1.7);\draw (-0.5,0.1) -- (-1.6,1.5);\draw (-0.1,0.5) -- (-1.1,1.7);
					\draw (-1.05,1.8) -- (1.05,1.8);\draw (-1.2,2.4) -- (1.2,2.4);
					\draw (-2,1.85) -- (-2.9,2.5);\draw (-1.8,2.4) -- (-2.5,2.9);
					\draw (2,1.85) -- (2.9,2.5);\draw (1.8,2.4) -- (2.5,2.9);
					
					\node [right] at (-.5,0) {$I_{|[q_1]|}$};
					\node [right] at (-.6,-2) {$I_{|[q_1]^*|}$};\node [right] at (-2,2) {$I_{|[q_3]|}$};\node [right] at (1.,2) {$I_{|[q_2]|}$}; \node [right] at (2.4,3) {$I_{|[q_2]^*|}$};\node [right] at (-3.6,3) {$I_{|[q_3]^*|}$};
					
					\node [right] at (-1.05,2.05) {$I_{|[q_3]|}\vee I_{|[q_2]|}$};
					
					\node [below] at (0.5,-3.5) {$(c)~ G(P)$};
				\end{scope}

				\begin{scope}[shift={(8,-6.5)}]
					
					\draw [fill=aquamarine] (0,4) circle (0.6);
					\draw [fill=aquamarine] (-2,0) circle (0.6);\draw [fill=aquamarine] (2,0) circle (0.6);\draw [fill=pink] (2.8,3.1) circle (0.6);\draw [fill=pink] (-2.8,3.1) circle (0.6);\draw [fill=pink] (0,-2) circle (0.6);
					\node [right] at (-.7,4) {$K_{|[q_3]^*|}$};\node [right] at (-2.65,0) {$K_{|[q_1]^*|}$};\node [right] at (1.4,0) {$K_{|[q_2]^*|}$};\node [right] at (2.2,3.1) {$K_{|[q_1]|}$};\node [right] at (-3.4,3.1) {$K_{|[q_2]|}$};\node [right] at (-.6,-2) {$K_{|[q_3]|}$};
					\node [right] at (-1.35,-0.05) {$K_{|[q_1]^*|}\vee K_{|[q_2]^*|}$};
					
					\draw (-1.5,0.3) -- (1.5,0.3);\draw (-1.5,-0.3) -- (1.5,-0.3);
					\draw (-1.55,0.35) -- (-0.05,3.4);\draw (1.55,0.35) -- (0.05,3.4);\draw (-2.1,0.55) -- (-0.45,3.6);\draw (2.1,0.55) -- (0.45,3.6); 
					\draw (-2.15,0.55) -- (-2.5,2.6);\draw (2.15,0.55) -- (2.5,2.6);
					\draw (-2.6,0.05) -- (-3.,2.55);\draw (2.6,0.05) -- (3.,2.55);
					
					\draw (-2.2,3.1) -- (-.6,3.85);\draw (-2.6,3.65) -- (-.2,4.6);\draw (2.2,3.1) -- (.6,3.85);\draw (2.6,3.65) -- (.2,4.6);
					
					\draw (-1.65,-.45) -- (-.45,-1.65);\draw (-2.3,-.5) -- (-.5,-2.3);\draw (1.65,-.45) -- (.45,-1.65);\draw (2.3,-.5) -- (.5,-2.3);
					
					\node [below] at (0.5,-3.0) {$(d)~ G^c(P)$};
				\end{scope}
				
			\end{tikzpicture}
			
		\end{center}
		\caption{}\label{example}
	\end{figure}
	
	\begin{theorem}\label{czdgtcc}
		Let $P$ be a finite $0$-distributive  poset. Then $G^c(P)$ satisfies the Total Coloring Conjecture.
	\end{theorem}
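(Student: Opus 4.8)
The plan is to exploit the fact that $G^c(P)$ is completely determined by the class poset $[P]$. Since every class $[a]=P_{i_1\dots i_k}$ is an independent set of $G(P)$ (Lemma \ref{property}(4)) and two classes are joined completely precisely when their index sets are disjoint (Corollary \ref{adj}), the graph $G(P)$ is a blow-up of $G([P])$ by independent sets; hence $G^c(P)$ is obtained from $G^c([P])$ by substituting a clique $K_{|[a]|}$ for each vertex $[a]$ and a complete join for each edge. For a $0$-distributive poset the classes that can occur as vertices are the atoms $P_1,\dots,P_n$, the pseudocomplements $P_{1\dots(i-1)(i+1)\dots n}$, and possibly intermediate classes; the pseudocomplement classes are always nonempty by Corollary \ref{pseudoatom}. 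I would split the argument according to the number $n$ of atoms.

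For $n\le 2$ the statement is immediate: when $n=2$ one has $G(P)=K_{l_1,l_2}$ with $l_i=|P_i|$, so $G^c(P)=K_{l_1}+K_{l_2}$ is a disjoint union of complete graphs, and each factor satisfies the conjecture by Theorem \ref{complete}. For $n\ge 4$ I would invoke the density criterion of Theorem \ref{czdg}. Fix $i$ with $|P_i|=\min_j|P_j|$ and pick a vertex $x$ in the nonempty pseudocomplement class $P_{1\dots(i-1)(i+1)\dots n}$; by Corollary \ref{adj} its only neighbours in $G(P)$ lie in $P_i$, so $\deg_{G^c(P)}(x)=N-1-\min_j|P_j|$, where $N=|V(G^c(P))|$. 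Since each pseudocomplement contributes at least one vertex, $N\ge\sum_j|P_j|+n$, whence the averaging bound $\min_j|P_j|\le\frac1n\sum_j|P_j|\le\frac14(N-n)\le\frac N4-1$ gives $\Delta(G^c(P))\ge N-1-\min_j|P_j|\ge\frac34N$, and Theorem \ref{czdg} applies.

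The substantial case is $n=3$, where the six possible vertex classes are $P_1,P_2,P_3$ (the atoms, of sizes $l_i=|P_i|$) and $P_{23}=[q_1]^*,\,P_{13}=[q_2]^*,\,P_{12}=[q_3]^*$ (the pseudocomplements, of sizes $m_i=|[q_i]^*|$). Here $G^c(P)$ consists of the clique $P_{12}\cup P_{13}\cup P_{23}$, the three pairwise non-adjacent atom-cliques $P_1,P_2,P_3$, and the join of each $P_i$ to the two pseudocomplement classes whose index contains $i$; a short computation locates the maximum degree at a pseudocomplement vertex and gives $\Delta(G^c(P))=N-1-\min_i l_i$. The independence-number tool (Theorem \ref{zdg}) is of no use now, because an independent set of $G^c(P)$ is a clique of $G(P)$ and so has at most $\omega(G(P))=3$ vertices (Theorem \ref{weaklyperfectatomic}), whereas Theorem \ref{zdg} would require an independent set of size $\min_i l_i$. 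I would therefore exhibit an explicit total colouring with $\Delta(G^c(P))+2=N+1-\min_i l_i$ colours, built in the stages indicated after the statement: assuming $l_3\ge l_2\ge l_1$, first totally colour the near-complete core $G'$ induced on $P_3\cup P_{12}\cup P_{13}\cup P_{23}$, which is a complete graph on $l_3+m_1+m_2+m_3$ vertices with the single complete bipartite join between $P_3$ and $P_{12}$ deleted and so can be coloured by adapting the complete-graph colouring of Theorem \ref{complete}; then totally colour the cliques $P_2$ and $P_1$, reusing colours freely since $P_1,P_2,P_3$ are mutually non-adjacent in $G^c(P)$; finally edge-colour the three remaining complete bipartite joins, from $P_1$ to $P_{12}$, from $P_2$ to $P_{12}\cup P_{23}$, and from $P_1$ to $P_{13}$, using Theorem \ref{bipartite}.

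The heart of the proof, and the step I expect to be the main obstacle, is the colour accounting for $n=3$: the four pieces above are coloured separately, so one must choose the palettes so that (i) at each pseudocomplement vertex the colours of its incident edges coming from the core and from the bipartite joins stay disjoint, (ii) the vertex colours reused on $P_1$ and $P_2$ do not coincide with the colours of the join-edges incident to them, and (iii) the \emph{total} number of colours never exceeds the budget $N+1-\min_i l_i$. Making the three complete bipartite edge-colourings mesh with the already-fixed colouring of $G'$ and with the vertex colours of the atoms, while exploiting that $\min_i l_i=l_1$ is the smallest class so that the tightest join is the cheapest, is the delicate bookkeeping on which the whole argument turns.
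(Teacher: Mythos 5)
Your architecture coincides with the paper's for $n\le 2$ and $n\ge 4$: the $n=2$ case is identical, and your averaging bound $\min_j|P_j|\le\tfrac{1}{n}\sum_j|P_j|\le\tfrac{N}{4}-1$ is a clean reformulation of the paper's contradiction argument for $n\ge 4$; both feed Theorem \ref{czdg}. The genuine gap is in the case $n=3$, and it is exactly the step you flag as ``the main obstacle'': the colour accounting cannot be completed as you have set it up, because your plan omits the preliminary dichotomy on which the paper's accounting rests. The paper first notes that if $|[q_i]|<\tfrac14 N$ for some $i$ then $\Delta(G^c(P))=N-1-\min_i|[q_i]|$ is already large enough for Theorem \ref{czdg}, so the hand colouring is only ever performed in the complementary regime $|[q_i]|\ge\tfrac14 N$ for all $i$. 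In that regime $m_1+m_2+m_3\le\tfrac14N\le l_1\le l_2\le l_3$, whence $m_1+m_3\le l_2$ and $m_2\le l_1$ (the paper's inequality (B)). These inequalities are precisely what make the joins cheap: by Theorem \ref{bipartite}, $\chi'(K_{l_2,\,m_1+m_3})=l_2$ and $\chi'(K_{l_1,\,m_2})=l_1\le l_2$, so one fresh palette of $l_2$ colours beyond the $m_1+m_2+m_3+l_3+1$ spent on the core $K_r$ covers both remaining joins and lands exactly on the budget $l_2+l_3+m_1+m_2+m_3+1=\Delta(G^c(P))+2$. Without the dichotomy this fails: for $l_1=l_2=l_3=1$ and the $m_i$ large, the join from $P_2$ to $P_{12}\cup P_{23}$ alone requires $m_1+m_3$ colours essentially disjoint from those already placed at the shared pseudocomplement vertices, and the total overshoots $\Delta(G^c(P))+2$ (of course in that situation Theorem \ref{czdg} would have applied, which is the point of the reduction).

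Two further items that you leave to ``choice of palettes'' are resolved in the paper by a specific reuse mechanism rather than by free choice. The cliques on $[q_2]$ and $[q_1]$ are coloured by copying the total colouring of the sub-clique $[q_3]\subseteq K_r$ verbatim (vertex $q_{2j}$ gets the colour of $q_{3j}$, edge $q_{2j}q_{2k}$ that of $q_{3j}q_{3k}$); this is legitimate because $P_1,P_2,P_3$ are pairwise non-adjacent in $G^c(P)$, and it is what makes your conditions (i) and (ii) automatic. Moreover the join from $P_1$ to $[q_3]^*=P_{12}$ costs no new colours at all: since $P_3$ and $P_{12}$ are non-adjacent in $G^c(P)$ (disjoint index sets, Corollary \ref{adj}), the $K_r$-colours of the edges between $[q_3]^*$ and $[q_3]$ never appear in $G'$ and are transplanted onto the edges $q^*_{3j}q_{1k}$, which is possible because $l_1\le l_3$ and consistent because $q_{1k}$ carries the vertex colour of $q_{3k}$. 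Supplying the dichotomy and this reuse mechanism would turn your outline into the paper's proof; as written, the central case is not established.
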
 
	
	\begin{proof}
		
		Let $q_1, q_2, \dots, q_n$ be all atoms of $P$. Then using the statements (1) and (7) of Lemma \ref{property}, $[q_1],[q_2],\dots,[q_n]$ are $n$ atoms of $[P]$ and $[q_i]^*$ is the pseudocomplement of $[q_i]$ in $[P]$ for every $i\in\{1,2,\dots,n\}$. Clearly,  $[q_k]^*$ is a pendent vertex of $G([P])$ adjacent to $[q_k]$ only (see Figure \ref{fig1}(a), if $n=3$).	Let $G^c(P)$ be the complement of the zero-divisor graph of $P$ and let $G^c([P])$ be the complement of the  zero-divisor graph of $[P]$.
		
		
		To get  $\Delta (G^c(P))$, we calculate $\delta(G(P))$. We claim that $\delta(G(P))= $ deg$(x)=|[q_i]|$ for some $i\in \{1,\dots, n\}$ and $x\in [q_i]^*$.

		If $[a]\in V(G([P]))\setminus\{[q_1]^*,\dots,[q_n]^*\}$, then $[a]$ is adjacent to at least two $[q_i]$ in $G([P])$ for $n\geq 3$. Otherwise, if $[a]$ is adjacent to exactly one $[q_i]$ in $G([P])$, then $[a]=[q_i]^*$, a contradiction. Hence, we assume that $[a]$ is adjacent of $[q_i]$ and $[q_j]$ for  $i, j$ in $G([P])$.   Hence if $y\in [a]$, then $y$ will be adjacent to every vertex of $[q_i]$ and $[q_j]$. Hence deg$_{G(P)}(y)\geq|[q_i]|+|[q_j]|$ for  $i,j\in\{1,\dots,n\}$.	Since   $[q_i]^*$ is adjacent to  $[q_i]$ only (Lemma \ref{property}(7)) in $G([P])$, by Lemma \ref{property}(4),  it is easy to prove that deg$_{G(P)}(x)=|[q_i]|$ for $x\in [q_i]^*$. This together with deg$_{G(P)}(y)\geq|[q_i]|+|[q_j]|$ gives that $\delta(G(P))=$deg$(x)=|[q_i]|$, for some $i\in \{1,\dots, n\}$ and $x\in [q_i]^*$. Therefore,\\

		\underline{\underline{ $\Delta(G^c(P))=$ deg$_{G^c(P)}(x)=|V(G^c(P))|-|[q_i]|-1$ for some $i\in \{1,\dots, n\}$ and $x\in [q_i]^*$-------- (A) 
		}}	
		\vskip 5truept 
		We  consider the following cases on $n$:
		
		\noindent 	\textbf{Case-I:} $n=2$.\\
		If $n=2$, then $[P]$ is a  poset with exactly two atoms $[q_1], [q_2]$ with $|[q_i]|=m_i$. In this case, the complement of the zero-divisor graph $G^c([P])$ of $[P]$ is a graph having two isolated vertices. Therefore $G^c(P)$ is the graph $K_{m_{1}} + K_{m_{2}}$. Clearly, $G^c(P)$ satisfies the Total Coloring Conjecture.

		\vskip 5truept 
		\noindent 	\textbf{Case-II:} $n=3$.
		
		Now, assume that  $n=3$. Then $[P]$ is a  poset with three atoms, $[q_1],[q_2]$ and $[q_3]$. Then $V(G([P]))=\{[q_1],[q_2],[q_3],[q_1]^*,[q_2]^*,[q_3]^*\}$. The zero-divisor graph $G([P])$ of $[P]$ and its complement  $G^c([P])$ are shown in Figure \ref{fig1}.

		\begin{figure}[h]
			\begin{center}
				\begin{tikzpicture}[scale =0.5]

					\begin{scope}[shift={(6,2)}]
						
						\draw (0,0) -- (1.5,2);\draw (0,0) -- (-1.5,-1.5); \draw (0,0) -- (3,0);\draw (1.5,4) -- (1.5,2);\draw (3,0) -- (1.5,2); \draw (3,0) -- (4.5,-1.5);
						
						\draw [fill=black] (0,0) circle (.1);\draw [fill=black] (3,0) circle (.1);\draw [fill=black] (1.5,2) circle (.1);\draw [fill=black] (1.5,4) circle (.1);\draw [fill=black] (-1.5,-1.5) circle (.1); \draw [fill=black] (4.5,-1.5) circle (.1);
						
						\node [above] at (-0.3,0) {$[q_3]$}; \node [above] at (3.2,0) {$[q_2]$};\node [above] at (1.5,4.1) {$[q_1]^*$};\node [above] at (-1.5,-1.4) {$[q_3]^*$};\node [above] at (4.6,-1.4) {$[q_2]^*$};\node [right] at (1.6,2) {$[q_1]$};
						\node [below] at (1.85,-3.2) {(a) $G([P])$}; 
					\end{scope}

					\begin{scope}[shift={(14,3)}]
						\draw [fill=black] (0,0) circle (.1);\draw [fill=black] (4,0) circle (.1);\draw [fill=black] (2,3) circle (.1);\draw [fill=black] (2,-1.75) circle (.1);\draw [fill=black] (-0.2,2.6) circle (.1);\draw [fill=black] (4.2,2.6) circle (.1);
						
						\node [left] at (0,0) {$[q_3]^*$};\node [right] at (4,0) {$[q_2]^*$};\node [above] at (2,3) {$[q_1]^*$};\node [above] at (-0.2,2.7) {$[q_2]$};\node [above] at (4.2,2.7) {$[q_3]$};\node [below] at (2,-1.8) {$[q_1]$};

						\draw (0,0) -- (4,0);\draw (0,0) -- (2,3);\draw (0,0) -- (2,-1.75);\draw (0,0) -- (-0.2,2.6);\draw (2,-1.75) -- (4,0);\draw (2,3) -- (4,0);\draw (4.2,2.6) -- (4,0);\draw (2,3) -- (-0.2,2.6);\draw (2,3) -- (4.2,2.6);\node [below] at (1.85,-4.2) {(b) $G^c([P])$};
						
					\end{scope}
					
				\end{tikzpicture}
			\end{center}
			\caption{}\label{fig1}
		\end{figure}
		
		Since deg$_{G^c(P)}(x)=|V(G^c(P))|-|[q_i]|-1$ for  $x\in [q_i]^*$ for some $i$, by $(A)$, and $\Delta(G^c(P))\geq$ deg$_{G^c(P)}(x)$, in view of Theorem \ref{czdg}, to prove that $G^c(P)$ satisfies the Total Coloring Conjecture, it is enough to show that  $|[q_i]|< \frac{1}{4}  |V(G^c(P))|$ for some $i$.

		Suppose on the contrary that  $|[q_i]|\geq \frac{1}{4}  |V(G^c(P))|$ for all $i$. Let $|[q_i]|=l_i$ and   $|[q_i]^*| =m_i$, where $l_i, m_i\in \mathbb{N}$. Without loss of generality,   assume that $l_3\geq l_2\geq l_1$. By the assumption, $l_i\geq \frac{1}{4}  |V(G^c(P))|$ for every $i$ and $l_1+l_2+l_3+m_1+m_2+m_3=|V(G^c(P))|$. Hence $m_1+m_2+m_3 \leq \frac{1}{4}  |V(G^c(P))|$.
		
		\vskip 5truept 
		\underline{\underline{This implies that both $(m_1+m_3), m_2< \frac{1}{4}  |V(G^c(P))|\leq l_1, ~l_2$.    \hspace{.2in} --------------- (B)}}
		\vskip 5truept 
		
		Since $|[q_i]|=l_i$ and   $|[q_i]^*| =m_i$, we denote $[q_i]=\{q_{i1},q_{i2},\dots,q_{il_i}\}$ and $[q_i]^*=\{q_{i1}^*,q_{i2}^*,\dots,q_{im_i}^*\}$. By Lemma \ref{property}(4), it is clear that deg$(q_{ij})= $ deg$(q_{ik})$ for $q_{ij}, q_{ik} \in [q_i]$. By $(A)$, we  have $\Delta(G^c(P))=$deg$(q_{11}^*)=|V(G^c(P))|-l_1-1=l_1+l_2+l_3+m_1+m_2+m_3-l_1-1= l_2+l_3+m_1+m_2+m_3-1$. 
		
		\vskip 5truept 
		Now, we prove that $\chi''(G^c(P))\leq \Delta(G^c(P))+2= l_2+l_3+m_1+m_2+m_3+1$.
		\vskip 5truept 
		
		Let $G'$ be the vertex induced subgraph on the vertices  of $[q_1]^*,[q_2]^*,[q_3]^*$, and $[q_3]$. First, we do the total coloring of $G'$. Put $A=\{q_{11}^*,q_{12}^*,\dots,q_{1m_1}^*,~q_{21}^*,q_{22}^*,\dots,q_{2m_2}^*,~q_{31}^*,q_{32}^*,\dots,q_{3m_3}^*, q_{31},q_{32},\dots,q_{3k_3}\}$. Consider the complete graph $K_r$ on the set $A$, where $r=m_1+m_2+m_3+l_3$. By Theorem \ref{complete}, $\chi''(K_r)\leq \Delta(K_r)+2=m_1+m_2+m_3+l_3-1+2=m_1+m_2+m_3+l_3+1$. Thus, we can do the total  coloring of $K_r$ with at most $m_1+m_2+m_3+l_3+1$ colors.

		Now, we give the total coloring to $G'$. The vertex $x$ of $G'$ is colored by the color of the vertex $x$ given in the total coloring of $K_r$ and edge $xy$ of $G'$ is colored by the color of the edge $xy$ given in the total coloring of $K_r$. 

		Let's do the total coloring to the vertex induced subgraph by $[q_2]$. For this, the vertex $q_{2j}$ is colored by the color of the vertex $q_{3j}$ in the total coloring of $K_r$ and edge $q_{2j}q_{2k}$ is colored by the color of the edge $q_{3j}q_{3k}$ in the total coloring of $K_r$, where $j,k\in\{1,2,\dots,l_2\}\subseteq\{1,2,\dots,l_3\}$, as $l_2 \leq l_3$.
		
		Similarly, we do the total coloring to the vertex-induced subgraph by $[q_1]$.
		
		We do the edge coloring to the edges between the vertices of $[q_3]^*$ and the vertices of $[q_1]$. For this, the edge $q_{3j}^*q_{1k}$ is colored by the color of the edge $q_{3j}^*q_{3k}$  in the total coloring of $K_r$. Note that there are no edges between $[q_3]$ and $[q_3]^*$ in $G^c([P])$. Further, $l_3 \geq l_1$. Hence this edge coloring is possible.
		
		We denote the set $V_1=\{q_{21},q_{22},\dots,q_{2l_2}\}$ and $V_2=\{q_{11}^*,\dots,q_{1m_1}^*,~ q_{31}^*,\dots,q_{3m_3}^*\}$.  Consider the complete bipartite graph $K_{s,t}$ on  sets $V_1$ and $V_2$, where $s=l_2, ~ t=m_1+m_3$.
		
		By $(B)$ and  Theorem \ref{bipartite}, $\chi'(K_{s,t})=max\{s,t\}=s=l_2$. Note that we use $l_2$ different colors other than the color used in the total coloring of $K_r$ to do the edge coloring of $K_{r,s}$. Thus up till now, we have used $m_1+m_2+m_3+l_3+1$ colors for the total coloring of $K_r$ and $l_2$ colors to do the edge coloring of $K_{s,t}$.
		
		Put $V_1'=\{q_{11},q_{12},\dots,q_{1l_1}\}$ and $V_2'=\{q_{21}^*,\dots,q_{2m_2}^*\}$. We consider the complete bipartite graph $K'_{r',s'}$ on  sets $V_1'$ and $V_2'$, where $r'=l_1,~ s'=m_2$.
		
		Again by (B) and  Theorem \ref{bipartite}, $\chi'(K'_{r',s'})=max\{r',s'\}=r'=l_1$. To do the edge coloring of $K'_{r',s'}$ we choose $r'=l_1$ colors out of $l_2$ colors used in the edge coloring of $K_{s,t}$.
		
		Combing all, we get the proper total coloring of $G^c(P)$. It is not very difficult to prove that, this is the proper total coloring to $G^c(P)$ with $ l_2+l_3+m_1+m_2+m_3+1=\Delta(G^c(P))+2$ colors. Thus, $\chi''(G^c(P))\leq \Delta(G^c(P))+2$. Thus, in this case, $G^c(P)$ satisfies the Conjecture.

		\vskip 5truept

		\textbf{Case-III:} $n\geq4$.

		In view of the discussion in Case-II, if  $|[q_i]|< \frac{1}{4}  |V(G^c(P))|$ for some $i$, then we are through.
		
		Suppose on the contrary that  $|[q_i]|\geq \frac{1}{4}  |V(G^c(P))|$ for all $i$. Then  $|V(G^c(P))|\geq |[q_1]|+|[q_2]|+|[q_3]|+|[q_4]|+|[q_1]^*|+|[q_2]^*|+|[q_3]^*|+|[q_4]^*|\geq \frac{1}{4}  |V(G^c(P))|+\frac{1}{4}  |V(G^c(P))|+\frac{1}{4}  |V(G^c(P))|+\frac{1}{4}  |V(G^c(P))|+1+1+1+1$, as $|[q_i]^*| \geq 1$. This implies that $|V(G^c(P))|\geq |V(G^c(P))|+4$, a contradiction. Thus $|[q_i]|< \frac{1}{4}  |V(G^c(P))|$ for some $i$. Therefore $G^c(P)$ satisfies the Conjecture in this case too. 
		
		This completes the proof.	\end{proof}


	\begin{remark}
		
		Now, we show that the techniques used in Theorem \ref{zdgtcc} can not be used to prove Theorem \ref{czdgtcc} and vice versa. 	
		
		It is easy to see that (in Figure $\ref{example} (b)$) $\Delta(G(P))=12$ and $|V(G(P))|=24$. Clearly, $\Delta(G(P))\ngeq\frac{3}{4} |V(G(P))|$. However by Theorem \ref{zdgtcc}, $G(P)$ satisfies the Total Coloring Conjecture.
		
		From Figure $\ref{example} (d)$,  $|V(G^c(P))|=24$ and $\Delta(G^c(P))=19$. Further,  $\alpha(G^c(P))=3$. It is easy to verify that, $|S|\ngeq |V(G^c(P))| - \Delta(G^c(P))-1$ for any independent set $S$ of $G^c(P)$. However, by Theorem \ref{czdgtcc}, $G^c(P)$ satisfies the Conjecture.
	\end{remark}

	\begin{remark}\label{tccrmk}
		It is easy to observe that if $G$ satisfies the Total Coloring Conjecture, then $G+ I_m$ satisfies the Total Coloring Conjecture. In view of Theorem \ref{czdg}, for any finite simple graph $G$, $G\vee K_m$ satisfies the Total Coloring Conjecture, where $m\in \mathbb{N}$.
		
	\end{remark}

	\section{Applications}\label{applications}
	In this section, we study the interplay of the zero-divisor graphs of ordered sets and the graphs associated with algebraic structures. Also,
	We apply our results to various graphs associated with algebraic structures such as the comaximal ideal graph of rings, the (co-)annihilating ideal graph of rings,  the zero-divisor graph of reduced rings, the intersection graphs of rings, and the intersection graphs of subgroups of groups.
	
	\vskip10pt 
	\textbf{I. The comaximal ideal graph of a ring  }
	
	\vskip10pt

	Let $R$ be a commutative ring with identity. Then $(Id(R),\leq)$  is a modular, $1$-distributive lattice (cf. discussion before Proposition 1.10 of Atiyah and MacDonald \cite{AM} and the fact that the ideal multiplication distributes over ideal sum) under the set inclusion as a partial order. Clearly, sup$\{I,J\}=I+J$ and  inf$\{ I, J\} = I\cap J$.
	It is well known that the lattice $Id( R)$ is a complete lattice with the ideals $(0)$ and $R$ as its least and the greatest element, respectively. Henceforth, we denoted the lattice $Id(R)$ by $L$. Let $L^\partial$ be the dual of the lattice of $L$. Therefore in $L^\partial$, sup$_{L^\partial}\{I, J\} = I\cap J$ and
	inf$_{L^\partial}\{I, J\} = I+ J$. The ideal $R$ is the least element of $L^\partial$, and the ideal $(0)$ is the greatest element of $L^\partial$. Since modularity is a self dual notion, $L^\partial$ is also modular. Further, by the duality, $   L^\partial$ is a $0$-distributive lattice. Moreover, the maximal ideals of $R$ are nothing but the atoms of $L^\partial$. Therefore, $L^\partial$ is an atomic lattice. With this preparation, we prove that the comaximal ideal graph of $R$ is nothing but the zero-divisor graph of $L^\partial$.

	\begin{definition}{[Ye and Wu \cite{yewu}]}
		Let $R$ be a commutative ring with identity. We associate a simple undirected graph with $R$, called as the {\it comaximal ideal graph $\mathbb{CG}(R)$} of $R$, where the vertices of $\mathbb{\mathbb{CG}}(R)$ are proper ideals of $R$ that are not contained in the Jacobson radical $J(R)$ of $R$ and two vertices $I$ and $J$ are adjacent if and only if $I+J=R$.
		
	\end{definition}

	The following is the modified definition of the comaximal ideal graph. 
	The \textit{modified comaximal ideal graph} $\mathbb{CG}^*(R)$ is the graph with the vertex set as  the nonzero proper ideals of $R$ and two
	vertices $I$ and $J$ are adjacent if and only if $I$ and $J$ are comaximal. 
	Clearly, one can see that  the  comaximal ideal graph $\mathbb{CG}(R)$ is the 
	subgraph of the modified comaximal ideal graph  $\mathbb{CG}^*(R)$. Moreover, $\mathbb{CG}^*(R)=\mathbb{CG}(R)+I_m$, where $m=|J(R)|-1$.

	\vskip 5truept 
	
	\begin{theorem}\label{same}
		Let $R$ be a  commutative ring with identity and let $L^\partial$ be the dual of the lattice $Id (R)$ of all ideals of $R$. Then $\mathbb{CG}(R)=G(L^\partial)$.
	\end{theorem}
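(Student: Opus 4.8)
The plan is to translate every order-theoretic notion attached to $L^\partial$ back into the language of ideals of $R$ and to check that it reproduces the definition of $\mathbb{CG}(R)$ verbatim. First I would record that the least element $0$ of $L^\partial$ is the ideal $R$, and that for ideals $I,J$ the lower cone taken in $L^\partial$ is $\{I,J\}^\ell=\{K\in Id(R)\mid I\subseteq K\text{ and }J\subseteq K\}=\{K\mid I+J\subseteq K\}$, since $K\le_{L^\partial}I$ means exactly $I\subseteq K$. Such a family collapses to the single element $\{R\}=\{0_{L^\partial}\}$ precisely when no proper ideal contains $I+J$, that is, when $I+J=R$. Hence two vertices are adjacent in $G(L^\partial)$ if and only if $I+J=R$, which is exactly comaximality; the edge relations of the two graphs therefore coincide once the vertex sets are shown to agree.

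It remains to identify the vertex set $Z(L^\partial)\setminus\{0_{L^\partial}\}=Z(L^\partial)\setminus\{R\}$ with the set of proper ideals of $R$ that are not contained in the Jacobson radical $J(R)$. The main step, and the one I expect to be the crux, is the characterisation $I\in Z(L^\partial)$ if and only if $I\nsubseteq J(R)$. Recall from the discussion preceding the theorem that the maximal ideals of $R$ are precisely the atoms of $L^\partial$ and that $J(R)$ is their intersection. If $I\nsubseteq J(R)$, I would choose a maximal ideal $\mathfrak m$ with $I\nsubseteq\mathfrak m$; maximality forces $I+\mathfrak m=R$, and since $\mathfrak m\neq R$ is a legitimate nonzero element of $L^\partial$, the element $I$ is a zero-divisor of $L^\partial$.

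For the converse suppose $I\subseteq J(R)$. Every element $b\neq R$ of $L^\partial$ is a proper ideal, hence is contained in some maximal ideal $\mathfrak m$; as $I\subseteq J(R)\subseteq\mathfrak m$ we obtain $I+b\subseteq\mathfrak m\neq R$, so $\{I,b\}^\ell\neq\{R\}$ for every admissible $b$, whence $I\notin Z(L^\partial)$. In particular $(0)\subseteq J(R)$ is correctly excluded. Thus $Z(L^\partial)\setminus\{R\}$ is exactly the set of proper ideals not lying in $J(R)$, which is the vertex set of $\mathbb{CG}(R)$; combining this with the adjacency computation yields $\mathbb{CG}(R)=G(L^\partial)$. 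The only delicate point is the appeal to the existence of a maximal ideal above each proper ideal, which is precisely where the hypothesis that $R$ has an identity enters.
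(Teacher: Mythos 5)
Your proposal is correct and follows essentially the same route as the paper: both arguments identify adjacency in $G(L^\partial)$ with comaximality via the observation that $\{I,J\}^\ell=\{R\}$ in $L^\partial$ exactly when $I+J=R$, and both identify the vertex sets by (i) picking a maximal ideal not containing $I$ when $I\nsubseteq J(R)$ and (ii) using that any proper $J$ lies below some maximal ideal to rule out $I\subseteq J(R)$ (your version phrases this direction contrapositively, the paper's by contradiction, but the content is identical). No gaps.
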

	
	\begin{proof} 
		It is clear that if a ring is a local ring, in this case, the comaximal ideal graph is a null graph. Hence we consider non-local commutative rings only. Further, in the case of zero-divisor graphs of lattices, it is clear that if a lattice has exactly one atom, then its zero-divisor graph is a null graph. 
		
		Hence, now consider that $R$ is non-local.
		
		Observe that  $V(G(L^\partial)) =\{ I\in L^\partial\setminus \{0\}| I\wedge J=0$ for some $J\in L^\partial\setminus \{0\}\}$, where $0$ is the least element of $L^\partial$ which is $R$. Hence $V( G(L^\partial) ) =\{ I\in Id(R)\setminus \{R\}| I+ J=R$ for some $J\in Id(R)\setminus \{R\} \}$.  Further, $I$ is adjacent to $ J$ in $G(L^\partial)$ if and only if $\inf_{L^\partial}\{I, J\}=\{0\}$, that is, $I+ J=R$. 
		\par 	Now, we prove $V(\mathbb{CG}(R))=V(G(L^\partial))$. For this, let $I\in V(\mathbb{CG}(R))$. Hence $I\in Id(R)\setminus\{R\}$ and $ I\nsubseteq J(R)$. Therefore  there exists a maximal ideal $M_i$ such that $I\nsubseteq M_i$. Clearly, $M_i \not\subseteq J(R)$ and $I+M_i=R$. Hence $I\in V(G(L^\partial))$. Thus, $V(\mathbb{CG}(R))\subseteq V(G(L^\partial))$. 
		
		\par  Let $I\in V(G(L^\partial))$. Hence $ I\in Id(R)\setminus \{R\}$ and there exists $J\neq R$  such that $I+ J=R$. We have to prove that $I\nsubseteq J(R)$. Suppose on the contrary that $I\subseteq J(R)$. Therefore $I\subseteq M_i$ for all $i$. Since $J\neq R$, then $J\subseteq M_i$ for some $i$. Hence  $R=I+J\subseteq M_i$, a contradiction. Hence $I\nsubseteq J(R)$. This proves that $I\in V(\mathbb{CG}(R))$ and hence $V(G(L^\partial))\subseteq V(\mathbb{CG}(R))$.   		 Further, adjacency in $\mathbb{CG}(R)$ and $G(L^\partial)$ is same.  
		
		Thus, the comaximal ideal graph of $R$ is the same as the zero-divisor graph of lattice $L^\partial$. \end{proof}
	
	\vskip 5truept

	\begin{remark} In view of Theorem \ref{same}, it is clear that the study of the comaximal ideal graph of a commutative ring $R$ with identity is nothing but the study of the zero-divisor graph of a $0$-distributive, modular lattice.\end{remark}
	
	\begin{remark}
		We rewrite the vertex set of the comaximal ideal graph $\mathbb{CG}(R)$  of a commutative ring $R$ with identity as follows:
		$V(\mathbb{CG}(R))=\{ I\in Id(R)\setminus \{R\} ~ | ~I+ J=R$ for some $J\neq R\}$. 
	\end{remark}
	
	\vskip 5truept 
	
	Let $R$ be an Artinian ring. Then $R=\prod\limits_{i=1}^nR_i$, where $R_i$ is an Artinian local ring for every $i$. It is easy to observe that $R$ has identity if and only if $R_i$ has identity for all $i$. Hence it follows from the result of Chajda and L\"anger \cite[Theorem 2]{cl} that  $Id(R)= Id(R_1) \times Id(R_2) \times  \cdots \times Id(R_n) $, where $Id(R_1) \times Id(R_2)  \times \cdots \times Id(R_n) $ denotes the (lattice) direct product of the ideal lattices $Id(R_i)$ of $R_i$. Hence, $Id(R)^\partial=  Id(R_1)^\partial \times Id(R_2)^\partial \times \times \cdots \times Id(R_n)^\partial $.  
	
	\vskip 5truept 
	
	The following result is immediate from Theorems \ref{edge chromatic number},  \ref{zdgtcc},  \ref{czdgtcc}, Remark \ref{type2} and Theorem \ref{same}.

	\vskip 5truept 
	
	\begin{corollary}\label{cgtcc}
		Let $R$ be a commutative ring with finitely many ideals and let $\mathbb{CG}(R)$ be its comaximal ideal graph. Then $\chi'(\mathbb{CG}(R))=\Delta(\mathbb{CG}(R))$ and $\mathbb{CG}(R)$, $\mathbb{CG}^c(R)$ satisfies the Total Coloring Conjecture. Moreover, $\mathbb{CG}(R)$ is of type II if and only if $R=R_1\times R_2$ with $|Id(R_1)|=|Id(R_2)|$. 
	\end{corollary}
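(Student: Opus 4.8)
The plan is to realize $\mathbb{CG}(R)$ as the zero-divisor graph of a product of bounded posets and then invoke the results collected above almost verbatim. First I would observe that a commutative ring with finitely many ideals satisfies the descending chain condition, hence is Artinian, and so decomposes as $R=\prod_{i=1}^{n}R_i$ with each $R_i$ an Artinian local ring (the decomposition recalled in the discussion preceding the corollary). Dualizing the ideal lattice then gives $L^\partial:=Id(R)^\partial=\prod_{i=1}^{n}Id(R_i)^\partial$, and by Theorem \ref{same} we have $\mathbb{CG}(R)=G(L^\partial)$. Thus it suffices to study $G(\mathbf{P})$ for $\mathbf{P}=\prod_{i=1}^{n}P^i$ with $P^i=Id(R_i)^\partial$.

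Next I would verify that this $\mathbf{P}$ meets the hypotheses of Theorems \ref{edge chromatic number} and \ref{czdgtcc} and of Remark \ref{type2}. Each $P^i$ is a finite bounded lattice whose least element (in the dual order) is $R_i$. Since $R_i$ is local, any two proper ideals $I,J\subsetneq R_i$ lie in the unique maximal ideal $M_i$, so $I+J\subseteq M_i\subsetneq R_i$; in the dual order this reads $\inf_{L^\partial}\{I,J\}\neq R_i$ for all such $I,J$, whence no nonzero element of $P^i$ is a zero-divisor, i.e. $Z(P^i)=\{0\}$, and $|P^i|\geq 2$ because $R_i$ is a nonzero ring. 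After relabeling so that $|Id(R_1)|\leq\cdots\leq|Id(R_n)|$, the poset $\mathbf{P}$ is exactly of the required form, and moreover $\mathbf{P}$ is $0$-distributive by the observation in the preliminaries that a direct product of posets with $Z(P^i)=\{0\}$ is $0$-distributive.

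With the hypotheses in place the three assertions follow at once for $n\geq 2$. Theorem \ref{edge chromatic number} gives $\chi'(\mathbb{CG}(R))=\chi'(G(\mathbf{P}))=\Delta(G(\mathbf{P}))=\Delta(\mathbb{CG}(R))$; Theorem \ref{zdgtcc} (which needs only that $\mathbf{P}$ is a finite poset) shows $\mathbb{CG}(R)=G(\mathbf{P})$ satisfies the Total Coloring Conjecture; and since $\mathbf{P}$ is $0$-distributive, Theorem \ref{czdgtcc} shows $\mathbb{CG}^c(R)=G^c(\mathbf{P})$ does as well. The type II claim is Remark \ref{type2} transported through these identifications: $G(\mathbf{P})$ is of type II exactly when $\mathbf{P}$ has precisely two factors with $Z=\{0\}$ of equal size, i.e. when $R=R_1\times R_2$ with $|Id(R_1)|=|Id(R_2)|$. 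The sole degenerate case is $n=1$, where $R$ is local, $\mathbb{CG}(R)$ is the null graph, and every assertion holds trivially.

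I would expect the one genuinely substantive step to be the verification that $Z(P^i)=\{0\}$, i.e. recognizing that locality of $R_i$ is precisely what forces the dualized ideal lattice to carry no zero-divisors; coupled with the bookkeeping that matches ``$R=R_1\times R_2$ with $|Id(R_1)|=|Id(R_2)|$'' to the product-of-two-equal-factors condition of Remark \ref{type2}, with $|P^i|=|Id(R_i)^\partial|=|Id(R_i)|$. Everything else is a direct appeal to the cited results.
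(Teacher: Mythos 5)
Your proposal is correct and follows essentially the same route as the paper: identify $\mathbb{CG}(R)$ with $G(Id(R)^\partial)$ via Theorem \ref{same}, decompose $Id(R)^\partial=\prod_{i}Id(R_i)^\partial$ using the Artinian structure, and then invoke Theorems \ref{edge chromatic number}, \ref{zdgtcc}, \ref{czdgtcc} and Remark \ref{type2}. The only addition is your explicit check that locality of each $R_i$ forces $Z(Id(R_i)^\partial)=\{0\}$, a hypothesis the paper uses but leaves implicit, and that check is correct.
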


	\vskip 5truept

	Now, we characterize the chordal comaximal ideal graphs of Artinian rings.

	\vskip 5truept 
	As an immediate consequence of  Corollary \ref{zdgchordproduct} and Theorem \ref{same}, we have the following result.

	\begin{corollary} \label{cgchord}
		Let $R$ be an Artinian ring with finitely many ideals. Then
		
		\textbf{(A)} $\mathbb{CG}(R)$ is chordal if and only if one of the following hold:
		
		\begin{enumerate}
			\item $R$ is a local ring;
			
			\item  $R=R_1\times R_2$ and $R_i$ is field for some $i\in \{1,2\}$;

			\item  $R=F_1\times F_2\times F_3$, where $F_i$ is field for all $i\in \{1,2,3\}$.
		\end{enumerate}
		
		\textbf{(B)} $\mathbb{CG}^{c}(R)$ is chordal if and only if the number of maximal ideals of $R$ is at most $3$.	
		
	\end{corollary}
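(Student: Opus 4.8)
The plan is to reduce both statements to Corollary~\ref{zdgchordproduct} through the identification of $\mathbb{CG}(R)$ with the zero-divisor graph of a product poset. Since $R$ is Artinian, I would write $R=\prod_{i=1}^n R_i$ with each $R_i$ an Artinian local ring, and recall from the discussion preceding the statement that $L^\partial = Id(R)^\partial = \prod_{i=1}^n Id(R_i)^\partial$. The first step is to set $P^i = Id(R_i)^\partial$ and to check that each $P^i$ is a finite bounded poset with $Z(P^i)=\{0\}$. Finiteness is immediate since $R$, and hence each $R_i$, has finitely many ideals. For the condition $Z(P^i)=\{0\}$, note that because $R_i$ is local its unique maximal ideal $M_i$ is the unique coatom of $Id(R_i)$, hence the unique atom of the dual $P^i$; in a finite lattice with a single atom every pair of nonzero elements lies above that atom, so no two nonzero elements have meet $0$, which gives $Z(P^i)=\{0\}$. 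Thus $\mathbf{P}=\prod_{i=1}^n P^i$ satisfies the hypotheses of Corollary~\ref{zdgchordproduct}, and by Theorem~\ref{same} we have $\mathbb{CG}(R)=G(L^\partial)=G(\mathbf{P})$, whence also $\mathbb{CG}^c(R)=G^c(\mathbf{P})$ on the common vertex set.

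For part \textbf{(A)}, I would feed $\mathbf{P}$ into Corollary~\ref{zdgchordproduct}(A) and translate its three cases. The only arithmetic needed is the observation that $|P^i|=2$ holds if and only if $|Id(R_i)|=2$, which in turn holds if and only if $R_i$ is a field (a local ring whose only ideals are $(0)$ and $R_i$). Under this dictionary, $n=1$ becomes ``$R$ is local'', the case ``$n=2$ with $|P^i|=2$ for some $i$'' becomes ``$R=R_1\times R_2$ with some $R_i$ a field'', and ``$n=3$ with $|P^i|=2$ for all $i$'' becomes ``$R=F_1\times F_2\times F_3$ with all $F_i$ fields'', which are precisely conditions (1)--(3).

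For part \textbf{(B)}, I would invoke Corollary~\ref{zdgchordproduct}(B), which asserts that $G^c(\mathbf{P})$ is chordal if and only if $\mathbf{P}$ has at most $3$ atoms. The key bookkeeping step is to identify these atoms: an atom of the product $\mathbf{P}$ is an element having one coordinate equal to the unique atom $M_i$ of $P^i$ and all other coordinates equal to the bottom. Since each local factor contributes exactly one such atom, the atoms of $\mathbf{P}$ are in bijection with the maximal ideals $M_i$ of $R$, of which there are exactly $n$. Hence $\mathbb{CG}^c(R)=G^c(\mathbf{P})$ is chordal if and only if $n\le 3$, that is, if and only if $R$ has at most $3$ maximal ideals.

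None of these steps presents a genuine obstacle, since the result is essentially a dictionary translation. The only points requiring any care are the verification that $Z(P^i)=\{0\}$ for the local factors and the identification of the atoms of the dual product lattice with the maximal ideals of $R$; both are routine once the local decomposition $R=\prod_{i=1}^n R_i$ is in place.
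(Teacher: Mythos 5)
Your proposal is correct and follows essentially the same route as the paper: the paper derives this corollary as an immediate consequence of Theorem \ref{same} (identifying $\mathbb{CG}(R)$ with $G(Id(R)^\partial)$) together with the Chajda--L\"anger decomposition $Id(R)^\partial=\prod_{i=1}^n Id(R_i)^\partial$ and Corollary \ref{zdgchordproduct}, which is exactly your dictionary. Your explicit verifications that $Z(Id(R_i)^\partial)=\{0\}$ for local $R_i$ and that the atoms of the product correspond to the maximal ideals are the same routine checks the paper leaves implicit.
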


	\vskip 5truept

	The following result is immediate from  Corollary \ref{perfect0lattice} and  Theorem \ref{same}.
	
	\begin{corollary}\label{cgperfect}
		Let $R$ be an Artinian ring with finitely many ideals. Then $\mathbb{CG}(R)$ is  perfect if and only if $\omega(\mathbb{CG}(R))=|\text{Max}(R)|\leq 4$.
	\end{corollary}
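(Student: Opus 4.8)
The plan is to recognize this statement as a direct translation of Corollary \ref{perfect0lattice} through the identification established in Theorem \ref{same}. Recall from the discussion preceding Theorem \ref{same} that $L^\partial$, the dual of the ideal lattice $Id(R)$, is a $0$-distributive lattice whose least element is the ideal $R$ and whose atoms are precisely the maximal ideals of $R$; in particular $L^\partial$ is atomic. Since $R$ has only finitely many ideals, $L^\partial$ is a finite lattice, and its number of atoms $n$ equals $|\text{Max}(R)|$.

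First I would invoke Theorem \ref{same} to write $\mathbb{CG}(R)=G(L^\partial)$, so that perfectness of $\mathbb{CG}(R)$ is equivalent to perfectness of $G(L^\partial)$. Next, applying Corollary \ref{perfect0lattice} to the finite $0$-distributive lattice $L=L^\partial$ with its $n$ atoms yields that $G(L^\partial)$ is perfect if and only if $\omega(G(L^\partial))=n\leq 4$. It then remains only to record that $\omega(G(L^\partial))=n=|\text{Max}(R)|$: because $L^\partial$ is atomic, Theorem \ref{weaklyperfectatomic} gives $\omega(G(L^\partial))=n$, the number of atoms, which we have already identified with $|\text{Max}(R)|$.

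Combining these equalities, $\mathbb{CG}(R)$ is perfect if and only if $\omega(\mathbb{CG}(R))=|\text{Max}(R)|\leq 4$, as claimed. There is no genuine obstacle here beyond the bookkeeping of the three identifications, namely $\mathbb{CG}(R)=G(L^\partial)$ from Theorem \ref{same}, the correspondence between the atoms of $L^\partial$ and the maximal ideals of $R$ from the preamble to that theorem, and the equality $\omega=n$ via atomicity from Theorem \ref{weaklyperfectatomic}; each of these has already been established in the preceding results, so the proof reduces to assembling them.
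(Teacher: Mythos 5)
Your proposal is correct and follows exactly the paper's route: the paper derives this corollary as an immediate consequence of Theorem \ref{same} (identifying $\mathbb{CG}(R)$ with $G(L^\partial)$) together with Corollary \ref{perfect0lattice} applied to the finite $0$-distributive lattice $L^\partial$, whose atoms are the maximal ideals of $R$. Your additional appeal to Theorem \ref{weaklyperfectatomic} for the equality $\omega(G(L^\partial))=n$ is consistent with how the paper itself justifies Corollary \ref{perfect0lattice}, so nothing is missing.
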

	
	This extends the following result.
	
	\begin{corollary}  [{\cite[Theorem 3.6]{azadi}}] Let $R$ be an Artinian ring with finitely many ideals.
		If $Max(R)\leq 4$, then $\mathbb{CG}(R)$ is a perfect graph.
	\end{corollary}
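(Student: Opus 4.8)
The plan is to reduce the statement entirely to the lattice-theoretic Corollary \ref{perfect0lattice} via the identification in Theorem \ref{same}, so that essentially no fresh graph-theoretic work is required. First I would record the structural facts about the relevant lattice. Since $R$ is Artinian with only finitely many ideals, the ideal lattice $Id(R)$ is finite, and hence so is its dual $L^\partial = Id(R)^\partial$. As established in the discussion preceding Theorem \ref{same}, $L^\partial$ is a modular, $0$-distributive lattice whose atoms are precisely the maximal ideals of $R$; moreover every proper ideal $I\neq R$ is contained in some maximal ideal, so $L^\partial$ is atomic with exactly $n=|\text{Max}(R)|$ atoms.

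Next I would invoke Theorem \ref{same} to write $\mathbb{CG}(R)=G(L^\partial)$, so that $\mathbb{CG}(R)$ is perfect if and only if $G(L^\partial)$ is perfect, and $\omega(\mathbb{CG}(R))=\omega(G(L^\partial))$. Applying Theorem \ref{weaklyperfectatomic} to the atomic poset $L^\partial$ gives $\omega(G(L^\partial))=n=|\text{Max}(R)|$, which is exactly the quantity appearing in the statement. At this point every symbol in the desired equivalence has been translated into the language of $G(L^\partial)$.

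Finally I would apply Corollary \ref{perfect0lattice} to the finite $0$-distributive lattice $L^\partial$ with its $n$ atoms: it yields that $G(L^\partial)$ is perfect if and only if $\omega(G(L^\partial))=n\leq 4$. Substituting the identifications $\mathbb{CG}(R)=G(L^\partial)$ and $\omega(\mathbb{CG}(R))=n=|\text{Max}(R)|$ completes the proof. The only point demanding care — and it is routine rather than a genuine obstacle — is verifying that $L^\partial$ actually satisfies the hypotheses of Corollary \ref{perfect0lattice}, namely finiteness and $0$-distributivity, together with the correct count of atoms as maximal ideals; all of these are furnished by the finitely-many-ideals assumption and the structural discussion already given before Theorem \ref{same}, so the argument is purely a matter of assembling the dictionary.
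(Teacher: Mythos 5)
Your proposal is correct and follows essentially the same route as the paper: the paper obtains this statement as the easy direction of Corollary \ref{cgperfect}, which it derives exactly as you do, by combining Theorem \ref{same} (identifying $\mathbb{CG}(R)$ with $G(L^\partial)$ for the finite, $0$-distributive dual ideal lattice $L^\partial$ whose atoms are the maximal ideals) with Corollary \ref{perfect0lattice}. The extra appeal to Theorem \ref{weaklyperfectatomic} to pin down $\omega(\mathbb{CG}(R))=|\mathrm{Max}(R)|$ is harmless and already implicit in Corollary \ref{perfect0lattice}.
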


	\vskip 15truept

	\noindent \textbf{II. Annihilating ideal graph and  co-annihilating ideal graph of ring}
	
	\vskip 5truept 
	Many researchers studied annihilating ideal graphs and co-annihilating ideal graphs of rings. In this section, we characterize perfect annihilating ideal graphs and co-annihilating ideal graphs of rings.
	\vskip 5truept
	
	\begin{definition}[Akbari et al. \cite{akbari}]
		The \textit{co-annihilating-ideal graph} of $R$, denoted by $\mathbb{CAG}^*(R)$, is a graph whose vertex set is
		the set of all non-zero proper ideals of $R$ and two distinct vertices $I$ and $J$ are adjacent
		whenever Ann$(I)\cap$ Ann$(J) =(0)$.
	\end{definition}

	\begin{definition}[Visweswaran and Patel \cite{viswe}]
		Let $R$ be a commutative ring with identity. Associate a simple undirected graph with $R$, called as the {\it annihilating ideal graph $\mathbb{AG}(R)$} of $R$, where the vertices of $\mathbb{AG}(R)$ are nonzero annihilating ideals of $R$, and two vertices $I$ and $J$ are adjacent if and only if $I+J$ is also an annihilating ideal of $R$.
	\end{definition}
	
	The following is the modified definition of annihilating ideal graph. The modified \textit{annihilating ideal graph} $\mathbb{AG}^*(R)$ be a simple undirected graph with vertex set $V(\mathbb{AG}^*(R))$ is set of all nonzero proper ideals of $R$ and two distinct vertices $I, J$ are joined if and only if $I+J$ is also an annihilating ideal of $R$.
	
	\begin{lemma}[{\cite[Lemma 2.1]{aghapouramin}}]
		$I-J$ is an edge of $\mathbb{AG}^*(R)$ if and only if Ann$(I) ~\cap$ Ann$(J)\neq(0)$. Hence $\mathbb{AG}^{*c}(R)=\mathbb{CAG}^*(R)$.
	\end{lemma}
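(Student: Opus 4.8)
The plan is to reduce the adjacency condition in $\mathbb{AG}^*(R)$ to a statement about annihilators by means of the elementary identity $\text{Ann}(I+J) = \text{Ann}(I) \cap \text{Ann}(J)$. Recall that, by definition, an ideal $K$ of $R$ is an \emph{annihilating ideal} precisely when $\text{Ann}(K) \neq (0)$, and that two distinct vertices $I, J$ are adjacent in $\mathbb{AG}^*(R)$ exactly when $I+J$ is an annihilating ideal of $R$, i.e.\ when $\text{Ann}(I+J) \neq (0)$. So the whole equivalence hinges on understanding $\text{Ann}(I+J)$.

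First I would establish the annihilator identity. For any $r \in R$, the equality $r(I+J) = (0)$ holds if and only if $rI = (0)$ and $rJ = (0)$: the forward direction follows by specializing an arbitrary element $i+j$ of $I+J$ to the cases $j=0$ and $i=0$, while the reverse direction is immediate from $r(i+j) = ri + rj$. Hence $\text{Ann}(I+J) = \text{Ann}(I) \cap \text{Ann}(J)$. Combining this with the definitions recalled above gives the stated equivalence: $I-J$ is an edge of $\mathbb{AG}^*(R)$ if and only if $\text{Ann}(I+J) \neq (0)$, which holds if and only if $\text{Ann}(I) \cap \text{Ann}(J) \neq (0)$.

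For the ``Hence'' clause, I would observe that $\mathbb{AG}^*(R)$ and $\mathbb{CAG}^*(R)$ are defined on the same vertex set, namely the set of all nonzero proper ideals of $R$. Two distinct such vertices $I, J$ are adjacent in the complement $\mathbb{AG}^{*c}(R)$ precisely when they are non-adjacent in $\mathbb{AG}^*(R)$, which by the first part means $\text{Ann}(I) \cap \text{Ann}(J) = (0)$. But this is exactly the adjacency condition defining $\mathbb{CAG}^*(R)$. Therefore the two graphs coincide, and $\mathbb{AG}^{*c}(R) = \mathbb{CAG}^*(R)$.

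Since every step is a direct unwinding of the definitions together with the one-line annihilator identity, there is no genuine obstacle here. The only point requiring attention is the bookkeeping that both graphs are taken on the identical vertex set of nonzero proper ideals, so that the complement $\mathbb{AG}^{*c}(R)$ is formed relative to the same vertices as $\mathbb{CAG}^*(R)$ and the conclusion is a literal equality of graphs rather than merely an isomorphism.
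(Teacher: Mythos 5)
Your proof is correct: the key identity $\mathrm{Ann}(I+J)=\mathrm{Ann}(I)\cap\mathrm{Ann}(J)$ together with the definition of an annihilating ideal immediately gives the adjacency criterion, and the complement claim follows since both graphs share the vertex set of nonzero proper ideals. The paper itself cites this lemma from an external source without reproving it, and your argument is exactly the standard one-line derivation that the result rests on (the same identity is also used in the paper's Observation on Artinian rings), so there is nothing further to compare.
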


	\begin{observation} \label{obs3}
		If $R$ is an Artinian ring, then for any non-zero proper ideal $I$ of $R$,
		Ann$_R(I)\neq (0)$. Hence in  $R$, if  non-zero proper ideals $I$ and $J$ are adjacent in $\mathbb{CAG}^*(R)$ if and only if  Ann$(I)\cap$ Ann$(J) =$   Ann$(I+J) =(0)=$Ann$(R)$. Thus, $I$ and $J$ are adjacent in $\mathbb{CAG}^*(R)$ if and only if $I+J=R$, whenever $R$ is an Artinian ring.
	\end{observation}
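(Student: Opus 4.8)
The plan is to reduce the stated equivalence to a single structural fact about Artinian rings, namely that \emph{every nonzero proper ideal $I$ of an Artinian ring $R$ satisfies $\mathrm{Ann}_R(I)\neq (0)$}, and then to deduce the adjacency criterion from a routine annihilator identity. Once this first assertion is available, the remaining argument is purely formal, so I regard that assertion as the crux.

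To prove $\mathrm{Ann}_R(I)\neq(0)$, I would use the structure theorem already invoked in this section: since $R$ is Artinian, $R=\prod_{i=1}^n R_i$ with each $R_i$ an Artinian local ring having nilpotent maximal ideal $M_i$, and every ideal of $R$ splits as $I=\prod_{i=1}^n I_i$ with $I_i$ an ideal of $R_i$. First I would settle the local case: if $R_i$ is Artinian local and $I_i\subseteq M_i$ is proper, pick the least $k$ with $M_i^{k}=(0)$; then $I_i\,M_i^{k-1}\subseteq M_i^{k}=(0)$, so $M_i^{k-1}\subseteq \mathrm{Ann}_{R_i}(I_i)$, and $M_i^{k-1}\neq(0)$ by minimality of $k$, giving a nonzero annihilator. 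Returning to $R=\prod R_i$, since $I$ is proper some coordinate $I_j$ is proper in $R_j$: if $I_j=(0)$, then the ideal supported on the $j$-th factor annihilates $I$; and if $I_j\neq(0)$, the local case supplies $0\neq c\in\mathrm{Ann}_{R_j}(I_j)$, whence the element of $R$ equal to $c$ in the $j$-th coordinate and $0$ elsewhere annihilates $I$. In either case $\mathrm{Ann}_R(I)\neq(0)$.

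With this in hand I would record the elementary identity $\mathrm{Ann}(I)\cap\mathrm{Ann}(J)=\mathrm{Ann}(I+J)$, valid in any commutative ring, since $r(I+J)=(0)$ is equivalent to the conjunction $rI=(0)$ and $rJ=(0)$. By the definition of $\mathbb{CAG}^*(R)$, the vertices $I$ and $J$ are adjacent exactly when $\mathrm{Ann}(I)\cap\mathrm{Ann}(J)=\mathrm{Ann}(I+J)=(0)$. Because $I$ is nonzero, $I+J\supseteq I$ is a nonzero ideal; hence by the crux either $I+J$ is proper, in which case $\mathrm{Ann}(I+J)\neq(0)$, or $I+J=R$, in which case $\mathrm{Ann}(I+J)=\mathrm{Ann}(R)=(0)$. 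Thus $\mathrm{Ann}(I+J)=(0)$ if and only if $I+J=R$, which yields the claimed equivalence.

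The main obstacle is the first assertion; everything afterward is bookkeeping. The only subtlety there is handling the two subcases uniformly in the product decomposition, distinguishing a vanishing coordinate $I_j=(0)$ from a nonzero proper coordinate, and ensuring that the element witnessing the annihilator is genuinely nonzero, which is exactly where nilpotency of $M_j$ and minimality of the exponent $k$ are used.
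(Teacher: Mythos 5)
Your proof is correct and follows the same chain of reasoning that the paper's Observation itself sketches (the paper states it without a detailed proof): first that $\mathrm{Ann}_R(I)\neq(0)$ for every nonzero proper ideal of an Artinian ring, then the identity $\mathrm{Ann}(I)\cap\mathrm{Ann}(J)=\mathrm{Ann}(I+J)$, and finally the dichotomy $I+J$ proper versus $I+J=R$. Your use of the decomposition $R\cong\prod R_i$ into Artinian local rings with nilpotent maximal ideals to justify the first assertion is exactly the structural fact the paper relies on elsewhere in this section, and the edge cases (a zero coordinate versus a nonzero proper coordinate) are handled correctly.
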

	
	\begin{lemma}[{\cite[Corollary  1.2]{akbari}}]
		\label{artiniansame1}
		If $R$ is an Artinian ring, then $\mathbb{CG}^*(R)=\mathbb{CAG}^*(R)$.
	\end{lemma}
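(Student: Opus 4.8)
The plan is to show that $\mathbb{CG}^*(R)$ and $\mathbb{CAG}^*(R)$ coincide as graphs by checking that they have the same vertex set and the same adjacency relation. The vertex sets agree immediately: by definition both graphs take as vertices precisely the nonzero proper ideals of $R$. Hence the whole content lies in comparing adjacency, and the strategy is to translate each of the two adjacency conditions into the single common condition $I+J=R$.

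For $\mathbb{CG}^*(R)$ this translation is built into the definition: two distinct vertices $I$ and $J$ are adjacent in $\mathbb{CG}^*(R)$ if and only if they are comaximal, that is, $I+J=R$. For $\mathbb{CAG}^*(R)$, adjacency is governed by $\text{Ann}(I)\cap\text{Ann}(J)=(0)$, and here I would invoke Observation \ref{obs3}. The key inputs recorded there are, first, the elementary annihilator identity $\text{Ann}(I)\cap\text{Ann}(J)=\text{Ann}(I+J)$ (an element kills both $I$ and $J$ precisely when it kills every element $i+j$ of $I+J$), and second, the fact that in an Artinian ring every nonzero proper ideal has nonzero annihilator. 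Combining these, $\text{Ann}(I)\cap\text{Ann}(J)=(0)$ is equivalent to $\text{Ann}(I+J)=(0)$, which by the Artinian hypothesis forces $I+J$ to fail to be a proper ideal, i.e. $I+J=R$; the converse direction is immediate since $\text{Ann}(R)=(0)$.

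Putting the two descriptions side by side, $I$ and $J$ are adjacent in $\mathbb{CAG}^*(R)$ if and only if $I+J=R$, if and only if they are adjacent in $\mathbb{CG}^*(R)$. Since both are simple graphs on the same vertex set with identical adjacency, I conclude $\mathbb{CG}^*(R)=\mathbb{CAG}^*(R)$.

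The only place the Artinian hypothesis enters is the implication ``$\text{Ann}(I+J)=(0)\Rightarrow I+J=R$'', which rests on the nonvanishing of annihilators of proper ideals in an Artinian ring; this is exactly what Observation \ref{obs3} supplies. Consequently there is no genuine obstacle once that observation is available—the proof reduces to citing it together with the definition of $\mathbb{CG}^*(R)$—and the only mild point to verify carefully is the annihilator identity $\text{Ann}(I)\cap\text{Ann}(J)=\text{Ann}(I+J)$ and its correct combination with the Artinian property.
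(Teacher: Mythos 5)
Your proposal is correct and follows essentially the same route as the paper, which states this lemma as a citation of Akbari et al.\ but justifies it via the immediately preceding Observation \ref{obs3}: adjacency in $\mathbb{CAG}^*(R)$ reduces to $\mathrm{Ann}(I)\cap\mathrm{Ann}(J)=\mathrm{Ann}(I+J)=(0)$, which for an Artinian ring (where every proper ideal has nonzero annihilator) is equivalent to $I+J=R$. Since both graphs have the nonzero proper ideals as vertices, your identification of the two adjacency relations completes the argument exactly as intended.
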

	
	Hence, we have the following result.
	
	\begin{theorem}\label{artiniansame}
		If $R$ is an Artinian ring, then $\mathbb{CG}^*(R)=\mathbb{CG}(R)+I_m=\mathbb{CAG}^*(R)=\mathbb{AG}^{*c}(R)=\mathbb{AG}^{c}(R)$,  where $m=|J(R)|-1$.
	\end{theorem}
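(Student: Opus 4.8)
The plan is to prove the displayed chain of equalities link by link, observing that nearly all of them are already available and that only one genuinely new identification is required. Reading the chain from left to right, the equality $\mathbb{CG}^*(R)=\mathbb{CG}(R)+I_m$ with $m=|J(R)|-1$ is exactly the observation recorded in the discussion defining the modified comaximal ideal graph: every nonzero ideal contained in the Jacobson radical $J(R)$ fails to be comaximal with any proper ideal and hence is isolated in $\mathbb{CG}^*(R)$, so deleting these isolated vertices recovers $\mathbb{CG}(R)$. The equality $\mathbb{CG}^*(R)=\mathbb{CAG}^*(R)$ is precisely Lemma \ref{artiniansame1} (Akbari et al.), valid for Artinian $R$, and $\mathbb{CAG}^*(R)=\mathbb{AG}^{*c}(R)$ is the content of \cite[Lemma 2.1]{aghapouramin}. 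Thus three of the four equalities can simply be quoted.

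The only link that still needs an argument is $\mathbb{AG}^{*c}(R)=\mathbb{AG}^{c}(R)$, and I would obtain it by establishing the stronger statement $\mathbb{AG}^*(R)=\mathbb{AG}(R)$ for Artinian $R$ and then taking complements. First I would compare vertex sets: the vertices of $\mathbb{AG}(R)$ are the nonzero annihilating ideals, while those of $\mathbb{AG}^*(R)$ are all nonzero proper ideals. By Observation \ref{obs3}, for an Artinian ring $R$ one has $\mathrm{Ann}_R(I)\neq(0)$ for every nonzero proper ideal $I$; equivalently, every nonzero proper ideal is an annihilating ideal. Since $R$ itself is never an annihilating ideal (as $\mathrm{Ann}_R(R)=(0)$), the two vertex sets coincide. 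Next I would compare adjacency: in both graphs two distinct vertices $I,J$ are joined precisely when $I+J$ is an annihilating ideal, so the edge sets agree as well. Hence $\mathbb{AG}^*(R)=\mathbb{AG}(R)$, and complementation gives $\mathbb{AG}^{*c}(R)=\mathbb{AG}^{c}(R)$.

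Assembling the four identifications then yields the full chain $\mathbb{CG}^*(R)=\mathbb{CG}(R)+I_m=\mathbb{CAG}^*(R)=\mathbb{AG}^{*c}(R)=\mathbb{AG}^{c}(R)$, completing the proof. I do not anticipate a serious obstacle here: the entire argument is a bookkeeping assembly of previously established facts, and the one nontrivial input---that passing from $\mathbb{AG}^*(R)$ to $\mathbb{AG}(R)$ changes nothing in the Artinian setting---is a direct consequence of the vanishing-annihilator property in Observation \ref{obs3}. The only point demanding care is to confirm that the annihilating-ideal adjacency condition is literally identical in the definitions of $\mathbb{AG}(R)$ and $\mathbb{AG}^*(R)$, so that no vertices or edges are inadvertently lost when the vertex set is enlarged from the annihilating ideals to all nonzero proper ideals.
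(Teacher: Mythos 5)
Your proposal is correct and follows essentially the same route as the paper, which derives the theorem by assembling the identity $\mathbb{CG}^*(R)=\mathbb{CG}(R)+I_m$ from the definition of the modified comaximal graph, Lemma \ref{artiniansame1}, the lemma of Aghapouramin--Nikmehr giving $\mathbb{AG}^{*c}(R)=\mathbb{CAG}^*(R)$, and Observation \ref{obs3} to identify $\mathbb{AG}^*(R)$ with $\mathbb{AG}(R)$ in the Artinian case. Your explicit verification that the vertex sets and adjacency conditions of $\mathbb{AG}(R)$ and $\mathbb{AG}^*(R)$ coincide for Artinian $R$ is exactly the one detail the paper leaves implicit.
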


	Hence the following result is immediate from  Remark \ref{tccrmk}, Corollary \ref{cgtcc}, and Theorem   \ref{artiniansame}.
	
	\begin{corollary}\label{cagtcc}
		Let $R$ be a commutative ring with finitely many ideals and let $\mathbb{CAG}^*(R), ~ \mathbb{AG}^*(R)$ be its co-annihilating ideal graph and annihilating ideal graph respectively. Then $\chi'(\mathbb{CAG}^*(R))=\chi'(\mathbb{AG}^{*c}(R))=\Delta(\mathbb{CAG}^*(R))=\Delta(\mathbb{AG}^{*c}(R))$ and $\mathbb{CAG}^*(R)=\mathbb{AG}^{*c}(R)$, $\mathbb{CAG}^{*c}(R)=\mathbb{AG}^{*}(R)$ satisfies the Total Coloring Conjecture. Moreover, $\mathbb{CAG}^*(R)=\mathbb{AG}^{*c}(R)$ is of type II if and only if $R=R_1\times R_2$ with $|Id(R_1)|=|Id(R_2)|$. 
	\end{corollary}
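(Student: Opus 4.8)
The plan is to deduce the entire statement from the graph identity in Theorem \ref{artiniansame} together with the coloring facts already recorded for the comaximal ideal graph. First I would observe that a commutative ring with only finitely many ideals is Artinian, so Theorem \ref{artiniansame} applies and gives
\[
\mathbb{CAG}^*(R)=\mathbb{AG}^{*c}(R)=\mathbb{CG}^*(R)=\mathbb{CG}(R)+I_m,\qquad m=|J(R)|-1.
\]
Thus each of the two graphs named in the statement is obtained from the comaximal ideal graph $\mathbb{CG}(R)$ by adjoining $m$ isolated vertices, and the whole corollary reduces to tracking how the operation of adjoining $I_m$, together with its effect on the complement, interacts with $\chi'$, $\Delta$, total colorability, and the type.

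For the edge chromatic number and the maximum degree I would note that adjoining isolated vertices alters neither the edge set nor the largest vertex degree, so $\chi'(\mathbb{CG}(R)+I_m)=\chi'(\mathbb{CG}(R))$ and $\Delta(\mathbb{CG}(R)+I_m)=\Delta(\mathbb{CG}(R))$. Combined with the equality $\chi'(\mathbb{CG}(R))=\Delta(\mathbb{CG}(R))$ from Corollary \ref{cgtcc}, this yields at once
\[
\chi'(\mathbb{CAG}^*(R))=\chi'(\mathbb{AG}^{*c}(R))=\Delta(\mathbb{CAG}^*(R))=\Delta(\mathbb{AG}^{*c}(R)).
\]

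For the Total Coloring Conjecture I would argue on both sides of the complement. Since $\mathbb{CG}(R)$ satisfies the Conjecture by Corollary \ref{cgtcc}, the first part of Remark \ref{tccrmk} shows that $\mathbb{CAG}^*(R)=\mathbb{CG}(R)+I_m$ does too. For the complementary graph I would use the identity $(\mathbb{CG}(R)+I_m)^c=\mathbb{CG}^c(R)\vee K_m$, so that $\mathbb{CAG}^{*c}(R)=\mathbb{AG}^*(R)=\mathbb{CG}^c(R)\vee K_m$; the second part of Remark \ref{tccrmk} (the $G\vee K_m$ statement, which rests on Theorem \ref{czdg}) then gives the Conjecture for $\mathbb{AG}^*(R)$ directly. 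Finally, for the type characterization I would observe that adjoining isolated vertices leaves both $\Delta$ and $\chi''$ unchanged, since each new vertex is non-adjacent and non-incident to everything and so may reuse an already-used color; hence $\mathbb{CAG}^*(R)$ is of type II exactly when $\mathbb{CG}(R)$ is, which by Corollary \ref{cgtcc} happens precisely when $R=R_1\times R_2$ with $|Id(R_1)|=|Id(R_2)|$.

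As the corollary is flagged as immediate, there is no genuine obstacle; the only points needing care are the elementary invariance of $\chi'$, $\Delta$, and $\chi''$ under adjoining isolated vertices and the complement identity $(G+I_m)^c=G^c\vee K_m$, after which all the substantive content is carried by Theorem \ref{artiniansame}, Corollary \ref{cgtcc}, and Remark \ref{tccrmk}.
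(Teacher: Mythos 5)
Your proposal is correct and follows exactly the route the paper intends: the paper derives this corollary as "immediate from Remark \ref{tccrmk}, Corollary \ref{cgtcc}, and Theorem \ref{artiniansame}," which are precisely the three ingredients you combine, and your explicit bookkeeping of how $\chi'$, $\Delta$, $\chi''$, and the complement behave under adjoining $I_m$ just fills in the details the paper leaves implicit.
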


	The following result is immediate from  Remark \ref{obs1},  Corollary \ref{cgchord} and Theorem \ref{artiniansame}.

	\begin{corollary}\label{cagchord} 
		Let $R$ be an Artinian ring with finitely many ideals. Then
		
		\textbf{(A)} $\mathbb{CAG}^*(R)=\mathbb{AG}^{*c}(R)$ is chordal if and only if one of the following hold:
		
		\begin{enumerate}
			\item $R$ is local ring;
			
			\item  $R=R_1\times R_2$ and $R_i$ is field for some $i\in \{1,2\}$;

			\item  $R=F_1\times F_2\times F_3$, where $F_i$ is field for all $i\in \{1,2,3\}$.
		\end{enumerate}
		
		\textbf{(B)} $\mathbb{CAG}^{*c}(R)=\mathbb{AG}^*(R)$ is chordal if and only if the number of maximal ideals of $R$ is at most $3$.	
		
	\end{corollary}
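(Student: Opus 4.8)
The plan is to deduce the statement directly from the graph identifications recorded in Theorem \ref{artiniansame} together with the chordality invariance of Remark \ref{obs1}, so that everything collapses to Corollary \ref{cgchord}. First I would invoke Theorem \ref{artiniansame}: since $R$ is Artinian, $\mathbb{CAG}^*(R)=\mathbb{AG}^{*c}(R)=\mathbb{CG}^*(R)=\mathbb{CG}(R)+I_m$, where $m=|J(R)|-1$. Thus part \textbf{(A)} is really a question about the chordality of $\mathbb{CG}(R)+I_m$.

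For \textbf{(A)}, Remark \ref{obs1} states that $G+I_m$ is chordal if and only if $G$ is chordal; applying this with $G=\mathbb{CG}(R)$ shows that $\mathbb{CAG}^*(R)=\mathbb{AG}^{*c}(R)$ is chordal if and only if $\mathbb{CG}(R)$ is chordal. Corollary \ref{cgchord}\textbf{(A)} then characterizes the latter as exactly the conditions (1)--(3), which finishes part (A).

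For \textbf{(B)}, I would pass to complements. From $\mathbb{CAG}^*(R)=\mathbb{CG}(R)+I_m$ together with the elementary identity $(G+I_m)^c=G^c\vee K_m$ one obtains $\mathbb{CAG}^{*c}(R)=\mathbb{AG}^*(R)=\mathbb{CG}^c(R)\vee K_m=\mathbb{CG}^{*c}(R)$. By Remark \ref{obs1}, $G^c\vee K_m$ is chordal if and only if $G^c$ is chordal, so $\mathbb{AG}^*(R)$ is chordal if and only if $\mathbb{CG}^c(R)$ is chordal. Corollary \ref{cgchord}\textbf{(B)} characterizes this by the number of maximal ideals of $R$ being at most $3$, recalling that for $R=\prod_{i=1}^n R_i$ with each $R_i$ local, the maximal ideals of $R$ correspond precisely to the atoms of $L^\partial=\prod_{i=1}^n Id(R_i)^\partial$, so their number equals $n$. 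This completes part (B).

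Since each step is merely an invocation of an already-established equivalence, there is no genuine obstacle here; the statement is truly an immediate corollary. The only points requiring a moment's care are verifying the complement identity $(G+I_m)^c=G^c\vee K_m$ (the complement of a disjoint union with an independent set is the join with the corresponding clique) and matching the phrase \emph{number of maximal ideals} with the atom count that underlies Corollary \ref{cgchord}\textbf{(B)}, both of which are routine.
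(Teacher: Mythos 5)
Your proposal is correct and follows exactly the route the paper intends: the paper states this corollary as ``immediate from Remark \ref{obs1}, Corollary \ref{cgchord} and Theorem \ref{artiniansame}'', and you have simply written out those same three invocations (the identification $\mathbb{CAG}^*(R)=\mathbb{AG}^{*c}(R)=\mathbb{CG}(R)+I_m$, the chordality invariance under $+I_m$ and $\vee K_m$, and the characterization of chordality of $\mathbb{CG}(R)$ and $\mathbb{CG}^{c}(R)$) in full detail.
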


	Hence the following result is immediate from   Remark \ref{obs2}, Corollary \ref{cgperfect}, and Theorem \ref{artiniansame}. The perfectness   of $\mathbb{AG}(R)$
	is studied  in \cite[Corollary 
	2.3]{aghapouramin}. The Corollary \ref{cagchord} and Corollary \ref{cagperfect} are essentially proved for co-annihilating ideal graph in \cite{mirghadim}.
	
	\begin{corollary}\label{cagperfect}
		Let $R$ be an Artinian ring with finitely many ideals. Then $\mathbb{CAG}^*(R)$ is perfect if and only if $\mathbb{AG}^*(R)$ is perfect if and only if $\omega(\mathbb{CG}^*(R))=$ number of maximal ideals of $R$ is at most 4.
	\end{corollary}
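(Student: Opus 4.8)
The plan is to reduce every assertion to the comaximal ideal graph $\mathbb{CG}(R)$, whose perfectness has already been settled in Corollary \ref{cgperfect}, so that the corollary becomes a matter of assembling identifications already established. The key move is to invoke Theorem \ref{artiniansame}, which for an Artinian ring $R$ supplies the chain of equalities $\mathbb{CAG}^*(R)=\mathbb{AG}^{*c}(R)=\mathbb{CG}^*(R)=\mathbb{CG}(R)+I_m$, where $m=|J(R)|-1$. This single identification is what makes the statement essentially immediate: it simultaneously exhibits the co-annihilating ideal graph as the complement of the annihilating ideal graph and as the comaximal graph with an independent set adjoined.

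For the equivalence of perfectness of $\mathbb{CAG}^*(R)$ and of $\mathbb{AG}^*(R)$, I would use the identity $\mathbb{CAG}^*(R)=\mathbb{AG}^{*c}(R)$ together with the Perfect Graph Theorem, i.e.\ Corollary \ref{corsperfect}(1), which asserts that a graph is perfect if and only if its complement is perfect. Hence $\mathbb{AG}^*(R)$ is perfect precisely when $\mathbb{CAG}^*(R)=\mathbb{AG}^{*c}(R)$ is perfect, giving the first equivalence directly.

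To connect perfectness with the arithmetic condition on maximal ideals, I would start from $\mathbb{CAG}^*(R)=\mathbb{CG}(R)+I_m$ and apply Remark \ref{obs2}, which states that adjoining an independent set leaves perfectness unchanged; thus $\mathbb{CAG}^*(R)$ is perfect if and only if $\mathbb{CG}(R)$ is perfect. Corollary \ref{cgperfect} then yields that this holds if and only if $\omega(\mathbb{CG}(R))=|\mathrm{Max}(R)|\le 4$. Since $I_m$ is an independent set and $\omega(\mathbb{CG}(R))=|\mathrm{Max}(R)|\ge 1$, the clique number is unaffected by the disjoint union, so $\omega(\mathbb{CG}^*(R))=\omega(\mathbb{CG}(R))=|\mathrm{Max}(R)|$, which is exactly the quantity appearing in the statement.

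All the steps are routine bookkeeping, and I do not expect any genuine obstacle. The only points meriting a little care are verifying that the clique number is preserved under the disjoint union with $I_m$, so that $\omega(\mathbb{CG}^*(R))=\omega(\mathbb{CG}(R))$, and recalling—via Theorem \ref{weaklyperfectatomic} applied to the atomic lattice $L^\partial$ whose atoms are precisely the maximal ideals of $R$—that $\omega(\mathbb{CG}(R))$ indeed equals $|\mathrm{Max}(R)|$. With these two observations in place, the three conditions chain together and the corollary follows at once.
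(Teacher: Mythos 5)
Your proposal is correct and follows essentially the same route as the paper, which derives the corollary as ``immediate from Remark \ref{obs2}, Corollary \ref{cgperfect}, and Theorem \ref{artiniansame}'': the identification $\mathbb{CAG}^*(R)=\mathbb{AG}^{*c}(R)=\mathbb{CG}^*(R)=\mathbb{CG}(R)+I_m$, the invariance of perfectness under disjoint union with $I_m$ and under complementation, and the characterization $\omega(\mathbb{CG}(R))=|\mathrm{Max}(R)|\le 4$. Your explicit appeal to the Perfect Graph Theorem for the $\mathbb{AG}^*(R)$ clause and your remark on the clique number being preserved are exactly the bookkeeping the paper leaves implicit.
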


	\vskip 15truept

	\noindent \textbf{III. The zero-divisor graph of a reduced ring  }
	
	\vskip10pt 
	In \cite{djl}, it is mentioned that if $R$ is a  reduced Artinian ring with exactly $k$ prime ideals, then there exist fields $F_1,\dots,F_k$ such that $R\cong F_1\times\cdots\times F_k$. Further, it is  proved that the ring-theoretic zero-divisor graph   $\Gamma(R)=\Gamma(\prod\limits_{i=1}^kF_i)$ equals the poset-theoretic zero-divisor graph of $R$ ($R$ treated  as a poset under the partial order given in \cite[Lemma 3.3]{djl}), that is,  $\Gamma(\prod\limits_{i=1}^kF_i)=G(\prod\limits_{i=1}^kF_i)$ (cf. \cite[Remark 3.4]{LR1}). 
	
	Hence the following result is immediate from Theorems \ref{tcc},  \ref{edge chromatic number}, Remark \ref{type2},    Theorems \ref{zdgtcc}, \ref{czdgtcc}.

	The first part of the result, i.e., $\chi'(\Gamma(R))=\Delta(\Gamma(R))$, $\Gamma(R)$ satisfies Total Coloring Conjecture and last part of result, i.e., $\Gamma(R)$ is of type II if and only if $R=F_1\times F_2$ with $|F_1|=|F_2|$ are proved in \cite{nkvj}.

	\begin{corollary}
		Let $R$ be a finite reduced ring and let $\Gamma(R)$ be its ring-theoretic zero-divisor graph. Then $\chi'(\Gamma(R))=\Delta(\Gamma(R))$ and $\Gamma(R)$, $\Gamma^c(R)$ satisfies the Total Coloring Conjecture. Moreover, $\Gamma(R)$ is of type II if and only if $R=F_1\times F_2$ with $|F_1|=|F_2|$. 
	\end{corollary}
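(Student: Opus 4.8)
The plan is to reduce the entire statement to the coloring theorems already proved for zero-divisor graphs of finite products of posets, via the identification of $\Gamma(R)$ recalled just before the corollary. First I would invoke the structure of finite reduced rings: such an $R$ has finitely many prime ideals, say $k$ of them, and hence $R\cong F_1\times\cdots\times F_k$ for finite fields $F_1,\dots,F_k$. The crucial input, quoted from \cite{djl} and \cite{LR1}, is that the ring-theoretic and poset-theoretic zero-divisor graphs coincide: $\Gamma(R)=\Gamma(\prod_{i=1}^kF_i)=G(\mathbf{P})$, where $\mathbf{P}=\prod_{i=1}^kF_i$ is the direct product of the $F_i$ viewed as posets under the order of \cite[Lemma 3.3]{djl}. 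Each factor $F_i$ is then a finite bounded poset with $Z(F_i)=\{0\}$ and $2\le|F_i|$, so $\mathbf{P}$ is precisely a poset of the kind appearing in Theorems \ref{tcc} and \ref{edge chromatic number}; in particular, by the remark in the preliminaries that $\prod P^i$ is $0$-distributive whenever $Z(P^i)=\{0\}$ for all $i$, the poset $\mathbf{P}$ is a finite $0$-distributive poset.

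With this identification in hand, each assertion follows by a direct appeal. For $\chi'(\Gamma(R))=\Delta(\Gamma(R))$ I would apply Theorem \ref{edge chromatic number} to $G(\mathbf{P})$, and for the fact that $\Gamma(R)$ satisfies the Total Coloring Conjecture I would apply Theorem \ref{tcc} (or equivalently the general Theorem \ref{zdgtcc}). The new ingredient is the claim for the complement: since $\mathbf{P}$ is $0$-distributive and $\Gamma^c(R)=G^c(\mathbf{P})$, Theorem \ref{czdgtcc} gives that $\Gamma^c(R)$ satisfies the Conjecture. Finally, the type II characterization is read off from Remark \ref{type2}: $G(\mathbf{P})$ is of type II exactly when $\mathbf{P}$ is a product of exactly two factors with trivial $Z(\cdot)$ of equal cardinality, i.e. $k=2$ and $|F_1|=|F_2|$, which is the condition $R=F_1\times F_2$ with $|F_1|=|F_2|$.

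The only point needing genuine care is the bookkeeping that makes Remark \ref{type2} translate cleanly into ring language, namely the cardinality identity $|P^i|=|F_i|$ for the poset $P^i=F_i$; a quick check (the poset attached to a field is a chain on $|F_i|$ elements under the order of \cite[Lemma 3.3]{djl}, whose only zero-divisor is $0$) confirms it, and this is what turns equality of poset cardinalities into equality of field cardinalities. Beyond this, the argument is merely a verification that the imported theorems apply to $\mathbf{P}$: finiteness, boundedness, and $Z(F_i)=\{0\}$ for each factor, all immediate once $R$ is written as a product of fields. I do not anticipate a substantive obstacle, since the content of the corollary is entirely carried by the prior Theorems \ref{tcc}, \ref{edge chromatic number}, \ref{zdgtcc}, \ref{czdgtcc} and Remark \ref{type2}; the work is simply to align the reduced-ring decomposition with their hypotheses.
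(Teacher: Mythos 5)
Your proposal is correct and follows essentially the same route as the paper: the authors likewise decompose the finite reduced ring as $R\cong F_1\times\cdots\times F_k$, identify $\Gamma(R)$ with the poset-theoretic zero-divisor graph $G(\prod_{i=1}^kF_i)$ via \cite{djl} and \cite{LR1}, and then declare the corollary immediate from Theorems \ref{tcc}, \ref{edge chromatic number}, \ref{zdgtcc}, \ref{czdgtcc} and Remark \ref{type2}. Your extra remarks on $0$-distributivity of the product and the cardinality bookkeeping $|P^i|=|F_i|$ only make explicit what the paper leaves tacit.
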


	In view of Corollary \ref{zdgchordproduct}, we have the following result.

	\begin{corollary} \label{zdgchordfinite}
		Let $R$ is a  finite reduced ring. Then
		
		\textbf{(A)} $\Gamma(R)$ is chordal if and only if one of the following holds:
		
		\begin{enumerate}
			\item $R$ is a field;
			
			\item  $R=F_1\times F_2$ and $|F_i|=2$ for some $i\in \{1,2\}$, i.e., $F_i\cong \mathbb{Z}_2$ for some $i\in \{1,2\}$;

			\item  $R=F_1\times F_2\times F_3$ and $|F_i|=2$ for all $i\in \{1,2,3\}$, i.e., $R\cong \mathbb{Z}_2\times \mathbb{Z}_2\times \mathbb{Z}_2$.
		\end{enumerate}
		
		\textbf{(B)} $\Gamma^c(R)$ is chordal if and only if number of maximal ideals $n\leq 3$, i.e., $R\cong F_1\times F_2 \times F_3$.	
		
	\end{corollary}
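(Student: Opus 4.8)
The plan is to deduce the statement from Corollary \ref{zdgchordproduct} by routing a finite reduced ring through the structure theorem and the poset-theoretic identification of $\Gamma(R)$ recorded just above. First I would recall that a finite reduced (commutative, with identity) ring $R$ is Artinian, and an Artinian reduced ring decomposes as $R\cong\prod\limits_{i=1}^{n}F_i$ with each $F_i$ a field; since $R$ is finite, every $F_i$ is a finite field, and $n$ is exactly the number of maximal ideals (equivalently, minimal primes) of $R$, because the maximal ideals are precisely the $n$ coordinate kernels. This converts all the field-theoretic hypotheses into data about the factors of a product.

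Next I would invoke the identification $\Gamma(R)=\Gamma(\prod\limits_{i=1}^{n}F_i)=G(\prod\limits_{i=1}^{n}F_i)$ from the discussion preceding the statement (from \cite{djl} and \cite[Remark 3.4]{LR1}), where $\prod\limits_{i=1}^{n}F_i$ is viewed as a direct product of posets. The bookkeeping step is to check that this product has the shape $\mathbf{P}=\prod\limits_{i=1}^{n}P^i$ to which Corollary \ref{zdgchordproduct} applies, taking $P^i=F_i$: each $F_i$ is a finite bounded poset, and because $F_i$ is a field one has $Z(F_i)=\{0\}$ (no two nonzero elements meet to $0$), which forces $F_i$ to have a unique atom and $|P^i|=|F_i|$ elements. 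Hence $\mathbf{P}$ has exactly $n$ atoms, one per factor, so the number of atoms of $\mathbf{P}$ equals the number $n$ of maximal ideals of $R$.

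With the dictionary $|P^i|=|F_i|$ and ``number of atoms of $\mathbf{P}$ $=n=$ number of maximal ideals of $R$'' in place, both parts are a transcription of Corollary \ref{zdgchordproduct}. For $\textbf{(A)}$: condition $(1)$ $n=1$ reads ``$R$ is a field''; condition $(2)$ $n=2$ with $|P^i|=2$ for some $i$ reads ``$R=F_1\times F_2$ with $|F_i|=2$, i.e.\ $F_i\cong\mathbb{Z}_2$, for some $i$''; condition $(3)$ $n=3$ with $|P^i|=2$ for all $i$ reads ``$R=F_1\times F_2\times F_3$ with every $F_i\cong\mathbb{Z}_2$, i.e.\ $R\cong\mathbb{Z}_2\times\mathbb{Z}_2\times\mathbb{Z}_2$'', using that for a field $|F_i|=2\iff F_i\cong\mathbb{Z}_2$. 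For $\textbf{(B)}$: $G^c(\mathbf{P})$ is chordal iff the number of atoms of $\mathbf{P}$ is at most $3$, which is exactly $n\leq 3$. The hard part will be none of the combinatorics but rather the middle step: confirming that the poset attached to a finite field $F_i$ is a bounded poset with $Z(F_i)=\{0\}$ and precisely $|F_i|$ elements, so that the parameter $|P^i|$ of Corollary \ref{zdgchordproduct} coincides with the field cardinality and the count of factors coincides with the number of maximal ideals; once this identification is pinned down, each equivalence is immediate.
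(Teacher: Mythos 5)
Your proposal matches the paper's route exactly: the paper likewise decomposes the finite reduced ring as $R\cong F_1\times\cdots\times F_n$, invokes the identification $\Gamma(R)=\Gamma(\prod_{i=1}^{n}F_i)=G(\prod_{i=1}^{n}F_i)$ from \cite{djl} and \cite[Remark 3.4]{LR1}, and then reads the result off from Corollary \ref{zdgchordproduct}, with the same dictionary $|P^i|=|F_i|$ and ``number of atoms $=$ number of maximal ideals $=n$''. The paper in fact gives no further argument beyond ``In view of Corollary \ref{zdgchordproduct}'', so your more explicit bookkeeping (checking $Z(F_i)=\{0\}$ as a poset and that each factor contributes one atom) is a faithful, slightly more careful rendering of the intended proof.
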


	In view of Corollary \ref{zdgperfectproduct}, we have the following result.

	\begin{corollary}[Smith \cite{smith}] \label{zdgperfectfinite}
		Let $R$ is a finite reduced  ring. Then $\Gamma(R)$ is perfect if and only if the number of maximal ideals of $R$ is at most  $ 4$, i.e., $R\cong F_1\times F_2 \times F_3 \times F_4$.
	\end{corollary}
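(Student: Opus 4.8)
The plan is to recognize $\Gamma(R)$ as the poset-theoretic zero-divisor graph of a product of posets and then invoke Corollary \ref{zdgperfectproduct} directly. First I would use the structure theory recalled at the start of subsection III: a finite reduced ring is Artinian with zero nilradical, so $R \cong F_1 \times \cdots \times F_k$ for finite fields $F_1,\dots,F_k$, where $k$ is precisely the number of maximal (equivalently, prime) ideals of $R$. This reduces the question to the product ring $\prod_{i=1}^{k} F_i$.

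Next I would pass from the ring-theoretic to the poset-theoretic zero-divisor graph using the identification $\Gamma(\prod_{i=1}^{k} F_i) = G(\prod_{i=1}^{k} F_i)$ quoted in subsection III (cf. \cite{djl} and \cite[Remark 3.4]{LR1}). Each finite field $F_i$, regarded as a finite bounded poset, satisfies $Z(F_i) = \{0\}$, so the product $\mathbf{P} = \prod_{i=1}^{k} F_i$ is exactly of the form required by Corollary \ref{zdgperfectproduct}, with the number of factors equal to $k$.

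With this setup, the conclusion is immediate. By Corollary \ref{zdgperfectproduct}, $G(\mathbf{P})$ is perfect if and only if $\omega(G(\mathbf{P})) = k \le 4$. Since $k$ is the number of maximal ideals of $R$ and $\Gamma(R) = G(\mathbf{P})$, this says $\Gamma(R)$ is perfect if and only if $R$ has at most four maximal ideals, i.e. $R \cong F_1 \times F_2 \times F_3 \times F_4$ (allowing degenerate products with fewer factors).

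The argument is essentially a translation, so there is no serious analytic obstacle; the only point that genuinely needs verifying is the bookkeeping that the number of poset factors in Corollary \ref{zdgperfectproduct} coincides with the number $k$ of maximal ideals of $R$, and hence with $\omega(\Gamma(R))$. This holds because the condition $Z(F_i) = \{0\}$ forces each factor to contribute exactly one atom to $\mathbf{P}$ (two distinct atoms of a factor would be a nonzero zero-divisor pair), so $\mathbf{P}$ has exactly $k$ atoms and $\omega(G(\mathbf{P})) = k$ by Theorem \ref{weaklyperfectatomic}. Once this identification is in place, the equivalence and the explicit description $R \cong F_1 \times F_2 \times F_3 \times F_4$ drop out.
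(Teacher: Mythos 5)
Your proposal is correct and follows the paper's own route exactly: decompose the finite reduced ring as a product of fields, identify $\Gamma(R)$ with the poset-theoretic zero-divisor graph $G(\prod_{i=1}^k F_i)$ via the discussion at the start of subsection III, and apply Corollary \ref{zdgperfectproduct}. The extra bookkeeping you supply (that each factor contributes one atom, so the number of factors equals $\omega(\Gamma(R))$ and the number of maximal ideals) is exactly what the paper leaves implicit.
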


	\vskip 5truept

	It is not very difficult to prove that, $P$ be a finite bounded poset has a unique dual atom if and only if   $Z(P^\partial)=\{0\}$, where $P^\partial$ is dual of $P$. If $\textbf{P}=\prod \limits _{i=1}^{n}P^i$, where $P^i$ is a finite bounded poset for all $i$, then it is easy to see that, $\textbf{P}^\partial=\prod \limits _{i=1}^{n}(P^i)^\partial$. If a finite poset $P$ has exactly one atom, then $\{x,y\}^\ell=\{0\}$, where $0$ is the least element of $P$ (the lower cone taken in $P$), if and only if  either $x=0$ or $y=0$, for all $x, y\in P$. Also, if a finite poset $P$ has exactly one dual atom, then $\{x,y\}^\ell=\{0\}$, where $0$ is the least element  of $P^\partial$ (the lower cone taken $P^\partial$), if and only if  either $x=0$ or $y=0$, for all $x,y\in P^\partial$. 
	
	\begin{lemma}\label{5.12}
		Let $\textbf{P}=\prod \limits _{i=1}^{n}P^i$, where $P^i$ is a finite bounded poset that has exactly one atom and exactly one dual atom for all $i$. Then the zero-divisor graph of $\textbf{P}$ is equal to the zero-divisor graph of $\textbf{P}^\partial$.
	\end{lemma}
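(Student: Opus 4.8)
The plan is to reduce everything to a coordinatewise statement and then exhibit the correspondence explicitly. First I would record that each factor and its dual carry no nontrivial zero-divisors: since $P^i$ has a unique atom, the lower-cone fact quoted just before the lemma gives $\{x,y\}^\ell=\{0_i\}$ in $P^i$ if and only if $x=0_i$ or $y=0_i$, so $Z(P^i)=\{0_i\}$; dually, since $P^i$ has a unique dual atom, the same fact applied in $(P^i)^\partial$ gives $Z((P^i)^\partial)=\{0\}$. Because lower cones in a direct product are taken coordinatewise, this immediately yields two edge criteria: $a$ and $b$ are adjacent in $G(\textbf{P})$ exactly when, for every $i$, $a_i=0_i$ or $b_i=0_i$; and, reading lower cones in $\textbf{P}^\partial$ as upper cones in $\textbf{P}$, $a$ and $b$ are adjacent in $G(\textbf{P}^\partial)$ exactly when, for every $i$, $a_i=1_i$ or $b_i=1_i$, where $1_i$ is the top element of $P^i$.

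Next I would build the identification. The two vertex sets, viewed inside $\prod_i P^i$, are genuinely different (a vertex of $G(\textbf{P})$ is a nonzero tuple with at least one bottom coordinate, whereas a vertex of $G(\textbf{P}^\partial)$ is a non-top tuple with at least one top coordinate), so the identity map is not the right map and ``equal'' here is to be read up to the canonical isomorphism below. In each coordinate choose a bijection $\psi_i\colon P^i\to P^i$ with $\psi_i(0_i)=1_i$; such a $\psi_i$ exists and automatically carries $P^i\setminus\{0_i\}$ onto $P^i\setminus\{1_i\}$. Set $\phi=\prod_i\psi_i$. Then $\phi$ sends nonzero tuples having a bottom coordinate to non-top tuples having a top coordinate, hence maps $V(G(\textbf{P}))$ bijectively onto $V(G(\textbf{P}^\partial))$, and by the two edge criteria the condition ``$a_i=0_i$ or $b_i=0_i$ for all $i$'' is equivalent to ``$\phi(a)_i=1_i$ or $\phi(b)_i=1_i$ for all $i$''. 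Thus $\phi$ preserves and reflects adjacency, giving the desired isomorphism $G(\textbf{P})\cong G(\textbf{P}^\partial)$.

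I would then cross-check this against the machinery of Section 2, which both confirms the result and explains the word ``equal'' in the paper's intended sense. By Lemma \ref{product} both $[\textbf{P}]$ and $[\textbf{P}^\partial]$ are the Boolean lattice $\mathbf{2}^n$, with matching class sizes $|P_{i_1\dots i_k}|=\prod_{j}(|P^j|-1)=|P^\partial_{i_1\dots i_k}|$ (since $|(P^j)^\partial|=|P^j|$); and by Corollary \ref{adj} adjacency of classes in $G([\textbf{P}])$ and in $G([\textbf{P}^\partial])$ is in both cases ``disjoint index sets''. Hence the index-set-preserving bijection is an isomorphism of the reduced graphs, and since $G(\textbf{P})$ and $G(\textbf{P}^\partial)$ are the blow-ups of these reduced graphs into independent sets of the matching cardinalities (Lemma \ref{property}(4)), the isomorphism lifts to $G(\textbf{P})\cong G(\textbf{P}^\partial)$, recovering $\phi$ up to relabelling within each class.

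The only genuine point to be careful about, which I would single out as the main obstacle, is not a computation but the matching of structures: one must first verify that the dual factors again have trivial zero-divisor set (so that Lemma \ref{product} is available on the $\textbf{P}^\partial$ side), and then match the equivalence classes by their support index sets rather than by the identity on $\prod_i P^i$. Once the atoms of $[\textbf{P}]$ and $[\textbf{P}^\partial]$ are paired by coordinate direction and the multiplicities are seen to agree, the conclusion is forced.
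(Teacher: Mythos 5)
Your proof is correct and follows essentially the same route as the paper, which states the lemma without a formal proof and relies on exactly the coordinatewise observations you make explicit (each factor and each dual factor has trivial zero-divisor set, so adjacency in $G(\textbf{P})$ and $G(\textbf{P}^\partial)$ is governed by the pattern of bottom, respectively top, coordinates). Your remark that the two vertex sets differ as subsets of $\prod_i P^i$, so that ``equal'' must be read as ``canonically isomorphic'' via a coordinatewise bijection sending $0_i$ to $1_i$, is a fair and useful precision rather than a deviation from the intended argument.
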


	If $C$ is a finite chain, then $\{x,y\}^\ell=\{0\}$, if and only if  either $x=0$ or $y=0$, for all $x,y\in C$. Further, every finite chain has exactly one atom and exactly one dual atom. From this fact and Lemma \ref{5.12},  the following result is immediate. 
	
	\begin{lemma}\label{chains}
		The zero-divisor graph of  $\textbf{P}=\prod\limits_{i=1}^nP^i$, where $P^i$ is a finite bounded poset with $Z(P^i)=\{0\}$  for all  $i$ is isomorphic to the zero-divisor graph of  $\textbf{C}=\prod\limits_{i=1}^nC_i$, where $|C_i|=|P^i|$ for all $i$.  Moreover, the zero-divisor graph of $\textbf{C}$ is equal to the zero-divisor graph of $\textbf{C}^\partial$.
		
	\end{lemma}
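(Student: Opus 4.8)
The plan is to prove the two assertions separately, reducing the isomorphism to a counting argument on equivalence classes and the equality to Lemma \ref{5.12}.

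For the isomorphism $G(\mathbf{P}) \cong G(\mathbf{C})$, I would first observe that each $C_i$ is a finite chain, hence a finite bounded poset, and that in a chain $\{x,y\}^\ell=\{0\}$ forces $x=0$ or $y=0$, so $Z(C_i)=\{0\}$. Thus both $\mathbf{P}$ and $\mathbf{C}$ are products of the type to which Lemma \ref{product} applies. By that lemma, $[\mathbf{P}]$ and $[\mathbf{C}]$ are both Boolean lattices with exactly $n$ atoms, and for every index set $S=\{i_1,\dots,i_k\}$ one has $|P_{i_1\dots i_k}| = \prod_{i \in S}(|P^i|-1) = \prod_{i \in S}(|C_i|-1) = |C_{i_1\dots i_k}|$, since $|C_i|=|P^i|$. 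In particular every class is nonempty (as $|P^i|\geq 2$), so both Boolean lattices are the full $2^{[n]}$, and by Corollary \ref{adj} the reduced graphs $G([\mathbf{P}])$ and $G([\mathbf{C}])$ coincide with a single graph $H$ whose vertices are the proper nonempty subsets of $\{1,\dots,n\}$, with $S$ adjacent to $T$ if and only if $S\cap T=\emptyset$.

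Next I would describe $G(\mathbf{P})$ and $G(\mathbf{C})$ as blow-ups of $H$. By Lemma \ref{property}(4) the vertices lying in one class $P_{i_1\dots i_k}$ form an independent set, while two adjacent classes induce a complete bipartite graph, and by Lemma \ref{property}(3) two vertices are adjacent exactly when their index sets are disjoint. Hence $G(\mathbf{P})$ is obtained from $H$ by replacing each vertex $S$ with an independent set of cardinality $|P_S|$ and joining two such sets completely precisely when the corresponding vertices are adjacent in $H$; the dense class $P_{12\dots n}$, indexed by the full set, contributes no vertices and is discarded on both sides. The same description holds for $G(\mathbf{C})$ with cardinalities $|C_S|$. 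Since $|P_S|=|C_S|$ for every $S$, choosing any bijection $\phi_S \colon P_{i_1\dots i_k} \to C_{i_1\dots i_k}$ for each $S$ and taking the union $\phi=\bigcup_S \phi_S$ yields a bijection $V(G(\mathbf{P}))\to V(G(\mathbf{C}))$; by the blow-up description $\phi$ both preserves and reflects adjacency, so it is the desired isomorphism.

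Finally, for the equality $G(\mathbf{C})=G(\mathbf{C}^\partial)$, I would invoke Lemma \ref{5.12}: every finite chain $C_i$ has a unique atom and a unique dual atom, so $\mathbf{C}=\prod_{i=1}^n C_i$ satisfies the hypothesis of that lemma and the conclusion is immediate. The only point requiring care in the whole argument is the blow-up identification, namely that no extra edges appear inside a class and that adjacency between distinct classes is genuinely complete bipartite and governed solely by disjointness of index sets; this is exactly the content of Lemma \ref{property}(3)--(4), and once it is in hand the isomorphism is forced by the matching class cardinalities supplied by Lemma \ref{product}.
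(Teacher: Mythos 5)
Your proposal is correct and follows essentially the same route the paper intends: the paper derives the lemma from the observation that chains are bounded posets with $Z(C_i)=\{0\}$ having a unique atom and a unique dual atom, so that Lemma \ref{product} gives matching class cardinalities $|P_{i_1\dots i_k}|=|C_{i_1\dots i_k}|$ and Lemma \ref{5.12} gives the self-duality, exactly as you argue. Your blow-up description via Lemma \ref{property}(3)--(4) simply spells out the details that the paper leaves as ``immediate.''
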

	
	\begin{remark}
		Note that though the zero-divisor graph of $\textbf{C}$ is equal to the zero-divisor graph of $\textbf{C}^\partial$, the zero-divisor graph of $\textbf{P}^\partial$ where  $\textbf{P}=\prod\limits_{i=1}^nP^i$, with $P^i$ is a finite bounded poset such that  $Z(P^i)=\{0\}$ for every $i$, is not isomorphic to the zero-divisor graph of $\textbf{C}^\partial$. However, each $P^i$ has a unique dual atom, then $G(\textbf{C}^\partial) \cong G(\textbf{P}^\partial)$. 
	\end{remark}

	\vskip 15truept

	\noindent 	\textbf{IV. Intersection graphs of ideals of  Artinian principal ideal rings   }
	
	\vskip10pt 
	
	Let $R$ be a  commutative ring with identity. Then the \textit{intersection graph of ideals} $\mathbb{IG}(R)$ of $R$ is the graph whose vertices are the nonzero proper   ideals of $R$ such that distinct vertices $I$ and $J$ are adjacent if and only if $I\cap J\neq\{0\}$; see \cite{chakra}. The vertex set of the  zero-divisor graph  ${G^*}(Id(R))$ of a lattice $Id(R)$ is $Id(R)\setminus \{0_{_{Id(R)} },1_{_{Id(R)}}\}$, that is, the set of nonzero proper   ideals of $R$. Therefore $V(\mathbb{IG}(R))=V({G^*}(Id(R)))=V({G}^{*c}(Id(R)))$.  The ideals $I$ and $J$ are adjacent in ${G^*}(Id(R))$ if and only if $I\wedge J=0_{_{Id(R)}}$, that is, the ideals $I$ and $J$ are adjacent in ${G^*}(Id(R))$ if and only if $I\cap J=(0)$. The ideals $I$ and $J$ are adjacent in ${G}^{*c}(Id(R))$ if and only if $I\cap J\neq (0)$. Hence  the following result.
	
	\begin{lemma}\label{igczdg} Let $R$ be a  commutative ring with identity. Then  $\mathbb{IG}(R)={G}^{*c}(Id(R))$.
	\end{lemma}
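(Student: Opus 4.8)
The statement asserts an equality of two graphs, so the plan is simply to verify that they have the same vertex set and the same edge set; no deep machinery is needed beyond unwinding the definitions in the lattice $Id(R)$. The key structural facts are that in the ideal lattice $Id(R)$ the meet is given by intersection, $\inf\{I,J\}=I\cap J$, the least element $0_{Id(R)}$ is the zero ideal $(0)$, and the greatest element $1_{Id(R)}$ is the whole ring $R$. With these in hand the proof reduces to two short observations.

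First I would pin down the vertex sets. By the definition of $G^*$ for a bounded lattice, the vertices of $G^*(Id(R))$ are the elements of $Id(R)\setminus\{0_{Id(R)},1_{Id(R)}\}$, i.e. the nonzero proper ideals of $R$; this is exactly the vertex set of $\mathbb{IG}(R)$. Hence, as already noted in the excerpt, $V(\mathbb{IG}(R))=V(G^*(Id(R)))=V(G^{*c}(Id(R)))$, since passing to the complement leaves the vertex set unchanged.

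Second I would compare adjacency. Two distinct ideals $I$ and $J$ are adjacent in $G^*(Id(R))$ if and only if $\{I,J\}^\ell=\{0_{Id(R)}\}$, which by the identification of meet with intersection says precisely that $I\cap J=(0)$. Taking complements flips this: $I$ and $J$ are adjacent in $G^{*c}(Id(R))$ if and only if they are \emph{not} adjacent in $G^*(Id(R))$, that is, if and only if $I\cap J\neq(0)$. This is exactly the adjacency rule defining $\mathbb{IG}(R)$. Since the two graphs have the same vertex set and the same pairs of adjacent vertices, they are equal, giving $\mathbb{IG}(R)=G^{*c}(Id(R))$.

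There is no genuine obstacle here; the only point requiring a moment's care is the role of the complement, namely that the intersection graph's condition $I\cap J\neq(0)$ is the negation of the zero-divisor adjacency condition $I\cap J=(0)$, so that $\mathbb{IG}(R)$ matches $G^{*c}(Id(R))$ rather than $G^*(Id(R))$ itself. Everything else is a direct translation between the ring-theoretic and lattice-theoretic language.
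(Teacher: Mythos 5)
Your proposal is correct and follows essentially the same route as the paper: the paper's justification (given in the paragraph immediately preceding the lemma) likewise identifies the vertex set of $G^{*}(Id(R))$ with the nonzero proper ideals of $R$, notes that adjacency in $G^{*}(Id(R))$ means $I\wedge J=I\cap J=(0)$, and then passes to the complement to match the condition $I\cap J\neq(0)$ defining $\mathbb{IG}(R)$. Nothing is missing.
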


	The following discussion can be found in \cite{djl}.
	
	Let $R$ be a commutative ring with identity. Recall that $R$ is a \emph{special principal ideal ring} (or, \emph{SPIR} for brevity) if $R$ is a local Artinian principal ideal ring (cf. \cite{Hung}). If $R$ is an SPIR with maximal ideal $M$, then there exists $n\in\mathbb{N}$ such that $M^n=\{0\}$, $M^{n-1}\neq\{0\}$, and if $I$ is an ideal of $R$, then $I=M^i$ for some $i\in\{0,1,\dots,n\}$ (\cite[Proposition 4]{Hung}). In this case, $M$ is nilpotent with index of nilpotency equal to $n$, and the lattice of ideals of $R$ is isomorphic to the chain $C$ of length $n$ (chain of $n+1$ elements). 
	
	By \cite[Lemma 10]{Hung}, $R$ is an Artinian principal ideal ring if and only if there exist SPIRs $R_1,\dots,R_n$ such that $R\cong R_1\times\cdots\times R_n$ (it is also a straightforward consequence of the structure theorem of Artinian rings in \cite[Theorem 8.7]{AM}). Thus, $Id(R)$ of Artinian principal ideal ring is product of chains $\textbf{C}=\prod\limits_{i=1}^nC_i$  ($C_i$ is chain of length $n_i$), where $R\cong R_1\times\cdots\times R_n$, nilpotency index of maximal ideal $M_i$ of $R_i$ is $n_i$. 
	
	\vskip 5truept 
	
	By Theorems \ref{edge chromatic number},  \ref{zdgtcc},  \ref{czdgtcc}, Remark \ref{type2}, Lemma \ref{chains} and Lemma \ref{igczdg}, the following result  follows.

	\vskip 5truept 
	
	\begin{corollary}\label{spirtcc} 
		Let $R$ be an Artinian principal ideal ring and let $\mathbb{IG}(R)$ be the intersection graph of ideals of $R$.  Then $\chi'(\mathbb{IG}^c(R))=\Delta(\mathbb{IG}^c(R))$ and $\mathbb{IG}(R)$, $\mathbb{IG}^c(R)$ satisfies the Total Coloring Conjecture. Moreover, $\mathbb{IG}^c(R)$ is of type II if and only if $R=R_1\times R_2$ with $|Id(R_1)|=|Id(R_2)|$.
	\end{corollary}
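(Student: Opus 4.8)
The plan is to reduce the whole statement to the zero-divisor graph of a product of chains, for which the needed coloring results are already in hand. First I would invoke the structure theory recalled just before the corollary: since $R$ is an Artinian principal ideal ring, $R\cong R_1\times\cdots\times R_n$ with each $R_i$ an SPIR, so the ideal lattice factors as $Id(R)=\prod_{i=1}^n Id(R_i)$, where each $Id(R_i)=C_i$ is the chain $R_i\supset M_i\supset\cdots\supset M_i^{n_i}=(0)$. Hence $Id(R)=\mathbf{C}=\prod_{i=1}^n C_i$ is a product of finite chains, each a finite bounded poset with $Z(C_i)=\{0\}$; after relabelling I may assume $2\leq|C_1|\leq\cdots\leq|C_n|$, and note $|C_i|=|Id(R_i)|$. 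By Lemma \ref{igczdg}, $\mathbb{IG}(R)=G^{*c}(Id(R))=G^{*c}(\mathbf{C})$, and consequently $\mathbb{IG}^c(R)=G^{*}(\mathbf{C})$. This places us squarely in the setting of Lemma \ref{chains}, so that the product-of-poset theorems \ref{tcc}, \ref{edge chromatic number} and Remark \ref{type2}, originally stated for $G(\mathbf{C})$, are available.

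The key bookkeeping step is the passage between the starred and unstarred graphs via Remark \ref{zdgczdg}: writing $m=|\mathcal{D}\setminus\{1\}|$, we have $G^{*}(\mathbf{C})=G(\mathbf{C})+I_m$ and $G^{*c}(\mathbf{C})=G^c(\mathbf{C})\vee K_m$. The essential point is that the dense elements of $\mathbf{C}$ other than the top (those with all coordinates positive) become \emph{isolated} vertices of $G^{*}(\mathbf{C})$, so adjoining the independent set $I_m$ alters neither the edge set nor the maximum degree.

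For the three assertions I would then argue as follows. For the edge-chromatic number, Theorem \ref{edge chromatic number} gives $\chi'(G(\mathbf{C}))=\Delta(G(\mathbf{C}))$, and since isolated vertices change neither the edges nor $\Delta$, we get $\chi'(\mathbb{IG}^c(R))=\chi'(G(\mathbf{C}))=\Delta(G(\mathbf{C}))=\Delta(\mathbb{IG}^c(R))$. For the Total Coloring Conjecture, $\mathbb{IG}^c(R)=G(\mathbf{C})+I_m$ satisfies it because $G(\mathbf{C})$ does (Theorem \ref{tcc}, equivalently Theorem \ref{zdgtcc}) and adjoining $I_m$ preserves TCC by Remark \ref{tccrmk}; and $\mathbb{IG}(R)=G^c(\mathbf{C})\vee K_m$ satisfies it directly by Remark \ref{tccrmk}, which records (via Theorem \ref{czdg}) that $G\vee K_m$ satisfies TCC for every finite simple graph $G$ (one may also note that $G^c(\mathbf{C})$ itself satisfies TCC by Theorem \ref{czdgtcc}, as $\mathbf{C}$ is $0$-distributive). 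Finally, for the type classification, since isolated vertices change neither $\chi''$ nor $\Delta$, the graph $\mathbb{IG}^c(R)=G(\mathbf{C})+I_m$ is of type II if and only if $G(\mathbf{C})$ is; by Remark \ref{type2} this occurs precisely when $\mathbf{C}$ is a product of exactly two chains of equal cardinality, i.e. $n=2$ and $|C_1|=|C_2|$, which reads as $R=R_1\times R_2$ with $|Id(R_1)|=|Id(R_2)|$.

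The main obstacle I anticipate is not any deep argument but rather the careful handling of the $G^{*}\leftrightarrow G$ correspondence: one must confirm that the non-top dense ideals of $\mathbf{C}$ form an independent set of isolated vertices in $G^{*}(\mathbf{C})$ so that Remark \ref{zdgczdg} legitimately applies, and that consequently each of $\chi'$, $\Delta$, $\chi''$, and the type is preserved under $+\,I_m$. I would also check the degenerate boundary cases ($n=1$, where the graphs are null, and factors with $|C_i|=2$) do not violate the hypotheses of the cited theorems. Everything past this reduction is a direct citation of the already-established product-of-poset coloring results.
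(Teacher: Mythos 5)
Your proposal is correct and follows essentially the same route as the paper, which proves this corollary by directly citing Lemma \ref{igczdg}, Lemma \ref{chains}, Theorems \ref{edge chromatic number}, \ref{zdgtcc}, \ref{czdgtcc} and Remark \ref{type2} after the reduction of $Id(R)$ to a product of chains. The only difference is that you spell out the $G^{*}\leftrightarrow G$ bookkeeping (isolated dense vertices, preservation of $\Delta$, $\chi'$, $\chi''$ and type under $+\,I_m$), which the paper leaves implicit.
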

	
	\noindent 	The next result characterizes chordal graphs $\mathbb{IG}^c(R)$ and $\mathbb{IG}(R)$ for an Artinian principal ideal ring $R$.	
	
	\vskip 5truept 
	
	The  Corollary \ref{zdgchordproduct}, Lemma \ref{chains} and Lemma \ref{igczdg}, yields the following result.

	\vskip 5truept 
	
	\begin{corollary}\label{spirchordal} \label{igchord}
		Let $R$ be an Artinian principal ideal ring. Then
		
		\textbf{(A)} $\mathbb{IG}^c(R)$ is chordal if and only if one of the following holds:
		
		\begin{enumerate}
			\item $R$ is a local ring;
			
			\item  $R=R_1\times R_2$ and $R_i$ is a field for some $i\in \{1,2\}$;

			\item  $R=F_1\times F_2\times F_3$, where $F_i$ is a field for all $i\in \{1,2,3\}$.
		\end{enumerate}
		
		\textbf{(B)} $\mathbb{IG}(R)$ is chordal if and only if number of maximal ideals of $R$ is at most $ 3$.	
		
	\end{corollary}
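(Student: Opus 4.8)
The plan is to transport the problem to the zero-divisor graph of the ideal lattice and then read off the answer from the chordality characterization already established for products of chains. First I would use Lemma~\ref{igczdg} to identify $\mathbb{IG}(R)$ with $G^{*c}(Id(R))$, so that its complement $\mathbb{IG}^c(R)$ is precisely $G^{*}(Id(R))$. Since $R$ is an Artinian principal ideal ring, the structure theorem recalled above gives $R\cong R_1\times\cdots\times R_n$ with each $R_i$ an SPIR, whence $Id(R)\cong \mathbf{C}=\prod_{i=1}^n C_i$, where $C_i$ is the chain of ideals of $R_i$ and $|C_i|=n_i+1$ for $n_i$ the nilpotency index of the maximal ideal $M_i$ of $R_i$. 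Each $C_i$ is a finite bounded poset with $Z(C_i)=\{0\}$, so $\mathbf{C}$ has exactly the form to which Corollary~\ref{zdgchordproduct} applies; this is the content of Lemma~\ref{chains} specialized to the present situation.

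Next I would pass between the starred and unstarred zero-divisor graphs. By Remark~\ref{obs1}, $G^{*}(\mathbf{C})$ is chordal if and only if $G(\mathbf{C})$ is chordal, and $G^{*c}(\mathbf{C})$ is chordal if and only if $G^{c}(\mathbf{C})$ is chordal. Combining these equivalences with the identifications of the previous paragraph, $\mathbb{IG}^c(R)$ is chordal if and only if $G(\mathbf{C})$ is chordal, while $\mathbb{IG}(R)$ is chordal if and only if $G^c(\mathbf{C})$ is chordal. At this point I would simply invoke Corollary~\ref{zdgchordproduct} applied to $\mathbf{C}=\prod_{i=1}^n C_i$ to obtain both chordality criteria in purely combinatorial terms.

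Finally I would translate these combinatorial conditions back into ring-theoretic ones. The number $n$ of chain factors equals the number of maximal ideals of $R$, and the number of atoms of $\mathbf{C}$ is also $n$, since every chain $C_i$ contributes exactly one atom; hence part \textbf{(B)} of Corollary~\ref{zdgchordproduct} yields that $\mathbb{IG}(R)$ is chordal exactly when $n\le 3$, which is the statement \textbf{(B)}. For \textbf{(A)} I would note that $|C_i|=2$ if and only if $Id(R_i)=\{(0),R_i\}$, i.e.\ if and only if $R_i$ is a field. Thus condition $(1)$ of Corollary~\ref{zdgchordproduct} ($n=1$) reads ``$R$ is local,'' condition $(2)$ ($n=2$ with $|C_i|=2$ for some $i$) reads ``$R=R_1\times R_2$ with some $R_i$ a field,'' and condition $(3)$ ($n=3$ with all $|C_i|=2$) reads ``$R=F_1\times F_2\times F_3$ with all $F_i$ fields,'' which is precisely statement \textbf{(A)}.

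The argument is essentially a bookkeeping of three reductions, so I do not anticipate a serious obstacle; the only point requiring care is the passage through the starred graphs, where one must be sure that deleting the dense vertices (the isolated vertices that $G^*$ adds, respectively the universal vertices that $G^{*c}$ adds) does not affect chordality -- but this is exactly Remark~\ref{obs1}, and Lemma~\ref{igczdg} already pins down the vertex set of $\mathbb{IG}(R)$ as all nonzero proper ideals. The remaining verification, that a chain $C_i$ satisfies $Z(C_i)=\{0\}$ so that Corollary~\ref{zdgchordproduct} genuinely applies, is immediate from the observation recorded just before Lemma~\ref{5.12}.
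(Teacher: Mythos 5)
Your proposal is correct and follows essentially the same route as the paper, which obtains the result by combining Lemma \ref{igczdg}, Lemma \ref{chains} and Corollary \ref{zdgchordproduct}; you merely make explicit the intermediate bookkeeping (the passage between the starred and unstarred graphs via Remark \ref{obs1}, and the dictionary ``$|C_i|=2$ iff $R_i$ is a field'', ``number of atoms of $\mathbf{C}$ equals the number of maximal ideals''). No gaps.
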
 
	
	\vskip 5truept 
	
	From Corollary \ref{zdgperfectproduct}, Lemma \ref{chains} and Lemma \ref{igczdg}, we have:

	\vskip 5truept

	\begin{corollary}\label{spirperfect}
		Let $R$ be an Artinian principal ideal ring. Then $\mathbb{IG}(R)$ is perfect if and only if the number of maximal ideals of $R$ is at most $ 4$.
	\end{corollary}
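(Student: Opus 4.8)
The plan is to transfer the question to a zero-divisor graph of a product of chains and then invoke Corollary~\ref{zdgperfectproduct}. Since $R$ is an Artinian principal ideal ring, the structure theorem writes $R\cong R_1\times\cdots\times R_n$ with each $R_i$ a special principal ideal ring, so the ideal lattice is a product of chains $Id(R)\cong\mathbf{C}=\prod_{i=1}^n C_i$, where $C_i$ is the chain of ideals of $R_i$ and $n$ is exactly the number of maximal ideals of $R$. By Lemma~\ref{igczdg}, $\mathbb{IG}(R)=G^{*c}(Id(R))=G^{*c}(\mathbf{C})$, and Lemma~\ref{chains} confirms that this graph is governed entirely by the product-of-chains structure. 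Thus it suffices to decide when $G^{*c}(\mathbf{C})$ is perfect. (When $R$ is local, $n=1$ and $\mathbb{IG}(R)$ is a complete graph, hence perfect, so the bound $n\le 4$ holds trivially; the content is in $n\geq 2$.)

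Next I would peel off the two features that separate $G^{*c}(\mathbf{C})$ from the plain zero-divisor graph $G(\mathbf{C})$. By Remark~\ref{zdgczdg}, $G^{*c}(\mathbf{C})=G^{c}(\mathbf{C})\vee K_m$ for the appropriate $m=|\mathcal{D}\setminus\{1\}|$, and by Remark~\ref{obs2} joining with a complete graph preserves perfectness, so $G^{*c}(\mathbf{C})$ is perfect if and only if $G^{c}(\mathbf{C})$ is perfect. By Corollary~\ref{corsperfect}(1) (the Perfect Graph Theorem), a graph is perfect precisely when its complement is, hence $G^{c}(\mathbf{C})$ is perfect if and only if $G(\mathbf{C})$ is perfect. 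Chaining these equivalences, $\mathbb{IG}(R)$ is perfect if and only if $G(\mathbf{C})$ is perfect.

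Finally, each $C_i$ is a finite bounded poset with $Z(C_i)=\{0\}$, since in a chain $\{x,y\}^\ell=\{0\}$ forces $x=0$ or $y=0$; hence $\mathbf{C}=\prod_{i=1}^n C_i$ meets the hypotheses of Corollary~\ref{zdgperfectproduct}, which yields that $G(\mathbf{C})$ is perfect if and only if $\omega(G(\mathbf{C}))=n\leq 4$. As the number of atoms of $\mathbf{C}$ equals the number $n$ of factors, which in turn equals the number of maximal ideals of $R$, this is exactly the asserted condition. I expect no serious obstacle here: the substantive graph-theoretic work---ruling out induced odd holes and antiholes via the Strong Perfect Graph Theorem once the atom count reaches $5$---is already packaged inside Corollary~\ref{zdgperfectproduct} and Theorem~\ref{zdgchordal}(C). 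The only point needing care is the chain of identifications, namely verifying that each passage $\mathbb{IG}(R)\to G^{*c}(\mathbf{C})\to G^{c}(\mathbf{C})\to G(\mathbf{C})$ preserves perfectness exactly and that the parameter \emph{number of maximal ideals} is tracked faithfully as \emph{number of atoms of $\mathbf{C}$} throughout.
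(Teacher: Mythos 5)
Your proposal is correct and follows essentially the same route as the paper, which derives this corollary directly from Lemma \ref{igczdg}, Lemma \ref{chains} and Corollary \ref{zdgperfectproduct}; you merely make explicit the perfectness-preserving steps $G^{*c}(\mathbf{C})=G^{c}(\mathbf{C})\vee K_m$ (Remark \ref{zdgczdg}, Remark \ref{obs2}) and the passage to $G(\mathbf{C})$ via the Perfect Graph Theorem, which the paper leaves implicit. The bookkeeping identifying the number of maximal ideals of $R$ with the number of chain factors, hence with the number of atoms of $\mathbf{C}$, is also handled correctly.
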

	
	\begin{theorem}[Das \cite{das}]
		The intersection graph of ideals of $Z_n$ is perfect if and only if
		$n = p_1^{n_1}p_2^{n_2}p_3^{n_3}p_4^{n_4}$, where $p_i$ are distinct primes and $n_i\in \mathbb{N}\cup \{0\}$, that is, the
		number of distinct prime factors of $n$ is less than or equal to $4$.  
	\end{theorem}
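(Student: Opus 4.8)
The plan is to recognize $\mathbb{Z}_n$ as an Artinian principal ideal ring and then read off the conclusion from Corollary \ref{spirperfect}, which already handles this entire class. First I would fix the prime factorization $n = p_1^{n_1}\cdots p_k^{n_k}$ with the $p_i$ distinct primes and each exponent $n_i \geq 1$; by the Chinese Remainder Theorem this gives the ring isomorphism $\mathbb{Z}_n \cong \mathbb{Z}_{p_1^{n_1}} \times \cdots \times \mathbb{Z}_{p_k^{n_k}}$. Each factor $\mathbb{Z}_{p_i^{n_i}}$ is a local Artinian principal ideal ring whose unique maximal ideal is $(p_i)$, i.e.\ an SPIR in the terminology recalled before Lemma \ref{igczdg}, so $\mathbb{Z}_n$ is a product of SPIRs and hence an Artinian principal ideal ring. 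This places $\mathbb{Z}_n$ squarely within the hypotheses of Corollary \ref{spirperfect}, whose ideal lattice is $Id(\mathbb{Z}_n)\cong \prod_{i=1}^k C_i$, a product of chains with $|C_i|=n_i+1$.

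Next I would identify the quantity that governs perfectness. The maximal ideals of $\mathbb{Z}_n$ are precisely the coatoms of $Id(\mathbb{Z}_n)$, namely the ideals $\mathbb{Z}_{p_1^{n_1}}\times\cdots\times (p_i)\times\cdots\times\mathbb{Z}_{p_k^{n_k}}$, one for each distinct prime divisor $p_i$ of $n$; thus the number of maximal ideals of $\mathbb{Z}_n$ equals $k$, the number of distinct prime factors of $n$. Applying Corollary \ref{spirperfect} with $R=\mathbb{Z}_n$ then yields that $\mathbb{IG}(\mathbb{Z}_n)$ is perfect if and only if $k\leq 4$. Finally I would reconcile this with the stated normal form: writing $n = p_1^{n_1}p_2^{n_2}p_3^{n_3}p_4^{n_4}$ with $n_i\in\mathbb{N}\cup\{0\}$ is exactly the assertion that $n$ has at most four distinct prime factors, since permitting an exponent $n_i=0$ simply deletes the corresponding prime. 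This establishes the desired equivalence.

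I do not anticipate a genuine obstacle here, as the theorem is an immediate specialization of Corollary \ref{spirperfect}, which in turn descends from the poset-theoretic perfectness criterion Corollary \ref{zdgperfectproduct} via the chain reductions of Lemma \ref{chains} and the identification $\mathbb{IG}(R)=G^{*c}(Id(R))$ of Lemma \ref{igczdg}. The only points demanding a line of care are the bookkeeping that equates the coatoms of $\prod_{i=1}^k C_i$ with the $k$ distinct prime divisors, and the observation that the $n_i=0$ convention in the stated form encodes \emph{at most} four primes rather than exactly four; neither is a real difficulty.
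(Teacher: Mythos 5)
Your proposal is correct and follows exactly the route the paper intends: the theorem of Das is placed immediately after Corollary \ref{spirperfect} precisely because it is the specialization of that corollary to $R=\mathbb{Z}_n$, obtained via the Chinese Remainder Theorem decomposition into SPIRs and the identification of the number of maximal ideals with the number of distinct prime factors. Your added care about the $n_i=0$ convention encoding ``at most four'' primes is a correct and worthwhile clarification, but the argument is the same as the paper's.
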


	\vskip 5truept 
	
	\noindent \textbf{  Note that the Corollary \ref{cgchordperfect} follows from Corollary \ref{cgchord}, \ref{cgperfect}, Theorem \ref{artiniansame}, Corollary \ref{cagtcc}, \ref{cagchord}, \ref{cagperfect}, \ref{spirtcc}, \ref{spirchordal}, \ref{spirperfect}.}
	
	\vskip15pt

	\noindent 	\textbf{V. Intersection graph of subgroups of a  group   }
	
	\vskip10pt 
	
	Let $\mathcal{G}$ be a  group. Then the \textit{intersection graph of subgroups} $\mathbb{IG}(\mathcal{G})$ of $\mathcal{G}$ is the graph whose vertices are the proper  non-trivial subgroups of $\mathcal{G}$ such that distinct vertices $H$ and $K$ are adjacent if and only if $H\cap K\neq\{e\}$. Let $L(\mathcal{G})$ be the subgroup lattice of  $\mathcal{G}$. The vertex set of ${G^*}(L(\mathcal{G}))$ is $L(\mathcal{G})\setminus \{0_{L(\mathcal{G})},1_{L(\mathcal{G})}\}$, that is, the set of proper  non-trivial subgroups of $\mathcal{G}$. Therefore, $V(\mathbb{IG}(\mathcal{G}))=V({G^*}(L(\mathcal{G})))=V(\mathbb{G}^{*c}(L(\mathcal{G})))$.  The subgroups $H$ and $K$ are adjacent in ${G}(L(\mathcal{G}))$ if and only if $H\wedge K=0_{L(\mathcal{G})}$, that is, the subgroups $H$ and $K$ are adjacent in ${G}(L(\mathcal{G}))$ if and only if $H\cap K=(e)$. The subgroups $H$ and $K$ are adjacent in ${G}^{*c}(L(\mathcal{G}))$ if and only if $I\cap J\neq (e)$. Hence we have the following result.
	
	\begin{lemma}
		Let $\mathcal{G}$ be a  group and  $L(\mathcal{G})$ be the subgroup lattice of $\mathcal{G}$. Then	$\mathbb{IG}(\mathcal{G})={G}^{*c}(L(\mathcal{G}))$.
		
	\end{lemma}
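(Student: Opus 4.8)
The plan is to establish the equality $\mathbb{IG}(\mathcal{G})=G^{*c}(L(\mathcal{G}))$ by verifying separately that the two graphs have the same vertex set and the same adjacency relation, exactly paralleling the proof of Lemma~\ref{igczdg} for ideals. First I would recall that the subgroup lattice $L(\mathcal{G})$ is a bounded lattice in which meet is intersection, $H\wedge K=H\cap K$, whose least element $0_{L(\mathcal{G})}$ is the trivial subgroup $\{e\}$ and whose greatest element $1_{L(\mathcal{G})}$ is the whole group $\mathcal{G}$. Consequently the vertex set of the zero-divisor graph $G^*(L(\mathcal{G}))$, which by definition is $L(\mathcal{G})\setminus\{0_{L(\mathcal{G})},1_{L(\mathcal{G})}\}$, is precisely the set of proper non-trivial subgroups of $\mathcal{G}$; this coincides with $V(\mathbb{IG}(\mathcal{G}))$, and since complementation leaves the vertex set unchanged, $V(\mathbb{IG}(\mathcal{G}))=V(G^*(L(\mathcal{G})))=V(G^{*c}(L(\mathcal{G})))$.

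Next I would translate the adjacency relation. In $G^*(L(\mathcal{G}))$ two vertices $H$ and $K$ are adjacent if and only if $\{H,K\}^{\ell}=\{0_{L(\mathcal{G})}\}$, which in a lattice is equivalent to $H\wedge K=0_{L(\mathcal{G})}$, i.e. $H\cap K=\{e\}$. Passing to the complement, $H$ and $K$ are adjacent in $G^{*c}(L(\mathcal{G}))$ exactly when they fail to be adjacent in $G^*(L(\mathcal{G}))$, that is, precisely when $H\cap K\neq\{e\}$. This is verbatim the defining adjacency of the intersection graph $\mathbb{IG}(\mathcal{G})$, so the two graphs share both their vertex sets and their edge sets and are therefore equal.

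The argument is essentially a bookkeeping verification, so there is no serious obstacle; the only point requiring care is the lattice-theoretic dictionary, namely confirming that the meet in $L(\mathcal{G})$ really is set intersection and that the bounds $0_{L(\mathcal{G})}$, $1_{L(\mathcal{G})}$ are $\{e\}$ and $\mathcal{G}$, which is standard. I would present the proof as a single short paragraph in the same style as Lemma~\ref{igczdg}, since the group--subgroup setting is formally identical to the ring--ideal setting once one observes that the subgroup lattice $L(\mathcal{G})$ plays the role of the ideal lattice $Id(R)$.
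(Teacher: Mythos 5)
Your proposal is correct and follows essentially the same route as the paper: the authors likewise identify the vertex set of $G^{*}(L(\mathcal{G}))$ with the proper non-trivial subgroups of $\mathcal{G}$ and then translate adjacency via $H\wedge K=H\cap K$, so that complementation yields exactly the adjacency of $\mathbb{IG}(\mathcal{G})$. The only difference is presentational: you make the lattice-theoretic dictionary (meet is intersection, bounds are $\{e\}$ and $\mathcal{G}$) explicit, which the paper leaves implicit.
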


	\begin{remark}\label{groupspir} If $p_1,\dots,p_k\in\mathbb{N}$ are (not necessarily distinct) prime numbers, then the intersection graph of subgroups  $\mathbb{IG}(\mathbb{Z}_{p_1^{n_1}}\times\cdots\times\mathbb{Z}_{p_k^{n_k}})$ of the group  $\mathbb{Z}_{p_1^{n_1}}\times\cdots\times\mathbb{Z}_{p_k^{n_k}}$ is same as the intersection graph of ideals of the ring $\mathbb{Z}_{p_1^{n_1}}\times\cdots\times\mathbb{Z}_{p_k^{n_k}}$. Since $\mathbb{Z}_{p_i^{n_i}}$ is an SPIR whose maximal ideal has index of nilpotency equal to $n_i$ for every $i\in\{1,\dots,k\}$.
		
	\end{remark}	
	
	\vskip 5truept 
	In view of Corollaries \ref{spirtcc}, \ref{spirchordal}, \ref{spirperfect} and Remark \ref{groupspir}, we have the following Corollaries. 
	
	\vskip 5truept 
	
	\begin{corollary} 
		Let $\mathcal{G}=\mathbb{Z}_{p_1^{n_1}}\times\cdots\times\mathbb{Z}_{p_k^{n_k}}$ be a group, where $p_1,\dots,p_k\in\mathbb{N}$ are (not necessarily distinct) prime numbers and let $\mathbb{IG}(\mathcal{G})$ be the intersection graph of ideals of $\mathcal{G}$.  Then $\chi'(\mathbb{IG}^c(\mathcal{G}))=\Delta(\mathbb{IG}^c(\mathcal{G}))$ and $\mathbb{IG}(\mathcal{G})$, $\mathbb{IG}^c(\mathcal{G})$ satisfies the Total Coloring Conjecture. Moreover, $\mathbb{IG}^c(\mathcal{G})$ is of type II if and only if $R=R_1\times R_2$ with $|Id(R_1)|=|Id(R_2)|$.
	\end{corollary}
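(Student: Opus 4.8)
The plan is to reduce the entire statement to the already-established Corollary \ref{spirtcc} for Artinian principal ideal rings, using the group-to-ring identification recorded in Remark \ref{groupspir}. First I would observe that each factor $\mathbb{Z}_{p_i^{n_i}}$ is a special principal ideal ring: its unique maximal ideal is $(p_i)$, which is nilpotent of index $n_i$, so by the structure discussion preceding Lemma \ref{5.12} the lattice of ideals of $\mathbb{Z}_{p_i^{n_i}}$ is a chain of $n_i+1$ elements. Consequently the ring $R := \mathbb{Z}_{p_1^{n_1}}\times\cdots\times\mathbb{Z}_{p_k^{n_k}}$ is a finite product of SPIRs, hence an Artinian principal ideal ring, and by the cited results $Id(R)$ is the product of chains $\prod_{i=1}^{k}C_i$ with $|C_i|=n_i+1$.

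Next I would invoke Remark \ref{groupspir}, which asserts that the intersection graph of subgroups $\mathbb{IG}(\mathcal{G})$ coincides with the intersection graph of ideals $\mathbb{IG}(R)$ of this ring. The content of this step is that the subgroup lattice $L(\mathcal{G})$ and the ideal lattice $Id(R)$ agree for groups of this form, so that the adjacency relation $H\cap K\neq\{e\}$ on nontrivial proper subgroups matches $I\cap J\neq(0)$ on nonzero proper ideals under the identification. Thus $\mathbb{IG}(\mathcal{G})=\mathbb{IG}(R)$ and $\mathbb{IG}^c(\mathcal{G})=\mathbb{IG}^c(R)$ as graphs.

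With this identification in hand, all three conclusions follow directly from Corollary \ref{spirtcc} applied to the Artinian principal ideal ring $R$: the edge-chromatic equality $\chi'(\mathbb{IG}^c(\mathcal{G}))=\Delta(\mathbb{IG}^c(\mathcal{G}))$, the fact that both $\mathbb{IG}(\mathcal{G})$ and $\mathbb{IG}^c(\mathcal{G})$ satisfy the Total Coloring Conjecture, and the type II characterization, which transfers verbatim to the condition $R=R_1\times R_2$ with $|Id(R_1)|=|Id(R_2)|$ (i.e.\ $R$ is a product of exactly two chains of equal length, cf.\ Remark \ref{type2}). Since no new graph-theoretic estimate is required, the only step deserving care is the faithful matching of the two graphs in Remark \ref{groupspir}: one must confirm that $L(\mathcal{G})$ is genuinely isomorphic, as a lattice preserving the meet that governs adjacency, to $Id(R)$. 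Once that bookkeeping is verified, the corollary is immediate.
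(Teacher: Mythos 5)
Your proposal follows exactly the paper's route: the paper derives this corollary by combining Remark \ref{groupspir} (identifying $\mathbb{IG}(\mathcal{G})$ with the intersection graph of ideals of the Artinian principal ideal ring $\mathbb{Z}_{p_1^{n_1}}\times\cdots\times\mathbb{Z}_{p_k^{n_k}}$) with Corollary \ref{spirtcc}, which is precisely your reduction. The argument is correct and essentially identical to the paper's.
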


	\begin{corollary} \label{iggchord}
		Let $\mathcal{G}=\mathbb{Z}_{p_1^{n_1}}\times\cdots\times\mathbb{Z}_{p_k^{n_k}}$ be a group, where $p_1,\dots,p_k\in\mathbb{N}$ are (not necessarily distinct) prime numbers. Then
		
		\textbf{(A)} $\mathbb{IG}^c(\mathcal{G})$ is chordal if and only if one of the following hold:
		
		\begin{enumerate}
			\item $\mathcal{G}$ is a cyclic group of order $p_1^{n_1}$;
			
			\item  $\mathcal{G}=\mathbb{Z}_{p_1^{n_1}}\times \mathbb{Z}_{p_2^{n_2}}$, with $n_i=1$ for some $i\in \{1,2\}$;

			\item  $\mathcal{G}=\mathbb{Z}_{p_1^{n_1}}\times \mathbb{Z}_{p_2^{n_2}}\times \mathbb{Z}_{p_3^{n_3}}$, with $n_i=1$ for all $i\in \{1,2,3\}$.
		\end{enumerate}
		
		\textbf{(B)} $\mathbb{IG}(\mathcal{G})$ is chordal if and only if number of maximal subgroups of $\mathcal{G}$ is at most $3$ (that is, $k\leq 3$).	
	\end{corollary}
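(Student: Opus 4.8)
The plan is to reduce the group-theoretic statement entirely to the ring-theoretic characterization already in hand. By Remark \ref{groupspir}, the intersection graph of subgroups $\mathbb{IG}(\mathcal{G})$ of $\mathcal{G}=\mathbb{Z}_{p_1^{n_1}}\times\cdots\times\mathbb{Z}_{p_k^{n_k}}$ coincides with the intersection graph of ideals $\mathbb{IG}(R)$ of the ring $R=\mathbb{Z}_{p_1^{n_1}}\times\cdots\times\mathbb{Z}_{p_k^{n_k}}$, because each factor $\mathbb{Z}_{p_i^{n_i}}$ is an SPIR; consequently $\mathbb{IG}^c(\mathcal{G})=\mathbb{IG}^c(R)$ as well. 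Since $R$ is a finite product of SPIRs, it is an Artinian principal ideal ring, so Corollary \ref{spirchordal} applies verbatim to $R$. Thus both parts \textbf{(A)} and \textbf{(B)} follow once I translate each ring-theoretic condition of Corollary \ref{spirchordal} into the corresponding group-theoretic condition on $\mathcal{G}$.

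First I would set up the dictionary between the two descriptions. The key elementary observations are: (i) $R$ is local if and only if $k=1$, i.e.\ $\mathcal{G}=\mathbb{Z}_{p_1^{n_1}}$ is cyclic of order $p_1^{n_1}$; and (ii) the local factor $\mathbb{Z}_{p_i^{n_i}}$ is a field if and only if $n_i=1$, since for $n_i\geq 2$ the class of $p_i$ is a nonzero zero-divisor (indeed a nonzero nilpotent) in $\mathbb{Z}_{p_i^{n_i}}$, whereas $\mathbb{Z}_{p_i}$ is a field. With (i) and (ii), condition $(1)$ of Corollary \ref{spirchordal}\textbf{(A)} becomes group condition $(1)$, condition $(2)$ (namely $R=R_1\times R_2$ with some factor a field) becomes $k=2$ with $n_i=1$ for some $i$, i.e.\ group condition $(2)$, and condition $(3)$ (namely $R=F_1\times F_2\times F_3$, a product of three fields) becomes $k=3$ with $n_i=1$ for all $i$, i.e.\ group condition $(3)$. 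This yields part \textbf{(A)}.

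For part \textbf{(B)}, I would note that the maximal ideals of $R=\prod_{i=1}^k\mathbb{Z}_{p_i^{n_i}}$ are exactly the $k$ ideals $R_1\times\cdots\times M_i\times\cdots\times R_k$, where $M_i$ is the unique maximal ideal of $\mathbb{Z}_{p_i^{n_i}}$; hence $R$ has exactly $k$ maximal ideals. Under the correspondence of Remark \ref{groupspir} these match the maximal subgroups of $\mathcal{G}$, so ``number of maximal ideals of $R$ at most $3$'' is ``number of maximal subgroups of $\mathcal{G}$ at most $3$'', i.e.\ $k\leq 3$; applying Corollary \ref{spirchordal}\textbf{(B)} finishes part \textbf{(B)}. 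The only point that requires genuine care --- and the step I expect to be the main obstacle --- is the counting and identification of maximal subgroups when the primes $p_i$ are allowed to coincide: in that case the subgroup lattice of $\mathcal{G}$ is not literally the product of chains that governs $\mathbb{IG}(R)$, so I would justify the count either by leaning entirely on the graph equality asserted in Remark \ref{groupspir} (which already absorbs this issue at the level of the intersection graph), or by working with the adjacency-relevant subgroups so that the effective number of maximal classes is again $k$. Everything else is a direct transcription of Corollary \ref{spirchordal}.
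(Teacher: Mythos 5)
Your proposal is exactly the paper's argument: the paper proves this corollary with the single line ``In view of Corollaries \ref{spirtcc}, \ref{spirchordal}, \ref{spirperfect} and Remark \ref{groupspir}, we have the following Corollaries,'' i.e.\ it identifies $\mathbb{IG}(\mathcal{G})$ with $\mathbb{IG}(R)$ for the Artinian principal ideal ring $R=\mathbb{Z}_{p_1^{n_1}}\times\cdots\times\mathbb{Z}_{p_k^{n_k}}$ and transcribes Corollary \ref{spirchordal}, which is precisely your dictionary (local $\leftrightarrow$ $k=1$, field factor $\leftrightarrow$ $n_i=1$, $k$ maximal ideals $\leftrightarrow$ $k$ maximal subgroups). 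The subtlety you flag about coinciding primes (where the subgroup lattice of $\mathcal{G}$ is strictly larger than the ideal lattice of $R$) is not addressed in the paper either --- it is silently absorbed into the assertion of Remark \ref{groupspir} --- so your treatment is, if anything, more careful than the source.
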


	\begin{corollary}
		Let $\mathcal{G}=\mathbb{Z}_{p_1^{n_1}}\times\cdots\times\mathbb{Z}_{p_k^{n_k}}$ be a group, where $p_1,\dots,p_k\in\mathbb{N}$ are (not necessarily distinct) prime numbers. Then $\mathbb{IG}(\mathcal{G})$ is  perfect if and only if  the number of maximal subgroups of $\mathcal{G}$ is at most $4$ (that is, $k\leq 4$).
	\end{corollary}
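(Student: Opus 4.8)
The plan is to reduce the statement directly to the ring-theoretic classification already obtained for intersection graphs of Artinian principal ideal rings. First I would apply Remark \ref{groupspir}, which asserts that for $\mathcal{G}=\mathbb{Z}_{p_1^{n_1}}\times\cdots\times\mathbb{Z}_{p_k^{n_k}}$ the intersection graph of subgroups $\mathbb{IG}(\mathcal{G})$ coincides with the intersection graph of ideals $\mathbb{IG}(R)$ of the ring $R=\mathbb{Z}_{p_1^{n_1}}\times\cdots\times\mathbb{Z}_{p_k^{n_k}}$. Since each factor $\mathbb{Z}_{p_i^{n_i}}$ is an SPIR, the ring $R$ is a finite direct product of SPIRs and hence an Artinian principal ideal ring, so Corollary \ref{spirperfect} applies to $\mathbb{IG}(R)$ without modification.

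Second, I would invoke Corollary \ref{spirperfect}: $\mathbb{IG}(R)$ is perfect if and only if the number of maximal ideals of $R$ is at most $4$. It then remains to count these maximal ideals. Because $R$ is the direct product of the $k$ local rings $\mathbb{Z}_{p_i^{n_i}}$, each of which has a unique maximal ideal $M_i$, the maximal ideals of $R$ are exactly the $k$ ideals $\mathbb{Z}_{p_1^{n_1}}\times\cdots\times M_i\times\cdots\times\mathbb{Z}_{p_k^{n_k}}$, so $R$ has precisely $k$ maximal ideals. Under the identification of the subgroup lattice of $\mathcal{G}$ with the ideal lattice of $R$ supplied by Remark \ref{groupspir}, these correspond to the maximal subgroups of $\mathcal{G}$, so the number of maximal subgroups of $\mathcal{G}$ equals $k$.

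Combining the two steps yields the claim: $\mathbb{IG}(\mathcal{G})=\mathbb{IG}(R)$ is perfect if and only if the number of maximal ideals of $R$, equivalently the number of maximal subgroups of $\mathcal{G}$, is at most $4$, i.e.\ $k\le 4$. Since every intermediate step is a direct citation of an earlier result, I do not expect a genuine obstacle here; the only point requiring care is the bookkeeping that guarantees the $k$ factors contribute exactly $k$ distinct maximal ideals even when some of the primes $p_i$ coincide, and this is immediate from the product decomposition of the ideal lattice of a finite product of local rings.
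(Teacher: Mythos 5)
Your proposal is correct and follows exactly the paper's route: the paper derives this corollary by citing Remark \ref{groupspir} to identify $\mathbb{IG}(\mathcal{G})$ with the intersection graph of ideals of the Artinian principal ideal ring $\mathbb{Z}_{p_1^{n_1}}\times\cdots\times\mathbb{Z}_{p_k^{n_k}}$ and then applying Corollary \ref{spirperfect}, with the count of $k$ maximal ideals coming from the product decomposition into $k$ local rings. Your extra bookkeeping remark about coinciding primes is handled on the ring side exactly as you say, and the translation back to maximal subgroups is absorbed into the identification asserted by Remark \ref{groupspir}, just as in the paper.
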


	%

	\vskip 5truept

	\noindent \textbf{Acknowledgment:} The first author is financially supported by the Council of Scientific and Industrial Research(CSIR), New Delhi, via Junior Research Fellowship Award Letter No. 09/137(0620)/2019-EMR-I.

	\vskip 10pt 
	\noindent\textbf{Conflict of interest:} The authors declare that there is no conflict of interest regarding the publishing of this paper.
	\vskip 10pt 
	\noindent\textbf{Authorship Contributions:} Both the authors contributed equally in the study of zero-divisor graphs of ordered sets and their applications to algebraic structures. Both the authors read and approved the final version of the manuscript.
	
	\vskip10pt 

\end{document}